\numberwithin{equation}{section}
\theoremstyle{plain}
\newtheorem{theorem}{Theorem}[section]
\newtheorem{definition}[theorem]{Definition}
\newtheorem{lemma}[theorem]{Lemma}
\newtheorem{corollary}[theorem]{Corollary}
\newtheorem{proposition}[theorem]{Proposition}
\theoremstyle{remark}
\newtheorem{remark}{Remark}[section]
\newtheorem{example}{Example}[section]
\def\A{{\cal A}}
\def\B{{\cal B}}
\def\ii{{\mathbbm{i}}}
\def\P{\mathbb P}
\def\dist{{d_{\mathbb V}}}  
\def\Z{\mathbb Z}
\def\C{\mathbb C}
\def\bbN{\mathbb N}
\newcommand{\ol}{\overline}
\def\H{\mathcal H} 
\def\V{\mathbb V}
\def\I{\mathrm{I}}
\newcommand\comment[1]{{\color{red} \fbox{Stuff commented out -- see the latex source.}}}
\newcommand{\bal}{\begin{align*}}
\newcommand{\eal}{\end{align*}}
\newcommand{\baln}{\begin{align}}
\newcommand{\enalign}{\end{align}}
\newcommand{\bn}{\begin{enumerate}}
	\newcommand{\en}{\end{enumerate}}
\newcommand{\bc}{\begin{cases}}
	\newcommand{\ec}{\end{cases}}
\newcommand{\tr}{{\rm tr}}
\newcommand{\bbR}{{\mathbb R}}
\newcommand{\bb}{{\boldsymbol{b}}}
\newcommand{\cid}{\stackrel{d}{\to}}
\newcommand{\Cid}{\stackrel{d}{\longrightarrow}}
\newcommand{\cas}{\buildrel a.s. \over \longrightarrow}
\newcommand{\eqid}{\buildrel d \over =}
\newcommand{\wh}{\widehat}
\newcommand{\wt}{\widetilde}
\newcommand{\E}{\mathbb{E}}
\newcommand{\pr}{\mathbb{P}}
\newcommand{\R}{\mathbb{R}}
\newcommand{\N}{\mathbb{N}}
\newcommand{\bbT}{\mathbb{T}}
\newcommand{\T}{\mathbb{T}}
\newcommand{\CT}{\mathcal{T}}
\newcommand{\CB}{\mathcal{B}}
\newcommand{\CK}{\mathcal{K}}
\newcommand{\bbL}{\mathbb{L}}
\newcommand{\CI}{\mathcal{I}}
\newcommand{\CJ}{\mathcal{J}}
\newcommand{\CF}{\mathcal{F}}
\newcommand{\bbC}{\mathbb{C}}
\newcommand{\bbS}{\mathbb{S}}
\newcommand{\ep}{\noindent\begin{flushright}${{\hfill\llap{$\sqcup\!\!\!\!\sqcap$}}}$\end{flushright}}
 \newenvironment{proof2}[2] {\paragraph{\bf{Proof of {#1} {#2}:}}}{\hfill$\square$\vskip.25cm}
\def\clb{}
\newcommand\replace[2]{{#2}}
\newcommand{\be}{\begin{eqnarray*}}
\newcommand{\ee}{\end{eqnarray*}}
\newcommand{\ben}{\begin{eqnarray}}
\newcommand{\een}{\end{eqnarray}}
\def\CC{\mathcal C}
\def\bbH{\mathbb H}
\def\la{\langle}
\def\ra{\rangle}
\def\wtilde{\widetilde}
\begin{document}

\begin{frontmatter}
\title{Tangent fields, intrinsic stationarity, and self-similarity\\ (with a supplement on Matheron Theory)}
\runtitle{Tangent fields, intrinsic stationarity and self-similarity}

\begin{aug}
\author{\fnms{Jinqi} \snm{Shen,}\ead[label=e1]{jqshen@umich.edu}}
\author{\fnms{Stilian} \snm{Stoev,}\ead[label=e2]{sstoev@umich.edu}}
\and
\author{\fnms{Tailen} \snm{Hsing}\ead[label=e3]{thsing@umich.edu}
}

\address[a]{Department of Statistics\\
The University of Michigan, Ann Arbor\\
\printead{e1,e2,e3}}

\runauthor{J.\ Shen et al.}

\affiliation{University of Michigan}
 
\end{aug}

\begin{center}

\bigskip
{\em Dedicated to the Memory of Mark Marvin Meerschaert (1955--2020)}

\medskip
\end{center}

\begin{abstract}
This paper studies the local structure of continuous random fields on $\mathbb R^d$ taking values in a complete 
separable linear metric space ${\mathbb V}$.  Extending seminal work of Falconer, 
we show that the generalized  $(1+k)$-th order increment tangent fields are self-similar and almost everywhere intrinsically stationary in the sense of  Matheron. 
These results motivate the further study of the structure of ${\mathbb V}$-valued 
intrinsic random functions of order $k$ (IRF$_k$,\ $k=0,1,\cdots$).   To this end, we focus on the special case where ${\mathbb V}$ is a 
Hilbert space. Building on the work of  Sasvari and Berschneider, 
we establish the spectral  characterization of all second order ${\mathbb V}$-valued IRF$_k$'s, extending the classical Matheron theory.
Using these results, we further characterize the class of Gaussian, {\em operator self-similar} ${\mathbb V}$-valued IRF$_k$'s,
generalizing results of Dobrushin and Didier, Meerschaert and Pipiras, 
among others.  These processes are the Hilbert-space-valued versions of the general $k$-th order operator fractional Brownian fields and are characterized by their 
self-similarity operator exponent as well as a finite trace class operator valued spectral measure.   We conclude with several 
examples motivating future applications to probability and statistics.

In a technical \hyperlink{supp}{Supplement} of independent interest, we provide a unified treatment of the Matheron spectral theory for 
second-order stationary and intrinsically stationary processes taking values in a separable Hilbert space.  We give the proofs of the 
Bochner-Neeb and Bochner-Schwartz theorems.
\end{abstract}

\begin{keyword}
\kwd{tangent field}
\kwd{IRF$_k$}
\kwd{operator self-similarity}
\kwd{spectral theory}
\kwd{functional data analysis}
\end{keyword}

\end{frontmatter}

\section{Introduction.} 

The tangent process of a random field is the stochastic process obtained in the limit of the suitably normalized increments of the random field at a fixed location. A pair of papers, \cite{falconer:2002,falconer:2003}, discovered a remarkable property about the 
structure of the tangent process. Briefly speaking, Falconer proved that the tangent processes 
must be self-similar and have stationary increments (a.e.).  The self-similarity of the tangent field is not surprising, which 
is a consequence of scaling. It is akin to what is shown in many limit theorems in the literature, such as those in the seminal work of Lamperti \citep[cf.][]{lamperti:1962}; see also \cite{davydov:paulauskas:2017} and a host of results on (univariate and multivariate) regular variation in \cite{gnedenko:1943,meerschaert:1984,hult:lindskog:2006}. 
The property of stationary increments  for the tangent process, however,  is less expected.  The proof of this property in Falconer's works involves a remarkable 
Lebesgue-density argument and ideas from geometric measure theory \citep[cf.][]{falconer:1986,preiss:1987}.   

The starting point of our paper is extending Falconer's results in two directions.  Firstly, we consider generalized tangent processes obtained by taking local $(1+k)$-th order increments, $k\ge 0$.  This is 
necessary if one wants to study the local behavior of many models arising in spatial statistics.  Secondly, we consider random fields taking values in a linear complete separable metric space $\V$ such 
as but not limited to a separable Banach space.  The resulting limit processes, will be referred to as $k$-th order tangent processes.
In \cite{falconer:2002,falconer:2003}, $k=0$ and $\V=\bbR$.
The self-similarity property continues to hold for  $k$-th order tangent processes, where self-similarity is in the sense of a general class of scaling actions, including operator 
scaling \citep[cf.][]{meerschaert:scheffler:2001book}, that commensurate with the generality of the state space $\V$.
To establish the generalized stationary-increment property, we introduce a new proof strategy based on the Lusin and Egorov theorems as well as some core ideas in \cite{falconer:2002,falconer:2003}.

 Interestingly, the stationarity of the higher-order increments of the $k$-th order tangent processes is related to the notion of {\em intrinsic random functions} of order $k$ (IRF$_k$) introduced
 by  \cite{matheron:1973}.  In the special case of real-valued processes ($\V=\R$), the classic results of Matheron as well as \cite{gelfand:vilenkin:1964d} lead to a concrete formula 
 for {\em all possible} covariance structures of the $k$-th order tangent fields.  It involves the local self-similarity exponent $H \in (0,k+1]$ and the local {\em spectral  measure} $\sigma$. Such results 
 have been established by \cite{Dobrushin:1979vi} in the setting where the paths of the stochastic processes are generalized functions, i.e., random elements in $\mathcal{S}'(\R^d)$ -- the topological 
 dual of the Schwartz space $\mathcal{S}(\R^d)$.  Our study, motivated by applications to spatial statistics and functional data analysis, considers random fields taking values in a separable Hilbert space $\V$.
 We follow the approach of Matheron rather than Dobrushin and realize the notion of a higher-order increment by integrating the process against signed measures with finite supports.

To this end, in Section \ref{sec:c2s5}, we develop an extension of Matheron's theory to the case of processes taking values in a separable Hilbert space $\V$.
Our theoretical development for Hilbert-space-valued IRF$_k$'s is of independent interest and builds on a large body of existing although somewhat scattered work.
With no intention to provide a complete list, we refer to \cite{bochner1948vorlesungen} and \cite{khintchine1934korrelationstheorie} for Bochner's theorem and \cite{Neeb1998} for extensions  to general spaces; \cite{cramer1942} for the spectral representation of stationary random fields; \cite{matheron:1973}, \cite{Sasvari:2009} for the existence of general covariance of IRF$_k$ and  its integral representation; \cite{Berschneider:2012de} for the integral representation of IRF$_k$ in an abstract space. A more comprehensive summary on this line of literature can be found in \cite{Berschneider2018}. Our proofs in 
this regard are contained in the \hyperlink{supp}{Supplement}, 
which aims to be self-contained and only uses arguments that are common in probability and statistics.

The developed theory is then utilized in Section \ref{s:self-similar IRFk} to characterize the covariance structure of self-similar intrinsic random functions taking values in a
separable Hilbert space.  General linear operator scaling actions are considered as well as the classic multiplication by a scalar.   In particular, our results provide characterizations of 
Gaussian operator self-similar IRF$_k$'s, which can be viewed as infinite-dimensional versions of the $k$-th order fractional Brownian fields.  
The $0$-th order operator fractional Brownian fields taking values in $\V=\R^m$ have been the subject of active investigation and numerous applications 
\citep[see e.g.][and the references therein]{mason:xiao2001,amblard:coeurjolly:2011,kechagias:pipiras:2015,abry:didier:2018,abry:wendt:jaffard:didier:2019,duker:2020,bierme:lacaux:2020}. 
Most if not all of the existing work, however, focuses on random fields taking values in $\R^m$.   In this paper, we provide a first comprehensive treatment of Hilbert space valued 
operator fractional Brownian fields and their higher order stationary increment counterparts -- the Gaussian IRF$_k$'s.  This leads to infinite-dimensional extensions of seminal results due to \cite{didier:pipiras:2011,didier:meerschaert:pipiras:2017,perrin2001nth} among others.

This paper also contributes to statistical research in several ways.  Matheron's work on $\R$-valued IRF$_k$'s has had a substantial impact on the field of spatial statistics
 \citep[see e.g.\ the monographs of][] {MR1697409, chiles:delfner:2012}.  Our extension of the Matheron theory to the case of Hilbert-space-valued random fields provides novel tools and 
 framework for spatially dependent functional data analysis -- an active area in statistics \citep[see, e.g., the monographs][and references therein]{ramsay:silverman:2005,horvath:kokoszka:2012,Hsing2015}. Unfortunately, the details of Matheron's 
 theory have been elusive to the broader community.  The \hyperlink{supp}{Supplement} 
 will be a useful resource for those who are interested in 
 learning those details and their novel generalizations. The self-similar IRF$_k$ is itself a flexible model for spatial statistics. The self-similarity exponent operator $\H$
 and spectral measure $\sigma$, which now takes values in the space of positive trace-class operators on $\V$ characterize the covariance structure.  The pair $(\H,\sigma)$ may be object 
 of further modeling and inference in the context of in-fill asymptotics \citep[cf.][]{MR1697409}.  The tangent process connection also provides guidance for building flexible random field models 
 with desired local properties.

\medskip
\noindent{\em The paper is organized as follows.} Section~\ref{sec:section2} introduces a suitable topology on the path space and scaling actions needed to define and
study higher-order tangent fields.  In Section~\ref{s:tangent_fields}, we establish the main results on the structure of higher-order tangent fields, namely their
self-similarity and almost everywhere intrinsic stationarity.  Section~\ref{sec:c2s5} develops the spectral theory for second-order stationary and 
intrinsically stationary random fields taking values in a separable Hilbert space $\V$.  This treatment unifies and extends 
results of Bochner, Cram\'er, Gelfand-Vilenkin, Matheron, Neeb, Sasv\'ari, and Berschneider.  The covariance structure of the self-similar 
$\V$-valued IRF$_k$ is characterized in Section \ref{s:self-similar IRFk}.  General linear operator-scaling actions (Section~\ref{s:operator-ss}) and the classic
scalar scaling actions (Section \ref{ss:T_rrx}) are studied.  Open problems, examples and connections to the existing literature are presented in 
Section \ref{sec:related-work-and-examples}.  Some technical proofs are relegated to the Appendix. Further background and details are given in the \hyperlink{supp}{Supplement}. 


	\section{Preliminaries.} \label{sec:section2}
	This section develops some tools that will be useful for the study of tangent fields. 
	We commence by defining some key spaces and operations.
	
	For $d=1,2,\ldots$, let $\Lambda$ denote the collection of complex-valued measures 
	on $\mathbb{R}^d$ supported on finitely many points, i.e.,
	\begin{align}\label{e:lambda-rep}
	\lambda(d u) = \sum_{i=1}^n c_i \delta_{t_i}(d u),
	\end{align}
	where $n=1,2,\dots$, $c_i\in \mathbb{C}, t_i \in \mathbb R^d$ for all $i$, and $\delta_a$ is the
	Dirac measure at $a$. Without loss of generality, we always assume the $t_i$'s in the representation
	\eqref{e:lambda-rep} to be distinct. In this paper we will be concerned with functions defined on $\Lambda$
	or a subspace of $\Lambda$ and take values in some vector space $\V$.
	A special case of that is 
	$$ g(\lambda) := \int g d\lambda \equiv \sum_{i=1}^n c_i g(t_i),$$
	for any $g: \mathbb{R}^d\mapsto \V$.
	
	For any $k\in \mathbb{N}:=\{0,1,2,\ldots\}$, a monomial of degree $k$ on $\bbR^d$ is any function of the form 
	$u =(u_1,\dots, u_d) \mapsto u_1^{j_1} \cdots u_d^{j_d},$ where $j_1,\ldots,j_d$ are non-negative integer powers 
	such that $j_1+\cdots+j_d=k$. More generally, a polynomial of degree $k$ is any complex 
	linear combination of monomials of degree less than or equal to $k$ with at least one non-zero degree $k$ term.

	\begin{definition} For any $k=0,1,2,\ldots$, let ${\Lambda}_{k}$ be the class of 
		$\lambda\in\Lambda$ such that $\int_{\bbR^d} f d\lambda=0$ for polynomials $f$ with degree ${\rm deg}(f)\le k$. 
		Thus, measures in $\Lambda_k$ ``annihilate'' all polynomials of degree up to $k$. We also let
		$\Lambda_{-1} := \Lambda$.
	\end{definition}
	
	Next, we define two operations pertinent to the definition of tangent fields. 
	As usual, for any set $B\subset\bbR^d$, $c\in\bbR$ and $s\in\bbR^d$, 
	let $c\cdot B=\{c t\, :\,  t \in B\}$ and $s+B=B+s=\{s+t\, :\, t \in B\}$. Also, define the 
	scaling and translation operations on $\Lambda$: 
	\begin{align}\label{e:sca_trans}
	r\cdot\lambda :=\lambda(r^{-1}\cdot), \ r \not = 0, \quad\hbox{and}
	\quad s+\lambda:=\lambda(\cdot-s), \ \ s\in \mathbb{R}^d.
	\end{align}
	Clearly, ${\Lambda}_{k}$ is closed with respect to both of these operations.

Assume that the random elements considered in the paper take values in a 
complete and separable metric linear space $(\V, \dist)$ over $\bbC$. Recall that
$(\V, \dist)$ is said to be a metric linear space \citep[cf.][]{rolewicz1985} if scalar multiplication 
and addition are continuous with respect to $\dist$.  Namely, for all $x_n, y_n, x, y \in \V$ and 
$c_n, c\in\bbC$, such that $ |c_n-c| + \dist(x_n,x) + \dist(y_n,y) \to 0$,  we have
\begin{align*}
\dist(c_n x_n, cx) \to 0 \quad\hbox{and}\quad \dist(x_n+y_n, x+y) \to 0.
\end{align*}
By the Birkhoff-Kakutani Theorem \citep[cf. Theorem 1.1.1 of][]{rolewicz1985}, 
	without loss of generality, we can and do assume that the metric $d_{\V}$ is translation
	invariant, that is,
	\begin{align}\label{e:translation}
	\dist(x,y) = \dist(x-y,0) \ \hbox{ for any $x,y\in \V$}. 
	\end{align}
A typical example of $\V$ in our applications is a separable Banach or even Hilbert space. 
However, we do not restrict to only Banach spaces for now.
	We also assume throughout that the following continuity condition holds: For any $K>0$, 
		\begin{align}\label{eq:extra_condition}
		\lim_{\delta\to 0}\sup_{|c|\leq K,\dist(x,0)<\delta}\dist(cx,0)\to 0.
		\end{align}
Note that \eqref{eq:extra_condition} readily holds if $\V$ is a normed space
and $d_\V$ is induced by the norm.

\subsection{The spaces $S(\Lambda_{k},\V)$ and $\breve S(\Lambda_k,\V)$.} 
\label{s:2.1}

A function $f$ from $\Lambda$ to ${\mathbb V}$ is linear if $f(c_1{\clb \lambda_1}+c_2\lambda_2) 
= c_1f(\lambda_1)+c_2 f(\lambda_2), c_1,c_2\in\bbC, \lambda_1,\lambda_2\in\Lambda$.
Denote by $S(\Lambda_{k},\V)$ the set of all linear functions from 
$\Lambda_k$ to ${\mathbb V}$.
In this section, we focus on obtaining a representation of functions in 
$S(\Lambda_{k},\V)$ in terms of functions from $\bbR^d$ to ${\mathbb V}$ as well as a topological 
structure for a subspace of $S(\Lambda_{k},\V)$.

Next we discuss the important notion of {\em representation} introduced by \cite{matheron:1973}.
	A function $\breve f : \bbR^d \mapsto \V$ is said to be a representation of $f\in S(\Lambda_{k},\V)$ if
	\begin{align} \label{e:rep_def}
	f(\lambda) = \int \breve f d\lambda, \ \lambda\in\Lambda_k.
	\end{align}
Consider the following construction of a representation.
Denote by $m_i,\ i=1,\ldots,M_k:=\binom{k+d}{k}$ all the monomials of 
degree less than or equal to $k$ on $\bbR^d$, where the ordering is arbitrary. 
Define the $\bbR^{M_k}-$valued function 
	\begin{align}
	 \label{e:bt}
	  \bb(t) := (m_1(t),\ldots,m_{M_k}(t))^\top,\quad  t\in\bbR^d.
	\end{align}
	Pick the points $t_i\in\bbR^d, i=1,\ldots,M_k$, such that 
	the ${M_k\times M_k}$ matrix $B :=(\bb(t_1),\ldots,\bb(t_{M_k}))$ has full rank.  
	Such points always exist. For instance, if $d=1$ then $B$ has full rank 
	for arbitrary distinct $t_i$'s. However, for $d>1$ some care is needed in selecting 
	the $t_i$ to ensure that $B$ has full rank. 
	For $t\in\bbR^d$, define the measure
	\begin{align}\label{e:eval1}
	\lambda_t = \delta_t-(\delta_{t_1},\ldots,\delta_{t_{M_k}}) B^{-1}\bb(t),
	\end{align}
	where, for any $\boldsymbol{c}=(c_1,\ldots,c_{M_k})^\top\in\bbC^{M_k}$, 
	$(\delta_{t_1},\ldots,\delta_{t_{M_k}})\boldsymbol{c}$ denotes
	the measure $\sum_{j=1}^{M_k} c_j\delta_{t_j}$. 
	Below, for convenience, we adopt such matrix notation when there 
	is no ambiguity. 
	It follows from \eqref{e:bt} and \eqref{e:eval1} that
	\begin{align*}
	\left(\int m_1 d \lambda_t, \ldots, \int m_{M_k}d\lambda_t\right)^\top
	= (I-B B^{-1})\bb(t)  = \bf{0},
	\end{align*}
	and so $\lambda_t\in\Lambda_k$ for all $t$. 
	For any $f\in S(\Lambda_{k},\V)$, consider
	\begin{align}\label{e:tilde_f1}
	\breve f(t):=f(\lambda_t).
	\end{align}
	Note that $\lambda_{t_i} = 0$, the null measure, for all $i$. Thus,
	\begin{align}\label{e:tilde_f2}
	\left(\breve f(t_1),\ldots,\breve f(t_{M_k})\right) = \bf{0}.
	\end{align}
	Moreover, for $\lambda\in\Lambda_k$, by linearity, 
	\begin{align} \label{e:tilde_f3}
	 \int \breve f d\lambda = f\left(\lambda - (\delta_{t_1},\ldots,\delta_{t_{M_k}}) B^{-1}\bb(\lambda)\right) = f(\lambda),
	\end{align}
	since $\bb(\lambda) = \bf{0}$, and therefore \eqref{e:rep_def} holds showing that $\breve f$ is
	a representation of $f$. 
	
	Clearly, the function $\breve f$ defined by \eqref{e:tilde_f1} is not the only possible representation of $f$. 
	However, any two representations $g_1,g_2$ of $f$ differ by
	a polynomial of degree $k$, since, for all $t$,
	\begin{align} \label{e:equiv_rep}
	0 = \int (g_1-g_2) d \lambda_t  = g_1(t) - g_2(t) 
	- (g_1(t_1)-g_2(t_1), \ldots, g_1(t_{M_k})-g_2(t_{M_k})) 	B^{-1}\bb(t),
	\end{align}
	The difference will not affect any of the results in this paper. Thus, from now on, we will adhere to the 
	representation $\breve f$ defined by \eqref{e:tilde_f1}.
	
	It follows from \eqref{e:tilde_f2} and \eqref{e:tilde_f3} that there is a one-to-one
	correspondence between $S(\Lambda_{k},\V)$ and 
	$\breve{S}(\Lambda_k,\V)$, where $\breve{S}(\Lambda_k,\V)$ denotes the set of functions from
	$\bbR^d$ to ${\mathbb V}$ that are equal to zero at each $t_i, i = 1,\ldots, M_k$,
	where the isomorphism is determined by the bijection
	\begin{align}\label{e:iso}
	\begin{split}
	(\mathcal{J}f) (t) &= {f(\lambda_t)}, \ f\in S(\Lambda_k,\V), \\  
	(\mathcal{J}^{-1}\breve f)(\lambda) & = \int \breve f d\lambda, \ \breve f\in 
	\breve{S}(\Lambda_k,\V).
	\end{split}
	\end{align}
			
	A linear function $f\in S(\Lambda_{k},\V)$ is said to be continuous if its 
	representation $\breve f$ is a continuous function from $\bbR^d$ to ${\mathbb V}$. 
	By \eqref{e:equiv_rep}, this property is ``intrinsic'' to $f$ and does not 
	depend on the representation. Let $S_c(\Lambda_{k},\V)$ denote the subset of
	continuous linear functions from $\Lambda_k$ to ${\mathbb V}$, and $\breve{S}_c(\Lambda_k,\V)$ 
	the corresponding set of representations. The mapping $\mathcal{J}$ in \eqref{e:iso} continues to
	be an isomorphism between $S_c(\Lambda_k,\V)$ and
	$\breve{S}_c(\Lambda_k,\V)$.
	
	We now proceed to define a topology on $S_c(\Lambda_k,\V)$ and 
	$\breve{S}_c(\Lambda_k,\V)$.
	A convenient metric on $\breve{S}_c(\Lambda_k,\V)$ is
	\begin{align}\label{eq:metric}
	\rho(\breve f,\breve g) = \sum_{n\geq 1} 2^{-n}\Big(1-\exp\Big\{-\sup_{\|t\|\leq n}\dist(\breve f(t) , 
	\breve g(t))\Big\}\Big),
	\end{align}
	where, as before, $\|\cdot\|$ denotes the Euclidean norm on $\bbR^d$
	and $\dist$ denotes the metric on ${\mathbb V}$.  
	Clearly, $\rho$ metrizes the local uniform 
	convergence topology, i.e., uniform convergence on compact sets.  
	We also have the following simple but important fact, proved in Section \ref{sec:proofs_sec:section2} in Appendix.
	
	\begin{lemma}\label{l:S_is_csms} The metric space $(\breve{S}_c(\Lambda_k,\V),\rho)$ is complete and separable.
	\end{lemma}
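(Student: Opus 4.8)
The idea is to realize $\breve{S}_c(\Lambda_k,\V)$ as a closed subset of the space $C(\bbR^d,\V)$ of \emph{all} continuous functions from $\bbR^d$ to $\V$, equipped with the metric $\rho$ of \eqref{eq:metric} (now with $\breve f,\breve g$ ranging over all of $C(\bbR^d,\V)$), which metrizes local uniform convergence. I would first establish that $(C(\bbR^d,\V),\rho)$ is complete and separable, and then deduce the lemma: the subset $\breve{S}_c(\Lambda_k,\V)$ is cut out by the finitely many constraints $\breve f(t_i)=0$, $i=1,\dots,M_k$, which are preserved under local uniform (indeed pointwise) limits, so it is closed. A closed subset of a complete metric space is complete, and any subset of a separable metric space is separable, giving both conclusions at once.

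\emph{Completeness.} The first step is to record that a $\rho$-Cauchy sequence $(\breve f_m)$ is uniformly Cauchy on every ball $\{\|t\|\le n\}$: if $\sup_{\|t\|\le n}\dist(\breve f_m(t),\breve f_{m'}(t))$ stayed above some $\epsilon>0$ along a subsequence, the $n$-th summand of $\rho$ would stay above $2^{-n}(1-e^{-\epsilon})$, contradicting the Cauchy property. For each fixed $t$ the sequence $(\breve f_m(t))$ is then Cauchy in $\V$ and converges to some $\breve f(t)$ by completeness of $(\V,\dist)$. Letting $m'\to\infty$ inside the supremum (using continuity of $\dist$) upgrades this to uniform convergence $\breve f_m\to\breve f$ on each ball, so $\breve f$ is continuous as a local-uniform limit of continuous functions and $\rho(\breve f_m,\breve f)\to 0$.

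\emph{Separability.} Fix a countable dense set $\{v_j\}\subset\V$. For each $m$, I would take a simplicial subdivision of $\bbR^d$ on the rational lattice $m^{-1}\Z^d$ (e.g.\ the Kuhn triangulation) with barycentric partition of unity $\{\phi_\alpha\}$: each $\phi_\alpha$ is continuous with $0\le\phi_\alpha\le1$, $\sum_\alpha\phi_\alpha\equiv1$, at most $d+1$ are nonzero at any point, and $\phi_\alpha$ is supported near its vertex. The candidate countable dense family consists of the functions $g=\sum_\alpha\phi_\alpha\,w_\alpha$ with $w_\alpha\in\{v_j\}$ for finitely many $\alpha$ and $w_\alpha=0$ otherwise; these are continuous by continuity of scalar multiplication. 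Given $f$, a ball $\{\|t\|\le N\}$ and $\epsilon>0$, I would pick $m$ finer than the modulus of uniform continuity of $f$ on a slightly enlarged (compact) ball and choose $w_\alpha\in\{v_j\}$ close to $f(v_\alpha)$ at each vertex meeting the ball. Writing $F(\cdot):=\dist(\cdot,0)$, translation invariance \eqref{e:translation} makes $F$ subadditive, and since $\sum_\alpha\phi_\alpha(t)=1$,
\begin{align*}
\dist\!\big(f(t),g(t)\big)=F\Big(\sum_\alpha \phi_\alpha(t)\big(f(t)-w_\alpha\big)\Big)\le \sum_\alpha F\big(\phi_\alpha(t)(f(t)-w_\alpha)\big),
\end{align*}
a sum of at most $d+1$ terms; each nonzero term has $t$ near $v_\alpha$, so $F(f(t)-w_\alpha)$ is small and condition \eqref{eq:extra_condition} (with $K=1$) makes $F(\phi_\alpha(t)(f(t)-w_\alpha))$ small uniformly, yielding density.

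\emph{Main obstacle.} The delicate point is the separability argument in a \emph{general} metric linear space $\V$, which need not be normed: scalar multiplication by $\phi_\alpha(t)\in[0,1]$ is not automatically non-expansive, so the interpolation estimate is not free. This is precisely where the standing assumption \eqref{eq:extra_condition} is needed, ensuring $\sup_{|c|\le1}F(cx)\to0$ as $F(x)\to0$; combined with the subadditivity of $F$ from translation invariance, it controls the finitely many nonzero interpolation terms. Completeness, by contrast, is routine once $\rho$-Cauchy is translated into uniform-Cauchy on balls.
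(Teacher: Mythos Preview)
Your proof is correct, and the overall strategy—reducing to $(C(\bbR^d,\V),\rho)$ and checking that $\breve S_c(\Lambda_k,\V)$ is closed there—is the same as the paper's. The completeness argument is essentially identical to what the paper does.

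The separability argument, however, takes a genuinely different route. The paper argues abstractly: $\rho$ metrizes the compact-open topology on $C(\bbR^d,\V)$, and since $\bbR^d$ is locally compact and $(\V,\dist)$ is separable, that topology is second countable (citing Dugundji), hence separable. This is short and uses no linear structure on $\V$, so in particular it does not invoke condition~\eqref{eq:extra_condition}. Your argument is more hands-on: you build an explicit countable dense family by barycentric interpolation on the Kuhn triangulation, using subadditivity of $F(\cdot)=\dist(\cdot,0)$ from translation invariance~\eqref{e:translation} together with the standing hypothesis~\eqref{eq:extra_condition} to control the at most $d+1$ nonzero interpolation terms. The trade-off is clear: the paper's route is cleaner and more general but leans on a cited topological theorem, while yours is self-contained and constructive but genuinely needs the linear structure and~\eqref{eq:extra_condition}. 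Both are valid; your identification of~\eqref{eq:extra_condition} as the key ingredient in your approach is exactly right.
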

		 
	Using the canonical bijection $\mathcal{J}$, we define the corresponding metric on $S_c(\Lambda_k,\V)$ 
	as $\rho(f,g) := \rho(\mathcal{J} f, \mathcal{J} g)$, for $f,g\in S_c(\Lambda_k,\V)$,
	where the same symbol $\rho$ is adopted for convenience.
	Again, by \eqref{e:translation}, \eqref{eq:extra_condition} and \eqref{e:equiv_rep}, the 
	topology so defined does not depend on the particular representation used to define 
	$(\breve{S}_c(\Lambda_k,\V),\rho)$. 
	It follows that $\mathcal{J}$ is an isometry and both $(S_c(\Lambda_k,\V),\rho)$ and 
	$(\breve{S}_c(\Lambda_k,\V),\rho)$ are separable and complete.  
	Thus, weak convergence of probability measures on 
	these spaces can be defined in the usual manner
	\citep[see, e.g.,][]{billingsley:1999}. 
	Specifically, by Prokhorov's theorem, convergence in distribution in $(S_c(\Lambda_k,\V),\rho)$ or equivalently 
	$(\breve{S}_c(\Lambda_k,\V),\rho)$ is equivalent to the convergence of the finite-dimensional distributions and tightness. 
	The following result provides a general criterion \citep[see also Proposition 2.1 in][]{falconer:2002}.
		
	\begin{proposition}\label{l:tightness}
		Let $X_n, X$ be random elements in
		$(S_c(\Lambda_k,\V),\rho)$ and let $\breve X_n = \mathcal{J}(X_n)$ and 
		$\breve X = \mathcal{J}(X)$. Then $X_n\cid X$ in $(S_c(\Lambda_k,\V),\rho)$, 
		or equivalently $\breve X_n\cid \breve X$ in 
		$(\breve{S}_c(\Lambda_k,\V),\rho)$,  if and only if the following two conditions hold:
		\begin{enumerate}
			\item For all $m>0$ and $s_1,\ldots,s_m\in\bbR^d$, 
			\begin{align}\label{l:tightness-fdd}
			(\breve X_n(s_1), \ldots, \breve X_n(s_m)) \cid (\breve X(s_1), \ldots, \breve X(s_m))
			\end{align}
			\item\label{item:condition2} For every compact set $K\subset \mathbb{R}^d$, $\breve X_n$ 
			is strongly stochastically equicontinuous on $K$, namely, for all $\eta,\epsilon>0$, there exists 
			$\delta>0$ such that 
			\begin{align*}
				\limsup_{n\to \infty}\mathbb{P}\left(\sup_{\substack{\|s-t\|<\delta,\ s,t\in K}}\dist(\breve X_n(s) , \breve X_n(t))>\eta\right)<\epsilon.
			\end{align*}
		\end{enumerate}
	\end{proposition}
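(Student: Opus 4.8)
The plan is to work throughout with the representations $\breve X_n,\breve X$ in the complete separable metric space $(\breve S_c(\Lambda_k,\V),\rho)$ of Lemma~\ref{l:S_is_csms}, since $\mathcal{J}$ is an isometry and hence $X_n\cid X$ is equivalent to $\breve X_n\cid\breve X$. By Prokhorov's theorem, weak convergence on this Polish space is equivalent to convergence of the finite-dimensional distributions together with tightness of $\{\breve X_n\}$, so the whole argument reduces to relating tightness to the equicontinuity condition (ii). The key tool is the Arzel\`a--Ascoli characterization of relatively compact subsets of $\breve S_c(\Lambda_k,\V)$ in the local uniform topology metrized by $\rho$: a set $A$ is relatively compact if and only if (a) for every compact $K\subset\bbR^d$, $A$ is uniformly equicontinuous on $K$, and (b) for every $t\in\bbR^d$ the section $\{g(t):g\in A\}$ is relatively compact in $(\V,\dist)$. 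This holds precisely because $\V$ is a complete metric space.

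For the necessity direction, suppose $\breve X_n\cid\breve X$. Condition (i) is immediate from the continuous mapping theorem, since for fixed $s_1,\ldots,s_m$ the evaluation map $g\mapsto(g(s_1),\ldots,g(s_m))$ from $(\breve S_c(\Lambda_k,\V),\rho)$ into $\V^m$ is continuous. For condition (ii), a convergent sequence of laws on a Polish space is tight, so for each $\epsilon>0$ there is a compact $\mathcal{K}\subset\breve S_c(\Lambda_k,\V)$ with $\inf_n\mathbb{P}(\breve X_n\in\mathcal{K})\ge 1-\epsilon$. Applying Arzel\`a--Ascoli to $\mathcal{K}$, the functions in $\mathcal{K}$ are uniformly equicontinuous on each compact $K$, so for any $\eta>0$ there is $\delta>0$ with $\sup_{g\in\mathcal{K}}\sup_{\|s-t\|<\delta,\,s,t\in K}\dist(g(s),g(t))\le\eta$; intersecting with the event $\{\breve X_n\in\mathcal{K}\}$ yields strong stochastic equicontinuity.

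For the sufficiency direction, assume (i) and (ii); it suffices to prove tightness of $\{\breve X_n\}$, after which Prokhorov gives relative compactness, and any subsequential weak limit has, by (i), the same finite-dimensional distributions as $\breve X$. Since the evaluation maps over a countable dense set of points determine $\rho$ and hence generate the Borel $\sigma$-algebra of $\breve S_c(\Lambda_k,\V)$, the finite-dimensional distributions determine the law, so every subsequential limit equals the law of $\breve X$ and the full sequence converges. To establish tightness I would construct, for each $\epsilon>0$, a compact $\mathcal{K}_\epsilon$ via Arzel\`a--Ascoli. For the section condition (b), condition (i) gives $\breve X_n(t)\cid\breve X(t)$, so the one-dimensional marginals are tight and admit compact $C_{t,\epsilon}\subset\V$ with $\inf_n\mathbb{P}(\breve X_n(t)\in C_{t,\epsilon})$ near $1$, and it suffices to control this over a countable dense set of $t$'s. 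For the equicontinuity condition (a), condition (ii) directly supplies, for each compact $K$ and each modulus level, a controlling $\delta$ valid for all large $n$; the finitely many small $n$ are handled separately since each of their laws is individually tight. Combining these controls over an exhaustion $\{\|t\|\le N\}$ of $\bbR^d$ and summing the exceptional probabilities against a geometric budget in $N$ and in the modulus scale produces a single event of probability at least $1-\epsilon$, uniformly in $n$, on which $\breve X_n$ lies in an equicontinuous, pointwise relatively compact set whose $\rho$-closure is the desired $\mathcal{K}_\epsilon$.

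The main obstacle is the bookkeeping in the sufficiency direction: patching the pointwise tightness on a countable dense set together with the uniform modulus-of-continuity bound into a genuinely $\rho$-compact set, while respecting that $\V$ is merely a complete separable metric linear space rather than a Banach space. Here the translation invariance \eqref{e:translation} and the uniform continuity of scalar multiplication \eqref{eq:extra_condition} are what let the dense-set sections, propagated through equicontinuity, bound the sections at all $t$ and pass from control on a dense subset to control on all of $K$. Finally, the verification that $\mathcal{K}_\epsilon\subset\breve S_c(\Lambda_k,\V)$, i.e.\ that its closure consists of functions vanishing at $t_1,\ldots,t_{M_k}$, is automatic, since uniform limits preserve these point constraints.
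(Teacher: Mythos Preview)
Your proposal is correct and follows essentially the same route as the paper, which simply cites Theorem~14.5 and Proposition~14.6 of Kallenberg (1997); those results encapsulate precisely the Prokhorov plus Arzel\`a--Ascoli argument you sketch for $C(\bbR^d,\V)$-type path spaces. One minor remark: the propagation of pointwise tightness from a dense set to all of $K$ via equicontinuity only needs that $(\V,\dist)$ is a complete metric space (closed $\eta$-neighborhoods of compacts are again compact), so your invocation of \eqref{e:translation} and \eqref{eq:extra_condition} at that step is not actually needed.
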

	
	\begin{proof} Since $(\breve{S}_c(\Lambda_k,\V),\rho)$ is separable and complete, the result is a direct consequence 
	of Theorem 14.5 and Proposition 14.6 in \cite{kallenberg:1997}. 
	\end{proof}

The convergence of the finite-dimensional distributions \eqref{l:tightness-fdd} is often easier to establish, while the challenge 
is to prove tightness.  The following result provides a simple sufficient condition, which also implies the H\"older continuity of the
limit. It is a restatement of Corollary 14.9 in \cite{kallenberg:1997}.

\begin{proposition} \label{p:tightness-via-moments}  Suppose that $\breve X_n, n\in\N$ take values in $(\breve{S}_c(\Lambda_k,\V),\rho)$ 
and let the sequence of random variables $\{\breve X_n(s_0),\ n\in\N\}$ be tight, for some $s_0\in\R^d$. 

\begin{enumerate}
\item
If for some $p>0$ and $\alpha>0$,
and all $M>0$, there exist $C_M<\infty$, such that 
\begin{align*}
 \sup_{n\in\N} \E  [\dist(\breve X_n(s),\breve X_n(t))^p] \le C_M \|s-t\|^{d+\alpha},\ \ \mbox{ for all }\|s\|,\|t\| \le M,\ s,t\in\R^d,\
\end{align*}
then the laws of the processes $\{\breve X_n, n\in\N\}$ are tight in $(\breve{S}_c(\Lambda_k,\V),\rho)$.  \\

\item If, moreover, $\breve X_n \stackrel{d}{\to} \breve X$, in $(\breve{S}_c(\Lambda_k,\V),\rho)$, then with probability one, 
$\breve X$ has $\gamma$-H\"older continuous paths, for all $\gamma \in (0,\alpha/p)$.  That is,  there is an event $\Omega^*$, 
with $\P(\Omega^*) =1$, such that for all $M>0$, we have
\begin{align*}
\dist (\breve X(s,\omega), \breve X(t,\omega)) \le C_M(\omega) \|s-t\|^{\gamma},\ \ \mbox{ for all }\|s\|, \|t\| \le M,\ \omega\in \Omega^*,
\end{align*}
with some $C_M(\omega)<\infty$.
\end{enumerate}
\end{proposition}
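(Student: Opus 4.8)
The plan is to read Proposition~\ref{p:tightness-via-moments} as a Kolmogorov--Chentsov criterion on the path space $(\breve{S}_c(\Lambda_k,\V),\rho)$ and to reduce it to a dyadic chaining estimate. Lemma~\ref{l:S_is_csms} tells us this space is complete and separable, so by Prokhorov's theorem it suffices to produce, for each $\epsilon>0$, a $\rho$-compact set carrying mass at least $1-\epsilon$ uniformly in $n$. Since the metric $\rho$ of \eqref{eq:metric} is assembled from the uniform moduli over the balls $\{\|t\|\le m\}$, this reduces to tightness of the restrictions of $\breve X_n$ to each closed ball $K=\{\|t\|\le M\}$ in the space $C(K,\V)$ of continuous $\V$-valued maps under the uniform metric. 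I would therefore fix $M$, work on such a $K$, and invoke the stochastic Arzel\`a--Ascoli characterization: a family in $C(K,\V)$ is tight provided (a) the $\V$-valued marginals $\{\breve X_n(s)\}_n$ are tight for each $s\in K$, and (b) the family is strongly stochastically equicontinuous in the sense of condition~(\ref{item:condition2}) of Proposition~\ref{l:tightness}.

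The main work is establishing (b) from the single moment bound, uniformly in $n$. For this I would run the classical chaining on the dyadic grids $D_m=K\cap 2^{-m}\Z^d$: Markov's inequality and the hypothesis give, for neighbouring grid points $s,t$ at scale $2^{-m}$,
\[
\sup_n\P\big(\dist(\breve X_n(s),\breve X_n(t))>2^{-m\gamma}\big)\le C_M\,2^{-m(d+\alpha)}\,2^{mp\gamma},
\]
and since there are of order $2^{md}$ such pairs, a union bound controls the maximal neighbouring increment at level $m$ by $C\,2^{-m(\alpha-p\gamma)}$. Choosing $\gamma<\alpha/p$ makes the exponent positive, so these probabilities are summable in $m$; telescoping along the binary expansion upgrades this to $\sup_{\|s-t\|<\delta}\dist(\breve X_n(s),\breve X_n(t))\le C\,\delta^{\gamma}$ on an event of probability $\ge 1-\epsilon$ for all large $n$, which is precisely (b). Only the triangle inequality for $\dist$ is used, so the absence of a norm on $\V$ is harmless here. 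The step I expect to require the most care, and the one where the infinite-dimensional state space genuinely bites, is (a): the moment bound controls increments \emph{only} through the metric $\dist$, and in infinite dimensions this does not by itself yield relative compactness in $\V$, so marginal tightness at every location is an essential extra ingredient beyond the one-point hypothesis at $s_0$ (in the applications it is supplied by the tightness of the finite-dimensional distributions). Feeding (a) and (b) into the stochastic Arzel\`a--Ascoli theorem, together with the completeness from Lemma~\ref{l:S_is_csms}, yields the tightness asserted in part~(i), which is the content of Corollary~14.9 of \cite{kallenberg:1997}.

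For part~(ii) I would pass the chaining bound to the limit. Assuming $\breve X_n\cid\breve X$, I would invoke the Skorokhod representation on $(\breve{S}_c(\Lambda_k,\V),\rho)$ to realize $\breve X_n\to\breve X$ almost surely in the local uniform topology on a common probability space. The $\gamma$-H\"older seminorm $\sup_{s\ne t,\,\|s\|,\|t\|\le M}\dist(\breve X_n(s),\breve X_n(t))/\|s-t\|^{\gamma}$ is bounded in probability uniformly in $n$ by the chaining estimate and is lower semicontinuous under local uniform convergence; Fatou's lemma (or the portmanteau inequality) then transfers the bound to $\breve X$, producing a finite random H\"older constant $C_M(\omega)$ on a full-measure event. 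Intersecting countably many such events over rational $\gamma\in(0,\alpha/p)$ and integer $M$ gives a single $\Omega^*$ with $\P(\Omega^*)=1$ on which the claimed almost sure $\gamma$-H\"older continuity holds simultaneously for all admissible $\gamma$ and $M$.
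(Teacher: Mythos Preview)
The paper supplies no argument; the single sentence ``It is a restatement of Corollary~14.9 in \cite{kallenberg:1997}'' is the entire justification offered. Your dyadic-chaining outline for the stochastic-equicontinuity condition~(\ref{item:condition2}) of Proposition~\ref{l:tightness} is the standard mechanism behind Kolmogorov--Chentsov results, and your treatment of part~(ii) via Skorokhod representation plus lower semicontinuity of the H\"older seminorm is correct. So your sketch is both more detailed and more careful than what the paper provides.

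Your unease about condition~(a) is more than warranted: it locates a defect in the \emph{statement} itself. For an infinite-dimensional $\V$, one-point tightness together with the moment bound does not yield tightness in $C(K,\V)$. Take $\V=\ell^2$, $d=1$, and the deterministic paths $f_n(t)=t\,e_n$: they satisfy $f_n(0)=0$ and $\|f_n(s)-f_n(t)\|^p=|s-t|^p\le C_M|s-t|^{1+\alpha}$ for any $0<\alpha\le p-1$, yet $\{f_n(1)\}=\{e_n\}$ has no convergent subsequence, so $\{f_n\}$ is not tight. The Arzel\`a--Ascoli step genuinely needs relative compactness of the marginals at a dense set of points, and $\epsilon$-enlargements of compacta are not compact in infinite dimensions. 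The proposition as written therefore requires the extra hypothesis you flag. This does not damage the paper's applications (e.g.\ the proof of Proposition~\ref{p:stationary-tangent-example}), where finite-dimensional convergence is verified separately and supplies the missing marginal tightness; your chaining then delivers equicontinuity, and Proposition~\ref{l:tightness} closes the argument. Your instinct to single this step out as the delicate one was exactly right.
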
 

\begin{remark} By taking $\breve X_n \equiv \breve X$ in Proposition \ref{p:tightness-via-moments}, we recover an 
extension of the well-known Kolmogorov-Chentsov path-regularity criterion.  See also Theorem 2.23 in \cite{kallenberg:1997}.
\end{remark}

 \begin{remark}\label{rem:tightness_at-a-point} If $k\ge 0$, recall that by \eqref{e:tilde_f2}, we have $\breve X_n(t_i) = 0,\ i=1,\cdots,M_k$. Thus,
in Proposition \ref{p:tightness-via-moments}, one can trivially take $s_0=t_1$ and the required tightness of the random variables $\{\breve X_n(s_0),\ n\in\N\}$ is immediate.  
This condition is non-trivial only when $k=-1$ (the case of stationary processes) where by convention ${\cal J}$ is the identity and $\breve{S}_c(\Lambda_{-1},\V) \equiv {S}_c(\Lambda_{-1},\V)$. 
\end{remark}

The above moment-based criterion is used in Section \ref{sec:related-work-and-examples} to furnish examples of tangent processes.

\subsection{Scaling actions on ${\mathbb V}$.}

 When considering limit theorems for ${\mathbb V}$-valued processes, one may need to rescale the process using an operator different from 
 the usual scalar multiplication. This is particularly relevant for the case where ${\mathbb V}$ is an infinite dimensional space of functions.
 The next definition introduces the natural conditions that such rescaling operators should possess.  It is similar to the one considered 
 in \cite{hult:lindskog:2006} in their abstract treatment of regular variation.

	\begin{definition}\label{def:scaling_action} 
	A family of (possibly non-linear) operators $T_a: \V \to \V$, 
	indexed by the multiplicative group $\mathbb R_+:= (0,\infty)$ is said to be a 
	scaling action on $\V$ if the following conditions hold:
		\begin{enumerate}
		\item for all $a_1>0$ and $a_2>0$, we have $T_{a_1} \circ T_{a_2} = T_{a_1 a_2}$,
		\vskip.2cm
		\item $T_1$ is the identity, and $T_a(0) = 0$ for all $a>0$, 
		\vskip.2cm
		\item $\{T_a\}$ is continuous, i.e., $\dist(T_{a_n}(x_n), T_{a} (x)) \to 0$, whenever $a_n\to a>0$ 
		and $\dist(x_n,x)\to 0$, 
		\vskip.2cm
		\item $\{T_a\}$ is radially monotone, i.e., $\dist(T_{a_1}(x), 0) < \dist(T_{a_2}(x),0)$, for all 
		$0\not=x\in \V$ and $0<a_1<a_2$, and
		\vskip.2cm
		\item $\dist(T_a(x), 0)\to 0$ as $a\downarrow 0$, for all $x\in \V$.
		\end{enumerate}
	\end{definition} 
	
	The above definition readily implies that $T_a, a>0$ are bijections and in particular
	$T_a(x)\not =0$ for all $x\not=0$.

	\begin{remark} \label{rem:property-v-continuity}
	Property (v) in Definition \ref{def:scaling_action} can be replaced by the equivalent 
	condition of	
	\begin{align}\label{e:def:scaling_action-v}
	\bigcup_{n=1}^\infty T_{n}(B_r) = \V \hbox{ for all $r\in\bbR_+$},
	\end{align}
	where $B_r := \{x\in \V\, :\, \dist(x,0)<r\}$ is the open ball centered at the origin with radius $r$.
	To see the equivalence, first assume that (\ref{e:def:scaling_action-v}) holds and, by (iv),  
	verifying (v) then amounts to showing that $\dist(T_{1/n}(x),0) \to 0$ for all $x\in \V$. 
	For every $r:=\epsilon>0$, however, \eqref{e:def:scaling_action-v} entails that $x\in T_{n}(B_\epsilon)$ for all 
	sufficiently large $n$. By (i) and (ii), this implies that $T_{1/n}(x)\in B_\epsilon$, or
	$\dist(T_{1/n}(x),0)<\epsilon$, for all sufficiently large $n$.  
	The converse argument showing (v) implies 
	(\ref{e:def:scaling_action-v}) is similar.
	\end{remark}
	
	{\clb \begin{remark}
	 Definition \ref{def:scaling_action} does not require the space $\mathbb V$ to be linear.  The linearity of $\mathbb V$, however, 
	 is needed in the definition of the path-spaces $S(\Lambda_k,\mathbb V)$ and $S_c(\Lambda_k,\mathbb V)$.
	 \end{remark}
	}
	
	Many common examples of scaling operations readily satisfy the above conditions.  
	For instance, if $\V$ is a linear normed space, a natural scaling action is scalar multiplication itself:
	\begin{align} \label{e:scalar_scaling}
	T_a(x) := a \cdot x.
	\end{align}
	More generally, the scalar multiplication is a scaling action if the underlying metric 
	is homogeneous, e.g., $\dist(a\cdot x, a \cdot y) = a^\gamma \dist(x,y),\ \gamma>0$.  
	
	Observe that since $T_1 = \I$ by property (ii), where $\I$ stands for the identity operator, 
	we have $T_a^{-1} = T_{1/a},\ a>0$ {\clb by property (i)}. Hence the 
	mappings $T_a$ are homeomorphisms and map open (closed) sets to open (closed) sets.  The radial 
	monotonicity property {\clb (iv)} implies that $T_{a_2}^{-1}(B_r) \subset T_{a_1}^{-1}(B_r)$, for all $0<a_1<a_2$.  This,
	since $T_{a}^{-1} = T_{1/a}$, entails
	\begin{align}\label{e:action-nested-balls}
	 T_{a_1} (B_r) \subset T_{a_2} (B_r),\quad \mbox{ for all } 0<a_1<a_2.
	 \end{align}
	
	Note that the metric $\dist$ and the action need not be ``compatible'', 
	that is, $\dist(T_a (x), 0)$ is in general not equal to $a \cdot \dist(x,0)$ and therefore, 
	$T_a (B_r)$ is in general not $B_{ar}$.

	\begin{remark}\label{rem:operator-scaling-actions}
	In the case where $\V$ is a Hilbert space and ${\cal H}:\V\to \V$ is a fixed bounded linear operator, one can consider the 
	action $T_c(x):= c^{\cal H} x,\ x\in \V$ for $c>0$, where $c^{\cal H}:= e^{\log(c){\cal H}}$ (see \eqref{e:exp}).  
	Lemma \ref{l:operator-scaling} shows that $c^{\cal H}$ is a scaling action in the sense of Definition \ref{def:scaling_action},
	under certain natural conditions on the operator ${\cal H}$.
	
	Limit theorems under linear operator scaling on $\V:=\R^m$ have been studied extensively in the literature 
	\citep[see e.g.][and the references therein and thereof]{meerschaert:scheffler:2001book}. 
	Such actions for a general separable Hilbert space $\V$ will be considered in Section \ref{s:operator-ss}.
	\end{remark}
		
	Given a scaling action $\{T_a,\ a\in \R_+\}$ on ${\mathbb V}$, it is natural to consider its 
	coordinate-wise extension 
	on the space of ${\mathbb V}$-valued functions ${S}( \Lambda_k, \V)$. Namely, the action 
	$\wt T_a : {S}_c( \Lambda_k, \V) \to {S}_c( \Lambda_k, \V)$ is defined such that for all 
	$f \in {S}_c( \Lambda_k, \V)$, and any $\lambda \in \Lambda_k$
	\begin{align}\label{e:T-a-widetilde}
	\wt T_a (f)(\lambda) = T_a(f(\lambda)).
	\end{align}
	
	The following result shows that the coordinate-wise action is in fact a scaling action on $S_c( \Lambda_k, \V)$.
	Its proof is given in Section~\ref{sec:proofs_sec:section2}, below.  
	
	\begin{lemma}\label{l:T-a-widetilde}
	For any scaling action $\{T_a,\ a\in \bbR_+\}$ on $(\V,\dist)$, the coordinate-wise action $\{\widetilde{T}_a ,\ a\in \bbR_+\}$ 
	in \eqref{e:T-a-widetilde} is a scaling action on the linear space ${S}_c( \Lambda_k, \V)$ equipped with the 
	metric $\rho$ in \eqref{eq:metric}.
       \end{lemma}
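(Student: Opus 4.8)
\emph{Proof plan.} Since $\mathcal{J}$ is an isometry between $(S_c(\Lambda_k,\V),\rho)$ and $(\breve S_c(\Lambda_k,\V),\rho)$ and $\rho$ depends only on representations, the plan is to verify the five axioms of Definition \ref{def:scaling_action} for the map induced on $(\breve S_c(\Lambda_k,\V),\rho)$. First I would observe that the representation of $\wt T_a f$ is the pointwise composition $T_a\circ\breve f$: indeed $\breve{(\wt T_a f)}(t)=(\wt T_a f)(\lambda_t)=T_a(f(\lambda_t))=T_a(\breve f(t))$ for all $t$, using $\lambda_t\in\Lambda_k$ and \eqref{e:tilde_f1}. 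This map sends $\breve S_c(\Lambda_k,\V)$ into itself: $t\mapsto T_a(\breve f(t))$ is continuous (composition of continuous maps, using property (iii) of $T_a$ with a fixed index) and vanishes at each $t_i$ because $\breve f(t_i)=0$ by \eqref{e:tilde_f2} and $T_a(0)=0$. Axioms (i) and (ii) for $\wt T_a$ are then immediate from those for $T_a$ applied at each $t$: $T_{a_1}(T_{a_2}(\breve f(t)))=T_{a_1a_2}(\breve f(t))$, $T_1(\breve f(t))=\breve f(t)$, and $\wt T_a$ fixes the zero function since $T_a(0)=0$.

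The first genuinely topological step is the joint continuity (iii): given $a_n\to a>0$ and $\rho(f_n,f)\to 0$, i.e.\ $\breve f_n\to\breve f$ uniformly on every ball $\{\|t\|\le N\}$, I must show $\sup_{\|t\|\le N}\dist(T_{a_n}(\breve f_n(t)),T_a(\breve f(t)))\to0$. I would argue by contradiction and compactness: if this sup stayed above some $\epsilon>0$ along a subsequence, I could pick witnessing points $t_{n_j}$ in the compact ball $\{\|t\|\le N\}$, extract $t_{n_j}\to t^\ast$, and use uniform convergence together with continuity of $\breve f$ to get $\breve f_{n_j}(t_{n_j})\to\breve f(t^\ast)$ and $\breve f(t_{n_j})\to\breve f(t^\ast)$. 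Property (iii) of $T_a$ then forces both $T_{a_{n_j}}(\breve f_{n_j}(t_{n_j}))$ and $T_a(\breve f(t_{n_j}))$ to converge to the common limit $T_a(\breve f(t^\ast))$, contradicting the $\epsilon$-separation. Local uniform convergence of representations is exactly $\rho$-convergence, giving (iii).

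For radial monotonicity (iv), fix $f\neq0$ and $0<a_1<a_2$ and set $\phi_a(n):=\sup_{\|t\|\le n}\dist(T_a(\breve f(t)),0)$, so that $\rho(\wt T_a f,0)=\sum_{n\ge1}2^{-n}(1-e^{-\phi_a(n)})$. As $1-e^{-x}$ is strictly increasing, it suffices to show $\phi_{a_1}(n)\le\phi_{a_2}(n)$ for all $n$, strictly for at least one $n$. The inequality is pointwise from property (iv) of $T_a$ (with $T_a(0)=0$ handling zeros of $\breve f$). For strictness I would fix $t^\ast$ with $\breve f(t^\ast)\neq0$ and any $n_0\ge\|t^\ast\|$; on the compact ball $\{\|t\|\le n_0\}$ the continuous map $t\mapsto\dist(T_{a_1}(\breve f(t)),0)$ attains $\phi_{a_1}(n_0)$ at some $t_1^\ast$, which must satisfy $\breve f(t_1^\ast)\neq0$ since $\phi_{a_1}(n_0)\ge\dist(T_{a_1}(\breve f(t^\ast)),0)>0$ and $T_{a_1}$ fixes only $0$; property (iv) then gives $\phi_{a_1}(n_0)=\dist(T_{a_1}(\breve f(t_1^\ast)),0)<\dist(T_{a_2}(\breve f(t_1^\ast)),0)\le\phi_{a_2}(n_0)$, whence $\rho(\wt T_{a_1}f,0)<\rho(\wt T_{a_2}f,0)$.

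Finally, for (v) I would use that $K_n:=\breve f(\{\|t\|\le n\})$ is a compact subset of $\V$ (continuous image of a compact set), so the claim reduces to $\sup_{x\in K_n}\dist(T_a(x),0)\to0$ as $a\downarrow0$. For each fixed $x$ the map $a\mapsto\dist(T_a(x),0)$ decreases to $0$ as $a\downarrow0$ by properties (iv) and (v), and $x\mapsto\dist(T_a(x),0)$ is continuous; hence along any sequence $a_m\downarrow0$ these continuous functions decrease pointwise to $0$ on the compact $K_n$, so Dini's theorem gives uniform convergence, which with monotonicity in $a$ yields $\phi_a(n)\to0$. Dominated convergence in $\sum_n2^{-n}(1-e^{-\phi_a(n)})$ (each term bounded by $2^{-n}$) then gives $\rho(\wt T_a f,0)\to0$. \textbf{The main obstacle} is precisely this passage from the pointwise axioms of $T_a$ to the uniform-over-compact-ball statements demanded by $\rho$ in (iii) and (v); the bridge throughout is that closed Euclidean balls are compact and $\breve f$ is continuous, so $\breve f$ carries them to compact subsets of $\V$ on which the compactness/subsequence argument and Dini's theorem apply.
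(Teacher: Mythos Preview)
Your proposal is correct and follows essentially the same route as the paper: work through the isometry $\mathcal J$ on $\breve S_c(\Lambda_k,\V)$, note that (i) and (ii) are pointwise, handle (iii) and (iv) via compactness of the closed balls $\{\|t\|\le n\}$ in $\R^d$, and use radial monotonicity together with property (v) of $T_a$ for the last axiom. Your argument for (iii) is in fact an inline version of the paper's Lemma~\ref{lem:folklore-uc}, which the paper applies twice to the same effect; your argument for (iv) is identical to the paper's.

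The one genuine tactical difference is in (v). The paper argues by contradiction: assuming $\sup_{\|t\|\le n}\dist(T_{1/m}(\breve f(t)),0)\ge\epsilon_0$ along a subsequence, it extracts $t_{m'}\to t_*$, uses radial monotonicity to bound $\dist(T_{1/m'}(\breve f(t_{m'})),0)\le\dist(T_\delta(\breve f(t_{m'})),0)\to\dist(T_\delta(\breve f(t_*)),0)$ for any fixed $\delta>0$, and then lets $\delta\downarrow 0$. You instead observe that $K_n=\breve f(\{\|t\|\le n\})$ is compact and apply Dini's theorem to the monotone-decreasing family $x\mapsto\dist(T_a(x),0)$ on $K_n$. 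Both are valid; your route is slightly cleaner once Dini is available, while the paper's route is more self-contained.
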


In view of Lemma \ref{l:T-a-widetilde}, from now on we will use the same notation 
$\{T_a\}$ for the scaling action on ${\mathbb V}$ 
and its coordinate-wise extensions on $S_c( \Lambda_k, \V)$.

\section{Tangent fields and their properties.} \label{s:tangent_fields}
	
	Throughout this section, suppose that $X=\{X(\lambda)$, $\lambda\in\Lambda_k\}$ is a random 
	element in $(S_c(\Lambda_k,\V),\rho)$. {\clb That is, for some probability space $(\Omega,{\cal F},\mathbb P)$, we have
	that $X:\Omega\to S_c(\Lambda_k,\V)$ is an ${\cal F}|{\cal B}_{(S_c(\Lambda_k,\V),\rho)}$-measurable map, where ${\cal B}_{(S_c(\Lambda_k,\V),\rho)}$ stands for 
	the Borel $\sigma$-field on $S_c(\Lambda_k)$.  }	
	For $s\in\bbR^d, \lambda\in\Lambda_k$ and $r>0$, define
	\begin{align}\label{e:X_s_lambda}
	X(s,r\cdot \lambda)=X(s + r\cdot \lambda),
	\end{align}
	where $s+r\cdot\lambda$ is as defined in \eqref{e:sca_trans}.
	With some abuse of notation, an example of $X(\lambda)$ is $X(\lambda) := \int_{\bbR^d} X(u)\lambda(du)$
	for some random field $X=\{X(s),\ s\in\R^d\}$ with continuous sample paths.
	{\clb (Note that if $\breve X$ is the representation of $X$
	as defined in Section \ref{s:2.1}, then $X(s,r\cdot \lambda) = \int_{\R^d} \breve X(s+ru) \lambda(du) =   \int_{\bbR^d} X(s+ru)\lambda(du)$, 
	since $\lambda\in \Lambda_k$ annihilates all polynomials of degree up to $k$.)
	
	One can interpret $X(s,r\cdot \lambda)$ as a generalized $(1+k)$-th order 
	increment of $X$ at location $s$ and scale $r>0$, relative to $\lambda$. Indeed, consider for example the case $d=1$ and let $\lambda(du) 
	= \sum_{j=0}^{k+1} {k+1 \choose j} (-1)^j \delta_{\{j\}}(du)$. Then, for $s\in \R$ and $r>0$,
	$$
	X(s,r\cdot \lambda)  =  \sum_{j=0}^{k+1} {k+1 \choose j} (-1)^j X(s+r j) \equiv \Delta_r^{k+1}X(s),
	$$
	where $\Delta_r X(s):= X(s+r) - X(s)$ and $\Delta_r^{k+1}X(s):= \Delta_r ( \Delta_r^k X(\cdot) )(s),\ k=0,1,\cdots$ is the usual $(1+k)$-th order regular difference operator.
	
	Thus, considering the process $X$ as a function of $(s+r\cdot \lambda)$, for all (any) signed measures $\lambda\in \Lambda_k$ effectively amounts to zooming in on its 
	$(1+k)$th order increments at location $s$ and scale $r>0$.  By letting $r\downarrow 0$, one can examine the local behavior of $X$ and arrive at (generalized) tangent fields as detailed next.
         This indeed extends the setting of \cite{falconer:2002} who focused on $\V=\bbR, k=0$, and studied the increment 
	 process $X(s+rt) - X(s) \equiv X(s+r\cdot t\lambda),\ t\in \R$.}
	 		
	One of the goals of the paper is to study the asymptotic behavior of the 
	generalized increments $X(s,r\cdot\lambda)$ as $r\downarrow 0$
	for fixed $s\in\bbR^d$, where $r\cdot\lambda$ is as in (\ref{e:sca_trans}).
	The normalization of the asymptotics will be facilitated by scaling actions $T_s=\{T_{s,a},\ a>0\}$
	as described by the next definition.  {\clb In this context, we use that the process
	$\left\{T_{s,c(s,r)}(X(s, r\cdot\lambda)),\ \lambda\in\Lambda_k \right\}$ is a random element in $S_c(\Lambda_k,\V)$ for all $s, r$,
        which is easy to verify by \eqref{e:X_s_lambda} and the continuity of the scaling action. This remark applies to similar situations below and will 
        not be repeated.}

\begin{definition}\label{def:tangent_field} 
{\clb Let $s\in\bbR^d$. A random process $Y_s = \{Y_s(\lambda),\ \lambda\in\Lambda_k\}\in
S_c(\Lambda_k,\V)$ is said to be a $k$-th order tangent field (or tangent process) to $X$ at $s$ based on the scaling action $T_s = \{T_{s,a}, a > 0\}$, if 
it is non-zero and} for some normalizing function $c(s,r)>0$, we have
	\begin{align}
	\label{assumption}
	\left\{T_{s,c(s,r)}(X(s, r\cdot\lambda)),\ \lambda\in\Lambda_k \right\}\Cid\{ Y_s(\lambda),\ \lambda\in\Lambda_k\},
	\ \ \mbox{ as } r\downarrow 0,
	\end{align} 
	where the convergence in distribution takes place in $(S_c(\Lambda_k, \V),\rho)$.
\end{definition}

{\clb The role of the function $c(s,r)$ is to provide flexibility in the choice of normalization without having to change the scaling action.  For example, 
in the special setting $\V=\bbR, k=0$ with the simple scalar scaling action $T_{s,a}(x) \equiv T_a(x)=a\cdot x$, and 
$\lambda$ replaced by $\lambda_t(dx) := \delta_{\{t\}}(dx)-\delta_{\{0\}}(dx)$,  Relation \eqref{assumption} implies 
$$
\{ c(s,r) (X(s + r t) - X(s)),\ t\in\R\} \Cid \{Y_s(\lambda_t),\ t\in\R\},\ \ \mbox{ as } r\downarrow 0.
$$
This recovers the classic setting, of tangent processes, where $c(s,r)$ plays the role of a normalizing constant.} 
In this case, \cite{falconer:2002} showed that tangent fields must be 
self-similar and have stationary increments. In the following two subsections, 
we extend Falconer's results to the general setting of this paper. 

\subsection{Tangent fields are self-similar.}
%
 %
Self-similarity is a distributional invariance phenomenon, which is ubiquitous in
the study of stochastic process limit theory.  Recall that a real-valued stochastic 
process $\xi = \{\xi(t),\ t\in \R^d\}$ is said to be self-similar with 
self-similarity exponent {\clb $H >0$}, if for all $r>0$, we have
$\{\xi(r t),\ t\in \R^d\} \stackrel{fdd}{=} \{ r^{H} \xi(t),\ t\in\R^d\},$
{\clb where $\stackrel{fdd}{=}$ means equality of all finite-dimensional distributions.}
For ${\mathbb V}$-valued processes, we have the following natural extension of the notion of self-similarity.

\begin{definition} \label{def:self-similar}
An ${\mathbb V}$-valued stochastic process $\xi=\{\xi(\lambda),\lambda\in\Lambda_k\}$ is said to be
self-similar relative to the scaling action $\{T_a\}$ if for some $\alpha\in \R$, we have 
\begin{align}\label{e:def:self-similar}
\{ \xi(r\cdot\lambda),\ \lambda\in\Lambda_k\} {\clb \stackrel{fdd}{=}}
\{ T_{r^\alpha} (\xi (\lambda)),\ \lambda\in\Lambda_k\},\quad \mbox{ for all $r>0$.}
\end{align}
\end{definition}

{\clb \begin{remark}\label{rem:measurability} The above definition views $\xi$ in the wide sense as a measurable map $\xi:\Omega \to \V^{\Lambda_k}$,
where $\V^{\Lambda_k}$ is equipped with the product $\sigma$-field ${\cal B}_{\V^{\Lambda_k}}$ generated by the class ${\cal C}$ all finite-dimensional cylinder sets 
$C:=\{ x \in \V^{\Lambda_k}\, :\, x(\lambda_i) \in B_i,\ i=1,\cdots,m \},\ B_i\in {\cal B}_{(\mathbb V,\dist)},\ \lambda_i \in \Lambda_k, i=1,\cdots,m,\ m\in\N$.  When the paths of
$\xi$ are continuous, i.e., $\xi$ is a random element in $(S_c(\Lambda_k,\mathbb V),\rho)$, it can be shown 
that the equality of the finite-dimensional distributions ``$\stackrel{fdd}{=}$'' in 
\eqref{e:def:self-similar} is equivalent to equality in distribution ``$\stackrel{d}{=}$'' between $S_c(\Lambda_k,\mathbb V)$-valued 
random elements.  Indeed, firstly, all finite-dimensional projections $\pi_{\lambda_1,\cdots,\lambda_m}: S_c(\Lambda_k,\V) \to \V^{m},\ m\in\N$ for $\lambda_i\in \Lambda_k,\ i=1,\cdots,m$ 
are continuous and hence measurable.  This shows that ${\cal B}_{\V^{\Lambda_k}}\subset {\cal B}_{(S_c(\Lambda_k,\mathbb V),\rho)}$ and hence 
``$\stackrel{d}{=}$''  implies ``$\stackrel{fdd}{=}$''. On the other hand, the fact that $(S_c(\Lambda_k,\V),\rho)$ is second-countable, entails that ${\cal B}_{(S_c(\Lambda_k,\V),\rho)}$ is generated 
by the class of all closed balls, for example.  Since each such ball is a countable intersection of cylinder sets \cite[e.g., as in the proof of Proposition 12.2.2 in ][]{dudley:1989} it follows
that the Borel $\sigma$-field ${\cal B}_{(S_c(\Lambda_k,\V),\rho)}$ is determined by the $\pi$-system ${\cal C}$ of all finite-dimensional cylinder sets. Thus, appealing to the 
$\pi$-$\lambda$ Theorem, we see that  ``$\stackrel{fdd}{=}$''  implies also ``$\stackrel{d}{=}$''. 
 \end{remark}
}

The seminal work of \cite{lamperti:1962} shows that  all non-trivial large-scale limits of stochastically 
continuous processes are self-similar.  From this perspective, it is expected that tangent fields (as small-scale limits) 
be self-similar. \cite{falconer:2002,falconer:2003} has shown that this is indeed the 
case for $k=0$.  The next result addresses the general case of $k$-th order tangent fields of 
$\V$-valued processes.
 
\begin{theorem}\label{pro:self_similar}

	Assume that, for some location $s\in\bbR^d$, 
	$\{Y_s(\lambda), \lambda\in\Lambda_k\}$ is a $k$-th order tangent field to $X$ at $s$ with respect to the scaling
	action $T_s$. That is, Relation \eqref{assumption} holds for some $c(s,r)$. 
	
	\begin{itemize} 
	\item [(i)] Then, for all $r>0$, we have
	\begin{align}\label{e:pro:self_similar}
	\{Y_s(r\cdot\lambda),\lambda\in \Lambda_k \}\overset{d}{=}\{T_{r^{\alpha(s)}}Y_s(\lambda),
	\lambda\in \Lambda_k \},
	\end{align}
	where $\alpha(s)>0$ is some positive constant. We have, moreover, that
	\begin{align}\label{e:c(z,r)_rep}
	c(s, r) = r^{-\alpha(s)}\ell_s(r),
	\end{align}
	where $\ell_s(r)$ is a slowly varying function at $0$, i.e., for every fixed $h>0$, $\ell_s(h r)/\ell_s(r) \to 1,\ r\downarrow 0$. 
	
        \item [(ii)]	{\clb The tangent process is unique up to rescaling.  That is, if \eqref{assumption} also holds with $c(s,r)$ and $Y_s=\{Y_s(\lambda)\}$ 
        replaced by $\wt c(s,r)$ and $\wt Y_s = \{\wt Y_s(\lambda)\}$, respectively, then we have
        \begin{equation}\label{e:tangent-uniqueness}
        \lim_{r\downarrow 0} \frac{\wt c(s,r)}{c(s,r)} = a \in (0,\infty)\quad \mbox{ and }\quad \wt Y_s \stackrel{d}{=} T_{s,a} (Y_s).
        \end{equation}}
        \end{itemize}
\end{theorem}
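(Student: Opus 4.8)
The plan is to reduce both parts of the theorem to a single \emph{convergence-of-types} principle for the scaling action $T_s$ (lifted to $S_c(\Lambda_k,\V)$ via Lemma~\ref{l:T-a-widetilde}), and then to extract the power law $\gamma_0(h)=h^{\alpha(s)}$ from a multiplicative functional equation. For part (i) I would first exploit the semigroup property $T_{s,a_1}\circ T_{s,a_2}=T_{s,a_1a_2}$ together with the identity $r\cdot(h\cdot\lambda)=(rh)\cdot\lambda$. Fix $h>0$ and set
\[
Z_r(\lambda):=T_{s,c(s,rh)}\big(X(s,(rh)\cdot\lambda)\big),\qquad W_r(\lambda):=T_{s,c(s,r)}\big(X(s,(rh)\cdot\lambda)\big).
\]
Replacing $r$ by $rh$ in \eqref{assumption} gives $Z_r\Cid Y_s$ as $r\downarrow 0$, while reparametrizing $\lambda\mapsto h\cdot\lambda$ (a continuous self-map of $S_c(\Lambda_k,\V)$) in \eqref{assumption} gives $W_r\Cid\{Y_s(h\cdot\lambda)\}$. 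Since $W_r=T_{s,\gamma(r)}(Z_r)$ with $\gamma(r):=c(s,r)/c(s,rh)$ (the action applied coordinatewise), I am exactly in the situation where $Z_r\Cid Y_s$ is non-degenerate and $T_{s,\gamma(r)}(Z_r)$ also converges.

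The crux is the following lemma, which I would establish first: if $Z_r\Cid Z$ with $Z$ non-degenerate and $T_{s,\gamma(r)}(Z_r)\Cid W$, then $\gamma(r)\to\gamma_0\in(0,\infty)$ and $W\stackrel{d}{=}T_{s,\gamma_0}(Z)$. Relative compactness of $\{\gamma(r)\}$ in $(0,\infty)$ follows from the scaling-action axioms on $S_c(\Lambda_k,\V)$: if $\gamma(r_n)\to 0$, then property (v), radial monotonicity (iv), a Dini argument and tightness of $\{Z_{r_n}\}$ force $T_{s,\gamma(r_n)}(Z_{r_n})\to 0$ in probability, contradicting non-degeneracy of $W$; writing $Z_{r_n}=T_{s,1/\gamma(r_n)}\big(T_{s,\gamma(r_n)}(Z_{r_n})\big)$ rules out $\gamma(r_n)\to\infty$ in the same way. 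For uniqueness of the subsequential limit, joint continuity of $(a,x)\mapsto T_{s,a}(x)$ (property (iii)) and an extended continuous-mapping argument (via Skorokhod coupling) give $W\stackrel{d}{=}T_{s,\gamma_0}(Z)$ along any subsequence $\gamma(r_n)\to\gamma_0$; if two limits $\gamma_0\neq\gamma_1$ occurred, then $Z\stackrel{d}{=}T_{s,\gamma_1/\gamma_0}(Z)$, and radial monotonicity (iv) forces $\dist(Z(\lambda),0)$ to be stochastically strictly larger (or smaller) than itself unless $Z(\lambda)=0$ a.s.\ for every $\lambda$, which is impossible. This rigidity from radial monotonicity is the main obstacle, since it must be pushed through all finite-dimensional projections simultaneously while respecting the nonlinearity of $T_s$.

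Granting the lemma, I obtain $\{Y_s(h\cdot\lambda)\}\stackrel{d}{=}\{T_{s,\gamma_0(h)}(Y_s(\lambda))\}$ and $c(s,r)/c(s,rh)\to\gamma_0(h)$ for every $h>0$. Composing the self-similarity relation for $h_1$ and $h_2$ and invoking the uniqueness assertion of the lemma yields $\gamma_0(h_1h_2)=\gamma_0(h_1)\gamma_0(h_2)$; being a measurable solution of the multiplicative Cauchy equation (a pointwise limit of ratios of the chosen normalization), $\gamma_0(h)=h^{\alpha(s)}$ for some $\alpha(s)\in\R$. Equivalently $c(s,\cdot)$ is regularly varying at $0$ with index $-\alpha(s)$, giving \eqref{e:c(z,r)_rep} with $\ell_s(r):=r^{\alpha(s)}c(s,r)$, since $\ell_s(rh)/\ell_s(r)=h^{\alpha(s)}\,c(s,rh)/c(s,r)\to 1$. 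To pin the sign $\alpha(s)>0$, I let $h\downarrow 0$ in $Y_s(h\cdot\lambda)\stackrel{d}{=}T_{s,h^{\alpha(s)}}(Y_s(\lambda))$: the left side tends to $0$ because $\breve Y_s$ is continuous and $\lambda\in\Lambda_k$ ($k\ge 0$) annihilates constants, whereas the right side remains constant if $\alpha(s)=0$ and blows up by radial monotonicity if $\alpha(s)<0$; only $\alpha(s)>0$ is consistent, which also yields \eqref{e:pro:self_similar}.

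Finally, part (ii) is the same lemma applied once more. With two normalizations $(c,Y_s)$ and $(\wt c,\wt Y_s)$, writing $T_{s,\wt c(s,r)}\big(X(s,r\cdot\lambda)\big)=T_{s,\wt c(s,r)/c(s,r)}\big(T_{s,c(s,r)}(X(s,r\cdot\lambda))\big)$ places me in the convergence-of-types setting with $Z_r:=T_{s,c(s,r)}(X(s,r\cdot\lambda))\Cid Y_s$ and scaling ratio $\delta(r):=\wt c(s,r)/c(s,r)$. The lemma then gives $\delta(r)\to a\in(0,\infty)$ and $\wt Y_s\stackrel{d}{=}T_{s,a}(Y_s)$, which is exactly \eqref{e:tangent-uniqueness}.
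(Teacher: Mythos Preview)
Your approach is essentially the paper's: the convergence-of-types lemma you describe is exactly Lemma~\ref{l:Slutsky} in the Appendix (applied on $\mathbb X=S_c(\Lambda_k,\V)$ via Lemma~\ref{l:T-a-widetilde}), the multiplicative identity $\gamma_0(h_1h_2)=\gamma_0(h_1)\gamma_0(h_2)$ is derived the same way, the sign of $\alpha(s)$ is obtained by letting $h\downarrow 0$, and part~(ii) is the lemma applied once more to the ratio $\wt c(s,r)/c(s,r)$.

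One small gap worth closing: you solve the multiplicative Cauchy equation by asserting that $\gamma_0$ is \emph{measurable} as ``a pointwise limit of ratios of the chosen normalization.'' But the theorem imposes no regularity on $r\mapsto c(s,r)$, so $h\mapsto c(s,r)/c(s,rh)$ need not be measurable, and hence neither need its pointwise limit. The paper sidesteps this by proving \emph{continuity} of $h\mapsto\gamma_0(h)$ directly: since $h_n\to h$ implies $\{Y_s(h_n\cdot\lambda)\}\cid\{Y_s(h\cdot\lambda)\}$ (Lemma~\ref{l:continuity}), the convergence-of-types lemma forces $\gamma_0(h_n)\to\gamma_0(h)$. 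With continuity in hand, the power-law form follows from the standard solution of Cauchy's equation. You can simply swap your measurability step for this continuity argument and the proof goes through.
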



\begin{remark} Relations \eqref{e:pro:self_similar} and \eqref{e:c(z,r)_rep} show that the normalization used to 
define a tangent field may differ from the scaling action that characterizes the self-similarity of the tangent field by a slowly varying factor,
which cannot be dropped in general.  This is akin to the fundamental role of slowly varying functions in the normalization of the partial sums in the non-Gaussian Central Limit Theorem.
\end{remark}

\begin{proof2}{Theorem}{\ref{pro:self_similar}}
	{\clb {\em Proof of part (i).}} For all fixed $h>0$, by \eqref{assumption}, as $r\downarrow 0$, we have
	\begin{align*}
	  \widetilde \xi_r:= \left\{T_{s,c(s,r)}(X(s,(hr)\cdot\lambda)), \lambda \in \Lambda_k \right\}\Cid 
	  \widetilde \xi := \left\{Y_s(h\cdot\lambda), \lambda \in \Lambda_k\right\}. 
	\end{align*}
	On the other hand, as $r\downarrow 0$,
	\begin{align*}
	 \xi_r:= \left\{T_{s,c(s,hr)}(X(s,(hr)\cdot\lambda)), \lambda \in \Lambda_k\right\}
	 \Cid \xi := \left\{Y_s(\lambda), \lambda \in \Lambda_k\right\}.
	\end{align*}
	By assumption both $\xi$ and $\widetilde \xi$ are  non-zero.  Observe that
	\begin{align*}
	T_{s,c(s,r)}(X(s,(hr)\cdot\lambda))=T_{s,\frac{c(s,r)}{c(s,hr)}}\circ T_{s,c(s,hr)}(X(s,(hr)\cdot\lambda)) 
	\end{align*}
	and hence
	\begin{align*}
	T_{s,\frac{c(s,r)}{c(s,hr)}} (\xi_r) = \widetilde \xi_r \cid \widetilde \xi.
	\end{align*}
	Applying Lemma \ref{l:Slutsky} (with $\mathbb X := S_c(\Lambda_k,\V)$ -- recall Lemma \ref{l:T-a-widetilde}) gives
	\begin{align}\label{eq:coef_relation}
	\frac{c(s,r)}{c(s,hr)}\to a(s,h),\ \ \mbox{ as } r\downarrow 0, 
	\end{align}
	for some positive $a(s,h)>0$.  We have, moreover, $\widetilde \xi \eqid T_{s,a(s,h)}(\xi)$, which reads 
	\begin{align}\label{e:pro:self_similar-2}
	\left\{Y_s(h\cdot\lambda), \lambda\in\Lambda_k\right\}\eqid\left\{T_{s,a(s,h)}(Y_s(\lambda)), \lambda\in\Lambda_k\right\}.
	\end{align}
	
	We will next show that $a(s,h) = h^{\alpha(s)}$, for some $\alpha(s)>0$. First, Relation \eqref{e:pro:self_similar-2} readily implies that  for all
	$ h_1>0$ and $h_2>0$
	\begin{align}\label{rel1}
	a(s,h_1h_2)=a(s,h_1)a(s,h_2). 
	\end{align}
	Indeed, by \eqref{e:pro:self_similar-2},
	\begin{align*}
	\{T_{s,a(s,h_1h_2)}(Y_s(\lambda))\}&\overset{d}{=}\{Y_s((h_1h_2)\cdot\lambda)\} 
	\overset{d}{=}\{T_{s,a(s,h_1)}(Y_s(h_2\cdot\lambda))\}\\
	& \overset{d}{=}\{T_{s,a(s,h_1)}\circ T_{s,a(s,h_2)}(Y_s(\lambda))\} = \{T_{s,a(s,h_1)a(s, h_2)}(Y_s(\lambda))\}.
	\end{align*}
	Since $Y_s$ is nonzero, the last relation implies \eqref{rel1} by (i) of Lemma \ref{l:Slutsky}.
	
	The function $a(s,h)$ is also continuous in $h \in (0,\infty)$.  Indeed, for any sequence 
	$h_n\to h$, $h_n,h \in (0,\infty)$, by Lemma~\ref{l:continuity} {\clb (applied with $X_n:=Y_s$, $v_n:=0$, and $r_n:=h_n$)}, we have 
	$\{Y_s(h_n\cdot\lambda)\}\overset{d}{\to} \{Y_s(h\cdot\lambda)\}$.  
	Therefore, by \eqref{e:pro:self_similar-2},
	\begin{align}\label{e:pro:self_similar-3}
	\{T_{s,a(s,h_n)}(Y_s(\lambda))\}\overset{d}{=}\{Y_s(h_n\cdot\lambda)\}\Cid \{Y_s(h\cdot\lambda)\}\overset{d}{=}\{T_{s,a(s,h)}(Y_s(\lambda))\}.
	\end{align}
	Since $Y_s$ is nonzero, applying (ii) of Lemma~\ref{l:Slutsky}, we obtain
	\begin{align}\label{rel2}
	a(s,h_n)\to a(s,h),
	\end{align}
	which shows the desired continuity.  
	
	Combining \eqref{rel1}, \eqref{rel2}, the continuity of $a(s,\cdot)$ and the
	fact that, trivially, $a(s,1)=1$, it is straightforward to conclude that $a(s,h)=h^{\alpha(s)},\ h>0$, 
	for some $\alpha(s)\in (-\infty,\infty)$, which is a special example of Cauchy's 
	functional equation \citep[cf. Theorem 5.2.1 of][]{Kuczma:2009cc}.
	
	We will show next that $a(s,h_n)\to 0$ as $h_n\downarrow 0$, which necessarily implies $\alpha(s)>0$.  Indeed, with $h=0$,
	\eqref{e:pro:self_similar-3} implies that $\{T_{s,a(s,h_n)}(Y_s(\lambda))\}\stackrel{d}{\to} 0 =\{Y_s(0\cdot \lambda)\}$, as $n\to\infty$. 
	This, by (iii) of Lemma \ref{l:Slutsky}, yields $a(s,h_n)\to 0$.
	
	To conclude the proof of part (i), letting $c(s, r) =: r^{-\alpha(s)}\ell_s (r)$, we see from Equation \eqref{eq:coef_relation} that, for all $h>0$,
	$\ell_{s}(r)/\ell_{s}(hr) \to 1$, as $r\downarrow 0$, which shows $\ell_s$ is a slowly varying function at $0$.
	
	\noindent {\clb {\em Proof of part (ii).} Assume now that in addition to \eqref{assumption}, we have
	$$
	\wt \eta_r:= \{ T_{s,\wt c(s,r)} (X(s,r\cdot\lambda)),\ \lambda \in\Lambda_k\} \stackrel{d}{\longrightarrow} \wt Y_s = \{ \wt Y_s(\lambda),\ \lambda \in \Lambda_k\},
	$$
	as $r\downarrow 0$.	By the properties of the scaling action, with $\eta_r:= \{T_{s,c(s,r)} (X(s,r\cdot\lambda)),\ \lambda \in\Lambda_k\}$, we have
	$$
	\wt \eta_r = T_{s,\frac{\wt c(s,r)}{c(s,r)}} (\eta_r)\stackrel{d}{\longrightarrow} \wt Y_s,\ \ \mbox{ as } r\downarrow 0.
	$$
	On the other hand, Relation \eqref{assumption} reads $\eta_r \stackrel{d}{\to} Y_s$, as $r\downarrow 0$.  Since both limits $\wt Y_s$ and $Y_s$ are non-zero, Lemma \ref{l:Slutsky}
	entails $\wt c(s,r)/ c(s,r) \to a>0$ and $T_{s,a}(Y_s) \stackrel{d}{=}\wt Y_s$, which proves  \eqref{e:tangent-uniqueness}, i.e., the essential uniqueness of the tangent process.
	} 
\end{proof2}

\subsection{Tangent fields are intrinsically stationary.}

One of the key results in \cite{falconer:2002} is that (almost all) tangent fields have 
stationary increments (cf.\! Theorem 3.6 therein). The proof of that is based 
on a delicate measure-theoretic argument. Below, we show that this phenomenon extends 
to higher order tangent fields to processes taking values in a linear separable metric space $\V$.

\medskip
Let \eqref{assumption} hold and let
\begin{align}\label{e:Fn}
F_{n}(s) :={\rm Law \ of} \Big\{ T_{s,c(s,1/n)} X\left(s , (1/n) \cdot\lambda \right),\ 
\lambda\in \Lambda_k\Big\}
\end{align}
be the probability distribution of the rescaled version of 
$\{X(s,\lambda),\lambda\in\Lambda_k\}$ in $(S_c(\Lambda_k,\V),\rho)$. Similarly, let
\begin{align}\label{e:G}
G(s):= {\rm Law \ of } \left\{ Y_s(\lambda),\ \lambda\in \Lambda_k\right\}.
\end{align}
In this notation, the convergence in \eqref{assumption}  (with $r:=1/n$) is simply
\begin{align}\label{eq:weak_convergence}
F_{n}(s) \stackrel{w}{\longrightarrow} G(s),\ \ n\to\infty,
\end{align}
{\clb where `$\stackrel{w}{\to}$' denotes the weak convergence of probability measures.}
An important result that will be utilized below is Proposition~\ref{P4} in the Appendix.  
In that regard, we first equip the space ${\cal P}(S_c(\Lambda_k,\V),\rho)$ of probability 
measures on $(S_c(\Lambda_k, \V),\rho)$ with a separable 
metric that metrizes the weak convergence \eqref{eq:weak_convergence}. 
Since $(S_c(\Lambda_k,\V),\rho)$ is complete and separable,
a suitable metric is $d_{\rm LP}$, the L\'evy-Prokhorov distance  
\citep[cf. Theorem 6.8 of][]{billingsley:1999}.
Thus, \eqref{eq:weak_convergence} can be re-expressed as 
\begin{align} \label{e:LP}
F_{n}(s) \to G(s) \hbox{ in $({\cal P}(S_c(\Lambda_k,\V),\rho),d_{\rm LP})$},
\end{align}
namely, $d_{\rm LP}(F_{n}(s),G(s))\to 0$.


\begin{definition}
A process $Y=\{Y(\lambda), \lambda\in\Lambda_k\}$ is said to be {\clb strictly} intrinsically stationary if
\begin{align*} 
\{Y(w+\lambda),\ \lambda\in\Lambda_k \}\overset{fdd}{=}\{Y(\lambda),\ \lambda\in\Lambda_k\},\quad \mbox{ for all } w\in\R^d.
\end{align*}
\end{definition}
Note that this is different from the usual notion of {\clb weak or second-order} intrinsic stationarity
in the literature \citep[cf.][]{Sasvari:2009}. The latter is the topic of Section \ref{sec:c2s5}. 

{\clb\begin{remark} Observe that the notion of strict intrinsic stationarity like that of self-similarity in \eqref{e:def:self-similar}
is stated in greater generality using equality in the sense of finite-dimensional distributions.  As discussed in Remark \ref{rem:measurability}, 
when the processes therein take values in the path space $S_c(\R^d,\V)$, the equality of the finite-dimensional distributions is equivalent to that of the 
probability distributions of the processes.  In the next result, the processes are understood as random elements in $S_c(\R^d,\V)$.
\end{remark} }

\begin{theorem}\label{IRF}
	Let $B$ be a Borel set of $\R^d$. Assume that $X=\{X(\lambda),\ \lambda\in \Lambda_k\}$ {\clb is a random element in $S_c(\R^d,\V)$ and it}
	has a $k$-th order tangent field $Y_s=\{Y_s(\lambda), \lambda\in\Lambda_k\}$ at every $s\in B$ {\clb in the sense of Definition \ref{def:tangent_field}.}
	Also assume that for any $r$, the normalization $c(s,r)$ is Borel measurable in $s$,
	and for any $s$ and any sequence $w_n \to w\in\bbR^d$, 
	\begin{align} \label{e:T_regularity}
	T_{s_n, c(s_n,1/n)} \circ T_{s,c(s,1/n)}^{-1} \to \I,
	\end{align} 
	where $\I$ is the identity operator and $s_n:=s+n^{-1}w_n$.
	Then, there exists a set $U$ with zero Lebesgue measure 
	such that for all $s\in B\setminus U$, the tangent field $Y_s$ is {\clb strictly} intrinsically stationary.
	That is, at almost all locations $s$, tangent fields are {\clb strictly} intrinsically stationary.
	\end{theorem}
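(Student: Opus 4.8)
The plan is to fix a shift $w\in\R^d$ and show that for Lebesgue-almost every $s\in B$ one has $\{Y_s(w+\lambda)\}\eqid\{Y_s(\lambda)\}$; since strict intrinsic stationarity only requires this for every $w$ (and it holds trivially for $w=0$), and the exceptional null set I produce will not depend on $w$, this yields the theorem. I regard the laws $G(s)$ and $F_n(s)$ from \eqref{e:Fn}--\eqref{e:G} as maps from $B$ into the separable metric space $(\mathcal{P}(S_c(\Lambda_k,\V),\rho),d_{\rm LP})$; these are Borel measurable in $s$ because $c(s,\cdot)$ is measurable and the scaling action is continuous, and by \eqref{eq:weak_convergence} one has $F_n(s)\to G(s)$ pointwise on $B$. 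Applying Lusin's theorem to $s\mapsto G(s)$ and Egorov's theorem to the sequence $F_n\to G$, for each $\delta>0$ I extract a closed set $A\subset B$ with $|B\setminus A|<\delta$ on which $G$ is continuous and $F_n\to G$ uniformly. Taking $\delta=1/k$, unioning the resulting sets $A_k$, and deleting from each $A_k$ its (null) set of non-density points, I obtain a null set $U$ such that every $s\in B\setminus U$ is a Lebesgue density point of some $A_k$ on which both the continuity of $G$ and the uniform convergence $F_n\to G$ hold.

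Fix such a density point $s\in A_k$ and a direction $w\neq 0$. The geometric crux---this is Falconer's Lebesgue-density idea---is to produce base points $s_n=s+n^{-1}w_n\in A_k$ with $w_n\to w$. I would do this by noting that the ball $B(s+n^{-1}w,\eta_n/n)$ has volume $\asymp(\eta_n/n)^d$, while $B(s+n^{-1}w,\eta_n/n)\subset B(s,(|w|+\eta_n)/n)$ and the deficit $|A_k^c\cap B(s,(|w|+\eta_n)/n)|=o(n^{-d})$ by density of $s$ in $A_k$; choosing $\eta_n\to 0$ slowly (e.g.\ so that $\eta_n^d$ dominates the density deficit) forces $B(s+n^{-1}w,\eta_n/n)\cap A_k\neq\emptyset$ for all large $n$, and any $s_n$ in this intersection satisfies $|w_n-w|=|n(s_n-s)-w|<\eta_n\to 0$.

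With $s_n$ in hand I would set $W_n:=T_{s_n,c(s_n,1/n)}\big(X(s_n,(1/n)\cdot\lambda)\big)$, whose law is $F_n(s_n)$. From $d_{\rm LP}(F_n(s_n),G(s))\le d_{\rm LP}(F_n(s_n),G(s_n))+d_{\rm LP}(G(s_n),G(s))$ the first term vanishes by \emph{uniform} convergence on $A_k$ (this is precisely why Egorov is needed, as $s_n$ moves) and the second by continuity of $G$ on $A_k$; hence $W_n\cid Y_s$. On the other hand, the measure identity $X\big(s,(1/n)\cdot(w_n+\lambda)\big)=X\big(s_n,(1/n)\cdot\lambda\big)$ lets me write, with $Z_n:=T_{s,c(s,1/n)}(X(s,(1/n)\cdot))$,
\[
W_n=T_{s_n,c(s_n,1/n)}\circ T_{s,c(s,1/n)}^{-1}\big(Z_n(w_n+\lambda)\big).
\]
By \eqref{assumption} at the base point $s$, $Z_n\cid Y_s$ in the path space, so by continuity of finite evaluations along the moving arguments $w_n+\lambda\to w+\lambda$ (Lemma~\ref{l:continuity}) one gets $Z_n(w_n+\lambda)\cid Y_s(w+\lambda)$ jointly over finitely many $\lambda$'s; the prefactor operator tends to $\I$ by the regularity hypothesis \eqref{e:T_regularity}, so a Slutsky argument (Lemma~\ref{l:Slutsky}) yields $W_n\cid Y_s(w+\lambda)$. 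Matching the two limits of $W_n$ gives $\{Y_s(w+\lambda)\}\eqid\{Y_s(\lambda)\}$ at the level of all finite-dimensional distributions, and letting $w$ range over $\R^d$ shows $Y_s$ is strictly intrinsically stationary for every $s\in B\setminus U$.

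The step I expect to be the main obstacle is the density selection of the $s_n$: one must simultaneously hit the good set $A_k$, keep the base point on the correct $n^{-1}w_n$ scale, and steer the direction $w_n\to w$, and it is the interplay of this selection with the \emph{uniform} (Egorov) convergence that replaces Falconer's geometric-measure-theory machinery. A secondary technical point to verify is the continuous dependence of finite evaluations on a moving argument in $(S_c(\Lambda_k,\V),\rho)$, i.e.\ that $Z_n\cid Y_s$ together with $w_n+\lambda\to w+\lambda$ forces $Z_n(w_n+\lambda)\cid Y_s(w+\lambda)$, which I would justify through Lemma~\ref{l:continuity} (or, failing a ready-made statement, via the Skorokhod representation and local uniform convergence of the representations $\breve Z_n$).
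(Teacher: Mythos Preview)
Your approach is essentially the paper's: it packages your Egorov--Lusin step as Proposition~\ref{P4}, cites Falconer's Lemma~3.5 for the density-based selection of $s_n\in A_k$ with $w_n\to w$, and matches the two weak limits of $W_n$ exactly as you describe. One small correction: Lemma~\ref{l:Slutsky} is a convergence-of-types result (it infers $a_n\to a$ from two weak limits under a scaling action, not the forward direction), so it is not the right citation for handling the prefactor $T_{s_n,c(s_n,1/n)}\circ T_{s,c(s,1/n)}^{-1}\to\I$; the paper instead appeals to Lemma~\ref{l:continuity} together with condition~\eqref{e:T_regularity} at that step.
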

	
 The proof of this result uses the following proposition established in Section \ref{Proposition_proof} below.
 
\begin{proposition}\label{P4}
	Let $B\subset \mathbb{R}^d$ be a Borel set with finite Lebesgue measure $\text{Leb}(B)<\infty$. 
	Suppose that $F_n:B\to E$ is a sequence of Borel measurable functions 
	into the separable metric space $(E,\rho_E)$ 
	such that
	\begin{align*}
	F_n(s)\mathop{\longrightarrow}_{n\to \infty} G(s), \text{   for almost all }s\in B.
	\end{align*}
	
	Then, for every $\epsilon>0$, there exists a compact set $K_\epsilon\subset B$, such that 
	$\mathrm{Leb}(B\setminus K_\epsilon)<\epsilon$, the function $G$ being continuous on $K_\epsilon$, and 
	\begin{align}\label{e:p:EL}
	F_n(s_n) \to G(s),\mbox{ whenever  $s_n\to s$, for $s_n,s\in K_\epsilon$.}
	\end{align}
\end{proposition}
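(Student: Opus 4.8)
The plan is to combine three classical tools from measure theory---Egorov's theorem, Lusin's theorem, and the inner regularity of Lebesgue measure---and then to close the argument with a short triangle-inequality estimate that exploits \emph{both} the uniform convergence of $F_n$ to $G$ and the continuity of $G$. Let $N \subseteq B$ denote the Lebesgue-null set on which $F_n(s) \not\to G(s)$. Since each $F_n$ is Borel measurable into the separable metric space $(E,\rho_E)$, the pointwise a.e.\ limit $G$ is itself Borel measurable, and the real-valued maps $g_n(s) := \rho_E(F_n(s), G(s))$ are measurable with $g_n \to 0$ a.e.\ on $B$. This reduction to the scalar sequence $g_n$ is what lets the metric-space-valued Egorov step proceed via the ordinary real-valued version.

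First I would apply Egorov's theorem to $g_n \to 0$ on the finite-measure set $B$: for the given $\epsilon > 0$ there is a measurable $A \subseteq B$ with $\mathrm{Leb}(B \setminus A) < \epsilon/4$ on which $g_n \to 0$ uniformly, i.e.\ $\sup_{s \in A} \rho_E(F_n(s), G(s)) \to 0$. By inner regularity I then extract a compact $A' \subseteq A$ with $\mathrm{Leb}(A \setminus A') < \epsilon/4$, so that $\mathrm{Leb}(B \setminus A') < \epsilon/2$ while uniform convergence persists on $A'$. Independently, Lusin's theorem applied to the measurable $G$ on the finite-measure set $B$ yields a compact $C \subseteq B$ with $\mathrm{Leb}(B \setminus C) < \epsilon/2$ such that the restriction $G|_C$ is continuous. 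I would then set
\[
K_\epsilon := A' \cap C,
\]
which is compact, satisfies $\mathrm{Leb}(B \setminus K_\epsilon) \le \mathrm{Leb}(B \setminus A') + \mathrm{Leb}(B \setminus C) < \epsilon$, carries the continuous restriction $G|_{K_\epsilon}$ (since $K_\epsilon \subseteq C$), and on which $F_n \to G$ uniformly (since $K_\epsilon \subseteq A'$).

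To establish \eqref{e:p:EL}, fix $s_n \to s$ with $s_n, s \in K_\epsilon$ and split
\[
\rho_E(F_n(s_n), G(s)) \le \rho_E(F_n(s_n), G(s_n)) + \rho_E(G(s_n), G(s)).
\]
The first term is bounded by $\sup_{t \in K_\epsilon} \rho_E(F_n(t), G(t))$, which tends to $0$ by uniform convergence on $K_\epsilon$; the second term tends to $0$ because $G|_{K_\epsilon}$ is continuous and $s_n \to s$ within $K_\epsilon$. Hence $F_n(s_n) \to G(s)$, as required.

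The conceptual heart of the argument---and the point I expect to be the main subtlety rather than a technical obstacle---is the recognition that uniform convergence of $F_n$ to $G$ on a set does \emph{not} by itself force $F_n(s_n) \to G(s)$ along a moving point $s_n \to s$; one must simultaneously control the oscillation of the limit $G$, and this is precisely what Lusin's theorem supplies. Once the three ingredients are in place, the measure bookkeeping summing below $\epsilon$ and the verification that $A' \cap C$ retains both the uniform-convergence and continuity properties are routine. The one point genuinely worth checking is the measurability of $G$ and of the $g_n$, which rests on the separability of $(E,\rho_E)$ so that Borel measurability is preserved under pointwise limits and under composition with the continuous distance $\rho_E$.
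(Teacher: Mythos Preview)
Your proof is correct and follows essentially the same approach as the paper: Egorov to obtain uniform convergence on a large set, Lusin to obtain continuity of $G$ on a large compact set, and then the triangle inequality to combine the two. The only cosmetic differences are that the paper nests the Lusin step inside the Egorov set rather than intersecting two separately constructed sets, and it packages your final triangle-inequality estimate as a standalone lemma (Lemma~\ref{lem:folklore-uc}).
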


\begin{proof2}{Theorem}{\ref{IRF}}
   {\clb By the $\sigma$-additivity of the Lebesgue measure on $\R^d$, it suffices to establish the result for the case
    ${\rm Leb}(B)<\infty.$ }
	
	The assumption implies that \eqref{e:LP} holds for all $s\in B$.
	The continuity of $X$ and the Borel-measurability of $s\mapsto c(s,1/n)$ entail that $s\mapsto F_{n}(s)$ 
	 is a sequence of Borel measurable functions in $(E,\rho_E):= ({\cal P}(S_c(\Lambda_k,\V),\rho),d_{\rm LP})$. 
	  Therefore, the assumptions of Proposition~\ref{P4} are fulfilled and
	 for any $\epsilon>0$, there is a compact set  $K_{\epsilon}\subset B$ 
	with 	$\text{Leb}(B\setminus K_\epsilon)<\epsilon$ such that $F_{n}(s_n) \to G(s)$ 
	so long as $s_n,s\in K_{\epsilon}$ and $s_n\to s$.  
	
	It follows from Lebesgue's density theorem that there is a subset $K'_{\epsilon}$ of $K_{\epsilon}$ 
	on which the Lebesgue density is equal to $1$ and $\text{Leb}(K_{\epsilon}\setminus K'_{\epsilon}) = 0$. 
	By Lemma 3.5 of \cite{falconer:2002}, for any $s\in K'_{\epsilon}$ and $w\in \R^d$, there exists a sequence 
	$w_n$ such that $w_n\to w$ and $s_n:=s+w_n/n\in K_{\epsilon}$.
	Thus, for any $s\in K'_\epsilon$, $F_{n}(s_n) \to G(s)$ as $n\to\infty$ or, equivalently,
	\begin{align}\label{eq:re1}
	  \xi_n:= \left\{T_{s_n,c(s_n,1/n)}(X(s_n,(1/n)\cdot\lambda)),\lambda\in\Lambda_k\right\}
	  \Cid \xi:= \{Y_s(\lambda),\lambda\in\Lambda_k\},
	\end{align}
	
	On the other hand, we have
	\begin{align}\label{eq:re2}
	\begin{split}
	\xi_n & = \left\{T_{s_n, c(s_n,1/n)}(X(s_n,(1/n)\cdot\lambda),\ \lambda\in\Lambda_k\right\}\\
	&= \left\{T_{s_n, c(s_n,1/n)} \circ T_{s, c(s,1/n)}^{-1} \circ T_{s, c(s,1/n)}
	(X(s_n,(1/n)\cdot\lambda)),\ \lambda\in\Lambda_k\right\}\\
	&=\left\{T_{s_n, c(s_n,1/n)} \circ T_{s, c(s,1/n)}^{-1} \circ T_{s, c(s,1/n)}
	(X(s,(1/n)\cdot (w_n+\lambda))),\ \lambda\in\Lambda_k\right\}.
	\end{split}
	\end{align}
	Since by \eqref{e:T_regularity}, we have $T_{s_n, c(s_n,1/n)} \circ T_{s, c(s,1/n)}^{-1}\to \I$, Lemma \ref{l:continuity} implies
	\begin{align}\label{eq:converge2}
	\begin{split}
	\xi_n \Cid \wt \xi := \{Y_s(w+\lambda),\ \lambda\in\Lambda_k \},
	  \end{split}
	\end{align}
	which implies that $\xi \eqid \wt \xi$. 

Finally, we take $U := \cap_{k=1}^\infty (B\setminus K'_{1/n})$, which is a set with measure $0$. This concludes the proof. 
\end{proof2}

We make next an important observation that Condition \eqref{e:T_regularity} holds automatically in the case when the scaling actions 
$T_{s,c}$ can be expressed through a single scaling action independent of the location $s$.  This is the case 
in particular for the usual scalar multiplication actions \eqref{e:scalar_scaling} considered for example 
in \citep[cf.][]{falconer:2002}.

\begin{corollary}\label{cor:IRF} Assume the conditions of Theorem \ref{IRF}.  If the scaling action does not depend on location, i.e.,
$T_{s,c(s,r) } = T_{c(s,r)},\ c>0$, then Condition \eqref{e:T_regularity} always holds.
\end{corollary}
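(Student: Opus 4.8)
The plan is to use the group law of the action to reduce the operator condition \eqref{e:T_regularity} to a scalar one, and then to force that scalar limit to equal $1$ by playing the self-similarity of the tangent field against the translation structure of $\Lambda_k$.

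First I would record the purely algebraic reduction. Set $\rho_n := c(s_n,1/n)/c(s,1/n)$. Since the action is location free, Definition \ref{def:scaling_action}(i),(ii) give $T_a^{-1}=T_{1/a}$ and $T_a\circ T_b = T_{ab}$, so
\[
T_{s_n,c(s_n,1/n)}\circ T_{s,c(s,1/n)}^{-1}=T_{c(s_n,1/n)}\circ T_{1/c(s,1/n)}=T_{\rho_n}.
\]
By the continuity of the action (Definition \ref{def:scaling_action}(iii)) and $T_1=\I$, it suffices to prove $\rho_n\to1$; this is the entire content of the corollary.

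Next I would pin down the limit of $\rho_n$ \emph{without} invoking \eqref{e:T_regularity}, so as to avoid circularity. Fix $s$ in the full-measure set $K'_\epsilon$ and a sequence $w_n\to w$ with $s_n:=s+w_n/n\in K_\epsilon$, exactly as produced in the proof of Theorem \ref{IRF} via Proposition \ref{P4} and Lemma 3.5 of \cite{falconer:2002}. Proposition \ref{P4} already yields $\xi_n:=\{T_{c(s_n,1/n)}(X(s_n,(1/n)\cdot\lambda))\}\Cid\{Y_s(\lambda)\}$, while Lemma \ref{l:continuity} applied to the tangent-field convergence at the single point $s$ gives $\zeta_n:=\{T_{c(s,1/n)}(X(s,(1/n)\cdot(w_n+\lambda)))\}\Cid\{Y_s(w+\lambda)\}$. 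As $\xi_n=T_{\rho_n}(\zeta_n)$ and both limits are non-zero, Lemma \ref{l:Slutsky} delivers $\rho_n\to a(s,w)\in(0,\infty)$ together with the shift relation
\[
\{Y_s(\lambda)\}\eqid\{T_{a(s,w)}(Y_s(w+\lambda))\},\qquad\lambda\in\Lambda_k.\quad(\star)
\]

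The crux is then to show $a(s,w)=1$, and this is where I expect the real difficulty to lie: the bare reduction delivers only $\rho_n\to a$, and ruling out $a\ne1$ is exactly what makes the location-free case special. Iterating $(\star)$ in the translation variable and using the injectivity of the action (Definition \ref{def:scaling_action}(iv)) yields the homomorphism identity $a(s,w_1+w_2)=a(s,w_1)a(s,w_2)$. To remove the remaining freedom I would feed in the self-similarity of $Y_s$ from Theorem \ref{pro:self_similar}: using $w+r\cdot\lambda=r\cdot(w/r+\lambda)$ (immediate from \eqref{e:sca_trans}), replacing $\lambda$ by $r\cdot\lambda$ in $(\star)$ and applying $Y_s(r\cdot\mu)\eqid T_{r^{\alpha(s)}}(Y_s(\mu))$ on both sides, the common factor $T_{r^{\alpha(s)}}$ cancels and leaves $\{Y_s(\lambda)\}\eqid\{T_{a(s,w)}(Y_s(w/r+\lambda))\}$; comparing with $(\star)$ at $w/r$ and using injectivity again gives $a(s,w)=a(s,w/r)$ for every $r>0$. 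Taking $r=2$ and combining with the homomorphism identity, $a(s,w)=a(s,w/2)^2=a(s,w)^2$, so positivity forces $a(s,w)=1$. Hence $\rho_n\to1$, $T_{\rho_n}\to\I$, and \eqref{e:T_regularity} holds at almost every $s$ — which is all that the proof of Theorem \ref{IRF} actually uses. Without the self-similarity cancellation one would be left only with the weaker ``$T_{a}$-twisted'' relation $(\star)$ rather than strict intrinsic stationarity, so this cancellation is the decisive step.
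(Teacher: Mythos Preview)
Your proof is correct and follows essentially the same route as the paper: reduce \eqref{e:T_regularity} to a scalar convergence via the group law, use Proposition~\ref{P4} together with Lemma~\ref{l:continuity} to obtain the shift relation $(\star)$ and the homomorphism $a(s,w_1+w_2)=a(s,w_1)a(s,w_2)$, and then exploit the self-similarity of $Y_s$ from Theorem~\ref{pro:self_similar} to force $a(s,w)=1$. The only cosmetic differences are that the paper works with the reciprocal ratio $c(s,1/n)/c(s_n,1/n)$ and concludes via $a_s(w)=a_s(w)^r$ for integer $r$ (using $a_s(rw)=a_s(w)^r$), whereas you rescale by $1/r$ and conclude via $a(s,w)=a(s,w/2)=a(s,w/2)^2$; both are the same cancellation in slightly different packaging.
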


The proof is given in Section \ref{ss:Supplementary_proof_section3}.  We conclude this section with several remarks. 

\begin{remark} The null set $U$ in Theorem \ref{IRF} cannot be dropped in general.
While all tangent fields are self-similar, not all of them are intrinsically stationary.   Indeed, one can consider the
simple example $X(t) = \|t\| ^{H} Z,\ t\in \R^d$, where $Z$ is a fixed random variable and $H>0$. Consider the usual scalar multiplication action and
observe that with $s:=0$, and $\lambda = \sum_i c_i \delta_{t_i} \in \Lambda_k$, for all $r>0$, we have
\begin{align*}
X(s+r\cdot\lambda) = \sum_{i} c_i X(rt_i) = r^{H} \sum_i c_i X(t_i) = r^{H} X(s+ \lambda).
\end{align*}
That is, $X$ is its own tangent field at $s=0$ for all $k\ge 0$.  Note that $X$ is not intrinsically stationary if $H \not\in\N$.
\end{remark}

\begin{remark} \label{rm:tangent_nesting} Notice that $\Lambda_{k_2} \subset \Lambda_{k_1}$ for all $0\le k_1 < k_2$.  Therefore, 
all $k_1$-order tangent fields are also $k_2$-order tangent fields.  Specifically, if \eqref{assumption} holds with $k=k_1$, then it also 
holds with $k_2$.  
\end{remark}

\begin{remark} As in  \cite{falconer:2002}, we focus here on random fields with continuous paths.  
	One can study the structure of generalized tangent fields for processes with discontinuous paths and
	potentially extend the results in \cite{falconer:2003} which focus on the space of {\em c\`adl\'ag} functions equipped with
	the Skorokhod $J_1$-topology.  The key challenge is coming up with a suitable topology on the 
	path-space in question which is separable and complete.  Provided that this is the case, we believe that
	versions of Theorems \ref{pro:self_similar} and \ref{IRF} will continue to hold.
	\end{remark}
	
\begin{remark}\label{rem:paths} In principle, in the definition of the tangent field \eqref{assumption} one could apply a general scaling action 
on both the domain $\R^d$ of the stochastic  process as well as on its range ${\mathbb V}$.  In this case, we anticipate that an analog of 
Theorem \ref{pro:self_similar} will hold, where the limits are scale-invariant processes similar to the 
ones studied in \cite{bierme:meerschaert:scheffler:2007,didier:meerschaert:pipiras:2017}.  Here, for simplicity, we chose to 
apply a general scaling action only on the range of the process and retain the usual rescaling by scalars in the domain $\R^d$.  
\end{remark}


\section{Spectral theory for Hilbert space valued IRF$_k$'s.}
\label{sec:c2s5}

In this section, we develop the general correlation theory for stationary and intrinsically stationary processes 
taking values in a separable Hilbert space $\V$ over $\C$ equipped with the inner product $\langle\cdot,\cdot\rangle$.
In the following section, we present a generalization of the celebrated Bochner Theorem and then in Section \ref{ss:spectral_notation_1}, we 
extend the Matheron spectral characterization to the class of $\V$-valued intrinsic random functions.
The applications of these results to the characterization of Gaussian $\V$-valued stationary and intrinsically stationary processes requires us to carefully 
consider {\em both} real and complex Hilbert spaces (discussed in Section \ref{sec:real-complex}).

Throughout this paper, a random element $X$ in ${\mathbb V}$ is said to have mean zero and finite variance, together 
referred to as {\em second order}, 
if $\E[ X] = 0$ and $\E[ \|X\|^2] < \infty$, where, for definiteness, 
all expectations here are defined in the Bochner sense 
(see Section \ref{supp:sec:Bochner}).
A process is said to be second order if each element is second order.

Denote by $\bbT$ the collection of trace-class operators on $\V$.
That is, linear operators $\CT: \V\to \V$, with finite {\em trace norm}:
\begin{align*}
\|\CT\|_{\tr} = \sum_{j=1}^\infty \langle (\CT^*\CT)^{1/2} e_j, e_j\rangle,
\end{align*}
where $\{e_j\}$ is an arbitrary complete orthonormal system (CONS) on ${\mathbb V}$, and where $\CT^*$ denotes
the adjoint operator of $\CT$.  One can show that the trace norm does not depend on the choice of the CONS
and the space $\bbT$ equipped with the trace norm is a Banach space 
\citep[cf.][]{simon2015comprehensive}. 

Recall that $\CT$ is self-adjoint if $\CT=\CT^*$.  Also $\CT$ is positive definite 
(or just positive), denoted $\CT\ge 0$, if $\CT$ is self-adjoint and $\langle f,\CT f\rangle\ge 0$, 
for all $f\in \V$.  The class of positive and trace-class operators will be denoted by $\bbT_+$.

\subsection{The Bochner Theorem.} \label{sec:Bochner}

The aim of this subsection is to review the basic properties of second order covariance-stationary 
processes on $\bbR^d$ taking values in the separable Hilbert space $\V$ over $\C$. 
We start with the important notion of positive definiteness. 

\begin{definition} \label{def:pos-def-function} A collection of operators $\{\mathcal{K}(t),t\in\bbR^d\}$ 
on the complex Hilbert space ${\mathbb V}$ is said to be positive definite in the weak sense if for all $c_j\in \mathbb C,\ t_j\in \bbR^d,\ j=1,\cdots,n$, 
we have
\begin{align}\label{eq:psd}
\sum_{j=1}^n\sum_{j'=1}^n c_j\overline c_{j'}\mathcal{K}(t_{j}-t_{j'}) \ge 0 \hbox{ (operator positivity)}.
\end{align}
\end{definition}

The classical Bochner's Theorem \citep[cf.][]{bochner1948vorlesungen, 
khintchine1934korrelationstheorie} which connects the space of positive-definite functions with
range in $\mathbb{C}$ and finite positive measures has provided a fundamental tool for  constructing useful 
models for stationary random fields.  Below we state an extension of that for the infinite-dimensional setting.
To do so we need the notion of integration with respect to a $\mathbb{T}_+$-valued measure. 

We say that $\mu: \mathcal{B}(\bbR^d) \mapsto \mathbb{T}_+$ is a $\mathbb{T}_+$-valued measure if $\mu$
is $\sigma$-additive, where ${\cal B}(\bbR^d)$ denotes the $\sigma$-field of Borel sets in $\R^d$.  Note that {\em a fortiori}
$\mu(\emptyset) = 0$ and $\mu$ is finite in the sense that $0\le \mu(B)\le \mu(\bbR^d) \in \bbT_+,\ B\in{\cal B}(\R^d)$ as positive 
operators.  Integration of a $\mathbb{C}$-valued measurable function with respect to such $\mu$ can be defined along the line 
of Lebesgue {\clb integration}; see Section \ref{sec:appdix_integral_def}.

\begin{theorem}\label{th:operator_Bochner} Let $\{\mathcal{K}(t),t\in\bbR^d\} \subset \bbT$ 
be a positive-definite set of trace-class operators in the sense of Definition \ref{def:pos-def-function}. 
If ${\cal K}$ is continuous at $0$ in the trace norm, i.e., 
$\|{\cal K}(t) - {\cal K}(0)\|_{\rm tr}\to 0$, as $t\to 0$,
then there exists a unique finite $\mathbb{T}_+$-valued measure $\mu$ such that 
\begin{align}\label{e:th:operator_Bochner}
\mathcal{K}(t) = \int_{\R^d} e^{\ii t^\top x }\mu(dx),\quad \mbox{ for all }t\in\bbR^d.
\end{align}
Conversely, for every finite $\bbT_+$-valued measure $\mu$, Relation \eqref{e:th:operator_Bochner} yields a 
positive-definite set of trace class operators.
\end{theorem}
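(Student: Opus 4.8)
The plan is to reduce the operator statement to the classical scalar Bochner theorem by testing $\CK$ against vectors of $\V$, and then to reassemble an operator-valued measure by polarization and Riesz representation. First, for each fixed $f\in\V$ consider the scalar function $k_f(t):=\la \CK(t)f,f\ra$. The operator positivity in Definition \ref{def:pos-def-function} gives, for all $c_j\in\C$ and $t_j\in\R^d$, that $\sum_{j,j'}c_j\overline{c}_{j'}k_f(t_j-t_{j'})\ge 0$, so $k_f$ is a scalar positive-definite function; trace-norm continuity of $\CK$ at $0$ forces $k_f$ to be continuous at $0$, hence everywhere (a positive-definite function continuous at the origin is uniformly continuous). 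Classical Bochner therefore yields a unique finite positive measure $\mu_f$ on $\R^d$ with $k_f(t)=\int e^{\ii t^\top x}\mu_f(dx)$ and total mass $\mu_f(\R^d)=\la\CK(0)f,f\ra$. For a general pair $f,g$, polarization of the $\mu_h$ produces a finite complex measure $\mu_{f,g}$ whose Fourier transform is $\la\CK(t)f,g\ra$; uniqueness of Fourier transforms of finite measures then shows that $(f,g)\mapsto\mu_{f,g}(B)$ is sesquilinear for each Borel $B$, with $\mu_{f,f}=\mu_f$.

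Next I would build the operators $\mu(B)$. For a fixed Borel set $B$ the form $(f,g)\mapsto\mu_{f,g}(B)$ is a bounded sesquilinear form: it is positive on the diagonal, $\mu_{f,f}(B)=\mu_f(B)\in[0,\la\CK(0)f,f\ra]$, so the Cauchy--Schwarz inequality for positive forms gives $|\mu_{f,g}(B)|\le\|\CK(0)\|\,\|f\|\,\|g\|$. Riesz representation then produces a unique bounded operator $\mu(B)$ with $\la\mu(B)f,g\ra=\mu_{f,g}(B)$, satisfying $0\le\mu(B)\le\CK(0)$. Because $0\le A\le C$ with $C$ trace-class implies $\tr(A)\le\tr(C)<\infty$, each $\mu(B)$ lies in $\bbT_+$, and $\mu(\R^d)=\CK(0)$.

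The main obstacle is $\sigma$-additivity in the trace norm. Pointwise (weak) $\sigma$-additivity is immediate, since $B\mapsto\la\mu(B)f,g\ra=\mu_{f,g}(B)$ is countably additive for every $f,g$. To upgrade this I would introduce the scalar set function $\nu(B):=\tr(\mu(B))=\sum_j\la\mu(B)e_j,e_j\ra=\sum_j\mu_{e_j}(B)$ for a CONS $\{e_j\}$; since $\sum_j\mu_{e_j}(\R^d)=\tr(\CK(0))<\infty$, $\nu$ is a finite positive measure. For disjoint $B_n$ with union $B$ and tail $R_N:=\cup_{n>N}B_n$, positivity gives $\|\mu(B)-\sum_{n\le N}\mu(B_n)\|_{\tr}=\tr(\mu(R_N))=\nu(R_N)\to 0$ by continuity from above of the finite measure $\nu$. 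Hence $\mu$ is a genuine finite $\bbT_+$-valued measure.

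Finally, unwinding the weakly defined operator integral of Section \ref{sec:appdix_integral_def} gives $\la(\int e^{\ii t^\top x}\mu(dx))f,g\ra=\int e^{\ii t^\top x}\mu_{f,g}(dx)=\la\CK(t)f,g\ra$, which is exactly \eqref{e:th:operator_Bochner}; uniqueness of $\mu$ follows from scalar Fourier uniqueness applied to each $\mu_{f,g}$. For the converse, expanding $\sum_{j,j'}c_j\overline{c}_{j'}\CK(t_j-t_{j'})=\int|\sum_j c_j e^{\ii t_j^\top x}|^2\mu(dx)\ge 0$ yields positive-definiteness, and dominated convergence against $\nu$ gives trace-norm continuity at $0$.
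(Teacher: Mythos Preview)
Your argument is correct and follows a genuinely different route from the paper's proof. The paper proceeds by first treating the integrable case $\int_{\R^d}\|\CK(t)\|_{\rm tr}\,dt<\infty$, where it defines $\widehat\CK(x)$ as a Bochner integral, shows $\widehat\CK(x)\ge 0$ via scalar Bochner applied to $k_f$, and recovers $\CK(t)=\int e^{\ii t^\top x}\widehat\CK(x)\,dx$ through finite-rank truncations $\Pi_n\CK\Pi_n$. For general $\CK$ it then smooths, setting $\CK_\sigma(t)=e^{-\sigma^2\|t\|^2/2}\CK(t)$, obtains measures $\mu_\sigma$ from the integrable case, proves operator-valued tightness of $\{\mu_\sigma\}$ via a L\'evy-type estimate, and invokes a bespoke operator-valued Prokhorov lemma (their Lemma~\ref{l:weak_convergence_operator}) to extract a limit $\mu$.

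Your approach bypasses all of the smoothing/tightness machinery: you apply scalar Bochner directly to each $k_f$, polarize, and assemble $\mu(B)$ by Riesz representation, with $\sigma$-additivity in trace norm following cleanly from the scalar trace measure $\nu(B)=\mathrm{tr}(\mu(B))=\sum_j\mu_{e_j}(B)$. This is shorter and more transparent; it is essentially the strategy of Neeb~\cite{Neeb1998} that the paper cites. What the paper's route buys is an explicit density $\widehat\CK$ in the integrable case, and the development of operator-valued tightness and relative compactness results (Definition~\ref{def:tightness}, Lemma~\ref{l:weak_convergence_operator}) that are of independent use elsewhere. One small point worth making explicit in your write-up: the step $\la(\int e^{\ii t^\top x}\mu(dx))f,g\ra=\int e^{\ii t^\top x}\mu_{f,g}(dx)$ relies on the fact that trace-norm convergence in the construction of the operator integral implies weak-operator convergence, so inner products pass through the limit defining $\int h\,d\mu$ for bounded $h$.
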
 

We note that Theorem \ref{th:operator_Bochner} or variations of it have been mentioned in
the literature.  See, for instance, 
\cite{kallianpur1971spectral}, \cite{holmes1979mathematical}, \cite{Neeb1998}, 
\cite{durand2020spectral} and \cite{van2020note}. 
In Section \ref{suppsec:thm4.2_proof} of Supplement,
we provide a detailed proof that uses standard arguments familiar to the readers in the statistics and probability
community.

Both \cite{Neeb1998} and \cite{van2020note} present Bochner's Theorem in terms of
the following natural but stronger version of 
positivity. 

\begin{definition} \label{def:complely-pos-def-function} A collection of operators 
$\{\mathcal{K}(t),t\in\bbR^d\}$ on ${\mathbb V}$ is said to be completely positive definite, or just positive definite, if
\begin{align}\label{eq:complete-psd}
\sum_{j=1}^n\sum_{j'=1}^n \langle f_j, \mathcal{K}(t_{j}-t_{j'}) f_{j'}\rangle \ge 0,
\end{align}
for all $f_j\in \V, t_j\in \bbR^d,\ j=1,\cdots,n$ and $n\in \bbN$. 
\end{definition}

Definition \ref{def:complely-pos-def-function} simply means that the matrices $({\cal K}(t_j-t_{j'}))_{n\times n}$
with operator $\bbT$-valued entries are self-adjoint positive definite operators on the product Hilbert space 
$\V^{n}$. For more mathematical insight into this condition, see abstract literature on Hilbert 
$C^*$-modules, e.g., \cite{murphy:1997} and \cite{pellonpaa:yilinen:2011}. 
Clearly \eqref{eq:complete-psd} implies \eqref{eq:psd}. However, observe that for every finite $\bbT_+$-valued 
measure $\mu$, Relation \eqref{e:th:operator_Bochner} defines a  completely positive definite
kernel ${\cal K}(\cdot)$. This entails the following curious result, already noted in \cite{durand2020spectral}.

\begin{corollary}\label{c:weak-psd-implies-complete-psd} Let ${\cal K}:\bbR^d\to \bbT$ be continuous at 
$0$ in $\|\cdot\|_{\rm tr}$. The collection of operators $\{{\cal K}(t),\ t\in \bbR^d\}$ is positive definite in the sense of Definition \ref{def:pos-def-function} if and only 
if it is completely positive definite in the sense of Definition \ref{def:complely-pos-def-function}.
\end{corollary}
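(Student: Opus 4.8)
The plan is to prove the two implications separately. The implication ``completely positive definite $\Rightarrow$ weakly positive definite'' is the elementary one (already noted in the text), so I would dispose of it quickly. Given scalars $c_j\in\C$ and nodes $t_j\in\R^d$, I apply \eqref{eq:complete-psd} to the vectors $f_j:=\overline c_j f$ for an arbitrary fixed $f\in\V$; since $\langle \overline c_j f,\mathcal{K}(t_j-t_{j'})\,\overline c_{j'} f\rangle = c_j\overline c_{j'}\langle f,\mathcal{K}(t_j-t_{j'}) f\rangle$, this gives $\langle f,\big(\sum_{j,j'} c_j\overline c_{j'}\mathcal{K}(t_j-t_{j'})\big) f\rangle\ge 0$ for every $f$. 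Combined with the self-adjointness relation $\mathcal{K}(t)^*=\mathcal{K}(-t)$ (read off from the self-adjointness of the block matrix underlying \eqref{eq:complete-psd}), this is exactly the operator positivity \eqref{eq:psd}.

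For the converse I would exploit that weak positive definiteness together with continuity at $0$ in $\|\cdot\|_{\rm tr}$ are precisely the hypotheses of Theorem \ref{th:operator_Bochner}. So I first invoke that theorem to produce a finite $\bbT_+$-valued measure $\mu$ with $\mathcal{K}(t)=\int_{\R^d} e^{\ii t^\top x}\mu(dx)$, and then verify directly that any kernel of this form is completely positive definite (this is the ``observation'' preceding the corollary). Fixing $f_1,\dots,f_n\in\V$, $t_1,\dots,t_n\in\R^d$, and writing $H(x):=\sum_{j=1}^n e^{-\ii t_j^\top x} f_j$, I expand using the scalar complex measures $\mu_{f,g}(\cdot):=\langle f,\mu(\cdot) g\rangle$ (cf.\ Section \ref{sec:appdix_integral_def}) to get
\begin{align*}
\sum_{j,j'}\langle f_j,\mathcal{K}(t_j-t_{j'}) f_{j'}\rangle
   = \sum_{j,j'}\int_{\R^d} e^{\ii(t_j-t_{j'})^\top x}\, d\mu_{f_j,f_{j'}}(x).
\end{align*}
I would then show nonnegativity by a Riemann--Stieltjes approximation: for a measurable partition $\{B_m\}$ of $\R^d$ with sample points $x_m\in B_m$, the sum $\sum_m \langle H(x_m),\mu(B_m) H(x_m)\rangle$ is a finite sum of terms $\langle v,\mu(B_m) v\rangle$, each nonnegative because $\mu(B_m)\in\bbT_+$ is a positive operator. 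Expanding $H$ shows this sum equals $\sum_{j,j'}\sum_m e^{\ii(t_j-t_{j'})^\top x_m}\mu_{f_j,f_{j'}}(B_m)$, which, as the partition is refined, converges to the displayed double integral; the latter is therefore a limit of nonnegative reals, establishing \eqref{eq:complete-psd}.

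I expect the main obstacle to be the rigorous limit passage in the operator-valued integral, namely justifying that the Riemann--Stieltjes sums converge to $\sum_{j,j'}\langle f_j,\mathcal{K}(t_j-t_{j'}) f_{j'}\rangle$. This reduces to the convergence of the finitely many scalar approximations $\sum_m e^{\ii(t_j-t_{j'})^\top x_m}\mu_{f_j,f_{j'}}(B_m)\to\int e^{\ii(t_j-t_{j'})^\top x}\,d\mu_{f_j,f_{j'}}(x)$, which is standard since each integrand is bounded and continuous and each $\mu_{f_j,f_{j'}}$ is a finite complex measure (tail contributions being controlled by the finiteness $\mu(\R^d)\in\bbT_+$). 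The positivity is automatic term by term, so once integration against the $\bbT_+$-valued measure is set up as in Section \ref{sec:appdix_integral_def}, no further analytic difficulty remains.
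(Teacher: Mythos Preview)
Your proposal is correct and follows exactly the paper's approach: the easy implication is immediate, and for the converse you invoke Theorem~\ref{th:operator_Bochner} to get the representation $\mathcal{K}(t)=\int e^{\ii t^\top x}\mu(dx)$ and then verify that any such kernel is completely positive definite. The paper states this last observation without details, whereas you supply a Riemann--Stieltjes approximation; this is fine, though a slightly cleaner route is to note that the double sum equals $\int_{\R^d}\langle H(x),\mu(dx)H(x)\rangle$ directly from the construction of the integral against a $\bbT_+$-valued measure in Section~\ref{sec:appdix_integral_def}.
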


Let now $\{X(t),t\in\bbR^d\}$ be a ${\mathbb V}$-valued, second order random field.
The {\em cross covariance operator} for $X$ is then well-defined as 
\begin{align*}
\mathcal{C}_X(s,t) := \mathbb{E}[X(s)\otimes X(t)],\ \ s,t\in\R^d,
\end{align*}
and takes values in the space of  trace-class operators $\bbT$ equipped with the trace norm 
Lemma \ref{l:cross-cov} in Supplement.
The latter expectation is understood to be defined in the sense of Bochner integral in the separable 
Banach space $(\bbT,\|\cdot\|_{\rm tr})$, and for $f,g\in \V$, the outer product operator is by definition $(f\otimes g) h := \langle h, g\rangle f$.  
Observe that $\mathcal{C}_X(t,t)$ is positive definite, and
\begin{align*}
\mathcal{C}_X(s,t) = \mathcal{C}_X^*(t,s),\mbox{ for all }s,t\in\R^d. 
\end{align*}

The process $X$ is said to be mean-square or $L^2$-continuous if  
\begin{align} \label{e:meansqcontinuity}
\E \| X(s) -X(t)\|^2 \to 0 \hbox{ as $s\to t$, for all $t\in \R^d$}. 
\end{align}
It is easy to see
(Section \ref{supp:existence-cross-cov} or Proposition \ref{supp:p:continuity-cross-cov} in Supplement) that
$X$ is mean-square continuous if and only if $\|\mathcal{C}_X(s',t') - \mathcal{C}_X(t,t)\|_{\rm tr}\to 0$ as $(s',t') \to (t,t)$.
 
\begin{definition} A process $X$ is said to be weakly or covariance-stationary if it is second order 
and its cross covariance is shift invariant, i.e., 
\begin{align*}
 {\cal K}_X(h):=\mathcal{C}_X(s,s+h),\quad \mbox{ for all }h\in \bbR^d,
\end{align*}
does not depend on $s\in\bbR^d$.  The function  ${\cal K}_X(h),\ h\in\mathbb{R}^d$, will be 
referred to as the stationary covariance function of $X$.
\end{definition}

Observe that every stationary covariance function ${\cal K}_X$ is positive definite in 
the sense of \eqref{eq:complete-psd} (and hence \eqref{eq:psd}).
  Also, the $L^2$-continuity
of a stationary process is equivalent to the continuity of its stationary
covariance function at $0$. Thus, the characterization
in Theorem \ref{th:operator_Bochner} readily holds for the stationary covariance
of a stationary process that is $L^2$-continuous. 

We conclude this section with a version of the classical Cram\'er 
stochastic representation of stationary Hilbert-space-valued random fields.
Recall that, for $\V=\bbC$, the well-known integral representation \citep[cf.][]{cramer1942}
of a covariance stationary random process $X$ on $\bbR^d$
states that
\begin{align}\label{e:Cramer-stoch-int-rep}
X(t) = \int_{\bbR^d} e^{\ii t^\top x} \xi(d x), \ \ \mbox{ almost surely, }\quad t\in \R^d, 
\end{align}
where $\xi$ is a second order random measure with orthogonal increments. 
To extend this result to a general ${\mathbb V}$, we first have to define integration with respect to
a random measure with orthogonal increments in that setting. This is done in Section 
\ref{sec:appdix_integral_def} of Appendix.  {\clb Here, we only give the main ideas.

 Let $\bbL^2(\Omega)$ be the $L^2$ space of all ${\mathbb V}$-valued random 
elements $\eta$ on the probability space $(\Omega,\CF,\pr)$ with
$\E[\|\eta\|^2] < \infty$, equipped with the inner product
\begin{align*}
\langle \eta_1,\eta_2\rangle_{\Omega} := \E\langle\eta_1,\eta_2\rangle.
\end{align*}

\begin{definition}\label{def:orthogonal-measure} Let $\mu$ be a 
$\bbT_+$-valued measure on ${\cal B}(\bbR^d)$.  A second order stochastic process 
$\xi = \{\xi(A), A\in\mathcal{B}(\bbR^d)\} \subset \bbL^2(\Omega)$ indexed by the Borel sets is said to 
be a ${\mathbb V}$-valued orthogonal random measure on $\R^d$ with control measure 
$\mu$ if the following conditions hold:
\begin{enumerate}
	\item [(i)] $\E [\|\xi(A_n)\|^2] \to 0$ if $A_n\to \emptyset$, 
	\vskip.2cm
	 \item  [(ii)] $\mu (A\cap B) =\E [ \xi(A) \otimes \xi(B)],$ for all $A,\ B \in {\cal B}(\bbR^d)$,  where the expectation is in the sense of 
Bochner on $(\bbT, \|\cdot\|_{\rm tr})$. 
 \end{enumerate}
 \end{definition}
 
 It is straightforward to see that (ii) implies for disjoint $A$ and $B$, that $\xi(A\cup B) = \xi(A) + \xi(B)$, almost surely, so that $\xi$ is in fact 
 an additive set-function.  This, combined with the continuity property (i), yields the $\sigma$-additivity of $\xi$ (for more details, see 
 Section \ref{sec:appdix_integral_def}).

For a simple function $f(t) = \sum_{i=1}^n c_i I_{A_i}(t)$, with $c_i\in\C$ and pairwise disjoint $A_i$'s, we naturally define
 $
 {\cal I}_\xi(f):= \int_{\R^d} f(t)\xi(dt) := \sum_{i=1}^n c_i \xi(A_i).
 $
Letting $\|\mu\|_{\rm tr}(A):= \| \mu(A)\|_{\rm tr}$ be the trace measure of $\mu$, we see that 
 $$
  \E[ \|{\cal I}_\xi(f) \|^2 ] = \sum_{i=1}^n |c_i|^2 \E \|\xi(A_i)\|^2 = \int_{\R^d} |f(t)|^2 \|\mu\|_{\rm tr}(dt).
 $$
 That is, the linear operator ${\cal I}_\xi$ is an isometry from the space of simple functions in $L^2(\R^d, \|\mu\|_{\rm tr})$ into
 the Hilbert space $\bbL^2(\Omega)$.  Thus, one can extend the definition of ${\cal I}_\xi(f)$, by continuity, to all $f\in L^2(\R^d, \|\mu\|_{\rm tr})$.
 We have moreover that, for all $f,g\in L^2(\R^d,\|\mu\|_{\rm tr})$, 
 $$
 \E [ {\cal I}_\xi(f)\otimes {\cal I}_\xi(g)] = \int_{\R^d}  f(t) \overline {g(t)} \mu(dt),
 $$
 where the latter integral is in the sense of Bochner (cf Section 
\ref{sec:appdix_integral_def} of Appendix).
}

\begin{theorem}\label{th:integral_representation_stationary}
Let $X=\{X(t),\ t\in \R^d\}$ be an $L^2$-continuous, weakly stationary 
process taking values in the separable Hilbert space ${\mathbb V}$ and having stationary covariance function ${\cal K}$. 

Then, \eqref{e:th:operator_Bochner} holds and there exists an a.s unique orthogonal $\V$-valued random measure $\xi$ with control 
measure $\mu$, such that \eqref{e:Cramer-stoch-int-rep} holds.
\end{theorem}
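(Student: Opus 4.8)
The plan is to extend the classical Cram\'er construction to the $\V$-valued setting by building the orthogonal random measure $\xi$ directly from the process $X$ via an isometry, using the operator-valued Bochner representation of the covariance as the bridge between the time domain and the spectral domain. First I would invoke Theorem \ref{th:operator_Bochner}: since $X$ is $L^2$-continuous and weakly stationary, its stationary covariance ${\cal K}$ is positive definite in the sense of \eqref{eq:complete-psd} and continuous at $0$ in trace norm (the equivalence of $L^2$-continuity and trace-norm continuity of the covariance having already been recorded in the excerpt), so there is a unique finite $\bbT_+$-valued measure $\mu$ with ${\cal K}(t)=\int_{\R^d}e^{\ii t^\top x}\mu(dx)$. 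This establishes the first assertion \eqref{e:th:operator_Bochner} and produces the control measure $\mu$ to be used in the stochastic integral.

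The core of the argument is to set up a unitary map between two Hilbert spaces. On the spectral side I would work in $L^2(\R^d,\|\mu\|_{\rm tr})$, where $\|\mu\|_{\rm tr}(A):=\|\mu(A)\|_{\rm tr}$ is the trace measure; on the time side I would take the closed linear span $\H_X\subset\bbL^2(\Omega)$ generated by $\{X(t),\ t\in\R^d\}$ together with all finite linear combinations of its coordinate projections. The key computation is that for exponentials the inner products match: for $f_t(x):=e^{\ii t^\top x}$ one checks $\E\langle X(s),X(t)\rangle = \tr({\cal K}(s-t)) = \int_{\R^d} f_s\overline{f_t}\,\|\mu\|_{\rm tr}(dx) = \langle f_s,f_t\rangle_{L^2(\|\mu\|_{\rm tr})}$, using \eqref{e:th:operator_Bochner} and the definition of the trace. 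Hence the correspondence $X(t)\leftrightarrow f_t$ extends by linearity and continuity to an isometry $\Phi$ from the span of exponentials onto $\H_X$. Since the exponentials are dense in $L^2(\R^d,\|\mu\|_{\rm tr})$ (finite $\mu$), I would obtain a surjective isometry, and then define the random measure by $\xi(A):=\Phi(I_A)$ and the stochastic integral ${\cal I}_\xi(f):=\Phi(f)$, matching the construction sketched in the excerpt preceding Definition \ref{def:orthogonal-measure}. Taking $f=f_t$ recovers $X(t)=\int_{\R^d}e^{\ii t^\top x}\xi(dx)$ a.s., which is \eqref{e:Cramer-stoch-int-rep}.

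I would then verify that $\xi$ is genuinely a $\V$-valued orthogonal random measure with control measure $\mu$ in the sense of Definition \ref{def:orthogonal-measure}. Property (ii) follows from the polarized isometry identity $\E[{\cal I}_\xi(f)\otimes{\cal I}_\xi(g)]=\int_{\R^d}f\,\overline{g}\,\mu(dx)$ recorded in the excerpt, applied to $f=I_A,\ g=I_B$, giving $\E[\xi(A)\otimes\xi(B)]=\mu(A\cap B)$; property (i) follows from $\E\|\xi(A_n)\|^2=\|\mu\|_{\rm tr}(A_n)\to 0$ as $A_n\downarrow\emptyset$ by continuity of the finite measure $\|\mu\|_{\rm tr}$. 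Uniqueness almost surely is immediate from the isometry: any two orthogonal measures representing $X$ must agree with $\Phi$ on indicators of sets and hence coincide in $\bbL^2(\Omega)$.

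The subtlety I expect to be the main obstacle is the $\V$-valued (rather than scalar) nature of the integrand-to-random-element map. In the classical case the isometry lands in a scalar $L^2(\Omega)$, but here $\H_X$ consists of $\V$-valued random elements and the inner product $\langle\cdot,\cdot\rangle_\Omega=\E\langle\cdot,\cdot\rangle$ is the relevant one; I must be careful that the isometry built from the \emph{scalar} measure $\|\mu\|_{\rm tr}$ nonetheless reproduces the full \emph{operator-valued} cross structure $\E[\,\cdot\otimes\cdot\,]=\int f\,\overline{g}\,\mu$, not merely its trace. The point is that the operator-valued identity is finer than the scalar isometry, so one has to confirm that the two are compatible — equivalently, that the map $\Phi$ respects not just norms but the full $\bbT_+$-valued bilinear form. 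This compatibility is exactly what the pre-Definition computation in the excerpt guarantees, so the remaining work is to check that it persists under the continuous extension from simple functions to all of $L^2(\R^d,\|\mu\|_{\rm tr})$, which is routine once the isometry is in place.
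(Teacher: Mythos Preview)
Your proposal is correct and matches the paper's proof essentially step for step: build the isometry from $L^2(\R^d,\|\mu\|_{\rm tr})$ onto $\overline{\mathrm{span}}\{X(t)\}\subset\bbL^2(\Omega)$ via $e^{\ii t^\top\cdot}\mapsto X(t)$, define $\xi(A)$ as the image of $\mathbf{1}_A$, verify Definition~\ref{def:orthogonal-measure}, and read off uniqueness from density of the exponentials. The subtlety you correctly flag---upgrading the scalar isometry to the full operator-valued identity $\E[\Phi(f)\otimes\Phi(g)]=\int f\bar g\,\mu(dx)$---is handled in the paper just as you anticipate (check it on exponentials directly from \eqref{e:th:operator_Bochner}, then extend by density, with the polarization Lemma~\ref{l:identification_operator} used to pass from the diagonal to the full bilinear form); your appeal to ``the pre-Definition computation in the excerpt'' for this step is slightly circular, since that displayed identity was derived assuming $\xi$ already has control measure $\mu$, but the direct verification on exponentials you sketch fixes this immediately.
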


\noindent 
The proof of this result can be found in Section \ref{supp:proof:Cramer-stationary}.

\subsection{Spectral theory for general IRF$_k$.}\label{ss:spectral_notation_1}

\cite{gelfand:vilenkin:1964d} provide an illuminating treatment of the spectral theory of generalized 
stochastic processes, i.e., processes with paths in the space of generalized functions.  In this setting,
the paths of the stochastic process have derivatives of all orders and one can naturally study processes 
with stationary $(1+k)$-th order derivatives.  One drawback of this treatment is that it is difficult to
use generalized process models in practice.  Motivated by fundamental problems in spatial statistics,
\cite{matheron:1973} developed the framework of intrinsic stationary functions, which allows one to
study classical random field models with stationary increments.

In a series of works, Matheron developed the theory of intrinsic random functions, which has become 
the {\em lingua franca} of spatial statistics \citep[see e.g.][]{chiles:delfner:2012}.  
Our goal here is to extend the Matheron theory to the functional setting, where the underlying stochastic processes
take values in a separable Hilbert space $\V$.  This is not straightforward and new {\em covariance 
asymmetry} phenomena arise that reflect the potential {\em irreversibility} of multivariate 
IRF's (see Remark \ref{rem:real-imaginary}).

Following \cite{matheron:1973}, in this section we will focus on second order linear processes 
$Y=\{Y(\lambda), \lambda\in\Lambda_k\}$, {\clb  viewed in the weak sense as stochastic processes indexed by $\Lambda_k$. That is, $Y$ is a 
random element in $S(\Lambda_k,\V)$ equipped with the product $\sigma$-field ${\cal B}_{\mathbb V^{\Lambda_k}}$ generated by all finite-dimensional cylinder sets.
We emphasize that,} in contrast to Section \ref{s:tangent_fields}, here we no longer require that $Y$ has continuous paths.
The (cross) covariance operator of $Y$ is defined as
\begin{align*}
\mathcal{C}_Y(\lambda,\mu):= \E[Y(\lambda)\otimes Y(\mu)], \quad \lambda,\mu
\in\Lambda_k.
\end{align*}
Denote by $\breve Y=\{\breve Y(t),t\in\bbR^d\}$ the representation of $Y$ (cf. Section \ref{s:2.1}) in 
$\breve{S}(\Lambda_k,\V)$, i.e., $\breve Y(t) = Y(\lambda_t)$, 
so that 
\begin{align}\label{e:representer}
Y(\lambda) := \int \breve Y(t) \lambda(dt), \quad\lambda\in\Lambda_k.
\end{align}
We say that $Y$ is mean-square continuous if $\breve Y$ is mean-square 
continuous in the sense of \eqref{e:meansqcontinuity}.


\begin{definition} \label{d:IRF}
A second order process $\{Y(\lambda),\lambda\in\Lambda_k\}\in S(\Lambda_k,\V)$ is 
said to be an intrinsic random function of order $k$ (IRF$_k$), $k=-1, 0,1,\ldots$, 
if 
\begin{align} \label{e:stationary_Y}
\mathcal{C}_Y(\lambda,\mu)\equiv \mathcal{C}_Y(w+\lambda,w+\mu), \quad w \in\bbR^d, \lambda,\mu\in\Lambda_k.
\end{align}
\end{definition}
Note that \eqref{e:stationary_Y} is equivalent to 
\begin{align} \label{e:stationary_Y_1}
\mathcal{C}_Y(\lambda,\lambda)\equiv \mathcal{C}_Y(w+\lambda,w+\lambda), \quad w \in\bbR^d, \lambda\in\Lambda_k,
\end{align}
by Lemma \ref{l:identification_operator}, and, in turn, to the weak stationarity of
$\{Y(t+\lambda), t\in\bbR^d\}$ in $t$ for all $\lambda\in\Lambda_k$.
Indeed, if $Y(t+\lambda)$ is stationary in $t$ for all $\lambda$ then \eqref{e:stationary_Y_1} holds, and if 
\eqref{e:stationary_Y} holds then $Y(t+\lambda)$ is stationary in $t$ for all $\lambda$.



\begin{definition} \label{ed:cond_pos}
A  collection of trace-class operators $\{{\cal K}(h), h\in\bbR^d\}\subset\mathbb{T}$ is said to be 
conditionally positive definite of degree $k$, $k=-1, 0,1,\ldots$, if {\clb for all $n\ge 1$,}
\begin{align}\label{eq:cond-psd}
  \sum_{j=1}^n\sum_{j'=1}^n c_j\overline{c}_{j'}\mathcal{K}(t_j-t_{j'}) \ge 0 \hbox{ (operator positivity)}
\end{align}
for all $c_i\in\bbC,t_j\in\bbR^d, 1\le i\le n$, such that $\lambda(du)=\sum_j c_j \delta_{t_j}(du)\in \Lambda_k$. 
\end{definition}
Relation \eqref{eq:cond-psd} can be succinctly written as ${\cal K}(\lambda *\wt \lambda)\ge 0$, where
\begin{align*}
\wt \lambda (du) : = \overline \lambda(-du) =\sum_{j} \overline c_j \delta_{-t_j}(du),
\end{align*}
and $\lambda*\mu$ denotes the usual convolution.
More generally, with $\lambda = \sum_j c_j \delta_{t_j}$ and $\mu = \sum_{j'} d_{j'}\delta_{s_{j'}} \in \Lambda_k$, 
\begin{align}\label{e:K-lambda-mu-tilde}
{\cal K}(\lambda * \wt \mu) = \sum_{j}\sum_{j'} c_j \overline d_{j'} {\cal K}(t_j-s_{j'}).
\end{align}
Interestingly, since 
$(w+\lambda)*\wt{(w+\mu)} \equiv \lambda*\wt\mu$, for all $w\in \bbR^d$, the map $(\lambda,\mu)\mapsto {\cal K}(\lambda*\wt\mu)$ is automatically shift invariant. This motivates the following definition.

\begin{definition}\label{def:GC}
	A  collection of operators ${\cal K}:\bbR^d\to \bbT$ is said to be a generalized covariance of $Y$ with
	degree $k$ if 
	\begin{align}\label{e:def:GC}
		\mathcal{C}_Y(\lambda,\mu)  = {\cal K}(\lambda *\wt \mu),\ \ \mbox{ for all }\lambda,\mu \in \Lambda_k.
	\end{align}
\end{definition} 
Again, by Lemma \ref{l:identification_operator}, \eqref{e:def:GC} is equivalent to
\begin{align*}
		\mathcal{C}_Y(\lambda,\lambda)  = {\cal K}(\lambda *\wt \lambda),\ \ \mbox{ for all }\lambda \in \Lambda_k.
	\end{align*}
The following result describes the connections between the notions in 
Definitions \ref{d:IRF}-\ref{def:GC}, and gives a {\em spectral representation} of a conditionally positive definite
$\mathcal{K}$. 
As a terminology, a polynomial in $\bbT$ refers to a linear combination of $d$-dimensional monomials with coefficients in $\bbT$, where the degree is equal to the highest degree of the monomials in the linear combination. 

\begin{theorem}\label{th:IRF_k_spectral_operator_value_1}
	Let $k\ge -1$ and the process $Y=\{Y(\lambda), \lambda\in\Lambda_k\}\in S(\Lambda_k,\V)$ be second order.
	\begin{enumerate} 
	\item
	 If $Y$ has a generalized covariance $\mathcal{K}$ of degree $k$, then $Y$ is IRF$_k$ and 
	${\cal K}$ must be conditionally positive definite of degree $k$. Conversely, if $Y$ is a mean-square
	continuous IRF$_k$, then it has a continuous generalized covariance of degree $k$. 
	\vskip.2cm
        \item
	A  continuous function ${\cal K}:\bbR^d\to \bbT$ is 
	conditionally positive definite of degree $k$ if and only if it can be represented as
	\begin{align}\label{e:K-h-rep}
	\mathcal{K}(h)=\int_{\bbR^d} \frac{e^{\ii  u^\top h}-I_B(u)P(u^\top h)}{1\wedge\|u\|^{2k+2}}\chi(du) + \mathcal{Q}(h),
	\end{align}
	where $P(x)=\sum_{j=0}^{2k+1} (\ii x)^{j}/j!$, $B$ is some arbitrary bounded neighborhood 
	of $0$, $\mathcal{Q}(h)$ is a conditionally positive definite polynomial with degree 
	no more than $2k+2$ and $\chi$ is a finite  $\T_+$-valued measure with no point 
	mass at $0$. The measure $\chi$ in \eqref{e:K-h-rep} is unique and does not depend on the choice 
        of the set $B$.  The polynomial $\mathcal{Q}$ therein is unique modulo an additive 
        polynomial of degree $2k+1$.
        \end{enumerate}
        	\end{theorem}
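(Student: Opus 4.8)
The plan is to handle the two parts in turn, reserving the bulk of the work for the spectral representation in part (2). For part (1), the forward implication is bookkeeping: if $\mathcal{C}_Y(\lambda,\mu)=\mathcal{K}(\lambda*\wt\mu)$, the identity $(w+\lambda)*\widetilde{(w+\mu)}=\lambda*\wt\mu$ noted before Definition \ref{def:GC} gives shift invariance, so $Y$ is IRF$_k$, while taking $\mu=\lambda$ yields $\mathcal{K}(\lambda*\wt\lambda)=\E[Y(\lambda)\otimes Y(\lambda)]\ge 0$, which is exactly \eqref{eq:cond-psd}. For the converse I would use the equivalence (Lemma \ref{l:identification_operator}) between \eqref{e:stationary_Y} and weak stationarity of $\{Y(t+\lambda),\,t\in\R^d\}$ for every $\lambda$, the point being to show that $\mathcal{C}_Y(\lambda,\mu)$ depends on $(\lambda,\mu)$ only through $\lambda*\wt\mu$; mean-square continuity of $\breve Y$ then makes the induced operator-valued function of $h$ continuous, defining $\mathcal K$. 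The sufficiency (``if'') in part (2) is a direct computation: for $\lambda=\sum_j c_j\delta_{t_j}\in\Lambda_k$,
\begin{align*}
\sum_{j,j'}c_j\overline{c_{j'}}\mathcal{K}(t_j-t_{j'})=\int_{\R^d}\frac{\big|\sum_j c_j e^{\ii u^\top t_j}\big|^2}{1\wedge\|u\|^{2k+2}}\,\chi(du)+\sum_{j,j'}c_j\overline{c_{j'}}\mathcal{Q}(t_j-t_{j'}),
\end{align*}
the correction term $I_B(u)P(u^\top(t_j-t_{j'}))$ dropping out because $P$ has degree $\le 2k+1$ and every monomial in its expansion carries a factor $\int t^\beta\lambda(dt)=0$ with $|\beta|\le k$; both surviving terms are positive operators.

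The heart is the necessity (``only if''). Writing $\wh\lambda(u):=\int e^{\ii u^\top t}\lambda(dt)$, I would fix $\lambda\in\Lambda_k$ and set $g_\lambda(h):=\sum_{j,j'}c_j\overline{c_{j'}}\mathcal{K}(h+t_j-t_{j'})$. Since $\Lambda_k$ is closed under convolution by finitely supported measures, $(\sum_m a_m\delta_{s_m})*\lambda\in\Lambda_k$, and applying \eqref{eq:cond-psd} to it shows $g_\lambda$ is a weakly positive-definite, trace-class-valued kernel, continuous at $0$. Theorem \ref{th:operator_Bochner} then gives a unique finite $\bbT_+$-valued measure $\mu_\lambda$ with $g_\lambda(h)=\int e^{\ii h^\top u}\mu_\lambda(du)$. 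For $\lambda_1,\lambda_2\in\Lambda_k$ I would compute the Bochner measure of $g_{\lambda_1*\lambda_2}$ in two ways (expanding through $\lambda_2*\wt\lambda_2$ and through $\lambda_1*\wt\lambda_1$) to obtain, by uniqueness in Bochner's theorem, the consistency relation $|\wh{\lambda_2}|^2\,\mu_{\lambda_1}=|\wh{\lambda_1}|^2\,\mu_{\lambda_2}$.

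This relation lets me glue the family $\{\mu_\lambda\}$ into one measure. On $\R^d\setminus\{0\}$ I define $\nu$ by $\mu_\lambda=|\wh\lambda|^2\nu$ on $\{\wh\lambda\ne0\}$; since every $u\ne0$ has some $\lambda$ with $\wh\lambda(u)\ne0$ and the local definitions agree by consistency, this yields a well-defined $\bbT_+$-valued measure on $\R^d\setminus\{0\}$. The key subtlety is the origin: as $\wh\lambda$ vanishes to order $k+1$ at $0$, we have $|\wh\lambda(0)|^2=0$, so the consistency relation gives no information about $\mu_\lambda(\{0\})$; an atom of $\mu_\lambda$ at $0$ therefore cannot be carried by $\nu$ and must instead be absorbed into the top-degree part of $\mathcal{Q}$ — this is precisely why $\chi$ carries no mass at $0$ and why $\deg\mathcal{Q}$ reaches $2k+2$. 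To see that $\chi(du):=(1\wedge\|u\|^{2k+2})\,\nu(du)$ is finite I would select finitely many $\lambda_1,\dots,\lambda_N\in\Lambda_k$ with $\sum_i|\wh{\lambda_i}(u)|^2\gtrsim 1\wedge\|u\|^{2k+2}$, whence $\int(1\wedge\|u\|^{2k+2})\,\nu(du)\lesssim\sum_i\mu_{\lambda_i}(\R^d)<\infty$.

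Finally I would recover the representation. Setting $\wt{\mathcal{K}}(h):=\int(e^{\ii h^\top u}-I_B(u)P(u^\top h))\,\nu(du)$ (convergent by the order-$2k+2$ cancellation near $0$ and finiteness of $\chi$ away from $0$), the computation from the sufficiency step gives $\sum_{j,j'}c_j\overline{c_{j'}}\wt{\mathcal{K}}(h+t_j-t_{j'})=g_\lambda(h)-\mu_\lambda(\{0\})$. Hence $\mathcal{Q}:=\mathcal{K}-\wt{\mathcal{K}}$ satisfies $\sum_{j,j'}c_j\overline{c_{j'}}\mathcal{Q}(h+t_j-t_{j'})=\mu_\lambda(\{0\})$, a constant in $h$ for every $\lambda$; a finite-difference (functional-equation) argument then identifies $\mathcal{Q}$ as a polynomial of degree $\le 2k+2$, conditionally positive definite since $\mathcal{K}$ and $\wt{\mathcal{K}}$ are. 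Uniqueness of $\chi$ follows from uniqueness of each $\mu_\lambda$ together with $\mu_\lambda=|\wh\lambda|^2\nu$, and changing $B$ alters $\wt{\mathcal{K}}$, hence $\mathcal{Q}$, only by a polynomial of degree $\le 2k+1$, giving the stated uniqueness of $\mathcal{Q}$. I expect the main obstacle to be this gluing step: proving consistency, correctly isolating the atom at $0$ as the source of the degree-$(2k+2)$ polynomial, and establishing the uniform lower bound needed for finiteness of $\chi$.
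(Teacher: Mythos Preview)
Your architecture matches the paper's almost exactly: apply the operator Bochner theorem (Theorem~\ref{th:operator_Bochner}) to $g_\lambda(h)=\mathcal K(\lambda*\wt\lambda+h)$, derive the consistency relation $|\wh{\lambda_2}|^2\mu_{\lambda_1}=|\wh{\lambda_1}|^2\mu_{\lambda_2}$, glue into a single measure, and identify the residual $\mathcal Q$ via a functional-equation argument (the paper's Lemma~\ref{l:bivariate_polynomial}). Your reading of the atom $\mu_\lambda(\{0\})$ as the source of the degree-$(2k+2)$ part of $\mathcal Q$ is exactly right.

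There is, however, a real gap in your finiteness step. You claim one can pick finitely many $\lambda_1,\dots,\lambda_N\in\Lambda_k$ with $\sum_i|\wh{\lambda_i}(u)|^2\gtrsim 1\wedge\|u\|^{2k+2}$. This fails. For any finite collection of \emph{finitely supported} measures, $F(u):=\sum_i|\wh{\lambda_i}(u)|^2$ is a trigonometric polynomial, hence Bohr almost periodic, and $F(0)=0$ because each $\lambda_i$ annihilates constants. Almost periodicity then gives, for every $\epsilon>0$, arbitrarily large $\epsilon$-almost-periods $\tau$, and $F(\tau)<F(0)+\epsilon=\epsilon$. Thus $\inf_{\|u\|\ge 1}F(u)=0$ and your bound $\int(1\wedge\|u\|^{2k+2})\,\nu(du)\lesssim\sum_i\mu_{\lambda_i}(\R^d)$ cannot be obtained this way.

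The paper's fix is to leave $\Lambda_k$. It constructs (Lemma~\ref{l:reference_measure}) a \emph{diffuse} reference measure $\mu_0$, a weak limit of measures in $\Lambda_k$, with $\wh{\mu_0}(u)\neq0$ for all $u\neq0$ and $\inf_{u\notin B}|\wh{\mu_0}(u)|>\delta>0$. Continuity of $\mathcal K$ lets Bochner and the consistency relation extend to $\mu_0$; one then sets $\chi(du)=(1\wedge\|u\|^{2k+2})\,|\wh{\mu_0}(u)|^{-2}\tau_{\mu_0}(du)$ globally (bypassing your local-patch gluing, which would otherwise need a separate $\sigma$-additivity check), and bounds $\|\chi\|_{\rm tr}$ by splitting at $\partial B$: $\mu_0$ controls the tail, a single $(\delta_1-\delta_0)^{*(k+1)}$ controls the origin (Proposition~\ref{pp:K}). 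Your sketch for the converse in part~(i) is also too quick: knowing that $\mathcal C_Y(\lambda,\mu)$ depends only on $\lambda*\wt\mu$ yields a function on $\Lambda_{2k+1}$, not on $\R^d$; the paper obtains the pointwise $\mathcal K(h)$ by rerunning the full spectral argument of part~(ii) with $C_\mu(h):=\mathcal C_Y(\mu+h,\mu)$ in place of $\mathcal K(\mu*\wt\mu+h)$.
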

	
The detailed proof of this result can be found in Section \ref{suppsec:thm4.3_proof} of Supplement.
Note that the proof follows closely the general and elegant treatment of \cite{Sasvari:2009}.

\begin{remark} In the notation of \cite{Sasvari:2009}, our situation corresponds to having a single multiplicative 
function (character) $\gamma_1 \equiv 1$ and $y_1:= 0$ and $k_1:= k+1$ and their measure $\sigma$ is our 
$(1\wedge \|u\|^{2k+2})^{-1}\chi(du)$.  Observe also that Relation (4.3) in Theorem 4.2 of \cite{Sasvari:2009} 
appears to be missing the non-ignorable degree $2k+2$ polynomial component in ${\mathcal Q}$ of \eqref{e:K-h-rep}.  
This omission can be attributed to the fact that the spectral measure of a stationary process in the Bochner theorem 
could have an atom at $\{0\}$, while $\sigma$ and $\chi$ do not. See Section \ref{suppsec:thm4.3_proof}
for more details.
\end{remark}

The measure $\chi$ and polynomial $\cal Q$ in \eqref{e:K-h-rep} will be referred to as the {\em spectral characteristics} of an IRF$_k$ with 
generalized covariance ${\cal K}$.  Note that the {\em spectral characteristics} pair $(\chi,\cal Q)$ is unique modulo an additive polynomial 
of degree $2k+1$ in the component $\cal Q$.  That is, the generalized covariance in \eqref{e:K-h-rep} is unique up to an additive polynomial of order $2k+1$.  
This implies that  ${\cal K}(\lambda*\wt \mu)$ is uniquely determined for $\lambda,\mu\in \Lambda_k$, where 
${\cal K}(\nu):=\int {\cal K}(h)\nu(dh),\ \nu\in\Lambda$. Notice that $\lambda*\wt \mu \in \Lambda_{2k+1}$, for $\lambda,\mu\in \Lambda_k$.  
Thus, in view of \eqref{e:def:GC}, the covariance structure of an IRF$_k$ process is completely determined by the linear 
measure-indexed $\bbT$-valued function ${\cal K}(\nu),\ \nu\in\Lambda_{2k+1}$. By integrating \eqref{e:K-h-rep} with respect to $\nu \in \Lambda_{2k+1}$, we obtain
\begin{align}\label{e:K-of_nu}
	\mathcal{K}(\nu)=\int_{\bbR^d} \frac{\widehat\nu(u)}{1\wedge\|u\|^{2k+2}}\chi(du) + \mathcal{Q}(\nu),
	\ \nu\in\Lambda_{2k+1},
\end{align}
where $\widehat\nu(u) =\int e^{\ii u^\top x} \nu(dx)$ is the Fourier transform of $\nu$. 
Since $\widehat {\lambda *\wt \mu} = \widehat \lambda \overline{\widehat\mu}$, the cross covariance operator $\mathcal{C}_Y(\lambda,\mu)$
of $Y$ can be uniquely expressed as
\begin{align}\label{e:IRF-cov-spectral}
\mathcal{C}_Y(\lambda,\mu) = {\cal K}(\lambda * \wt \mu) = \int_{\bbR^d} \frac{ \widehat \lambda (u) 
 \overline{\widehat\mu(u) }}{1\wedge\|u\|^{2k+2}}\chi(du) + \mathcal{Q}(\lambda * \wt \mu).
\end{align}


Now, consider the following counterpart to Definition \ref{def:complely-pos-def-function}.

\begin{definition} \label{def:complete-positive-definiteness}
A  collection of trace-class operators $\{{\cal K}(h), h\in\bbR^d\}\subset\mathbb{T}$ is said to be 
conditionally complete positive definite of degree $k$, $k=-1, 0,1,\ldots$, if
\begin{align}\label{eq:cond-complete-psd}
\sum_{j=1}^n\sum_{j'=1}^n \langle f_j, \mathcal{K}(\mu_j*\wt\mu_{j'}) f_{j'}\rangle \ge 0,
\end{align}
for all $f_j\in \V, \mu_j\in\Lambda_k,\ j=1,\cdots,n$ and $n\in \bbN$. 
\end{definition}

Since \eqref{eq:cond-psd} is the special case of \eqref{eq:cond-complete-psd} with $n=1$, 
conditional complete positive definiteness implies conditional positive definiteness.
However, as seen from \eqref{e:IRF-cov-spectral}, ${\cal K}(\lambda*\wt\mu),\ \lambda,\mu\in\Lambda_k$ is a valid cross-covariance,
and hence the operator function $\mathcal{K}$ in \eqref{e:K-h-rep} is conditionally complete positive definite. Thus, Theorem \ref{th:IRF_k_spectral_operator_value_1} implies the following parallel of Corollary \ref{c:weak-psd-implies-complete-psd}.

\begin{corollary} If ${\cal K}:\bbR^d \to \bbT$ is continuous, then Definitions \ref{ed:cond_pos} and 
\ref{def:complete-positive-definiteness} are equivalent.  
\end{corollary}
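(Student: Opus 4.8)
The plan is to establish the two implications separately, leveraging the machinery developed earlier. Recall that Definition \ref{ed:cond_pos} (conditional positive definiteness of degree $k$) is precisely the special case $n=1$ of Definition \ref{def:complete-positive-definiteness} (conditional complete positive definiteness of degree $k$), as the text immediately notes. Hence the direction that conditional complete positive definiteness implies conditional positive definiteness is trivial and requires no continuity hypothesis at all. The substance of the corollary is therefore the reverse implication: under the continuity assumption on ${\cal K}$, conditional positive definiteness of degree $k$ forces conditional complete positive definiteness of degree $k$.

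For the nontrivial direction, the key observation is that the heavy lifting has already been done by Theorem \ref{th:IRF_k_spectral_operator_value_1}. First I would invoke part (2) of that theorem: since ${\cal K}:\bbR^d\to\bbT$ is continuous and conditionally positive definite of degree $k$, it admits the spectral representation \eqref{e:K-h-rep} in terms of a finite $\T_+$-valued measure $\chi$ and a conditionally positive definite polynomial $\mathcal{Q}$. Next, as the discussion following \eqref{e:IRF-cov-spectral} makes explicit, a kernel of the form \eqref{e:K-h-rep} is exactly the generalized covariance of a genuine second order IRF$_k$ process $Y$, so that $\mathcal{C}_Y(\lambda,\mu) = {\cal K}(\lambda*\wt\mu)$ for all $\lambda,\mu\in\Lambda_k$ via \eqref{e:def:GC}. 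The final step is then to read off complete positive definiteness directly from the fact that ${\cal K}(\lambda*\wt\mu)$ is a bona fide cross-covariance: for any $f_j\in\V$ and $\mu_j\in\Lambda_k$,
\begin{align*}
\sum_{j=1}^n\sum_{j'=1}^n \langle f_j, {\cal K}(\mu_j*\wt\mu_{j'}) f_{j'}\rangle
= \sum_{j=1}^n\sum_{j'=1}^n \langle f_j, \mathcal{C}_Y(\mu_j,\mu_{j'}) f_{j'}\rangle
= \E\Big[ \Big\| \sum_{j=1}^n \overline{\langle Y(\mu_j), f_j\rangle}\,\Big\|^2 \Big]\ge 0,
\end{align*}
where the middle equality uses $\mathcal{C}_Y(\mu_j,\mu_{j'})=\E[Y(\mu_j)\otimes Y(\mu_{j'})]$ together with the definition $(a\otimes b)c=\langle c,b\rangle a$, so that $\langle f_j,(Y(\mu_j)\otimes Y(\mu_{j'}))f_{j'}\rangle = \langle f_j,Y(\mu_j)\rangle\langle Y(\mu_{j'}),f_{j'}\rangle$, and the diagonal sum collapses to a squared norm of the scalar random variable $\sum_j \overline{\langle Y(\mu_j),f_j\rangle}$. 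This is precisely inequality \eqref{eq:cond-complete-psd}.

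I do not anticipate a genuine obstacle here, since this corollary is designed to mirror Corollary \ref{c:weak-psd-implies-complete-psd} and the same strategy applies. The only point requiring mild care is the bookkeeping in the penultimate display: one must verify that the double sum of inner products against the outer-product operators $Y(\mu_j)\otimes Y(\mu_{j'})$ indeed reassembles as the expectation of the squared norm (or modulus) of a single scalar random variable, which is where positivity comes for free. The continuity hypothesis enters only insofar as it is the precondition for applying part (2) of Theorem \ref{th:IRF_k_spectral_operator_value_1} to obtain the spectral representation; without it one cannot guarantee the existence of the representing process $Y$, and the equivalence may fail. Thus the proof is essentially a two-line deduction from the spectral theorem together with the elementary fact that every cross-covariance kernel is completely positive definite.
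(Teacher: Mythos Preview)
Your proposal is correct and follows essentially the same route as the paper: the trivial direction is the $n=1$ specialization, and for the converse you invoke the spectral representation from Theorem \ref{th:IRF_k_spectral_operator_value_1}(ii) and then read off complete positive definiteness from the fact that ${\cal K}(\lambda*\wt\mu)$ is a genuine cross-covariance. The only cosmetic point is that in your final display the quantity $\sum_j \overline{\langle Y(\mu_j),f_j\rangle}$ is a scalar, so $|\cdot|^2$ rather than $\|\cdot\|^2$ is the appropriate notation.
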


We end this section with a stochastic representation result for continuous  
IRF$_k$, which parallels the Cram\'er representation in Theorem \ref{th:integral_representation_stationary}. 
The proof is given in Section \ref{supp:th:integral_representation_IRF}. 

\begin{theorem}\label{th:integral_representation_IRF}
Let $k\ge -1$ and the process $Y=\{Y(\lambda), \lambda\in\Lambda_k\}$ in $S(\Lambda_k,\V)$ be 
mean-square continuous. Then $Y$ is IRF$_k$ if and only if it can be uniquely represented as 
	\begin{align} \label{e:IRF_representation}
	Y(\lambda) = \sum_{(j_1,\ldots, j_d) \in J} {\partial^{k+1}  \widehat\lambda\over\partial^{j_1}\cdots\partial^{j_d}}  (0) \cdot Z_{j_1\cdots j_d} + \int _{\mathbb{R}^d}\frac{\widehat\lambda(u)}{1\wedge \|u\|^{k+1}}\xi(du),\ \lambda\in
	\Lambda_k, \quad \mbox{a.s.}
	\end{align}
	where 
	\begin{enumerate}
		\item $J = \{(j_1,\ldots,j_d): j_1,\ldots, j_d\ge 0 \hbox{ and } j_1+\cdots+j_d=k+1\}$,
		\vskip.2cm
		\item $\xi$ is an a.s.\ unique random orthogonal measure $\xi$ on $(\R^d,\mathcal{B}(\mathbb{R}^d))$ 
		with control measure $\chi$, where $\chi$ is a finite $\T_+$-valued measure with no point mass at $0$,
		and
		\vskip.2cm
		\item the $Z_{j_1\cdots j_d}$ are uncorrelated random variables with values in 
		${\mathbb V}$ and are uncorrelated with $\xi$.
	\end{enumerate}
\end{theorem}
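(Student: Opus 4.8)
The plan is to reduce everything to the spectral form of the covariance already supplied by Theorem~\ref{th:IRF_k_spectral_operator_value_1}. Write $f_\lambda(u) := \widehat\lambda(u)/(1\wedge\|u\|^{k+1})$ and, for multi-indices $\beta$ with $|\beta|=k+1$, record the top-order Taylor data $g_\lambda^\beta := (\partial^{k+1}\widehat\lambda/\partial u^\beta)(0)$, which is exactly the coefficient attached to $Z_\beta$ in \eqref{e:IRF_representation}. Expanding $\mathcal{Q}(\lambda*\wt\mu)$ and using that $\lambda,\mu\in\Lambda_k$ annihilate polynomials of degree $\le k$ (so only the degree-$(2k+2)$ part of $\mathcal Q$ survives and splits into two blocks of $k+1$ derivatives), the content of \eqref{e:IRF-cov-spectral} becomes
\[
\mathcal{C}_Y(\lambda,\mu) = \int_{\R^d} f_\lambda(u)\,\overline{f_\mu(u)}\,\chi(du) + \sum_{|\beta|=|\gamma|=k+1} g_\lambda^\beta\,\overline{g_\mu^\gamma}\,\mathcal{M}_{\beta\gamma},
\]
where $\mathcal{M}_{\beta\gamma}=(-1)^{k+1}\binom{\beta+\gamma}{\beta}\mathcal{Q}_{\beta+\gamma}\in\bbT$ depends only on the (fixed) top-degree coefficients of $\mathcal Q$, and $f_\lambda\in L^2(\R^d,\|\chi\|_{\rm tr})$ because $\widehat\lambda$ vanishes to order $k+1$ at $0$ and $\chi$ is finite.

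For the ``if'' direction I would insert the proposed representation into $\mathcal{C}_Y(\lambda,\mu)=\E[Y(\lambda)\otimes Y(\mu)]$. Bilinearity, the hypothesis that $\{Z_\beta\}$ is uncorrelated with $\xi$, and the isometry identity $\E[\mathcal{I}_\xi(f)\otimes\mathcal{I}_\xi(g)]=\int f\bar g\,d\chi$ established just before Theorem~\ref{th:integral_representation_stationary} reproduce the two-term expression above with $\mathcal{M}_{\beta\gamma}=\E[Z_\beta\otimes Z_\gamma]$. Since $\widehat{w+\lambda}(u)=e^{\ii u^\top w}\widehat\lambda(u)$ leaves $f_\lambda\overline{f_\mu}$ and each $g_\lambda^\beta$ invariant under the common shift $(\lambda,\mu)\mapsto(w+\lambda,w+\mu)$, the covariance is shift invariant, so \eqref{e:stationary_Y} holds and $Y$ is IRF$_k$.

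The substance is the ``only if'' direction, a Cram\'er-type construction. Let $\mathcal{H}$ be the closed linear span of $\{Y(\lambda):\lambda\in\Lambda_k\}$ in $\bbL^2(\Omega)$. I would define a map on the model space $L^2(\R^d,\|\chi\|_{\rm tr})\oplus\C^{|J|}$, carrying the $\bbT$-valued covariance given by $\chi$ on the first summand and by $\mathcal{M}$ on the second, by sending $(f_\lambda,g_\lambda)\mapsto Y(\lambda)$; relation \eqref{e:IRF-cov-spectral} shows this is well defined and covariance preserving. The key density input is that $\{f_\lambda:\lambda\in\Lambda_k\}$ is dense in $L^2(\R^d,\|\chi\|_{\rm tr})$: if $h$ were orthogonal to every $\widehat\lambda$, then testing against the representer measures $\lambda_t$ of \eqref{e:eval1} forces the Fourier transform of the finite measure $\overline h\,d\|\chi\|_{\rm tr}$ to coincide with a polynomial of degree $\le k$, hence to be bounded and thus constant; as $\|\chi\|_{\rm tr}(\{0\})=0$ this yields $h=0$. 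Together with the fact that $\{g_\lambda\}$ spans $\C^{|J|}$ (the degree-$(k+1)$ moment functionals are linearly independent over $\Lambda_k$) and the positivity of the operator-matrix $\mathcal{M}$ (from conditional complete positive definiteness, Definition~\ref{def:complete-positive-definiteness}), this lets me extend the map to an isometry, set $\xi(A)$ to be the image of $(I_A,0)$ and $Z_\beta$ the image of $(0,e_\beta)$, and read off that $\xi$ is a $\V$-valued orthogonal measure with control measure $\chi$ (Definition~\ref{def:orthogonal-measure}), that $\E[Z_\beta\otimes Z_\gamma]=\mathcal{M}_{\beta\gamma}$, that $Z_\beta\perp\xi$, and that $Y(\lambda)=\mathcal{I}_\xi(f_\lambda)+\sum_\beta g_\lambda^\beta Z_\beta$; uniqueness is automatic since $\xi$ and each $Z_\beta$ are images of fixed functions under the isometry determined by $Y$.

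I expect the main obstacle to be upgrading the isometry from the scalar (trace) level, where $\langle Y(\lambda),Y(\mu)\rangle_\Omega=\tr\mathcal{C}_Y(\lambda,\mu)=\int f_\lambda\overline{f_\mu}\,d\|\chi\|_{\rm tr}$, to one preserving the full $\bbT_+$-valued covariance, so that the resulting $\xi$ has operator control measure $\chi$ rather than merely $\|\chi\|_{\rm tr}$, and simultaneously disentangling the degree-$(k+1)$ ``polynomial drift'' part from the integral part inside $\mathcal{H}$ (i.e.\ showing the directions $(0,e_\beta)$ are genuinely adjoined to the span of $\{(f_\lambda,g_\lambda)\}$). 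This is precisely the difficulty resolved in the stationary Cram\'er theorem (Theorem~\ref{th:integral_representation_stationary}), whose construction I would import on the orthogonal complement of the polynomial block. A final caveat: the $\mathcal{M}_{\beta\gamma}$ are not diagonal in general, so the $Z_\beta$ are mutually uncorrelated only after diagonalizing the positive operator-matrix $\mathcal{M}$; the orthogonality actually used in the representation is that the whole family $\{Z_\beta\}$ is uncorrelated with $\xi$, which holds because the polynomial and integral parts occupy orthogonal summands of the model space by construction.
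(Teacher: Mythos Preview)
Your model-space isometry approach is genuinely different from the paper's. The paper does not build an abstract model $L^2(\|\chi\|_{\rm tr})\oplus\C^{|J|}$; instead it applies the stationary Cram\'er representation (Theorem~\ref{th:integral_representation_stationary}) directly to each shifted process $t\mapsto Y(\lambda+t)$, obtaining orthogonal random measures $\xi_\lambda$, and in particular $\xi_{\mu_0}$ attached to the reference measure $\mu_0$ of Lemma~\ref{l:reference_measure} (whose Fourier transform is nonvanishing off $0$). Writing $Y(\lambda*\mu_0+t)$ two ways forces $\widehat\mu_0(x)\xi_\lambda(dx)=\widehat\lambda(x)\xi_{\mu_0}(dx)$, so $\xi$ is \emph{defined} explicitly by $\xi(B)=\int_{\R^d\setminus\{0\}}\mathbf{1}_B(x)\frac{1\wedge\|x\|^{k+1}}{\widehat\mu_0(x)}\xi_{\mu_0}(dx)$. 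The polynomial part is then the remainder $U(t):=\breve Y(t)-\int\frac{e^{\ii t^\top x}-P(t^\top x)}{1\wedge\|x\|^{k+1}}\xi(dx)$; a direct computation using the identity above gives $U(\lambda+t)=\xi_\lambda(\{0\})$, independent of $t$, and a Sasv\'ari-type rigidity lemma forces $U$ to be a random polynomial of degree at most $k+1$. This constructive route bypasses the density and disentangling issues in your model space entirely.

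The gap you flag is real, and your proposed resolution (``import the Cram\'er construction on the orthogonal complement of the polynomial block'') does not close it: that block inside $\mathcal H$ is precisely what you are trying to isolate, so you cannot take its complement beforehand. Concretely, your isometry is only defined on the closure of $\{(f_\lambda,g_\lambda):\lambda\in\Lambda_k\}$, and density of $\{f_\lambda\}$ in $L^2$ alone does not place $(I_A,0)$ there, since $f_{\lambda_n}\to I_A$ gives no control over $g_{\lambda_n}$. A clean fix you did not mention: restrict to $\lambda\in\Lambda_{k+1}\subset\Lambda_k$, for which $g_\lambda=0$ automatically; your Fourier-uniqueness argument still gives density of $\{f_\lambda:\lambda\in\Lambda_{k+1}\}$ in $L^2(\|\chi\|_{\rm tr})$ (the polynomial obstruction is now degree $\le k+1$, still bounded hence constant, and $\chi(\{0\})=0$), which places $(I_A,0)$ in the closure. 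Then $(0,e_\beta)=(f_{\lambda^\beta},e_\beta)-(f_{\lambda^\beta},0)$ follows for any $\lambda^\beta\in\Lambda_k$ with $g_{\lambda^\beta}=e_\beta$, and the rest of your argument goes through. Your final caveat---that the $Z_\beta$ need not be mutually uncorrelated unless $\mathcal M$ is diagonal---is well taken; the paper's proof also leaves this point undeveloped.
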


\begin{remark}
\cite{Berschneider:2012de} also obtains the stochastic representation of IRF$_k$ with the more abstract setting of 
locally compact Abelian domains. Our result here can be considered as an extension to the case of ${\mathbb V}$-valued 
processes connecting $\xi(dx)$ with the covariance operator functions in Theorem~\ref{th:IRF_k_spectral_operator_value_1}
explicitly. 
\end{remark}

\subsection{Real and complex IRF$_k$'s: Covariance (ir)reversibility.} \label{sec:real-complex}

The general treatment in the previous two subsections involves an abstract separable Hilbert space $\V$ 
over the field of complex numbers $\mathbb C$.  In practice, however, one often deals with Hilbert spaces of real-valued functions and it is useful to know
how our results specialize to this setting.  Furthermore, the distribution of a zero-mean Gaussian process taking values in a 
{\em complex} Hilbert space $\V$ cannot be {\em directly} characterized using their covariance structure, alone.  
To this end one needs to consider both real and complex Hilbert spaces.


\medskip
{\em Real and imaginary parts in a complex Hilbert space.} In an abstract complex Hilbert space $\V$ the notions of a real and imaginary part 
of an element $z(\in\V)$ are not well defined unless one fixes a basis. Let ${\cal E}:=\{e_j,\ j\in\N\}$ be a {\em fixed} CONS of $\V$.  Then one can postulate that the CONS ${\cal E}$ is
{\em real} and for each $z = \sum_{j} z_j e_j\in \V$, with coordinates $z_j:=\langle z,e_j\rangle,$ we can define
\begin{align}\label{e:real-complex}
\Re(z) \equiv \Re_{\cal E}(z):= \sum_{j} \Re(z_j) e_j\ \ \ \mbox{ and }\ \ \ \Im(z) \equiv \Im_{\cal E}(z):= \sum_{j} \Im(z_j) e_j,
\end{align}
as the {\em real} and {\em imaginary} parts of $z$, relative to the CONS ${\cal E}$.  (Should one change the basis ${\cal E}$ the notions of real and imaginary part may change.)
Notice that $\V_\R :=\{ z\in\V\, :\, \Im(z)=0\}$ is invariant to addition and multiplication by real scalars and it becomes a real Hilbert space, with the inner product inherited from 
$\V$.  All elements of $\V$ that belong to $\V_\R$ will be referred to as real.

For $z\in \V$, we shall write $z = \Re(z) + \ii \Im(z)$ and naturally define the {\em complex conjugate} $\overline{z} := \Re(z) - \ii \Im(z)$.   The complex conjugate operation
as well as the real and imaginary part operators extend to $\V$-valued random elements in a straightforward manner
and we shall say that $x\in\V$ is real if $x\in \V_\R$, i.e., if its imaginary part is zero.  

The complex conjugate of a linear operator ${\cal A}:\V\to \V$ is defined as:
$
\overline \A(x):= \overline {\A(\overline x)},\ \ x\in \V.
$
This implies that $\overline {\A(x)} = \overline \A (\overline x)$, the operator $\overline \A$ is also {\em linear} and one can 
define the real and imaginary parts of $\A$ in as:
$
\Re(\A):={(\A + \overline \A)}/{2} \mbox{ and }\Im(\A):={(\A-\overline \A)}/{2\ii}. 
$
Thus, $\A = \Re(\A) + \ii \Im(\A)$ and the usual operations with complex numbers and vectors extend 
to the operator Banach algebra over the complex Hilbert space $\V$.  Note that the real and imaginary parts of ${\cal A}$ can be equivalently defined 
in terms of the real and imaginary parts of the coordinates of ${\cal A}$ in the fixed CONS ${\cal E}$.  We shall say that an operator $\A$ is {\em real} if
$\A = \Re(\A)$.\\

 {\em Real IRF$_k$'s.}  The above discussion shows how one can specialize and interpret the results in Sections \ref{sec:Bochner} and \ref{ss:spectral_notation_1} for 
 the case of real Hilbert spaces $\V_\R$.  Indeed, let $\Lambda_k(\R)$ be the set of all real $\lambda \in\Lambda_k$. It is easy to see that 
 $\Lambda_k = \Lambda_k(\R) + \ii \Lambda_k(\R)$.  
 
Suppose now that $Y$ is a $\V_\R$-valued IRF$_k$.  That is, Definition \ref{d:IRF} holds with $\Lambda_k$ replaced by $\Lambda_k(\R)$.  Then, by linearity,
$Y$ extends uniquely to a $\V$-valued IRF$_k$ as follows 
\begin{align}\label{e:Y-extension}
 Y(\lambda) := Y(\Re(\lambda)) + \ii Y(\Im(\lambda)),\ \lambda\in\Lambda_k,
\end{align}
where in fact $Y(\Re(\lambda))$ and $Y(\Im(\lambda))$ are real (belong to $\V_\R$).  This leads us to the following
 
 \begin{definition} \label{def:real-IRFk} A $\V$-valued IRF$_k$ $Y$ is said to be real if $Y(\lambda)$ is real for all $\lambda\in\Lambda_k(\R)$.
 \end{definition}
 
\noindent Thus, there is a one-to-one correspondence between the real IRF$_k$'s in $\V$ and the $\V_\R$-valued IRF$_k$'s as processes indexed by $\Lambda_k(\R)$.
 
 
 \begin{proposition}\label{p:real-IRFk} Let $Y$ be a mean-square continuous IRF$_k$ taking values in $\V$ and having spectral characteristics $(\chi,{\cal Q})$.  
 
\begin{enumerate}
 \item
$Y$ is real if and only if in Relation \eqref{e:IRF_representation} the vectors $(\ii)^{k+1} Z_{j_1\cdots j_d}$ are real and the orthogonal measure 
  $\xi$ is Hermitian, i.e., $\xi(-A) = \overline{\xi(A)},$ almost surely, for all $A\in {\cal B}(\R^d\setminus\{0\})$.
  \vskip.2cm

\item
If $Y$ is real, then ${\cal Q}$ has real (operator) coefficients 
 (modulo polynomials of degree up to $2k+1$) and the spectral measure $\chi$ is Hermitian, i.e., 
 $
 \overline{\chi(-A)} = \chi(A),$  for all $A\in {\cal B}(\R^d\setminus\{0\}).
 $
 \vskip.2cm
 
\item Conversely, if $\chi$ is Hermitian and ${\cal Q}$ real, then there is a real IRF$_k$ $Y$with spectral characteristics $(\chi,{\cal Q})$. Let now $Y$ and $Y^\prime$ be two 
  real IRF$_k$ with the same spectral characteristics $(\chi,{\cal Q})$ and such that $\E[ Y(\lambda)\otimes Y^\prime(\mu)] = \E [Y^\prime(\lambda) \otimes Y(\mu)],\ \lambda,\mu\in\Lambda_k$.  
  Then, for any $a,b\in\R,$ with $a^2 + b^2=1$, 
  the IRF$_k$ defined as $\wt Y(\lambda) = aY(\lambda) + \ii b Y^\prime(\lambda)$ has the same spectral characteristics as $Y$ and $Y^\prime$. 
\end{enumerate}
   \end{proposition}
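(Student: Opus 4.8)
The entire argument is driven by the unique representation \eqref{e:IRF_representation} of Theorem~\ref{th:integral_representation_IRF}, the elementary Fourier identity $\overline{\widehat\lambda(u)}=\widehat\lambda(-u)$ valid for $\lambda\in\Lambda_k(\R)$, and the operator-conjugation identity $\overline{f\otimes g}=\overline f\otimes\overline g$, which follows from $\overline{\A}(x)=\overline{\A(\overline x)}$ together with the conventions in \eqref{e:real-complex}. Throughout write $D_{\bj}(\lambda):=\partial^{k+1}\widehat\lambda/(\partial^{j_1}\cdots\partial^{j_d})(0)$ for $\bj=(j_1,\ldots,j_d)\in J$ and $f_\lambda(u):=\widehat\lambda(u)/(1\wedge\|u\|^{k+1})$, so that \eqref{e:IRF_representation} reads $Y(\lambda)=\sum_{\bj}D_{\bj}(\lambda)Z_{\bj}+\int f_\lambda\,d\xi$. \emph{For part (i)}, the plan is to introduce the linear process $Y^\sharp(\lambda):=\overline{Y(\overline\lambda)}$, which is again a mean-square continuous IRF$_k$ with the same spectral measure, and to observe that $Y$ is real iff $Y^\sharp=Y$ (test on $\lambda\in\Lambda_k(\R)$, where $\overline\lambda=\lambda$, and extend by linearity using $\Lambda_k=\Lambda_k(\R)+\ii\Lambda_k(\R)$). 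I would then read off the representation of $Y^\sharp$: from $\widehat{\overline\lambda}(u)=\overline{\widehat\lambda(-u)}$ one gets $\overline{D_{\bj}(\overline\lambda)}=(-1)^{k+1}D_{\bj}(\lambda)$, while a change of variables $u\mapsto -u$ turns the stochastic-integral term into $\int f_\lambda\,d\xi^{-}$ with $\xi^{-}(A):=\overline{\xi(-A)}$. Hence $Y^\sharp$ has representation parameters $((-1)^{k+1}\overline{Z_{\bj}},\,\xi^{-})$, and the uniqueness in Theorem~\ref{th:integral_representation_IRF} forces $Y^\sharp=Y$ iff $Z_{\bj}=(-1)^{k+1}\overline{Z_{\bj}}$ and $\xi=\xi^{-}$. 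Multiplying the first relation by $(\ii)^{k+1}$ shows it is equivalent to $(\ii)^{k+1}Z_{\bj}$ being real, and the second is exactly the Hermitian property $\xi(-A)=\overline{\xi(A)}$.

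\emph{Part (ii).} Assuming $Y$ real, part (i) supplies $\xi$ Hermitian and $W_{\bj}:=(\ii)^{k+1}Z_{\bj}\in\V_\R$. For the spectral measure, using $\overline{f\otimes g}=\overline f\otimes\overline g$ and that the Bochner expectation commutes with conjugation, $\chi(-A)=\E[\xi(-A)\otimes\xi(-A)]=\E[\overline{\xi(A)}\otimes\overline{\xi(A)}]=\overline{\E[\xi(A)\otimes\xi(A)]}=\overline{\chi(A)}$, i.e.\ $\chi$ is Hermitian. For the polynomial part, I would match the polynomial component of $\mathcal C_Y$, computed from the representation as $\sum_{\bj,\bj'}D_{\bj}(\lambda)\overline{D_{\bj'}(\mu)}\,\E[Z_{\bj}\otimes Z_{\bj'}]$, against ${\cal Q}(\lambda*\wt\mu)$ in \eqref{e:IRF-cov-spectral}. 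Since $Z_{\bj}\otimes Z_{\bj'}=W_{\bj}\otimes W_{\bj'}$ is a real operator (as $W_{\bj},W_{\bj'}\in\V_\R$), the top-degree ($2k+2$) operator coefficients of ${\cal Q}$ are real, which is the claim modulo the degree-$(2k+1)$ ambiguity inherent to ${\cal Q}$.

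\emph{Part (iii).} For existence I would build the representation \eqref{e:IRF_representation} directly and invoke the converse direction of Theorem~\ref{th:integral_representation_IRF} to produce an IRF$_k$, then apply part (i) for reality. Fix a lexicographic half-space $H$ with $\bbR^d\setminus\{0\}=H\sqcup(-H)$; construct a proper complex $\V$-valued orthogonal measure $\xi$ on $H$ with control $\chi|_H$ and vanishing pseudo-covariance $\E[\xi(A)\otimes\overline{\xi(C)}]=0$ for $A,C\subset H$, and extend to $-H$ by $\xi(A):=\overline{\xi(-A)}$; Hermitian symmetry of $\chi$ gives the control-measure identity on $-H$ and properness gives orthogonality across $H$ and $-H$. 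Take real Gaussian $W_{\bj}$ with second moments prescribed by the real top-degree coefficients of ${\cal Q}$ and set $Z_{\bj}=(\ii)^{-(k+1)}W_{\bj}$. For the mixing claim, expand $\mathcal C_{\wt Y}(\lambda,\mu)$ for $\wt Y=aY+\ii bY'$ by bilinearity of $\otimes$ (conjugate-linear in the second slot, so the factor $\ii$ contributes $-\ii$ there): the diagonal yields $a^2\mathcal C_Y+b^2\mathcal C_{Y'}$ and the off-diagonal yields $\ii ab\big(\E[Y'(\lambda)\otimes Y(\mu)]-\E[Y(\lambda)\otimes Y'(\mu)]\big)$, which vanishes by the symmetry hypothesis. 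Since $Y,Y'$ share spectral characteristics, $\mathcal C_Y=\mathcal C_{Y'}$, whence $\mathcal C_{\wt Y}=(a^2+b^2)\mathcal C_Y=\mathcal C_Y$, so $\wt Y$ inherits $(\chi,{\cal Q})$ by the uniqueness in Theorem~\ref{th:IRF_k_spectral_operator_value_1}.

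\emph{Main obstacle.} The genuinely delicate step is the Hermitian orthogonal-measure construction in the existence part: the constraint $\xi(-A)=\overline{\xi(A)}$ couples the values of $\xi$ at $u$ and $-u$, which forces the half-space construction, and one must simultaneously verify the control-measure identity (requiring $\chi$ Hermitian) and cross-orthogonality between $H$ and $-H$ (requiring properness of the measure built on $H$). Everything else reduces to Fourier and operator bookkeeping together with the uniqueness already granted by Theorems~\ref{th:integral_representation_IRF} and~\ref{th:IRF_k_spectral_operator_value_1}.
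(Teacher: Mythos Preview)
Your proposal is correct and tracks the paper's proof closely for parts (ii) and (iii); in particular, your expansion of $\mathcal C_{\wt Y}$ in part (iii) is verbatim the paper's computation, and your derivation of $\overline{\chi(-A)}=\chi(A)$ from the Hermitian property of $\xi$ in part (ii) is exactly what the paper means by ``immediate consequence of (i).'' You also supply a genuine half-space construction of a Hermitian orthogonal measure for the existence claim in (iii), which the paper simply asserts.

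The one place you diverge is part (i). You introduce the involution $Y^\sharp(\lambda):=\overline{Y(\overline\lambda)}$, compute its representation parameters as $((-1)^{k+1}\overline{Z_{\bj}},\,\xi^{-})$, and invoke the full uniqueness of Theorem~\ref{th:integral_representation_IRF} to conclude $\xi=\xi^{-}$ and $Z_{\bj}=(-1)^{k+1}\overline{Z_{\bj}}$ simultaneously. The paper instead first views $Y$ as an IRF$_{k+1}$: for $\lambda\in\Lambda_{k+1}$ all the $D_{\bj}(\lambda)$ vanish, so \eqref{e:IRF_representation} collapses to a pure stochastic integral, and the \emph{explicitly stated} a.s.\ uniqueness of the orthogonal measure in Theorem~\ref{th:integral_representation_IRF} (at order $k+1$) gives $\xi$ Hermitian directly. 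Only then does the paper read off the $Z$-condition by subtracting the (now known to be real) integral from the real $Y(\lambda)$ for $\lambda\in\Lambda_k(\R)$. Your route is arguably cleaner but leans on uniqueness of the $Z_{\bj}$ as well as $\xi$; the paper's detour through $\Lambda_{k+1}$ buys it the ability to use only the part of the uniqueness that is stated explicitly. Both arguments are sound.
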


The proof is given in Section \ref{sec:appdix_integral_def} of the Appendix.\\

{\em Covariance (ir)reversibility.} Next, we comment on an important covariance irreversibility phenomenon, which arises in 
the case of vector valued processes.  It extends the notion of {\em time reversibility} for vector time series.

\begin{definition}\label{def:cov-reversible} We shall say that an IRF$_k$  $Y$ with generalized operator covariance ${\cal K}(\cdot)$
 is covariance-symmetric or -reversible if
\begin{align}\label{e:K-symmetric}
{\cal K}(\nu) = {\cal K}((-1)\cdot \nu),\ \ \mbox{ for all }\nu\in\Lambda_{2k+1}.
\end{align}
\end{definition}

Observe that the symmetry of the generalized covariance is equivalent to the fact that the IRF$_k$ processes
$\{Y((-1)\cdot \lambda)\}$ and $\{Y(\lambda)\}$ have the same covariance structure. Indeed, recall that
\begin{align*}
\E [ Y(\lambda) \otimes Y(\mu)] = {\cal C}_Y(\lambda,\mu) = {\cal K}(\lambda*\wt \mu),\ \lambda,\mu\in\Lambda_k
\end{align*}
and observe that $(-1)\cdot \lambda*\wt\mu = ((-1)\cdot\lambda)*\widetilde{((-1)\cdot\mu)}$. Thus, the IRF$_k$ process
$\wt Y(\lambda) := Y((-1)\cdot \lambda)$ has covariance ${\cal K}((-1)\cdot \nu)$.

In the simple case, where $Y$ takes real scalar values, all IRF$_k$'s are automatically covariance-symmetric.  This is perhaps
why symmetry is often taken for granted.  In the multivariate
and especially function-valued case, however, covariance-symmetry is an exception rather than a rule.  Naturally,
in view of \eqref{e:K-of_nu} and the uniqueness of the spectral measure, \eqref{e:K-symmetric} holds if and only if
$\chi(-A) = \chi(A)$,  for all $A\in {\cal B}(\R^d\setminus\{0\})$.  This simple observation and Proposition \ref{p:real-IRFk} yield the following  fact \citep[see also Theorem 5.1 in][]{didier:pipiras:2011}.

\begin{proposition}\label{p:supp:symmetry} A real mean-square continuous IRF$_k$ $Y$ is covariance symmetric, if and only if its spectral measure $\chi$ is real.
\end{proposition}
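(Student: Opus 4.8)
The plan is to assemble the statement from two facts that are already in hand just above the proposition. First, the discussion preceding the proposition records that, by the uniqueness of the spectral measure in the representation \eqref{e:K-of_nu}, covariance-symmetry \eqref{e:K-symmetric} is equivalent to the reflection identity $\chi(-A) = \chi(A)$ for every $A \in {\cal B}(\R^d\setminus\{0\})$. Second, since $Y$ is real, part (ii) of Proposition \ref{p:real-IRFk} tells us that $\chi$ is Hermitian, i.e.\ $\overline{\chi(-A)} = \chi(A)$ for all such $A$; replacing $A$ by $-A$ this reads equivalently $\chi(-A) = \overline{\chi(A)}$. Recall finally that, in the terminology of Section \ref{sec:real-complex}, the measure $\chi$ is real precisely when $\overline{\chi(A)} = \chi(A)$ for every $A$ (an operator $\A$ being real means $\A = \Re(\A)$, equivalently $\overline{\A} = \A$).

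With these in place the equivalence is immediate. For the forward direction I would assume $Y$ is covariance-symmetric, so $\chi(-A) = \chi(A)$; combining this with the Hermitian identity $\chi(-A) = \overline{\chi(A)}$ gives $\chi(A) = \overline{\chi(A)}$ for every $A$, which is exactly the assertion that $\chi$ is real. Conversely, if $\chi$ is real, then $\overline{\chi(A)} = \chi(A)$, and feeding this into $\chi(-A) = \overline{\chi(A)}$ yields $\chi(-A) = \chi(A)$, i.e.\ covariance-symmetry. Thus the two conditions coincide, and the proof closes.

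The argument is a short manipulation, so I do not expect a serious obstacle; the only point requiring care is the bookkeeping of the two distinct involutions acting on $\chi$ — spatial reflection $A \mapsto -A$ (symmetry) versus operator conjugation $\mathcal{T}\mapsto \overline{\mathcal{T}}$ (the Hermitian/real distinction) — and the correct use of the substitution $A\mapsto -A$ to rewrite the Hermitian relation into a form that can be chained with the symmetry relation. One should also note that all identities are understood to hold for the version of $\chi$ determined uniquely off $\{0\}$, which is consistent with $\chi$ carrying no mass at the origin.
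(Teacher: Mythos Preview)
Your proposal is correct and matches the paper's own (implicit) argument exactly: the paper simply states that the result follows from the equivalence of covariance-symmetry with $\chi(-A)=\chi(A)$ together with the Hermitian property $\overline{\chi(-A)}=\chi(A)$ from Proposition~\ref{p:real-IRFk}, and you have written out precisely this short chain of identities.
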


We end this section with a comment on the use of the results from Sections \ref{sec:Bochner} and \ref{ss:spectral_notation_1} 
in the context of Gaussian processes.  Recall that a $\V$-valued random element $Y$ is said to be Gaussian, if $\langle Y,f\rangle$ is a complex Gaussian variable, 
for each $f\in\V$.  This means that the joint distribution of $(\Re(\langle Y,f\rangle), \Im(\langle Y,f\rangle))$ is bivariate normal, for all $f\in\V$.
Equivalently, $Y$ is Gaussian in $\V$ if and only if $\wt Y:= (\Re(Y),\Im(Y))$ is a Gaussian element in the {\em real} Hilbert space $\V_\R^2 := \V_\R \times \V_\R$.  
  
\begin{remark}[Characterization of Gaussian IRF$_k$'s]  Part (iii) of Proposition \ref{p:real-IRFk} is a manifestation of the fact that the covariance 
 structure alone does not determine the distribution of  zero-mean Gaussian processes taking values in {\em complex} Hilbert spaces
  (cf Example \ref{ex:supp:complex-Gauss} in Supplement). 
  To determine the distribution of a zero-mean Gaussian IRF$_k$ 
  $Y = \{Y(\lambda),\ \lambda\in\Lambda_k\}$, one needs to know both the {\em cross-covariance} and {\em pseudo cross-covariance} operators:
$
{\cal C}_Y(\lambda,\mu) = \E[ Y(\lambda) \otimes Y(\mu)]$ and ${\cal C}_{Y,\overline Y} (\lambda,\mu)= \E[ Y(\lambda) \otimes \overline Y(\mu)]
$
(see, e.g., Section \ref{sec:real-irreversible} and Corollary \ref{c:supp:Gaussian-characterization} in Supplement).
 
 Equivalently, the distribution of a $\V$-valued Gaussian IRF$_k$ is completely determined by the real IRF$_k$ 
 $\wt Y(\cdot):= (\Re(Y)(\cdot),\Im(Y)(\cdot))$ in the product space $\V^2$.  Since the law of the real Gaussian IRF$_k$ $\wt Y$ {\em is} determined
 by its cross-covariance, the results of  Sections \ref{sec:Bochner} and \ref{ss:spectral_notation_1} provide a complete characterization of the 
 $\V$-valued Gaussian IRF$_k$'s. 
\end{remark}

\section{Second order covariance self-similar IRF$_k$'s.}\label{s:self-similar IRFk}

In view of Theorems \ref{pro:self_similar} and \ref{IRF}, essentially all tangent fields are self-similar IRF$_k$.  This motivates a 
more in-depth study of self-similar IRF$_k$'s. In this section, we focus on second order covariance self-similar IRF$_k$'s with respect to
{\em linear} operator-scaling actions.  We establish their covariance structure and spectral representation.  
Section \ref{s:operator-ss} addresses the general case, Section \ref{sec:related-work-and-examples} discusses examples and related work, 
while Section \ref{ss:T_rrx} deals with the scalar scaling action, where the generalized covariance can be  written in closed form.

%
%

\subsection{Linear operator scaling.} \label{s:operator-ss}

Let $\H:\V \to \V$ be a bounded {\em linear operator} on the Hilbert space ${\mathbb V}$.  Consider the operator scaling actions
 $T_c:= c^{\H},\ c>0$, where $c^{\H}$ is interpreted as $\exp\{\log(c) {\H}\}$ and as usual, 
 \begin{align}\label{e:exp}
e^{\H} := \sum_{n=0}^\infty \frac{{\H}^n}{n!}.
\end{align}
The latter series converges in operator norm and $\|e^{\H}\|_{\rm op} \le e^{\|{\H}\|_{\rm op}}.$  We have moreover that if the 
bounded operators ${\H}_1$ and ${\H}_2$ {\em commute}, i.e., 
${\H}_1{\H}_2  = {\H}_2{\H}_1$, then $e^{{\H}_1} e^{{\H}_2} = e^{{\H}_1+{\H}_2} = e^{{\H}_2} e^{{\H}_1}$.  
This readily implies that  $e^{\H}$ has a bounded inverse $(e^{\H})^{-1} = e^{-{\H}}$.  Consequently,
$c^{\H},\ c>0$ is a strongly (operator) continuous and invertible group action on 
${\mathbb V}$, i.e., $c_1^{\H}c_2^{\H} = (c_1c_2)^{\H},\ c_1,c_2>0$. In fact, using the power-series representation \eqref{e:exp}, one 
can readily show that $c\mapsto c^{\H}$ is continuously Fr\'echet differentiable with derivative $c^{\H-1}\H$, i.e., 
\begin{align}\label{e:Frechet-derivative-c-to-H}
\Big\| \frac{1}{h} ( (c+h)^{\cal H} - c^{\H})  - c^{\H-1}\H \Big\|_{\rm op} \to 0,\ \ \mbox{ as }h\to 0,
\end{align}
{\clb where here and below $\H-a,\ a\in\R$ is interpreted as $\H-a {\rm I}$, so that  $c^{\H-1} = c^{-1} c^{\H}$.}

As in Definition \ref{def:self-similar}, we consider the following notion of {\em covariance} operator self-similarity.

\begin{definition}\label{def:OSS} Fix an {\em arbitrary} bounded linear operator ${\H}$ on ${\mathbb V}$.
A second order IRF$_k$ $Y$ is said to be covariance ${\H}$-self-similar, if $\{Y(c\cdot \lambda),\lambda\in\Lambda_k\}$ and $\{ c^{\H} Y(\lambda),\lambda\in\Lambda_k\}$
have the same operator cross-covariance function for all $c>0$.
\end{definition}

\begin{remark} If the IRF$_k$ process $Y$ is {\em real} and Gaussian, then  $Y$ is covariance ${\cal H}$-self-similar if and only if it is $\H$-self-similar in the
following stronger sense:
\begin{align}\label{e:Y-Hss}
\{ Y(c\cdot \lambda),\ \lambda\in \Lambda_k\} \stackrel{fdd}{=} \{ c^{\H} Y(\lambda),\ \lambda\in \Lambda_k\},\ \ \mbox{ for all }c>0.
\end{align}

We emphasize that ${\H}$ in Definition \ref{def:OSS} and \eqref{e:Y-Hss} is an arbitrary bounded linear operator and we do not require that 
$c\mapsto c^{\H}$ be a scaling action on $\V$ in the sense of Definition \ref{def:scaling_action} (see also Remark \ref{rem:operator-scaling-actions}).
If $T_c:= c^{\H}$ is a scaling action, however, then \eqref{e:Y-Hss} recovers the notion of self-similarity in Definition \ref{def:self-similar}.
\end{remark}

Let now $Y$ be a second order, mean-square continuous IRF$_k$ with operator auto-covariance ${\cal K}$ and 
spectral characteristics $(\chi,\mathcal{Q})$.  By Theorem \ref{th:integral_representation_IRF}, we have the
decomposition
\begin{align}\label{e:Y-decomposition}
Y(\lambda) = Y_{(0,\mathcal{Q})}(\lambda)  +Y_{(\chi,0)}(\lambda),\ \ \mbox{ almost surely,}
\end{align}
for all $\lambda\in \Lambda_k$, where $\{Y_{(0,\mathcal{Q})}(\lambda)\}$ and $\{Y_{(\chi,0)}(\lambda)\}$ 
are orthogonal mean-square continuous IRF$_k$'s with spectral characteristics $(0,\mathcal{Q})$ and $(\chi,0)$, respectively.  
This decomposition is second order unique.  Therefore, it follows that $Y$ is covariance ${\H}$-self-similar 
{\em if and only if}  both the components $Y_{(0,\mathcal{Q})}$ and $Y_{(\chi,0)}$ are covariance ${\H}$-self-similar. 
More precisely, we have the following general characterization of covariance ${\H}$-self-similar IRF$_k$'s.
For convenience, write
\begin{align} \label{e:chi-k}
\chi_k (dx) = {\chi(dx) \over 1\wedge \|x\|^{2k+2}}.
\end{align}

\begin{theorem} \label{thm:operator-ss} Let $Y$ be {\clb an $L^2$-continuous} IRF$_k$ with 
spectral characteristics $(\chi,\mathcal{Q})$. Let also ${\H}$ be a bounded linear operator.
\begin{enumerate}
\item
We have that $Y$ is covariance ${\H}$-self-similar if and only if 
for all $c>0$ and $\lambda,\mu\in \Lambda_k$
 \begin{align}\label{e:thm:operator-ss-2}
   \chi_k(dx) =  c^{{-\H}} \chi_k(dx/c) c^{{-\H}^*}\quad \mbox{ and }\quad  \mathcal{Q}(\lambda *\wt{\mu}) = c^{k+1{-\H}} \mathcal{Q}(\lambda*\wt\mu) c^{k+1{-\H}^*},
 \end{align}
 where $\chi_k$ is as in \eqref{e:chi-k}. \\
 
\noindent Suppose henceforth that \eqref{e:thm:operator-ss-2} holds and consider the polar coordinates 
$(r,\theta):= (\|x\|, x/\|x\|)$ in $\R^d\setminus\{0\}$. \\
 
\item
There exists a finite $\bbT_+$-valued measure $\sigma$ on the unit sphere $\mathbb S = \{ \|x\|=1\}$ such that
\begin{align}\label{e:chi-k-polar-new}
\chi_k ( D ) 
 = \int_0^\infty r^{{-\H}} \Big( \int_{\bbS} 1_{D}(r\theta ) \sigma(d\theta) \Big) r^{{-\H}^*} \frac{dr}{r},
\end{align}
for all Borel sets $D \in {\cal B}(\R^d\setminus\{0\})$ that are bounded away from $0$. 
If \eqref{e:chi-k-polar-new} holds, we simply write
\begin{align}\label{e:chi-k-polar}
\chi_k(drd\theta) = r^{{-\H}} \sigma(d\theta) r^{{-\H}^*} \frac{dr}{r}
\end{align}
and refer to \eqref{e:chi-k-polar} as a disintegration formula for $\chi_k$.
\vskip.2cm

\item The measure $\sigma$ in \eqref{e:chi-k-polar} is uniquely determined by the measure $\chi_k$ and it does not depend 
on the possibly non-unique operator ${\H}$ in \eqref{e:thm:operator-ss-2}.
\vskip.2cm
%
%
%

\item 
The component $Y_{(\chi,0)}$ of $Y$ admits the Cram\'er-type 
stochastic integral representation
\begin{align}\label{e:p:operator-ss}
  Y_{(\chi,0)} (\lambda) = \int_0^\infty \int_{\mathbb S} \wh \lambda(r \theta) W(dr,d\theta),\ \ \mbox{ almost surely,}
\end{align}
$\lambda\in\Lambda_k$, where $W(dr,d\theta)$ is an orthogonal ${\mathbb V}$-valued random measure on 
$(0,\infty)\times \mathbb S$,  such that
\begin{align}\label{e:p:operator-ss-1}
\E [ W(dr,d\theta)\otimes W(dr,d\theta)]  = r^{-({\H}+1/2)} \sigma(d\theta) r^{-({\H}^*+1/2)} dr.
\end{align}
\end{enumerate}
\end{theorem}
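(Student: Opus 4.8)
The plan is to reduce everything to the spectral representation \eqref{e:IRF-cov-spectral}, which after absorbing the factor $(1\wedge\|u\|^{2k+2})^{-1}$ into $\chi_k$ (see \eqref{e:chi-k}) reads $\mathcal{C}_Y(\lambda,\mu) = \int \wh\lambda(u)\overline{\wh\mu(u)}\,\chi_k(du) + \mathcal{Q}(\lambda*\wt\mu)$.

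\emph{Part (i).} First I would note that both $\{Y(c\cdot\lambda)\}$ and $\{c^{\H} Y(\lambda)\}$ are $L^2$-continuous IRF$_k$'s (the former because $\mathcal{C}_Y$ is shift-invariant, here under the shift $cw$; the latter trivially), so each has unique spectral characteristics by Theorem \ref{th:IRF_k_spectral_operator_value_1}. Using $\widehat{c\cdot\lambda}(u) = \wh\lambda(cu)$ and the change of variables $x=cu$ gives $\mathcal{C}_Y(c\cdot\lambda, c\cdot\mu) = \int\wh\lambda(x)\overline{\wh\mu(x)}\,\chi_k(dx/c) + \mathcal{Q}\big(c\cdot(\lambda*\wt\mu)\big)$, while $(Ax)\otimes(By) = A(x\otimes y)B^*$ yields $\mathcal{C}_{c^{\H} Y}(\lambda,\mu) = \int\wh\lambda\,\overline{\wh\mu}\, c^{\H}\chi_k(du)c^{\H^*} + c^{\H}\mathcal{Q}(\lambda*\wt\mu)c^{\H^*}$. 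Since $\lambda*\wt\mu\in\Lambda_{2k+1}$ annihilates polynomials of degree $\le 2k+1$, only the homogeneous degree-$(2k+2)$ part of $\mathcal{Q}$ contributes, so $\mathcal{Q}(c\cdot(\lambda*\wt\mu)) = c^{2k+2}\mathcal{Q}(\lambda*\wt\mu)$. Covariance $\H$-self-similarity is the equality of these two cross-covariances; matching the (individually unique) spectral measures and degree-$(2k+2)$ polynomial parts gives $\chi_k(dx/c) = c^{\H}\chi_k(dx)c^{\H^*}$ and $c^{2k+2}\mathcal{Q} = c^{\H}\mathcal{Q}c^{\H^*}$ on $\Lambda_{2k+1}$, which rearrange (replacing $c$ by $1/c$ in the second) to \eqref{e:thm:operator-ss-2}. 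The converse is the same computation read backwards.

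\emph{Part (ii).} This is the crux. Passing to log-polar coordinates $s=\log r$, $x=e^{s}\theta$, let $\wt\rho$ be the image of $\chi_k$ on $\R\times\mathbb{S}$; the first relation in \eqref{e:thm:operator-ss-2} becomes $(\tau_a)_*\wt\rho = e^{a\H}\wt\rho\,e^{a\H^*}$, where $\tau_a(s,\theta)=(s+a,\theta)$ and $a=\log c$. I would then define the conjugated $\bbT_+$-valued measure $\kappa(ds\,d\theta):=e^{s\H}\wt\rho(ds\,d\theta)e^{s\H^*}$ and show it is $s$-translation invariant. This is where non-commutativity bites: the weight $e^{s\H}$ varies with $s$, whereas the scaling relation conjugates by the \emph{fixed} operator $e^{a\H}$. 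The key algebraic fact that rescues the computation is that powers of a single operator commute, so $e^{(u-a)\H}=e^{u\H}e^{-a\H}$; combining this with the change-of-variables identity $\int L(s)\,[(\tau_a)_*\nu](ds)\,R(s) = \int L(s'+a)\,\nu(ds')\,R(s'+a)$ for operator weights $L,R$, the factors $e^{\pm a\H}$ cancel against those produced by $(\tau_a)_*\wt\rho = e^{a\H}\wt\rho e^{a\H^*}$, giving $(\tau_a)_*\kappa=\kappa$. A translation-invariant, locally finite $\bbT_+$-valued measure factors as $\kappa=ds\otimes\sigma$ with $\sigma(A):=\kappa([0,1)\times A)$ (by the usual dyadic-subdivision argument applied to each quadratic form $\langle f,\kappa(\cdot)f\rangle$), and $\sigma$ is finite because $\kappa([0,1)\times\mathbb{S})$ is the conjugate of $\chi_k(\{1\le\|x\|<e\})$ by the bounded operators $e^{s\H}$. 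Undoing the substitution yields $\chi_k(dr\,d\theta)=r^{-\H}\sigma(d\theta)r^{-\H^*}\,dr/r$, i.e.\ \eqref{e:chi-k-polar}, and integrating against $1_D$ gives \eqref{e:chi-k-polar-new}.

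\emph{Part (iii).} For uniqueness and independence of $\H$, suppose $\H_1,\H_2$ both satisfy \eqref{e:thm:operator-ss-2} with associated measures $\sigma_1,\sigma_2$. Pairing the disintegration identity with $f\in\V$ and a scalar test function $h$ on $\mathbb{S}$ gives, for $dr/r$-a.e.\ $r$, the equality $\langle f, r^{-\H_1}(\int h\,d\sigma_1)r^{-\H_1^*}f\rangle = \langle f, r^{-\H_2}(\int h\,d\sigma_2)r^{-\H_2^*}f\rangle$; both sides are continuous in $r$ and $dr/r$ has full support, so they agree at $r=1$, where $r^{-\H_i}=\I$, forcing $\int h\,d\sigma_1=\int h\,d\sigma_2$ and hence $\sigma_1=\sigma_2$. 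Taking $\H_1=\H_2$ gives uniqueness of $\sigma$ given $\H$; taking $\H_1\neq\H_2$ gives independence of the (possibly non-unique) choice of $\H$.

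\emph{Part (iv).} Since $Y_{(\chi,0)}$ has characteristics $(\chi,0)$, Theorem \ref{th:integral_representation_IRF} gives $Y_{(\chi,0)}(\lambda)=\int (1\wedge\|u\|^{k+1})^{-1}\wh\lambda(u)\,\xi(du)$ with the $Z_{j_1\cdots j_d}$ terms absent and $\xi$ of control measure $\chi$. Transporting $\xi$ to polar coordinates and setting $W(dr,d\theta):=(1\wedge r^{k+1})^{-1}\xi^{\mathrm{pol}}(dr,d\theta)$ yields \eqref{e:p:operator-ss} immediately. Its control measure is $(1\wedge r^{k+1})^{-2}\chi^{\mathrm{pol}}=\chi_k^{\mathrm{pol}}$, using $(1\wedge r^{k+1})^2 = 1\wedge r^{2k+2}$ together with $\chi=(1\wedge\|x\|^{2k+2})\chi_k$; by part (ii) and the scalar identity $r^{-1/2}\sigma\,r^{-1/2}=r^{-1}\sigma$, this equals $r^{-(\H+1/2)}\sigma(d\theta)r^{-(\H^*+1/2)}\,dr$, which is \eqref{e:p:operator-ss-1}, while orthogonality of $W$ is inherited from $\xi$. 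I expect part (ii) to be the only genuinely hard step, the remaining parts being bookkeeping on top of Theorems \ref{th:IRF_k_spectral_operator_value_1} and \ref{th:integral_representation_IRF}.
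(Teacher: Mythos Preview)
Your treatment of parts (i) and (iv) matches the paper's proof essentially verbatim. Part (iii) is a close variant: the paper obtains $\sigma$ via the explicit formula $\sigma(B)=-\frac{d}{dc}\big|_{c=1}\chi_k((c,\infty)\times B)$ (a Fr\'echet derivative in $(\bbT,\|\cdot\|_{\rm tr})$), which immediately shows $\sigma$ is determined by $\chi_k$ alone; your continuity-at-$r=1$ argument reaches the same conclusion and is also valid.

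Part (ii) is where you take a genuinely different route. The paper does \emph{not} pass to log-polar coordinates or invoke a translation-invariance/Haar argument. Instead it defines $\sigma(B):=\H\,\chi_k((1,\infty)\times B)+\chi_k((1,\infty)\times B)\,\H^*$ directly, and then observes that $f(u):=u^{-\H}\chi_k((1,\infty)\times B)u^{-\H^*}$ is Fr\'echet differentiable in $\bbT$ with $f'(u)=-u^{-1}u^{-\H}\sigma(B)u^{-\H^*}$ (via a preparatory lemma on differentiating $c\mapsto c^{-\H}\A c^{-\H^*}$). The scaling relation gives $f(u)=\chi_k((u,\infty)\times B)$, so monotonicity of $\chi_k$ forces $-f'(u)\in\bbT_+$, hence $\sigma(B)\in\bbT_+$; the fundamental theorem of calculus for Bochner integrals then yields \eqref{e:polar-decomposition} and the disintegration formula follows by a $\pi$--$\lambda$ argument.

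Your translation-invariance approach is conceptually attractive, and the algebraic cancellation you identify (commutativity of $e^{s\H}$ with $e^{-a\H}$) is exactly what makes it work. The cost is that the object $\kappa(ds\,d\theta)=e^{s\H}\wt\rho(ds\,d\theta)e^{s\H^*}$ is not a standard integral against a $\bbT_+$-valued measure: the operator sandwich varies with the integration variable and does not commute with $\wt\rho$. You would need to construct $\kappa$ carefully---e.g., as a Riemann--Stieltjes limit on rectangles $J\times B$, or by writing $\wt\rho(ds\,d\theta)=\rho'(s,\theta)\|\wt\rho\|_{\rm tr}(ds\,d\theta)$ (using that $\bbT$ has the Radon--Nikodym property as a separable dual) and then taking a genuine Bochner integral---and then check $\sigma$-additivity and local finiteness. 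None of this is hard, but it is work you do not do, and your phrase ``$\kappa([0,1)\times\mathbb{S})$ is the conjugate of $\chi_k(\{1\le\|x\|<e\})$'' is imprecise (it is a weighted integral, not a single conjugate). The paper's derivative-based route sidesteps all of this: it only ever manipulates the well-defined quantities $\chi_k((c,\infty)\times B)\in\bbT_+$ and their Fr\'echet derivatives, and never needs to give meaning to an integral with $s$-dependent operator weights.
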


The proof of this result is deferred to Section \ref{sec:proofs:operator-ss}, below.
\vskip.3cm

\begin{remark}[The support of an ${\H}$-self-similar IRF$_k$ is ${\H}$-invariant] The self-similarity exponent operator ${\H}$ can in principle 
be  arbitrary outside the support of the IRF$_k$ process 
$Y$.  The support of $Y$, denoted ${\rm supp}(Y)$, is the smallest closed linear subspace of ${\mathbb V}$, which contains all 
$Y(\lambda)$'s almost surely.  One can show that ${\H}({\rm supp}(Y))$ is a dense subset of ${\rm supp}(Y)$.  This allows one to essentially
restrict the operator ${\H}$ to ${\rm supp}(Y)$ (see Section \ref{sec:supplement-support} in Supplement for more details).
\end{remark}

\begin{remark} If $Y$ is ${\H}$-self-similar, so are its components $Y_{(\chi,0)}$ and $Y_{(0,{\cal Q})}$ in \eqref{e:Y-decomposition}.   
While this decomposition is unique in law, the operator ${\H}$ need not be unique.  
See for example \cite{didier:meerschaert:pipiras:2017} and Remark \ref{rem:non-unique-H} below.
For example, the polynomial component $Y_{(0,{\cal Q})}$ is always $(k+1)\cdot \I$-self-similar. In general, however, we cannot conclude 
that $\H=(k+1)\cdot \I$, where $\I$ is the identity.  The non-uniqueness of the operator self-similarity exponent in the general setting of ${\mathbb V}$-valued IRF$_k$'s 
is an interesting problem of future research.
\end{remark}

For simplicity, in the rest of this section we suppose that $Y\stackrel{d}{=} Y_{(\chi,0)}$ has a {\em trivial} polynomial 
component $Y_{(0,{\cal Q})} =0$.   We will examine two classes of operators ${\H}$ which can serve as self-similarity exponents of $Y$.
We begin with a simple criterion.

\begin{corollary} \label{c:NSC-H-chi} A finite $\bbT_+$-valued measure $\chi$ is the spectral measure of {\clb an $L^2$-continuous,} covariance 
${\H}$-self-similar IRF$_k$ if and only if for some finite $\mathbb{T}_+$-valued measure $\sigma$ on $\mathbb{S}$, we have
\begin{align} \label{e:chi_sigma_trace-1}
\chi_k (dx)\equiv \frac{1}{1\wedge \|x\|^{2k+2}} \chi(dx) = r^{{-\H}} \sigma(d\theta)r^{{-\H}^*} r^{-1} dr
\end{align}
and
\begin{align}\label{e:chi_sigma_trace-2}
\int _0^\infty r^{-1} (1\wedge r^{2k+2}) \mbox{ {\rm trace}}\left(r^{{-\H}} \sigma(\mathbb{S})r^{{-\H}^*}\right)  dr < \infty,
\end{align}
where $(r,\theta):= (\|x\|,x/\|x\|)$ are the polar coordinates in $\R^d\setminus\{0\}$.
\end{corollary}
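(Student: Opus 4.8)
The plan is to read off both implications directly from Theorem \ref{thm:operator-ss}, the only genuinely new ingredient being the identification of the scalar integrability condition \eqref{e:chi_sigma_trace-2} with the trace-class finiteness of $\chi$. Throughout I work under the standing assumption of a trivial polynomial component, so that a finite $\bbT_+$-valued measure $\chi$ with no atom at $0$ is, by Theorem \ref{th:IRF_k_spectral_operator_value_1} and Theorem \ref{th:integral_representation_IRF}, exactly the data determining an $L^2$-continuous IRF$_k$ $Y = Y_{(\chi,0)}$. For necessity I would start from a covariance $\H$-self-similar $Y$ with spectral measure $\chi$: part (i) of Theorem \ref{thm:operator-ss} yields the scaling relation for $\chi_k$, and then part (ii) produces a finite $\bbT_+$-valued measure $\sigma$ on $\mathbb{S}$ for which the disintegration \eqref{e:chi_sigma_trace-1} holds. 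It remains to extract \eqref{e:chi_sigma_trace-2}. Passing to the scalar trace measure and using $\chi(dx) = (1\wedge\|x\|^{2k+2})\chi_k(dx)$ from \eqref{e:chi-k}, I would insert the disintegration \eqref{e:chi_sigma_trace-1} and interchange $\tr$ with the $\bbT_+$-valued integral (Tonelli for positive operators) to obtain
\begin{align*}
\tr(\chi(\bbR^d)) &= \int_0^\infty \int_{\mathbb{S}} (1\wedge r^{2k+2})\,\tr\bigl(r^{-\H}\sigma(d\theta)r^{-\H^*}\bigr)\,\frac{dr}{r} \\
&= \int_0^\infty r^{-1}(1\wedge r^{2k+2})\,\tr\bigl(r^{-\H}\sigma(\mathbb{S}) r^{-\H^*}\bigr)\,dr,
\end{align*}
where the last step integrates out $\theta$ since $r^{-\H}$ and $r^{-\H^*}$ do not depend on $\theta$. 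The finiteness of $\chi$ is precisely $\tr(\chi(\bbR^d))<\infty$, which is \eqref{e:chi_sigma_trace-2}.

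For sufficiency, given a finite $\bbT_+$-valued $\sigma$ on $\mathbb{S}$ satisfying \eqref{e:chi_sigma_trace-1}--\eqref{e:chi_sigma_trace-2}, I would define $\chi_k$ by \eqref{e:chi_sigma_trace-1} and $\chi(dx) := (1\wedge\|x\|^{2k+2})\chi_k(dx)$. Positivity of the integrand $r^{-\H}\sigma(d\theta)r^{-\H^*}$ shows that $\chi$ is $\bbT_+$-valued with no point mass at $0$, while the same trace computation read backwards through \eqref{e:chi_sigma_trace-2} shows $\tr(\chi(\bbR^d))<\infty$, i.e.\ $\chi$ is finite. By Theorem \ref{th:IRF_k_spectral_operator_value_1} and Theorem \ref{th:integral_representation_IRF}, $\chi$ is then the spectral measure of an $L^2$-continuous IRF$_k$ $Y_{(\chi,0)}$, mean-square continuity following from the continuity of its generalized covariance \eqref{e:K-h-rep} with $\mathcal{Q}=0$, itself a consequence of the finiteness of $\chi$. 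To conclude that $Y_{(\chi,0)}$ is covariance $\H$-self-similar, by part (i) of Theorem \ref{thm:operator-ss} it suffices to verify that the disintegration \eqref{e:chi_sigma_trace-1} forces the scaling relation $\chi_k(dx) = c^{-\H}\chi_k(dx/c)c^{-\H^*}$. This is a direct check: writing $\chi_k(D/c)$ via \eqref{e:chi-k-polar-new}, substituting $s = cr$, and using the scale invariance of $dr/r$ together with $(s/c)^{-\H} = s^{-\H}c^{\H} = c^{\H}s^{-\H}$ and its adjoint, one finds $\chi_k(D/c) = c^{\H}\chi_k(D)c^{\H^*}$, whence $c^{-\H}\chi_k(D/c)c^{-\H^*} = \chi_k(D)$ for every Borel $D$ bounded away from $0$.

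I expect the main obstacle to be measure-theoretic bookkeeping rather than anything conceptual: rigorously justifying the interchange of the scalar trace with the $\bbT_+$-valued operator integral in the disintegration (a Tonelli-type statement for positive trace-class-operator-valued measures), and confirming that the process supplied by Theorem \ref{th:integral_representation_IRF} is genuinely $L^2$-continuous under only the finiteness of $\chi$. Both are routine given the machinery already developed in the preceding sections, but they are the places where care is required.
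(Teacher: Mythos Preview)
Your proposal is correct and follows essentially the same route as the paper's proof: both directions lean on Theorem \ref{thm:operator-ss} for the disintegration and then identify \eqref{e:chi_sigma_trace-2} with the finiteness $\|\chi(\bbR^d)\|_{\rm tr}<\infty$, while for sufficiency the paper similarly defines $\chi$ from $\sigma$, checks it is a finite $\bbT_+$-valued measure (using the operator inequality $r^{-\H}\sigma(B)r^{-\H^*}\le r^{-\H}\sigma(\bbS)r^{-\H^*}$ to justify Bochner integrability), and observes that the disintegration forces the scaling relation \eqref{e:thm:operator-ss-2}. Your explicit change-of-variables verification of the scaling is a slight elaboration of what the paper leaves as ``clearly,'' but the argument is otherwise the same.
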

\begin{proof} `only if' Let $Y$ be a continuous covariance ${\H}$-self-similar IRF$_k$.  Then, by Theorem \ref{thm:operator-ss},
$\chi_k$ satisfies the disintegration formula in \eqref{e:chi_sigma_trace-1}.  We have moreover that 
\begin{align*}
\chi(\mathbb{R}^d) = \int_0^\infty r^{-1} (1\wedge r^{2k+2}) r^{{-\H}} \sigma(\mathbb{S})r^{{-\H}^*} dr
\in \mathbb{T}_+.
\end{align*}
Since $\chi(\mathbb{R}^d)$ is self-adjoint and positive definite, 
$\|\chi(\mathbb{R}^d)\|_{tr}= {\rm trace}\left(\chi(\mathbb{R}^d) \right)<\infty$, which proves \eqref{e:chi_sigma_trace-2}.

`if': Conversely, suppose that $\sigma$ is a finite $\bbT_+$-valued measure on $\bbS$ such that \eqref{e:chi_sigma_trace-2} holds.
Then, the fact that for all $B\in {\cal B}(\bbS)$ and $r>0$, $r^{{-\H}}\sigma(B) r^{{-\H}^*}\le r^{{-\H}}\sigma(\bbS) r^{{-\H}^*}$ 
as positive operators in $\bbT_+$,  implies that 
\begin{align*}
\chi(D) := \int_0^\infty  r^{-1} (1\wedge r^{2k+2}) r^{{-\H}} \int_\bbS 1_D(r\theta) \sigma(d\theta) r^{{-\H}^*} dr,\ \ D\in {\cal B}(\bbR^d\setminus\{0\})
\end{align*}
is well-defined in the sense of Bochner.  The so-defined $\chi$ is a finite $\bbT_+$-valued Borel measure on $\bbR^d\setminus\{0\}$, which can be taken as the spectral measure of an IRF$_k$ process $Y = Y_{(\chi,0)}$ with trivial polynomial component.   Clearly, $\chi_k$ defined 
as in \eqref{e:chi_sigma_trace-1} satisfies the scaling property \eqref{e:thm:operator-ss-2}, which entails the covariance
 ${\H}$-self-similarity of $Y$.
\end{proof}

$\bullet$ {\em Normal diagonalizable exponents.} Corollary \ref{c:NSC-H-chi} allows us to provide a complete characterization 
of the valid pairs $(\H,\sigma)$ of operator exponents and spectral measures in the important case where $\H$ is {\em normal} and
diagonalizable operator. Namely, suppose ${\H}$ is a normal operator with 
\begin{align}\label{e:H-normal}
 {\H} =\sum_{j=1}^\infty \lambda_j e_j\otimes e_j,
\end{align}
where $\lambda_j\in \bbC$ and where $\{e_j\}$ is a CONS of $\mathrm{Range}({\H})=\mathrm{Range}({\H}^*)$.  The convergence
of the last series is understood in the weak operator topology. 

\begin{theorem} Let ${\H}$ be a normal diagonalizable operator as in \eqref{e:H-normal} and let $\sigma$ be 
a finite $\bbT_+-$valued measure on $\bbS$.  The measure 
 $\chi (drd\theta) =(1\wedge r^{2k+2}) r^{{-\H}} \sigma(d\theta)r^{{-\H}^*} r^{-1} dr$ is the spectral measure 
of an ${\H}$-self-similar IRF$_k$, if and only if  

\begin{align} \label{e:R(lambda)}
0 <  \mathfrak{R}(\lambda_j) < k+1, \ \mbox{ whenever $\langle \sigma(\bbS)e_j, e_j\rangle >0$ } 
\end{align}
and 
\begin{align} \label{e:sigma}
\sum_j \left({1\over k+1-\mathfrak{R}(\lambda_j)} + {1\over \mathfrak{R}(\lambda_j)}\right) 
\langle \sigma(\mathbb{S}) e_j, e_j\rangle
< \infty.
\end{align}
\end{theorem}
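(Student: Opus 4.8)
The plan is to reduce the statement to the single scalar integrability criterion provided by Corollary~\ref{c:NSC-H-chi}. Since $\chi$ is defined through the disintegration $\chi_k(dr\,d\theta)=r^{-\mathcal{H}}\sigma(d\theta)r^{-\mathcal{H}^*}\,r^{-1}dr$, the disintegration requirement in Corollary~\ref{c:NSC-H-chi} holds by construction, so $\chi$ is the spectral measure of an $L^2$-continuous covariance $\mathcal{H}$-self-similar IRF$_k$ if and only if
\[
\int_0^\infty r^{-1}(1\wedge r^{2k+2})\,\mathrm{trace}\big(r^{-\mathcal{H}}\sigma(\mathbb{S})r^{-\mathcal{H}^*}\big)\,dr<\infty.
\]
Everything then comes down to evaluating this trace and this radial integral explicitly using the normal diagonalization of $\mathcal{H}$.

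First I would compute the integrand. Since $\mathcal{H}^*=\sum_j\overline{\lambda_j}\,e_j\otimes e_j$ on $\mathrm{Range}(\mathcal{H})$ and $(r^{-\mathcal{H}})^*=r^{-\mathcal{H}^*}$, one has $r^{-\mathcal{H}^*}e_j=r^{-\overline{\lambda_j}}e_j$, whence, using $|r^{-\lambda_j}|^2=r^{-2\mathfrak{R}(\lambda_j)}$ for $r>0$,
\[
\langle r^{-\mathcal{H}}\sigma(\mathbb{S})r^{-\mathcal{H}^*}e_j,e_j\rangle=r^{-2\mathfrak{R}(\lambda_j)}\langle\sigma(\mathbb{S})e_j,e_j\rangle.
\]
Completing $\{e_j\}$ to a CONS of $\mathbb{V}$ by adjoining an orthonormal basis $\{f_i\}$ of $\ker\mathcal{H}$, on which $r^{-\mathcal{H}}$ acts as the identity, and summing the diagonal entries of the positive trace-class operator $r^{-\mathcal{H}}\sigma(\mathbb{S})r^{-\mathcal{H}^*}$, I obtain
\[
\mathrm{trace}\big(r^{-\mathcal{H}}\sigma(\mathbb{S})r^{-\mathcal{H}^*}\big)=\sum_j r^{-2\mathfrak{R}(\lambda_j)}\langle\sigma(\mathbb{S})e_j,e_j\rangle+\sum_i\langle\sigma(\mathbb{S})f_i,f_i\rangle.
\]
As all summands are nonnegative, Tonelli's theorem justifies interchanging the $r$-integral with both sums.

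The remaining computation is a one-dimensional integral: for $a:=\mathfrak{R}(\lambda_j)$,
\[
\int_0^\infty r^{-1}(1\wedge r^{2k+2})r^{-2a}\,dr=\int_0^1 r^{2k+1-2a}\,dr+\int_1^\infty r^{-1-2a}\,dr=\frac12\Big(\frac{1}{k+1-a}+\frac{1}{a}\Big),
\]
which is finite exactly when $0<a<k+1$ and equals $+\infty$ otherwise; the kernel terms have $a=0$ and their integral $\int_0^\infty r^{-1}(1\wedge r^{2k+2})\,dr$ diverges. Substituting back, the criterion becomes
\[
\frac12\sum_j\Big(\frac{1}{k+1-\mathfrak{R}(\lambda_j)}+\frac{1}{\mathfrak{R}(\lambda_j)}\Big)\langle\sigma(\mathbb{S})e_j,e_j\rangle<\infty,
\]
a term being $+\infty$ as soon as a direction carrying positive $\sigma(\mathbb{S})$-mass has $\mathfrak{R}(\lambda_j)\notin(0,k+1)$. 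Finiteness of this sum is equivalent to the two stated conditions: the vanishing of every term with $\mathfrak{R}(\lambda_j)\notin(0,k+1)$ is exactly \eqref{e:R(lambda)}, and convergence of the resulting series is exactly \eqref{e:sigma}. This proves both implications simultaneously.

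The only delicate bookkeeping concerns $\ker\mathcal{H}$: a priori $\sigma(\mathbb{S})$ may place mass on $\ker\mathcal{H}$, where $r^{-\mathcal{H}}$ is the identity and the radial integral diverges, so finiteness already forces $\sigma(\mathbb{S})$ to annihilate $\ker\mathcal{H}$; this is absorbed into \eqref{e:R(lambda)} by regarding the kernel directions as eigenvectors with eigenvalue $0$, which fail the constraint $0<\mathfrak{R}(\lambda_j)$. Apart from this, the main steps are routine: the termwise trace identity is legitimate because $\sigma(\mathbb{S})\in\mathbb{T}_+$ and $r^{-\mathcal{H}}$ is bounded for each fixed $r$, so the product is again positive and trace class, and the sum--integral interchange is pure Tonelli.
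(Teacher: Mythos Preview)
Your proof is correct and follows essentially the same route as the paper: both reduce to the trace-integrability criterion of Corollary~\ref{c:NSC-H-chi}, diagonalize the trace as $\sum_j r^{-2\mathfrak{R}(\lambda_j)}\langle\sigma(\mathbb{S})e_j,e_j\rangle$, evaluate the radial integral termwise via Tonelli, and identify the result with \eqref{e:R(lambda)} and \eqref{e:sigma}. Your treatment of the kernel of $\mathcal{H}$ is actually a bit more careful than the paper's, which tacitly sums only over the CONS of $\mathrm{Range}(\mathcal{H})$.
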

\begin{proof}
First, we prove the `if' part. Let $\sigma\in\mathbb{T}_+$ satisfy \eqref{e:R(lambda)}
and define (in polar coordinates)
$
\chi (drd\theta) = (1\wedge r^{2k+2}) r^{{-\H}} \sigma(d\theta)r^{{-\H}^*} r^{-1} dr.
$
By the simple fact
\begin{align*}
\mbox{ trace}\left(r^{{-\H}} \sigma(\mathbb{S}) r^{{-\H}^*}\right) 
= \sum_j r^{-2 \mathfrak{R}(\lambda_j)} \langle \sigma(\mathbb{S})  e_j, e_j\rangle,
\end{align*}
we obtain
\begin{align} \label{e:integral_trace}
\begin{split}
& \int _0^\infty r^{-1} (1\wedge r^{2k+2}) \mbox{ trace}\left(r^{{-\H}} \sigma(\mathbb{S}) r^{{-\H}^*}\right)  dr  \\
&= \sum_j \langle \sigma(\mathbb{S}) e_j, e_j\rangle \int _0^\infty (1\wedge r^{2k+2})
r^{-2 \mathfrak{R}(\lambda_j)-1}  dr \\
& = {1\over 2} \sum_j \left({1\over k+1-\mathfrak{R}(\lambda_j)} 
+ {1\over \mathfrak{R}(\lambda_j)}\right)\langle \sigma(\mathbb{S}) e_j, e_j\rangle < \infty
\end{split}
\end{align}
where the integration is justified by \eqref{e:R(lambda)} {\clb and \eqref{e:sigma}.}
By Corollary \ref{c:NSC-H-chi}, $\chi$ is the spectral measure of a covariance ${\H}$-self-similar 
IRF$_k$.

Conversely, suppose $\chi$ is the spectral measure of a covariance ${\H}$-self-similar IRF$_k$. 
In order for the integral in \eqref{e:chi_sigma_trace-2} to be finite, the calculations in
\eqref{e:integral_trace} show that both
\eqref{e:R(lambda)} and \eqref{e:sigma} must hold.
\end{proof}

$\bullet$ {\em General bounded operator exponents.} Suppose now that $\H$ is a {\em general} bounded operator, which need not 
be normal nor diagonalizable.  In this case, we cannot provide a complete characterization of the covariance
 ${\H}$-self-similar IRF$_k$'s, but still furnish a general sufficient condition using Riesz functional  calculus \citep[see e.g., Ch.\ VII.4 in][]{conway2019course}.  
 Recall that the spectrum ${\rm sp}({\H})$ of a bounded operator consists of all $z\in \bbC$ such that
$({\H}-z\cdot \I)$ has no bounded inverse.  The spectrum ${\rm sp}({\H})$ is always a non-empty  compact subset of $\bbC$ 
and ${\rm sp}({\H}^*) = \{ \overline z\, :\, z\in {\rm sp}({\H})\}$ consists of the complex conjugates of the elements in the spectrum 
of ${\H}$.   If $\Gamma$ is a rectifiable curve containing ${\rm sp}({\H})$ in its interior then for every holomorphic function 
 $f$ on an open set containing the curve $\Gamma$ along with its interior, 
we define
\begin{align}\label{e:Riesz-calculus}
f({\H}) := \frac{1}{2\pi\ii} \oint_{\Gamma} \frac{f(z)}{z{-\H}} dz,
\end{align}
where the latter integral over $\Gamma$ is considered in the positive direction and $1/(z-\H) := (z\cdot \I-\H)^{-1}$ is a bounded operator since $z\in \Gamma \subset \bbC \setminus {\rm sp}({\H})$.
Since $f(z) = \exp\{ -\log(r) z\}$ is analytic for all $r>0$, we can use the above Riesz functional calculus tool to study the operator $r^{{-\H}}$.  

\begin{proposition}\label{p:general-bounded-H} 
Let ${\H}$ be a bounded operator and let $\Re({\rm sp}({\H}))$ denote the set of real parts of its spectrum.
{\clb If 
\begin{align}\label{e:H-bounded-sufficient}
 \replace{\|\H\|_{\rm op} < k+1\ \ \mbox{ and }\ \ \Re({\rm sp}({\H})) \subset (0 ,\infty)}{\Re({\rm sp}({\H})) \subset (0 ,k+1)},
\end{align}
for some $k\ge 0,\ k\in\mathbb Z,$} then for all finite $\bbT_+$-valued measures $\sigma$ on $\bbS$, we have that 
\begin{align*}
\chi(drd\theta) := (1\wedge r^{2k+2}) r^{{-\H}} \sigma(d\theta) r^{{-\H}^*} r^{-1} dr,
\end{align*}
is the spectral measure of a covariance ${\H}$-self-similar IRF$_k$.
\end{proposition}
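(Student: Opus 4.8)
The plan is to reduce everything to the necessary-and-sufficient criterion already established in Corollary~\ref{c:NSC-H-chi}. By its very construction, the candidate measure $\chi(dr\,d\theta) = (1\wedge r^{2k+2}) r^{{-\H}}\sigma(d\theta) r^{{-\H}^*} r^{-1}dr$ already has the disintegration shape required by \eqref{e:chi_sigma_trace-1}, with $\sigma$ the given finite $\bbT_+$-valued measure on $\bbS$. Hence the only thing left to verify is the scalar integrability condition \eqref{e:chi_sigma_trace-2}, namely
$$\int_0^\infty r^{-1}(1\wedge r^{2k+2})\,\tr\!\left(r^{{-\H}}\sigma(\bbS) r^{{-\H}^*}\right)dr < \infty,$$
after which Corollary~\ref{c:NSC-H-chi} delivers the conclusion immediately.

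First I would separate the operator part from the radial part. Writing $A:=\sigma(\bbS)\in\bbT_+$ and using $r^{{-\H}^*}=(r^{{-\H}})^*$, the positivity and trace-class property of $A$ yield the clean bound $\tr(r^{{-\H}} A\, r^{{-\H}^*}) = \|r^{{-\H}} A^{1/2}\|_{\rm HS}^2 \le \|r^{{-\H}}\|_{\rm op}^2\,\tr(A)$, obtained by factoring $A=A^{1/2}A^{1/2}$ and passing to the Hilbert--Schmidt norm. Since $\tr(A)=\|\sigma(\bbS)\|_{\rm tr}<\infty$, the problem collapses to controlling the purely scalar integral $\int_0^\infty r^{-1}(1\wedge r^{2k+2})\|r^{{-\H}}\|_{\rm op}^2\,dr$.

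The crux is a two-sided growth estimate on $\|r^{{-\H}}\|_{\rm op}$ as $r\to 0$ and $r\to\infty$, and this is exactly where the Riesz functional calculus \eqref{e:Riesz-calculus} enters. Because ${\rm sp}(\H)$ is compact and $\Re({\rm sp}(\H))\subset(0,k+1)$, I can choose $0<\delta_1<\min\Re({\rm sp}(\H))$ and $\max\Re({\rm sp}(\H))<\delta_2<k+1$, and take $\Gamma$ to be the boundary of a rectangle $(\delta_1,\delta_2)\times(-R,R)$ enclosing ${\rm sp}(\H)$. Applying \eqref{e:Riesz-calculus} to the entire function $z\mapsto r^{-z}$ and bounding $\|(z-\H)^{-1}\|_{\rm op}$ by its (finite) supremum over the compact curve $\Gamma$ gives $\|r^{{-\H}}\|_{\rm op}\le C\sup_{z\in\Gamma}r^{-\Re z}$, i.e.\ $\|r^{{-\H}}\|_{\rm op}\le C\, r^{-\delta_1}$ for $r\ge1$ and $\|r^{{-\H}}\|_{\rm op}\le C\, r^{-\delta_2}$ for $0<r\le1$.

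Finally I would insert these bounds into the scalar integral and split at $r=1$. On $(1,\infty)$ the integrand is at most $C^2 r^{-1-2\delta_1}$, which is integrable because $\delta_1>0$; on $(0,1)$, where $1\wedge r^{2k+2}=r^{2k+2}$, it is at most $C^2 r^{2k+1-2\delta_2}$, which is integrable because $\delta_2<k+1$ forces the exponent to exceed $-1$. Thus \eqref{e:chi_sigma_trace-2} holds and the proposition follows. I expect the main obstacle to be the spectral growth bound on $\|r^{{-\H}}\|_{\rm op}$: one must check that the contour $\Gamma$ can be kept inside the open strip $0<\Re z<k+1$ while still winding once around every point of ${\rm sp}(\H)$, so that $\delta_1$ and $\delta_2$ land strictly on the correct sides of $0$ and $k+1$; this is precisely what the hypothesis $\Re({\rm sp}(\H))\subset(0,k+1)$ guarantees, and it is also the place where a general (non-normal) $\H$ differs from the diagonalizable case, since here one cannot read off the growth rate coordinate-wise.
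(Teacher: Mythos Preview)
Your proposal is correct and follows essentially the same approach as the paper: reduce to Corollary~\ref{c:NSC-H-chi}, bound $\tr(r^{-\H}\sigma(\bbS)r^{-\H^*})$ by $\|r^{-\H}\|_{\rm op}^2\|\sigma(\bbS)\|_{\rm tr}$ (the paper cites \eqref{e:BAC-inequality} where you use the Hilbert--Schmidt factorization, but these are equivalent here), and then control $\|r^{-\H}\|_{\rm op}$ via the Riesz contour integral over a curve $\Gamma$ whose real parts lie strictly inside $(0,k+1)$. The only cosmetic difference is that the paper uses a single symmetric margin $\epsilon$ with $\Re({\rm sp}(\H))\subset(\epsilon,k+1-\epsilon)$, whereas you allow asymmetric margins $\delta_1,\delta_2$; the resulting bound \eqref{e:cH-epsilon} and the final split of the integral at $r=1$ are identical.
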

\begin{proof} We will show first that, for some $\epsilon>0$ and $C_{\H}>0$,
\begin{align}\label{e:cH-epsilon}
\| r^{{-\H}}\|_{\rm op} \le C_{\H}\cdot \Big(r^{-(k+1)+\epsilon} 1_{(0,1)}(r) + r^{-\epsilon} 1_{[1,\infty)}(r) \Big).
\end{align}
Recall that ${\rm sp}({\H})$ is a compact subset of $\bbC$.  This fact and the assumption \eqref{e:H-bounded-sufficient} 
on the spectrum of ${\H}$ implies that $\Re({\rm sp}({\H})) \subset \replace{(\epsilon,\infty)}{(\epsilon, k+1-\epsilon)}$, for some $\epsilon>0$. Since ${\rm sp}({\H})$ is compact,
one can take a closed  curve $\Gamma$ containing  ${\rm sp}({\H})$ in its interior, such that \replace{$\Re(z) \ge \epsilon$}{$\epsilon \le \Re(z) \le k+1 -\epsilon$} for all $z\in \Gamma$. 
Note that $z\mapsto \|(z \cdot \I -\H)^{-1}\|_{\rm op}$ is a continuous function of $z$ for all $z\in \Gamma 
\subset\bbC\setminus {\rm sp}({\H})$. Thus, since $\Gamma$ is a compact set, we have that 
\begin{align*}
\max_{z\in \Gamma} \| (z\cdot \I - \H)^{-1}\|_{\rm op} =:C_{\H}(\Gamma)<\infty.
\end{align*}

Now, by applying \eqref{e:Riesz-calculus} to $f(z):= r^{{-\H}} = e^{-\log(r) \H}$, {\clb for all $r>0$, 
we obtain
\begin{align}\label{e:cH-epsilon-new}
\| r^{{-\H}}\|_{\rm op} &\le \frac{1}{2\pi}  \int_\Gamma |e^{-\log(r) z}| \| (z\cdot \I -\H)^{-1}\|_{\rm op} |dz| \nonumber \\
& \le \frac{C_\H(\Gamma)}{2\pi} {\rm Len}(\Gamma) \sup_{z\in \Gamma} | e^{-\log(r) z} |, 
\end{align}
where ${\rm Len}(\Gamma) = \int_\Gamma |dz|$ is the length of $\Gamma$. 

Observe now that $|e^{-\log (r) z}| = r^{-\Re(z)}$ and recall that $\epsilon\le \Re(z)\le k+1 - \epsilon$, for all $z\in \Gamma$.
This,  implies that
$$
|e^{-\log (r) z}| \le r^{-(k+1)+\epsilon}1_{(0,1)}(r) + r^{-\epsilon} 1_{[1,\infty)}(r),
$$
which in view of \eqref{e:cH-epsilon-new} yields \eqref{e:cH-epsilon}.}

Now, by Corollary \ref{c:NSC-H-chi}, the measure $\chi$ in \eqref{e:chi_sigma_trace-1} is the spectral measure of
a covariance ${\H}$-self-similar IRF$_k$, provided \eqref{e:chi_sigma_trace-2} holds.  This, however, readily follows from 
\eqref{e:cH-epsilon}.  Indeed, by \eqref{e:BAC-inequality},
the integral in \eqref{e:chi_sigma_trace-2} is bounded above by
\begin{align}\label{e:H-bounded-1}
\| \sigma(\bbS)\|_{\rm tr} \int_0^1 r^{2k+2} \|r^{{-\H}}\|_{\rm op}^2 r^{-1} dr + 
\| \sigma(\bbS)\|_{\rm tr} \int_1^\infty \|r^{{-\H}}\|_{\rm op}^2 r^{-1} dr,
\end{align}
where we used the fact that $\|r^{{-\H}}\|_{\rm op} = \|r^{{-\H}^*}\|_{\rm op}$.  By \eqref{e:cH-epsilon}, the integrals in \eqref{e:H-bounded-1} are finite
and the proof is complete.
\end{proof}

\begin{remark} The sufficient condition in \eqref{e:H-bounded-sufficient} may appear restrictive. In particular, it implies that ${\H}$ has a bounded inverse 
(since $0\not \in {\rm sp}({\H})$). This condition is not all that restrictive when the Hilbert space ${\mathbb V}$ is finite-dimensional and our
sufficient conditions are precisely the same as the existing literature in the special case $k=0$ 
\citep[see e.g.,][] {didier:pipiras:2011,didier:meerschaert:pipiras:2018}.
\end{remark}

\subsection{Related work and examples.}\label{sec:related-work-and-examples}

Here, we first specialize the results from the previous section and discuss
existing related work when $\V$ is finite-dimensional. Then, we consider a class of stationary infinite-dimensional processes, 
which admit higher-order tangent fields under operator scaling.

\begin{example}[IRF$_0$ or operator fractional Brownian motions]  
\label{rem:OFBM}
When $k=0, d=1$, and $\V = \bbR^m$, the IRF$_k$ processes can be identified with the well-studied
class of vector-valued stationary increment processes.  The seminal paper of  the \cite{didier:pipiras:2011} established the spectral 
representation and stochastic integral representations for essentially all Gaussian operator self-similar processes with stationary increments
taking values in $\R^m$.  We demonstrate next how these processes, known as operator fractional Brownian motions (OFBM), can be 
recovered from our Theorem \ref{thm:operator-ss}. {\clb In this setting the operator ${\cal H}$ is a real $m\times m$ matrix with
eigenvalues $\lambda_i \in\bbC,\ i=1,\cdots,m$ such that 
\begin{equation}\label{e:OFBM-condition}
 0 < \Re(\lambda_i) <1,
\end{equation}
\citep[see e.g., (1.4) in ][]{didier:pipiras:2011}.  Observe that the last condition coincides with \eqref{e:H-bounded-sufficient} of Proposition \ref{p:general-bounded-H} 
for $k=0$. } 

Let $\{Y(\lambda)\}$ be a zero-mean Gaussian ${\H}$-self-similar IRF$_0$.  Then, if one considers
\begin{align*}
 \lambda_t(du):=  \delta_{t}(du) - \delta_0(du),\ \ t\in \R,
\end{align*}
the process $B(t):= Y(\lambda_t),\ t\in \replace{\R^d}{\R}$ has stationary increments.  The ${\H}$-self-similarity of $\{B(t)\}$ follows 
readily from the self-similarity of $\{Y(\lambda)\}$ and the fact that $c\cdot \lambda_t = \lambda_{ct},\ c>0,\ t\in \R$.  Conversely,
every ${\H}$-self-similar stationary increment process $\{B(t),\ t\in \R\}$ can be taken as a representer of an ${\H}$-self-similar IRF$_0$
process.

Since $\wh{\lambda_t}(x) = e^{\ii tx} - 1$ and $\bbS=\{-1,1\}$, Relation \eqref{e:p:operator-ss} yields
\begin{align}\label{e:rem:OFBM}
B(t) \equiv Y(\lambda_t) &= \int_0^\infty \int_{\{-1,1\}} (e^{\ii tr \theta } - 1) W(dr,d\theta)\nonumber\\
& = \int_{0}^\infty  (e^{\ii x t } - 1) W(dx,\{1\}) + \int_{0}^\infty (e^{-\ii xt} -1) W(dx,\{-1\}).
\end{align}
Now, following the notation in Theorem 3.1 of  \cite{didier:pipiras:2011}, let 
$\wt B(dx) = \wt B_1 (dx) + \ii \wt B_2(dx)$, where $\wt B_i,\ i=1,2$ are independent zero-mean
Gaussian $\bbR^m$-valued measures such that $\wt B_1(dx) = \wt B_1(-dx)$, $\wt B_2(dx) = -\wt B_2(-dx)$, and
\begin{align}\label{e:Didier}
\E [\wt B(dx) {\wt B(dx)}^*]  \equiv \E \Big[\wt B(dx) \overline{\wt B(dx)}^\top\Big] = \I_m dx.
\end{align}
Observe that, by \eqref{e:p:operator-ss-1},
\begin{align*}
\Big\{W(dx,\{\pm 1\}),\ dx\in (0,\infty) \Big\} \stackrel{d}{=} \Big\{ x^{-({\H}+1/2)} A_{\pm 1}\wt B(\pm dx),\ dx\in (0,\infty)\Big\},
\end{align*}
where $A_{\pm 1} A_{\pm 1}^* = \sigma(\{\pm 1\})$.  Therefore, Relation \eqref{e:rem:OFBM} yields
\begin{align*}
\{B(t)\} \stackrel{d}{=}
 \left\{  \int_{-\infty}^\infty \frac{(e^{\ii x t } - 1)}{\ii x} \Big( x_+^{-({\H}-1/2)} A_{1} + x_-^{-({\H}-1/2)}A_{-1} \Big) \wt B(dx) \right\}.
\end{align*}
This is precisely the representation established in Theorem 3.1 of \cite{didier:pipiras:2011}, wherein $A_{-1} = \overline{A_1}$ is
the complex conjugate of $A_1$ since they consider real-valued processes.  Indeed, the last stochastic integral is real-valued if 
and only if the integrand $f_t(x)$ is a Hermitian function of $x$, i.e., $f_t(-x) = \overline{ f_t(x)}$.  This is the case, if and only if
 $A_1 = \overline A_{-1}$.
 
{\clb  \begin{remark}  Note that the condition \eqref{e:OFBM-condition} on the eigenvalues of the matrix $\H$ does not imply in general 
that $T_c:=c^{\H},\ c>0$ are scaling actions in the Euclidean norm of $\V\equiv \R^m$.  This is because the monotonicity of
the function $c\mapsto \|c^{\H}\|$ may be violated except when the matrix $\H$ is normal (i.e., diagonalizable in an orthonormal basis). In particular, Lemma 
\ref{l:operator-scaling} is not applicable.  Nevertheless, as shown in \citep[Lemma 6.1.5 in ][]{meerschaert:scheffler:2001book}, there is a 
suitable norm in $\V$,  with respect to which the latter are monotone increasing and in this new (equivalent norm) $\{c^{\H},\ c>0\}$ is a scaling 
action in the sense of Definition \ref{def:scaling_action}. See also \cite{jurek:1984} for the case where $\V$ is a Banach space.
\end{remark}
\begin{remark} \label{rem:OFBM:existence}  By Proposition \ref{p:general-bounded-H} (with $k=0$), Condition \eqref{e:OFBM-condition} implies that the 
stochastic integrals in \eqref{e:rem:OFBM} are well-defined. \end{remark}}
\end{example}

\begin{example}[Operator fractional Brownian fields] \label{rem:OSS-RF}
Stationary increment vector-valued random fields (IRF$_k$ with $k=0$) where $d\ge 2$
have been actively studied by many authors  
\citep[see e.g.,][among others.]
{bierme:meerschaert:scheffler:2007,yuqiang:xiao:2011,baek:didier:pipiras:2014,didier:meerschaert:pipiras:2018} 
In the latter references, self-similarity is considered under operator rescaling of {\em both} the range and
the domain of the process.  Here, we consider only scalar rescaling of the domain. In this setting, in the special case of 
processes taking values in $\R^m$ and $k=0$, Theorem \ref{thm:operator-ss} recovers Proposition 3.1 of \cite{didier:meerschaert:pipiras:2018}.

Interestingly, using Fr\'echet differentiability in Theorem \ref{thm:operator-ss}, we extend the 
{\em disintegration formula} in Relation (3.10) of \cite{didier:meerschaert:pipiras:2018} to the case of processes taking values in
a separable Hilbert space as well as to the general case of intrinsic random functions of order $k$.
We anticipate that a version of our Theorem \ref{thm:operator-ss} holds under operator scaling of both the range and the domain of $Y$.
\end{example}

\begin{remark}[The non-uniqueness of the operator exponent ${\H}$]\label{rem:non-unique-H} Suppose that
$Y$ is an operator ${\H}$-self-similar zero-mean Gaussian continuous IRF$_k$ taking values in the {\em real} Hilbert 
space $\V_\R$ (recall Section \ref{sec:real-complex}). Then, the distribution of $Y$ is determined by its covariance structure, i.e., 
by the unique pair of  its spectral characteristics $(\chi,{\mathcal Q})$ or equivalently $(\sigma,{\mathcal Q})$. The operator 
exponent ${\H}$, however, is not necessarily unique even when $\H$ is restricted to the support of the process $Y$.  For the notion 
of a support of $Y$ and its relation to the operator exponent ${\H}$, see Section \ref{sec:supplement-support} in Supplement.
To gain some intuition, suppose that for an operator ${\mathcal A}$ on $\V:={\rm supp}(Y)$, we have that 
$
\{ c^{\mathcal A} Y(\lambda)\} \stackrel{d}{=} \{Y(\lambda)\},
$
for all $c>0$.  If ${\H}$ and ${\mathcal A}$ commute, then $c^{{\H}+{\mathcal A}} = c^{\H}c^{\mathcal A},\ c>0$ and hence $Y$ is also $({\H}+{\mathcal A})$-self-similar.  

As shown in \cite{didier:meerschaert:pipiras:2017} such non-uniqueness can arise even in the finite-dimensional case with $k=0$,  where a wealth of interesting 
phenomena emerge.  Specifically, Theorem 2.4 therein characterizes all possible operator exponents  and shows that one can always choose a {\em commuting exponent} 
$\H_0$ such that ${\H}_0 {\mathcal A} = {\mathcal A} {\H}_0$. In their terminology, the operator $\A$ belongs  
to the tangent space of the group of symmetries of the process.  Notice that ${\mathcal A}$ can indeed be viewed as a tangent since it is the 
Fr\'echet derivative of $f(c) = c^{\mathcal A}$ at $c=1$. 

 Understanding the non-uniqueness of the operator self-similarity exponent in the general infinite-dimensional 
case is a challenging problem.  We anticipate that the extension of the important characterization results of 
\cite{didier:meerschaert:pipiras:2017} to the infinite-dimensional case is possible but considerably beyond the scope of this paper.
\end{remark}

We end this section with an example of stationary Gaussian $\V$-valued processes, which admit a large class of
tangent fields.  

\begin{example}[Higher-order tangent fields in infinite dimensions] \label{ex:functional-Matern-type} {\clb In this example, we shall assume that
$\V_\R$ is a real Hilbert space and through the method of complexification define $\V = \V_\R + \ii \V_\R$, with the natural inclusion $\V_\R \subset \V$.  

Consider polar coordinates in $\R^d\setminus\{0\}$, where $u = \|x\|,\ \theta:=x/\|x\|,$ are the radial and angular components of 
$x\in \R^d\setminus\{0\}$ and let $\mu(d\theta)$ be a finite, real, $\T_+$-valued measure on the unit sphere
 $\mathbb S = \{ \theta \in\R^d\, :\, \|\theta\|=1\}$. Define the real, $\sigma$-finite $\T_+$-valued 
measure 
$$
 \nu(dx) =\nu(du,d\theta) = du \mu(d\theta),\ \ \ (u,\theta) \in (0,\infty)\times \mathbb S.
$$

Let $W_{\R} = \{W_\R(A),\ A \in {\cal B}(\R^d\setminus\{0\}) \}$ and $W_{\mathbb I}= \{W_{\mathbb I}(A),\ A \in {\cal B}(\R^d\setminus\{0\})\}$ 
be two independent, real (i.e., $\V_\R$-valued) orthogonal Gaussian measures with the same control measure $2^{-1} \nu$ (in the sense of Definition \ref{def:orthogonal-measure}).  Construct
\begin{equation}\label{e:W-WR-WI}
 W(A) =  W_{\R}(A) + \ii W_{\mathbb I}(A).
\end{equation}

It is straightforward to see that $W = \{W(A)\}$ is an orthogonal Gaussian $\V$-valued random measure with 
control measure $\nu$, i.e., for all bounded Borel $A, B\in {\cal B}(\R^d\setminus\{0\})$, 
the random vectors $W(A)$ and $W(B)$ are such that 
\begin{align}\label{e:W_AB}
\E [ W(A)\otimes W(B)] = \nu(A\cap B) = \int_0^\infty \int_{\bbS} 1_{A\cap B} (u\theta) du\mu(d\theta).
\end{align}
Note, moreover, that $W$ is also independently scattered, i.e., $W(A_i),\ i=1,\cdots,n$ are independent for all disjoint bounded Borel 
sets $A_i \subset \R^d\setminus\{0\},\ i=1,\cdots,n$, which is not necessarily true for all orthogonal Gaussian random measures taking values in 
a complex Hilbert space.

By analogy with the scalar $\C$-valued case the Gaussian random measure $W$ in \eqref{e:W-WR-WI} will be referred to as standard. Since
its real and imaginary components are iid, the distribution of the process $W=\{W(A)\}$ is completely determined by its 
cross-covariance operators in \eqref{e:W_AB}.  Moreover, $W$ has circular symmetry and self-similarity properties:
\begin{equation}\label{e:circular-symmetry}
\{ e^{\ii \varphi } W(du,d\theta) \} \stackrel{fdd}{=} \{W(du,d\theta)\}\ \ \mbox{ and } \ \ \{ r^{1/2} W(du,d\theta)\} \stackrel{fdd}{=} \{W(d(r\cdot u), d\theta)
\},
\end{equation}
for all $\varphi\in \R$ and $r>0$ .

} 

Let $\H$ be a bounded linear operator on $\V$ such that 
\begin{align}\label{e:MRFk-H-condition}
\replace{\|\H\| < k+1\ \ \ \mbox{ and }\ \ \ \Re({\rm sp}(\H)) \subset (\epsilon,\infty)}{\Re({\rm sp}(\H)) \subset (\epsilon,k+1)},
\end{align}
for some $\epsilon>0$ {\clb and $k\ge 0,\ k\in\mathbb Z$}.
Suppose also that $\A(\theta),\ \theta\in\mathbb S$ is a collection of bounded linear 
operators such that $\theta\mapsto \A(\theta)$ is Borel measurable in $\theta$ and such that
\begin{align}\label{e:MRFk-A-condition}
 \int_\bbS \| \A(\theta)\|_{\rm op}^2 \|\mu\|_{\rm tr} (d\theta) <\infty,
\end{align}
where  $\|\mu\|_{\rm tr}$ denoted the (finite) trace measure $\|\mu\|_{\rm tr}(A) := \|\mu(A)\|_{\rm tr}$.
\end{example}

\begin{proposition} \label{p:stationary-tangent-example} 
Suppose that \eqref{e:MRFk-H-condition} and \eqref{e:MRFk-A-condition} hold, {\clb for some $k\ge 0,\ k\in\Z$.}

\begin{enumerate}
\item
For all $s\in\R^d$, the stochastic integral 
\begin{align}\label{e:MRFk}
X(s) := \int_0^\infty \int_{\bbS} f_s(u,\theta) W(du,d\theta),\ \ \mbox{ where } 
f_s(u,\theta) := e^{\ii u s^\top \theta } (1\wedge u)^{k+1}  u^{-(\H+1/2)} \A(\theta)
\end{align}
exists and defines a stationary $\V-$valued Gaussian random field.
\vskip.2cm
\item
The process $X = \{X(s),\ s\in \R^d\} $ has a version with $\gamma$-H\"older continuous paths for all 
$\gamma\in (0,1\wedge \epsilon)$, where $\epsilon$ is as in \eqref{e:MRFk-H-condition}.
\vskip.2cm
\item
The {\clb continuous-path version of the} process $\{X(s)\}$ has a $k$-th order tangent field at each (any) $s_0$. More precisely,
\begin{align}\label{e:tangent-field-example}
\Big\{r^{-\H} X(s_0+r\cdot \lambda),\ \lambda\in\Lambda_k\Big\} \stackrel{d}{\longrightarrow} Y= \{Y(\lambda),\ 
\lambda\in\Lambda_k\},\ \ \mbox{ as }r\downarrow 0,
\end{align}
where the tangent process is an $\H$-self-similar IRF$_k$ given by 
\begin{align*}
Y(\lambda) = \int_{\R^d} \widehat \lambda(u\theta) u^{-\H-1/2} \A(\theta) W(du,d\theta).
\end{align*}
\end{enumerate}
\end{proposition}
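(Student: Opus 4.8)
The plan is to reduce all three parts to the operator bound
$\|r^{{-\H}}\|_{\rm op}\le C_{\H}\bigl(r^{-(k+1)+\epsilon}1_{(0,1)}(r)+r^{-\epsilon}1_{[1,\infty)}(r)\bigr)$ of \eqref{e:cH-epsilon}, which is available here since \eqref{e:MRFk-H-condition} is exactly the hypothesis \eqref{e:H-bounded-sufficient} of Proposition~\ref{p:general-bounded-H}, together with the operator-valued isometry for integrals against $W$, namely $\E[{\cal I}_W(F)\otimes{\cal I}_W(G)]=\int F(u,\theta)\,\nu(du,d\theta)\,G(u,\theta)^*$ for bounded-operator-valued $F,G$ (the natural extension of the scalar isometry stated before Definition~\ref{def:orthogonal-measure}). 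For part (i), applying this to $F=G=f_s$ and taking traces, and using $|e^{\ii u s^\top\theta}|=1$ and $\Sigma:=\int_{\bbS}\A(\theta)\mu(d\theta)\A(\theta)^*\in\bbT_+$ (finite by \eqref{e:MRFk-A-condition}), I obtain $\E\|X(s)\|^2=\int_0^\infty(1\wedge u)^{2k+2}u^{-1}\tr(u^{{-\H}}\Sigma\,u^{{-\H}^*})\,du$, independent of $s$ and finite by \eqref{e:cH-epsilon} (the integrand is $O(u^{-1+2\epsilon})$ at $0$ and $O(u^{-1-2\epsilon})$ at $\infty$); the same computation with $f_s,f_t$ shows $\mathcal{C}_X(s,t)$ depends only on $s-t$, and since $X$ is a Gaussian integral of a deterministic integrand against a circularly symmetric $W$, shift-invariance of the covariance upgrades to strict stationarity.

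For part (ii) I would bound the increment variance via the isometry: with $f_s-f_t=(e^{\ii u s^\top\theta}-e^{\ii u t^\top\theta})(1\wedge u)^{k+1}u^{-(\H+1/2)}\A(\theta)$ and $|e^{\ii a}-1|^2\le 2^{2-2\beta}|a|^{2\beta}$ applied with $a=u(s-t)^\top\theta$, $|a|\le u\|s-t\|$, one gets $\E\|X(s)-X(t)\|^2\le C\|s-t\|^{2\beta}\int_0^\infty u^{2\beta}(1\wedge u)^{2k+2}u^{-1}\tr(u^{{-\H}}\Sigma\,u^{{-\H}^*})\,du$, and \eqref{e:cH-epsilon} shows this radial integral is finite exactly when $\beta<1\wedge\epsilon$ (the binding constraint being $\beta<\epsilon$ at infinity). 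Since $X$ is Gaussian, its increment $L^p$-norms are comparable to the $L^2$-norm, so $\E\|X(s)-X(t)\|^p\le C_{M,p}\|s-t\|^{p\beta}$ on $\{\|s\|,\|t\|\le M\}$; the Kolmogorov--Chentsov criterion (the Remark after Proposition~\ref{p:tightness-via-moments}) then produces a modification with $\gamma$-H\"older paths for every $\gamma<\beta-d/p$, and letting $p\to\infty$, $\beta\uparrow 1\wedge\epsilon$ yields (ii).

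For part (iii) I would first establish convergence of the finite-dimensional distributions. For $\lambda=\sum_i c_i\delta_{t_i}\in\Lambda_k$ one has $\sum_i c_i e^{\ii(ur\theta)^\top t_i}=\wh\lambda(ur\theta)$, so the integrand of $r^{{-\H}}X(s_0+r\cdot\lambda)$ is $e^{\ii u s_0^\top\theta}\wh\lambda(ur\theta)(1\wedge u)^{k+1}(ru)^{{-\H}}u^{-1/2}\A(\theta)$ after using $r^{{-\H}}u^{{-\H}}=(ru)^{{-\H}}$. Substituting $v=ur$ (so $u^{-1}du=v^{-1}dv$), the cross-covariance of $r^{{-\H}}X(s_0+r\cdot\lambda)$ and $r^{{-\H}}X(s_0+r\cdot\lambda')$ equals $\int_0^\infty\int_{\bbS}\wh\lambda(v\theta)\overline{\wh{\lambda'}(v\theta)}(1\wedge(v/r))^{2k+2}v^{-1}v^{{-\H}}\A(\theta)\mu(d\theta)\A(\theta)^* v^{{-\H}^*}\,dv$; since $(1\wedge(v/r))^{2k+2}\uparrow 1$ as $r\downarrow 0$ and is dominated by the $r$-free envelope furnished by $\wh\lambda(v\theta)=O(v^{k+1})$ near $0$ (from $\lambda\in\Lambda_k$) and \eqref{e:cH-epsilon} at infinity, dominated convergence in $\bbT$ sends this to $\E[Y(\lambda)\otimes Y(\lambda')]$, the covariance of the stated limit $Y$ (well defined and $\H$-self-similar IRF$_k$ by Proposition~\ref{p:general-bounded-H}, with spectral data $\sigma(d\theta)=\A(\theta)\mu(d\theta)\A(\theta)^*$ via Theorem~\ref{thm:operator-ss}). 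Because $W$ is circularly symmetric by \eqref{e:circular-symmetry}, the prelimit processes and $Y$ are proper complex Gaussian, so their pseudo-covariances vanish and convergence of the cross-covariances alone forces convergence of all finite-dimensional distributions.

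The main obstacle is tightness in $(S_c(\Lambda_k,\V),\rho)$, which I would obtain from Proposition~\ref{p:tightness-via-moments} applied to the representations $\breve X_r(t)=r^{{-\H}}X(s_0+r\cdot\lambda_t)$ (tightness at a point being automatic by Remark~\ref{rem:tightness_at-a-point}). Repeating the covariance computation with $\lambda=\lambda'=\lambda_s-\lambda_t$ and $g_{s,t}:=\wh{\lambda_s-\lambda_t}$ reduces matters to bounding $\int_0^\infty G(v)(1\wedge(v/r))^{2k+2}v^{-1}\|v^{{-\H}}\|_{\rm op}^2\,dv$, with $G(v):=\sup_{\|\theta\|=1}|g_{s,t}(v\theta)|^2$, uniformly in $r\in(0,1)$. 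The delicate point is the interplay of the factor $(1\wedge(v/r))^{2k+2}$ with the blow-up of $\|v^{{-\H}}\|_{\rm op}$ near $0$; here I would combine two estimates for $g_{s,t}$ --- the decay $|g_{s,t}(x)|\le C_M\|x\|^{k+1}$ from $\lambda_s-\lambda_t\in\Lambda_k$ and the bound $|g_{s,t}(x)|\le C_M(1+\|x\|)\|s-t\|$ from differentiating $\lambda_s$ in $s$ --- and interpolate to get $|g_{s,t}(x)|\le C_M\|x\|^{(k+1)(1-\eta)}\|s-t\|^{\eta}$ for $\|x\|\le 1$, $\eta\in[0,1]$. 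Splitting the integral over $(0,r),(r,1),(1,\infty)$ and invoking \eqref{e:cH-epsilon}, each piece is $\le C_M\|s-t\|^{2\beta_0}$ uniformly in $r$ for a fixed small $\beta_0>0$ (on $(0,r)$ the $r$-powers cancel to a nonnegative power of $r$); crucially no sharp exponent is needed. Gaussianity self-improves this $L^2$ bound to $\E\|\breve X_r(s)-\breve X_r(t)\|^p\le C_{M,p}\|s-t\|^{p\beta_0}$, and choosing $p>d/\beta_0$ meets Proposition~\ref{p:tightness-via-moments}(i). Combining fdd convergence with tightness through Proposition~\ref{l:tightness} gives \eqref{e:tangent-field-example}, while $Y$ is $\H$-self-similar IRF$_k$ either by Theorem~\ref{pro:self_similar} or directly, since the change of variables $v=cu$ gives $\E[Y(c\cdot\lambda)\otimes Y(c\cdot\lambda')]=c^{\H}\E[Y(\lambda)\otimes Y(\lambda')]c^{\H^*}$ and the covariance depends on $(\lambda,\lambda')$ only through $\lambda*\wt{\lambda'}$.
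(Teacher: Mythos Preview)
Your argument is essentially correct and in parts (i)--(ii) follows the paper's proof closely. In part (iii) you and the paper diverge in two places, and both differences are worth noting.

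For the finite-dimensional convergence, the paper does not work at the level of covariances. Instead it exploits both properties in \eqref{e:circular-symmetry} to establish the distributional identity $\{e^{\ii u s_0^\top\theta}r^{1/2}W(du,d\theta)\}\stackrel{fdd}{=}\{W(d(ru),d\theta)\}$; after the change of variables $v=ru$ this produces processes $X_r$ with $\{\breve X_r(t)\}\stackrel{d}{=}\{X_r(t)\}$ where $X_r$ is defined with the \emph{same} random measure $W$ as the limit $\breve Y$. One then shows $\E\|X_r(t)-\breve Y(t)\|^2\to 0$ by dominated convergence. Your route via properness (vanishing pseudo-covariance from circular symmetry) plus trace-norm convergence of covariances is also valid, but the paper's coupling is cleaner because it upgrades fdd convergence to an $L^2$ statement.

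For tightness, you work harder than necessary. After your change of variables the increment variance carries the factor $(1\wedge(v/r))^{2k+2}$, and you split at $v=r$ to track the $r$-powers. The paper simply observes that $(1\wedge(v/r))^{2k+2}\le 1$ and drops it, obtaining the \emph{$r$-free} bound
\[
\E\|\breve X_r(s)-\breve X_r(t)\|^2\le\int_0^\infty\sup_{\theta\in\bbS}|\wh\lambda_s(v\theta)-\wh\lambda_t(v\theta)|^2\,v^{-1}\|v^{-\H}\|_{\rm op}^2\,dv,
\]
which is finite because $\lambda_s-\lambda_t\in\Lambda_k$ restores the $v^{2(k+1)}$ factor near $0$. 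The paper then splits not at $r$ but at $\alpha=\|s-t\|^\kappa$ (regions $(0,\alpha]$, $(\alpha,1/\alpha)$, $[1/\alpha,\infty)$), using the crude envelope on the outer pieces and the Lipschitz bound $|\wh\lambda_s-\wh\lambda_t|^2\le C_M(1\vee v^2)\|s-t\|^2$ on the middle, balancing to get an exponent $2\zeta$ with $\zeta=2(\epsilon\wedge\delta)/(3+2(\epsilon\wedge\delta))$. Your interpolation works too (with $\eta<\epsilon/(k+1)$, once the $(1,\infty)$ piece is handled by interpolating the Lipschitz and uniform bounds, which you did not spell out), but the paper's reduction to an $r$-independent integral is the main simplification you missed.
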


The proof of this result is given in Section \ref{sec:proofs:related-work-and-examples}, below.

\begin{remark} Notice that when $\V$ is infinite-dimensional in \eqref{e:MRFk} one {\em cannot} consider Gaussian measures 
$W$ with the control measure equal to the Lebesgue measure times the identity operator $\I_m$ as in \eqref{e:Didier}.  
Indeed, for $W(A)$ to be a bona fide random element in $\V$ the control measure of $W$ must take values in $\bbT_+$.   
This is the key reason why we consider control measures of the type $du\mu(d\theta)$. 
In the {\em finite-dimensional case}, one can obtain more familiar, but ultimately equivalent stochastic integral 
representations, in terms of Gaussian $\bbC^m$-valued Gaussian random measures with the Lebesgue control measure times 
the identity by considering $\E [ W_{\rm Leb}(dr,d\theta)\otimes W_{\rm Leb} (dr,d\theta)]= v_d r^{d-1} d\theta \times \I_m$, where 
$v_d:= \pi^{d/2}/\Gamma(1+d/2)$ is the volume of the unit sphere in $\R^d$. In this case, the stochastic integral in 
\eqref{e:MRFk} can be equivalently written in Cartesian coordinates as follows:
\begin{align*}
X(s) := v_d^{-1/2}  \int_{\R^d} e^{\ii s^\top x} (1\wedge \|x\|)^{k+1} \|x\| ^{-(\H+d/2)} \A(x/\|x\| ) W_{\rm Leb} (dx),\ \ \ s\in \R^d.
\end{align*}
\end{remark}

\subsection{Scalar actions.} \label{ss:T_rrx} In this section, we characterize the spectral measure of covariance self-similar IRF$_k$'s 
with respect to the usual scalar scaling action.  In this special but important case we obtain a more complete picture of the 
$H$-self-similar IRF$_k$'s, where now $H$ is a scalar exponent.

\begin{proposition} \label{proposition:GC} Let $Y$ be a non-constant continuous
IRF$_k,\ k\ge 0$ with operator auto-covariance function ${\cal K}$ and
spectral characteristics $(\chi,{\cal Q})$. If $Y$ is covariance self-similar with exponent ${H}$, then 
$H \in (0,k+1]$ and we have the following dichotomy:
\begin{enumerate}
\item
If $H=k+1$, then $\chi \equiv 0$ and if $0<H<k+1$, then ${\cal Q}$ is trivial, i.e., ${\cal Q}(\nu)=0$, for all $\nu\in\Lambda_{2k+1}$.
\vskip.2cm
\item
In the case $0<H< k+1$, the measure $\chi_k$ in \eqref{e:chi-k} satisfies the scaling property in
\eqref{e:thm:operator-ss-2} and consequently, the disintegration formula in  \eqref{e:chi-k-polar} reads:
\begin{align}\label{e:chi_k-disintegration-scalar}
\chi_k(dr d\theta) = r^{-2H-1} dr \sigma(d\theta),\ \ (r,\theta)\in (0,\infty)\times \bbS,
\end{align} 
for some finite $\bbT_+$-valued measure $\sigma$ on $\bbS$. 
\vskip.2cm
\item
Conversely, for every $0<H<k+1$ and any finite $\bbT_+$-valued measure $\sigma$ on $\bbS$, 
there exists a covariance $H$-self-similar IRF$_k$ with spectral measure $\chi$ such that \eqref{e:chi_k-disintegration-scalar} 
holds, which can be written as in \eqref{e:p:operator-ss}.
\end{enumerate}
\end{proposition}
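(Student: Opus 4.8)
The plan is to obtain Proposition~\ref{proposition:GC} as the specialization of the general operator theory to the scalar exponent $\H = H\I$, where $\I$ is the identity and $H$ is the real scalar exponent. Covariance $H$-self-similarity under the scalar action is exactly covariance $\H$-self-similarity in the sense of Definition~\ref{def:OSS} with $\H = H\I$, so I would first invoke part (i) of Theorem~\ref{thm:operator-ss} to get the two scaling identities \eqref{e:thm:operator-ss-2}. Because $c^{-\H} = c^{-H}\I = c^{-\H^*}$, these collapse to the single radial scaling $\chi_k(dx) = c^{-2H}\chi_k(dx/c)$ together with $\mathcal{Q}(\lambda*\wt\mu) = c^{2(k+1-H)}\mathcal{Q}(\lambda*\wt\mu)$, valid for all $c>0$ and $\lambda,\mu\in\Lambda_k$.

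For the $\mathcal{Q}$-half of the dichotomy in part (i), fix $\nu = \lambda*\wt\mu\in\Lambda_{2k+1}$; the second identity reads $\mathcal{Q}(\nu)\,(c^{2(k+1-H)}-1) = 0$ for every $c>0$, so if $H\neq k+1$ we may pick $c$ with $c^{2(k+1-H)}\neq 1$ and deduce $\mathcal{Q}(\nu)=0$, i.e.\ $\mathcal{Q}$ is trivial on $\Lambda_{2k+1}$. For the $\chi$-half, part (ii) of Theorem~\ref{thm:operator-ss} supplies the disintegration \eqref{e:chi-k-polar}, which for $\H = H\I$ reduces precisely to \eqref{e:chi_k-disintegration-scalar}; recalling \eqref{e:chi-k}, this gives $\chi(dr\,d\theta) = (1\wedge r^{2k+2})\,r^{-2H-1}\,dr\,\sigma(d\theta)$ and hence $\chi(\R^d) = \sigma(\bbS)\int_0^\infty (1\wedge r^{2k+2})\,r^{-2H-1}\,dr$. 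Since $\chi$ is a \emph{finite} $\bbT_+$-valued measure (Theorem~\ref{th:IRF_k_spectral_operator_value_1}), the scalar integral must converge whenever $\sigma(\bbS)\neq 0$; a one-line computation shows it converges if and only if $0<H<k+1$ (the $r\downarrow 0$ endpoint needs $H<k+1$, the $r\to\infty$ endpoint needs $H>0$). Thus $\chi\neq 0$ forces $0<H<k+1$, while $H=k+1$ forces $\chi\equiv 0$, which is exactly the dichotomy and at the same time establishes the radial form asserted in part (ii).

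The range $H\in(0,k+1]$ then follows by combining the two cases with the non-constancy of $Y$, which prevents $\chi$ and $\mathcal{Q}$ from both vanishing: either $\chi\neq 0$, giving $0<H<k+1$ and (by the previous paragraph) trivial $\mathcal{Q}$, or $\chi=0$ and $\mathcal{Q}\not\equiv 0$, which by the $\mathcal{Q}$-scaling forces $H=k+1$. For the converse statement in part (iii), given $0<H<k+1$ and a finite $\bbT_+$-valued $\sigma$ on $\bbS$, I would check the single hypothesis \eqref{e:chi_sigma_trace-2} of Corollary~\ref{c:NSC-H-chi} with $\H = H\I$: there $\mathrm{trace}\big(r^{-\H}\sigma(\bbS)r^{-\H^*}\big) = r^{-2H}\,\mathrm{trace}(\sigma(\bbS))$, so the required integral equals $\mathrm{trace}(\sigma(\bbS))\int_0^\infty (1\wedge r^{2k+2})\,r^{-2H-1}\,dr$, which is finite exactly when $0<H<k+1$. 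Corollary~\ref{c:NSC-H-chi} then delivers a covariance $H$-self-similar IRF$_k$ with spectral measure $\chi$ obeying \eqref{e:chi_k-disintegration-scalar}, and part (iv) of Theorem~\ref{thm:operator-ss} provides its stochastic-integral form \eqref{e:p:operator-ss}.

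The only step carrying genuine content beyond bookkeeping is the integrability dichotomy: one must justify factoring $\mathrm{trace}(\sigma(\bbS))$ out of the radial integral — which works precisely because $c^{-\H}=c^{-H}\I$ commutes with $\sigma$ and decouples the radial and angular variables — and then carefully split the endpoint analysis at $r\downarrow 0$ and $r\to\infty$ across the cases $H\le 0$, $0<H<k+1$, $H=k+1$, and $H>k+1$. Everything else is a direct transcription of the operator-valued results Theorem~\ref{thm:operator-ss} and Corollary~\ref{c:NSC-H-chi} to the commuting scalar exponent.
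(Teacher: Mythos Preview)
Your proof is correct and covers all parts of the statement, but it takes a somewhat different route from the paper for the range constraint and dichotomy in part~(i). You deduce everything algebraically from Theorem~\ref{thm:operator-ss} and Corollary~\ref{c:NSC-H-chi}: the identity $\mathcal{Q}(\nu)(c^{2(k+1-H)}-1)=0$ kills $\mathcal{Q}$ when $H\neq k+1$, and the integrability criterion \eqref{e:chi_sigma_trace-2} specialized to $\H=H\I$ forces $0<H<k+1$ whenever $\chi\neq 0$ (equivalently $\sigma(\bbS)\neq 0$). The paper instead works more directly at the process level: it decomposes $Y=Y_{(\chi,0)}+Y_{(0,\mathcal{Q})}$ and uses Lemma~\ref{l:Slutsky} on the polynomial component to obtain $H=k+1$ unless $Y_{(0,\mathcal{Q})}\equiv 0$; it then gets $H>0$ from the $L^2$-continuity of $Y_{(\chi,0)}$ at the origin, and $H<k+1$ from a separate dominated-convergence computation on $r^{2(k+1-H)}\chi_k((1,\infty)\times\bbS)$ as $r\downarrow 0$. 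For part~(iii) the paper cites Proposition~\ref{p:general-bounded-H}, while you cite Corollary~\ref{c:NSC-H-chi}; both are valid since $\Re(\mathrm{sp}(H\I))=\{H\}\subset(0,k+1)$ and the trace integral reduces to the same scalar one. Your approach is more modular and reuses the operator machinery cleanly; the paper's approach is more self-contained and makes the roles of continuity and finiteness of $\chi$ explicit without passing through the disintegration formula for the upper bound.
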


The proof is given in Section \ref{sec:proofs:scalar-actions}, below.\\

In view of \eqref{e:K-of_nu} and \eqref{e:chi_k-disintegration-scalar}, one can obtain explicit formulae for
 the generalized covariance ${\cal K}$ of all covariance $H$-self-similar IRF$_k$'s.  This is done next.
 
 \begin{theorem} \label{p:K_nu_polar}
 Let ${\cal K}(\cdot)$ be the generalized covariance of a covariance self-similar IRF$_k,\ k\ge 0$ with exponent $H\in (0,k+1)$.  Then,
 with $\sigma$ as in \eqref{e:chi_k-disintegration-scalar}, we have:
 \begin{enumerate}
\item If $2H\not \in\{1,\dots,k\}$ is non-integer, then for all $\nu\in\Lambda_{2k+1}$,
 \begin{align}\label{e:p:K_nu_polar}
{\cal K}(\nu) = I({H}) \int_{\mathbb S^{d-1}} | ( \theta, \cdot ) |^{2H} (\nu) \sigma(d \theta) +
\ii J({H}) \int_{\mathbb S^{d-1}} (\theta, \cdot )^{<2H>} (\nu) \sigma(d\theta),
\end{align}
where $(\theta,t)= \theta^\top t$ denotes the Euclidean inner product,  $x^{<H>}:= {\rm sign}(x) |x|^{H}$, and 
$f(\cdot)(\nu) := \int f(t) \nu(dt)$. Here the real functions $I(H)$ and $J(H)$ are such that
\begin{align}\label{e:I_and_J}
I({H}) + \ii J({H}) := \int_{0}^\infty {\Big(} e^{\ii r} -\sum_{j=0}^{\lfloor 2H\rfloor} \frac{(\ii r)^j}{j!} {\Big)} \frac{dr}{r^{2H+1}}.
\end{align}
\item
If $2H \in \{1,\dots,k\}$ is  integer, then 
\begin{align*}
{\cal K}(\nu) = 
 \int_{\mathbb S^{d-1}} \Big[ | ( \theta, \cdot ) | ^{2H} \Big(\frac{(-1)^{H+1}}{(2H)!}  \log |( \theta,\cdot )| 
 + \ii\, J({H}) {\rm sign} ( \theta, \cdot ) {\Big)} \Big] (\nu) \sigma(d\theta),\ \ \mbox{ if $2H$ is even, }
\end{align*}
and 
\begin{align*}
 {\cal K}(\nu) = 
 \int_{\mathbb S^{d-1}} \Big[ |( \theta, \cdot )| ^{2H}  \Big( I({H}) + \ii\, \frac{(-1)^{{H}+1/2}}{(2H)!}   {\rm sign}( \theta,\cdot) 
 \log |( \theta,\cdot)|  \Big) \Big] (\nu) \sigma(d\theta),\ \ \mbox{ if $2H$ is odd.}
\end{align*}
\end{enumerate}
\end{theorem}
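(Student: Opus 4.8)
The plan is to start from the spectral formula \eqref{e:K-of_nu} together with Proposition \ref{proposition:GC}. Since $0<H<k+1$, part (i) of that proposition gives ${\cal Q}\equiv 0$, so that for $\nu\in\Lambda_{2k+1}$ the generalized covariance collapses to ${\cal K}(\nu)=\int_{\bbR^d}\widehat\nu(u)\,\chi_k(du)$, the weight $1\wedge\|u\|^{2k+2}$ cancelling against the definition \eqref{e:chi-k} of $\chi_k$. Inserting the disintegration \eqref{e:chi_k-disintegration-scalar} and passing to polar coordinates $u=r\theta$ yields
\[
{\cal K}(\nu)=\int_{\bbS}\Big(\int_0^\infty \widehat\nu(r\theta)\,r^{-2H-1}\,dr\Big)\sigma(d\theta).
\]
The first thing I would record is that this double integral converges absolutely for \emph{every} $H\in(0,k+1)$: because $\nu$ annihilates all polynomials of degree $\le 2k+1$, its Fourier transform satisfies $\widehat\nu(r\theta)=O(r^{2k+2})$ as $r\downarrow0$, making the inner integral integrable at $0$ (as $2H<2k+2$), while the boundedness of $\widehat\nu$ makes it integrable at $\infty$ (as $2H>0$); the outer integral is controlled by $\|\sigma(\bbS)\|_{\rm tr}<\infty$.

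For part (i) I would exploit the annihilation property to write $\widehat\nu(r\theta)=\int\big(e^{\ii r(\theta,x)}-\sum_{j=0}^{\lfloor 2H\rfloor}(\ii r(\theta,x))^j/j!\big)\nu(dx)$ and then apply Fubini to move the radial integral inside. When $2H$ is non-integer, the integrand obtained after subtracting the degree-$\lfloor 2H\rfloor$ Taylor polynomial of $e^{\ii r(\theta,x)}$ is jointly absolutely integrable in $(r,x)$, which legitimizes the interchange. The change of variables $r\mapsto r/|a|$ with $a=(\theta,x)$, together with the observation that the $a<0$ integral is the complex conjugate of the $a>0$ one, evaluates the inner integral to $|a|^{2H}\big(I(H)+\ii\,{\rm sign}(a)J(H)\big)=I(H)|(\theta,x)|^{2H}+\ii J(H)(\theta,x)^{<2H>}$, with $I(H),J(H)$ exactly the real and imaginary parts in \eqref{e:I_and_J}. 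Reassembling and integrating against $\nu$ and $\sigma$ gives \eqref{e:p:K_nu_polar}.

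The genuinely delicate part is part (ii), the integer resonances $2H=m$. Here the defining integral \eqref{e:I_and_J} diverges logarithmically at $\infty$ because the top Taylor term leaves a non-integrable $r^{-1}$ tail; its coefficient $(\ii)^m/m!$ is real when $m$ is even and purely imaginary when $m$ is odd, so precisely one of $I(H),J(H)$ blows up. The compensating fact is that the companion kernel — $|(\theta,\cdot)|^{2H}=(\theta,\cdot)^{m}$ when $m$ is even, and $(\theta,\cdot)^{<2H>}=(\theta,\cdot)^{m}$ when $m$ is odd — is then a genuine polynomial of degree $m\le 2k+1$, hence annihilated by $\nu$, so the offending product is a $0\cdot\infty$ indeterminacy. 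I would resolve it by approaching $m$ through non-integer values $2H'=m+\eta$, applying part (i), and using the expansion $|(\theta,x)|^{2H'}=|(\theta,x)|^{m}\big(1+\eta\log|(\theta,x)|+o(\eta)\big)$ against the simple pole $I(H')\sim c/\eta$ (resp.\ $J(H')\sim c/\eta$) arising from the residue of $\Gamma(-2H')$ and an explicit phase factor. Since $\int(\theta,x)^{m}\nu(dx)=0$, the $1/\eta$ singularity cancels and the finite part is exactly $c\,|(\theta,x)|^{m}\log|(\theta,x)|$ integrated against $\nu$; a direct residue computation identifies $c$ as $(-1)^{H+1}/(2H)!$ in the even case and $(-1)^{H+1/2}/(2H)!$ in the odd case, while the non-resonant companion ($J(H)$ for even $m$, $I(H)$ for odd $m$) passes to the limit by continuity. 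Dominated convergence in $\theta$ against the finite measure $\sigma$ secures the limit under the outer integral.

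The main obstacle is precisely this extraction of the finite part at the resonant exponents: one must simultaneously control the simple pole of $I(H')$ or $J(H')$, the first-order coefficient in $\eta$ of $|(\theta,x)|^{2H'}$, and the exact residue constant, and justify passing the limit $\eta\to0$ under the $\nu$- and $\sigma$-integrals. Everything else reduces to the bookkeeping above once absolute convergence of the polar integral is in hand.
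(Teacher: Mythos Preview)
Your treatment of part (i) is essentially the paper's proof: same polar decomposition, same subtraction of the degree-$\lfloor 2H\rfloor$ Taylor jet to force absolute integrability, same Fubini and scaling $r\mapsto r/|(\theta,x)|$ to produce $I(H)+\ii J(H)$.

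For part (ii) you take a genuinely different route. You argue by analytic continuation: approach the resonant integer $2H=m$ through non-integer $2H'=m+\eta$, use part (i), identify the simple pole of $I(H')$ (even $m$) or $J(H')$ (odd $m$) coming from the $-(\ii r)^{m}/m!$ tail, and cancel it against the first-order vanishing of $|(\theta,\cdot)|^{2H'}(\nu)$ at $\eta=0$ (since $(\theta,\cdot)^m$ is a polynomial annihilated by $\nu$). The finite part is the $\log$ term. This works, and your residue bookkeeping $I(H')\sim \frac{(-1)^{H+1}}{(2H)!\,\eta}$ (even case) is correct; the non-resonant companion converges by dominated convergence since only odd (resp.\ even) Taylor terms enter $J$ (resp.\ $I$).

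The paper avoids the limit entirely. It subtracts the Taylor polynomial only up to degree $2H-1$, and includes the degree-$2H$ term with a cutoff $1_{[0,1]}(r)$; this makes the radial integrand absolutely integrable \emph{at} the integer $2H$ directly. After Fubini and the substitution $z=r|(\theta,t)|$, the cutoff becomes $1_{[0,|(\theta,t)|]}(z)$. The paper then adds and subtracts the same expression with the $t$-independent cutoff $1_{[0,1]}(z)$ --- harmless under $\nu(dt)$ since it multiplies the polynomial $(\theta,t)^{2H}$ --- and the difference is exactly $\frac{(\ii)^{2H}}{(2H)!}\int_{|(\theta,t)|}^{1}z^{-1}\,dz = \frac{(-1)^{H+1}}{(2H)!}\log|(\theta,t)|$ (even case), with the orthogonal real/imaginary piece yielding $J(H)$ (resp.\ $I(H)$) as a convergent integral.

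Your approach is more conceptual and explains the $\log$ as a Hadamard finite part; the paper's cutoff trick is more elementary, needs no asymptotics of $I(H'),J(H')$, and sidesteps the dominated-convergence justification in $\eta$ that you flag as the main obstacle.
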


The proof of this result is given in Section \ref{sec:proofs:scalar-actions}, below.

\begin{remark} \cite{gelfand:vilenkin:1964d} provide spectral theory for generalized random fields taking values in the dual of the 
Schwartz space on $\R^d$ with homogeneous $(1+k)$th-order increments (denoted as G-IRF$_k$ here).  \cite{Dobrushin:1979vi} then studied 
the self-similar G-IRF$_k$ and obtained results similar to Proposition~\ref{proposition:GC} where the self-similarity parameter $H$ can take any
value in $(-\infty,k+1]$. The G-IRF$_k$ class of processes  is more general than the IRF$_k$'s studied by \cite{matheron:1973} and they do 
not always have a representation on $\R^d$. Specifically, it can be shown that a Gaussian self-similar G-IRF$_k$ has a {\clb representation} as in \eqref{e:representer} on $\R^d$ only if $H>0$ 
\citep[see][]{shen:phd_thesis:2019}.
\end{remark}

\begin{remark}[On symmetry and covariance (ir)reversibility]\label{rem:real-imaginary}
Observe that for $\sigma$ in \eqref{e:p:K_nu_polar}, one can write $\sigma = \sigma_{\rm s} + \sigma_{\rm a}$, 
where $\sigma_{\rm s}(A):= (\sigma(A) + \sigma(-A))/2$ and $\sigma_{\rm a}(A) := (\sigma(A) - \sigma(-A))/2,$
are the symmetric and anti-symmetric components of $\sigma$.  Thus, 
\begin{align*}
\int_{\mathbb S^{d-1}} |(\theta,\cdot)|^{2H} (\nu) \sigma_{\rm a}(dt) 
= \int_{\mathbb S^{d-1}} ( \theta,\cdot)^{<2H>} (\nu)\sigma_{\rm s}(dt) = 0,
\end{align*}
and \eqref{e:p:K_nu_polar} can be equivalently written as:
\begin{align}\label{e:K-sym-asym}
{\cal K}(\nu) = I({H}) \int_{\mathbb S^{d-1}} |( \theta, \cdot) |^{2H}(\nu) \sigma_{\rm s}(d \theta) +
\ii J({H}) \int_{\mathbb S^{d-1}} ( \theta, \cdot)^{<2H>}(\nu) \sigma_{\rm a}(d\theta).
\end{align}
This shows that unless $\sigma_{\rm a} \equiv 0$, we have that ${\cal K}(\nu) \not = {\cal K}(-\nu),$ for
some $\nu\in \Lambda_{2k+1}$.  Recall that by $-\nu$ we understand $((-1)\cdot \nu)(dx):=\nu(- dx)$.

{\clb Recall Definition \ref{def:cov-reversible}; $Y$ is covariance reversible, i.e.,
 $\{Y(-\lambda)\}$ and $\{Y(\lambda)\}$ have the same covariance structure, if and only if }
${\cal K}(\nu) = {\cal K}(-\nu), \forall \nu\in\Lambda_{2k+1}$ 
or equivalently if and only if $\sigma_{\rm a} \equiv 0$
\citep[see also Proposition \ref{p:supp:symmetry} above as well as 
Theorem 5.1 in][for a related result]{didier:pipiras:2011}.
\end{remark}

\begin{remark}[Real $H$-self-similar IRF$_k$'s]  Recall that $\V = \V_\R + \ii \V_\R$ (cf Section \ref{sec:real-complex}).  Thus
for an $H$-self-similar IRF$_k$  $Y$, we have
\begin{align*}
Y(\lambda) = \Re Y (\lambda) + \ii \Im Y(\lambda),
\end{align*}
where the real and imaginary parts $\Re Y$ and $\Im Y$ are real, i.e., $\V_\R$-valued. 
Thus, in view of \eqref{e:K-sym-asym}, one can see that $Y$ is real-valued (i.e., $\Im Y\equiv 0$) if and only if  
$\sigma_s$ is real and $\sigma_a$ imaginary, i.e., if $\sigma$ is Hermitian,
$\sigma(A) = \overline \sigma(-A)$ for all $A\in {\cal B}(\bbS)$.  Observe that $Y$ need not be covariance-reversible 
for it to be real (see Section \ref{sec:real-complex}.)

Since the covariance structure characterizes completely the zero-mean Gaussian processes taking values in {\em real} Hilbert spaces,
Theorem \ref{p:K_nu_polar} with Hermitian $\sigma$ provides a complete characterization of all $H$-self-similar $\V_\R$-valued IRF$_k$'s.
\end{remark}

\begin{remark}[$n$-th order fractional Brownian motion]  \cite{perrin2001nth} have studied the so-called $n$-th order fractional 
Brownian motion defined (in Remark 2 therein) as
\begin{align*}
B_H^{(n)} (t) := \frac{1}{2\pi} \int_{-\infty}^\infty \frac{1}{(\ii\omega)^{H+1/2} } \Big(e^{\ii t\omega} - \sum_{\ell =0}^{n-1} \frac{(\ii t\omega)^\ell}{\ell!} 
\Big) W(d\omega),
\end{align*}
with $n-1<H<n$, $n\in\N$, where $W(d\omega)$ is a zero mean complex Gaussian measure on $\R$ with the 
Lebesgue control measure and such that
$W(-d\omega) = \overline{W(d\omega)}$.  Notice, however,
that the above integral representation is well defined only if $H\in (n-1,n)$.  While one can always put $n:= \lceil H\rceil$, the integer values of
$H$ have to be dealt with separately.  Using our abstract approach, we can handle {\em all} 
values of $H \in (0,k+1),\ k:=n-1$, in a unified manner.  

Indeed, observe that with $k=n-1$, for any $\lambda\in\Lambda_k$, we have
\begin{align}  \label{e:nFBM}
\begin{split}
B_H^{(n)}(\lambda) &= \frac{1}{2\pi} \int_{-\infty}^\infty \wh \lambda(\omega)(\ii \omega)^{-(H+1/2)} W(d\omega) \\
&= \frac{1}{2\pi} \int_0^\infty \int_{\bbS} \wh\lambda ( u\theta) u^{-(H+1/2)}  e^{- \ii \theta \frac{\pi}{2} (H+1/2)} W(du,d\theta) \\
&=: \frac{1}{2\pi} \int_0^\infty \int_{\bbS} \wh\lambda ( u\theta) u^{-(H+1/2)}  \wt W(du,d\theta),
\end{split}
\end{align} 
where we used the change to polar coordinates $(u,\theta):= (|\omega|,{\rm sign}(\omega))$, with $\bbS = \{\pm 1\}$ and the fact that
$\lambda$ annihilates all polynomials of degree up to $k$.  This latter integral is defined for all $0<H<k+1$.  Notice that
$\wt W(du,d\theta) := e^{- \ii \theta \frac{\pi}{2} (H+1/2)} W(du,d\theta)$ is equal in law to $W(du,d\theta)$
 and \eqref{e:nFBM} is a particular case of the stochastic representation of the self-similar IRF$_k$'s characterized in Proposition \ref{proposition:GC}.
\end{remark}

We end with an example outlining the general form of the  $\R$-valued $H$-self-similar Gaussian IRF$_k$'s in $\R^d$,
which may be viewed as generalized fractional Brownian fields with $(1+k)$-th order stationary increments. 

\begin{example}[Real $n$-th order fractional Brownian fields] Fix an integer $k:=n-1\ge 0$, 
and let $\lambda_t$ be as in \eqref{e:eval1}. Let also $\sigma(d\theta)$ be a finite symmetric measure 
on the unit sphere $\bbS\subset \R^d$.  For all $H \in (0,k+1)$, it can be shown that 
$f_t(x,\theta):= (\langle \cdot,\theta\rangle - x)_+^{H-1/2}(\lambda_t)$ belongs to  ${\cal L}^2(dx,\sigma(d\theta))$, 
where $dx$ is the Lebesgue measure on $\R$.  Thus, one can define the $\R$-valued Gaussian 
random field 
\begin{align*}
Y(t):=  \int_{-\infty}^\infty \int_{\bbS} \Big(\langle \cdot,\theta\rangle - x\Big)_+^{H-1/2}(\lambda_t) W(dx,d\theta),\ \ \ t\in \R^d,
\end{align*}
where $W(dx,d\theta)$ is a zero-mean Gaussian real-valued random measure on $\R\times \bbS$ 
with control measure $dx\sigma(d\theta)$.
Then, it is easy to see with a simple change of variables 
that $Y(\lambda):=\int Y d\lambda,\  \lambda \in\Lambda_k$ is an $H$-self-similar IRF$_k$ with
{\em real} and symmetric generalized covariance $K$. In this case Theorem \ref{p:K_nu_polar} yields:
 \begin{align*}
 K(\nu) = C_H \int_{\bbS} \Big[ |\langle \cdot,\theta\rangle|^{2H} \Big ( I(H)+ 
  \frac{(-1)^{H+1}}{(2H)!} 1_{\bbN}(H) \log |\langle \cdot,\theta\rangle| \Big) \Big] (\nu) \sigma(d\theta).
 \end{align*}
 For more examples and further insights, see the PhD thesis of \cite{shen:phd_thesis:2019}.
\end{example}

\appendix

\section{Proofs and auxiliary results.}\label{app}

\subsection{Proofs and tools for Section~\ref{sec:section2}.} \label{sec:proofs_sec:section2}

	\begin{proof2}{Lemma}{\ref{l:S_is_csms}} Notice that $S_c(\breve \Lambda_k, \V)$ is a closed set in the metric space 
	$(C(\R^d,\V),\rho)$ of continuous ${\mathbb V}$-valued functions on $\R^d$, equipped with the metric $\rho$ in \eqref{eq:metric}.  
	Thus, it is enough to show that $(C(\R^d,\V),\rho)$ is a complete separable metric space 
	\citep[cf.\! Theorems 1 and 2 in Chapter XIV.2 in][] {Kuratowski:1977ty}.
	
	It is known that the metric $\rho$ generates the compact-open topology \citep[see, e.g., Theorem 46.8 in][]{Munkres:2000wg}. Therefore, to prove separability it is enough to demonstrate that this topology has a countable base. Recall that the compact-open topology on $C(\R^d,\V)$ has a sub-base comprising all sets 
	$V(K,U) = \{ f\in C(\R^d,\V)\, :\, f(K)\subset U\}$, where $K\subset \R^d$ is compact and $U\subset \V$ is open.
	Since $(\V,\dist)$ is separable and $\R^d$ is locally compact, the compact-open topology on $C(\R^d,\V)$ is second countable \citep[cf.\! Theorem 5.2, page 265 in][]{Dugundji:1966wg}.  This entails the separability of $(C(\R^d,\V),\rho)$.
	
        Completeness is established in a standard manner. Let $\{f_n\}$ be a Cauchy sequence in 
        $(C(\R^d,\V),\rho)$.  In view of \eqref{eq:metric}, for each $t\in \R^d$, 
        $\{f_n(t)\}$ is a Cauchy sequence in the complete metric space $(\V,\dist)$.  
        Thus, $f_n(t) \to f(t)\in \V$.  It remains to show that $f$ is continuous and
        $\rho(f_n,f)\to 0$.  Fix an arbitrary compact $K\subset \R^d$ and  $\epsilon >0$.  
        Since $\{f_n\}$ is Cauchy in $\rho$, there exists an $N_\epsilon$ such that
        \begin{align*}
        \sup_{\tau\in K} \dist(f_n(\tau),f_m(\tau))\le \epsilon/3,\quad \mbox{ for all } m,n\ge N_\epsilon. 
        \end{align*}
        On the other hand, for every fixed $t\in K$, we have
         \begin{align*}
         \dist(f_n(t),f(t)) = \lim_{m\to\infty} \dist(f_n(t),f_m(t)) \le \epsilon/3,\quad n\ge N_\epsilon.
         \end{align*}
        Since the latter bound is uniform in $t$, we also obtain $\sup_{t\in K} \dist(f_n(t),f(t))\le \epsilon/3,\ \ n\ge N_\epsilon$.  That is, $f_n$ converge to $f$ uniformly 
        on all compact $K$.  It remains to establish that $f$ is continuous.
        For all $s,t\in K$, we have by the triangle inequality that
         \begin{align*}
        \dist(f(t),f(s))\le 2 \sup_{\tau\in K} \dist(f_n(\tau),f(\tau)) + \dist(f_n(t),f_n(s)) \le \epsilon,
         \end{align*}
        provided that $\|t-s\|<\delta$ for some sufficiently small $\delta>0$.  Here, we used the uniform continuity of $f_n$ on $K$.
        Since $\epsilon>0$ was arbitrary, this completes the proof of the (uniform) continuity of $f$ on $K$.
        \end{proof2}

\begin{proof2}{Lemma}{\ref{l:T-a-widetilde}} \label{proof:T-a-widetilde} Properties (i) and (ii) in Definition \ref{def:scaling_action} are immediate. 
We now verify (iii). Consider the coordinate-wise action on $\breve{S}_c(\Lambda_k,\V)$, also denoted as $\{\widetilde T_a,\ a\in \bbR_+\}$ for convenience.  
 One can easily verify that $\mathcal{J}(\widetilde T_a (f)) = \widetilde T_a(\mathcal{J}(f))$. Let $f_n\to f$ in $S_c( \Lambda_k, \V)$ and $a_n\to a>0$. 
	To show that $\rho(\widetilde T_{a_n}(f_n), \widetilde T_{a}(f))\to 0$, it is enough to verify that for every $K>0$, we have
	\begin{align*}
	\sup_{\|t\| \leq K} \dist( \widetilde T_{a_n}(\breve f_n)(t), \widetilde T_{a}(\breve f)(t) ) \to 0,\ \ \mbox{ as }n\to\infty,
	\end{align*}
	where $\breve f_n = \mathcal{J}(f_n)$ and $\breve f = \mathcal{J}(f)$.
	
	In view of Lemma \ref{lem:folklore-uc}, below, it is enough to show that $ \widetilde T_{a_n}(\breve f_n)(t_n) \to \widetilde T_{a}(\breve f)(t)$, whenever $t_n \to t$ in $\overline{B}_K:=\{t:\|t\|\leq K, t\in\R^d \}$.
	Notice, however, that $ \widetilde T_{a_n}(\breve f_n)(t_n) = T_{a_n}(y_n)$ and $\widetilde T_{a}(\breve f)(t)=T_a(y)$, where $y_n: = \breve f_n(t_n)$ and $y:= \breve f(t)$.  
	By applying Lemma \ref{lem:folklore-uc} again, but now to the locally converging functions $\breve f_n$ and $\breve f$, we have that $y_n = \breve f_n(t_n) \to y = \breve f(t)$ in ${\mathbb V}$, 
	whenever $t_n \to t$ in $\overline{B}_K$.  Hence, the continuity of the scaling action $\{T_a\}$, yields $T_{a_n}(y_n) \to T_a(y)$ in ${\mathbb V}$, which completes the proof of property (iii).
	
	Let now $0 \not = f \in {S}_c(\Lambda_k,\V)$.  Proving property (iv) of Definition \ref{def:scaling_action} amounts to showing 
	that $\rho(\widetilde T_{a_1}(f), 0) < \rho(\widetilde T_{a_2}(f), 0)$, for all $0<a_1<a_2$.  Observe that by property (iv) for $\{T_a\}$,
	we have 
	\begin{align*}
	\dist(\widetilde T_{a_1}(\breve f)(t),0)  \equiv \dist(T_{a_1}(\breve f(t)), 0) \le \dist(T_{a_2}(\breve f(t)), 0)  \equiv \dist(\widetilde T_{a_2}(\breve f)(t),0).
	\end{align*}
	This implies that $\rho(\widetilde T_{a_1}(f), 0) \le \rho(\widetilde T_{a_2}(f), 0)$.  We next argue that the inequality is strict. Since $f\not =0$, we have 
	$0 \not = \breve f(t) \in \V$ for some $t\in \bbR^d$.
	Let $t \in \overline B_j$ for some large enough $j$, where $B_j$ is as defined in \eqref{e:def:scaling_action-v}.  Since the suprema therein are attained,
	it is enough to show that
	\begin{align} \label{e:l:T-a-widetilde-1}
	\begin{split}
	\dist(T_{a_1}(\breve f(t_1)), 0) &:= \max_{t \in \overline{B_j}} \dist(T_{a_1}(\breve f(t)),0) \\
	&< \max_{t \in \overline{B}_j} \dist(T_{a_2}(\breve f(t)),0)=: \dist(T_{a_2}(\breve f(t_2)), 0).
	\end{split}
	\end{align}
	Observe that, $0< \dist(T_{a_1}(\breve f(t),0) \le \dist(T_{a_1}(\breve f(t_1)), 0)$ and hence $\breve f(t_1)\not =0$.  Thus, by the radial monotonicity of the
	action $\{T_a\}$, we have
	\begin{align*}
	\dist(T_{a_1}(\breve f(t_1)), 0)  < \dist(T_{a_2}(\breve f(t_1)),0) \le \dist(T_{a_2}(\breve f(t_2)),0),
	\end{align*}
	which yields \eqref{e:l:T-a-widetilde-1} and completes the proof of (iv).
	
	We now verify property (v).  In view of Remark \ref{rem:property-v-continuity}, it is equivalent to show that for all $f\in {S}_c(\Lambda_k,\V)$, we have $\widetilde T_{1/n}(f) \to 0$ in ${S}_c(\Lambda_k,\V)$.
	Suppose that this is not the case.  Then, for some compact $K\subset \R^d$, some $\epsilon_0>0$ and a sequence $t_n \in K$, we have
	\begin{align*}
	\dist(T_{1/n}(\breve f(t_n)), 0) \ge \epsilon_0>0.
	\end{align*}
	Since $K$ is compact, for some $n'\to\infty$, we have $t_{n'}\to$ some $t_*\in K$, and by the continuity of $\breve f$, we have $\breve f(t_n) \to \breve f(t_*)$ in ${\mathbb V}$.  For all $\delta>0$, fixed,
	the radial monotonicity implies that 
	\begin{align*}
	0<\epsilon_0 \le \limsup_{n'\to\infty} \dist(T_{1/n'}(\breve f(t_{n'})), 0) \le \lim_{n'\to\infty} \dist(T_\delta(\breve f(t_{n'})),0)  = \dist(T_\delta(\breve f(t_*)),0).
	\end{align*}
	Property (v), for the scaling action $\{T_a\}$, however, entails that $T_\delta(\breve f(t_*))\to 0$ in ${\mathbb V}$, 
	as $\delta\downarrow 0$, which yields a contradiction with the above inequality and completes the proof.
	\end{proof2}
	
\noindent The following result shows that the linear operator actions considered in Remark \ref{rem:operator-scaling-actions} are in fact 
actions under mild natural conditions on the operator exponent $\H$.
	
	\begin{lemma} \label{l:operator-scaling} Let $\V$ be a Hilbert space and ${\cal H}:\V\to\V$ a bounded linear operator such that
	\begin{align}\label{e:l:operator-scaling}
	\Re ( {\rm sp}({\cal H}) ) \subset (0,\infty) \ \ \ \mbox{ and }\ \ \ 2\Re \langle {\cal H}x,x\rangle_{\V} 
	 = \langle ({\cal H}+{\cal H}^*) x, x \rangle_\V >0,
	\end{align}
	for all $x\not = 0$.  Then, $T_c:= c^{\cal H},\ c>0$ is a scaling action in the sense of Definition \ref{def:scaling_action}.
	Here $\Re( {\rm sp}({\cal H}))$ denotes the set of real parts of the elements in the spectrum of ${\cal H}$. 
	\end{lemma}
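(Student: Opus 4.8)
The plan is to verify each of the five defining properties of a scaling action in Definition~\ref{def:scaling_action} for $T_c := c^{\H} = e^{\log(c)\H}$, using that on a Hilbert space $\dist(x,0)=\|x\|$. Properties (i) and (ii) are immediate from the exponential structure: since $\log(a_1)\H$ and $\log(a_2)\H$ commute, the identity $e^{\H_1}e^{\H_2}=e^{\H_1+\H_2}$ recorded before \eqref{e:Frechet-derivative-c-to-H} gives $a_1^{\H}a_2^{\H}=(a_1a_2)^{\H}$, while $1^{\H}=e^{0}=\I$ and linearity yield $T_a(0)=0$. Property (iii) follows from the operator-norm continuity (indeed Fr\'echet differentiability) of $c\mapsto c^{\H}$ in \eqref{e:Frechet-derivative-c-to-H}: writing $\|a_n^{\H}x_n-a^{\H}x\|\le \|a_n^{\H}\|_{\rm op}\|x_n-x\|+\|(a_n^{\H}-a^{\H})x\|$, the operator norms $\|a_n^{\H}\|_{\rm op}$ stay bounded as $a_n\to a$ and the second term vanishes. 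The substance of the lemma lies in properties (iv) and (v), each of which invokes one of the two hypotheses in \eqref{e:l:operator-scaling}.

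For radial monotonicity (iv), I would fix $0\neq x\in\V$ and study the scalar function $\phi(t):=\|e^{t\H}x\|^2=\langle e^{t\H}x,e^{t\H}x\rangle$ for $t\in\R$. Since $t\mapsto e^{t\H}x$ is differentiable in $\V$ with derivative $\H e^{t\H}x$ (from the power series \eqref{e:exp}), the product rule gives
\[
\phi'(t)=\langle \H e^{t\H}x,e^{t\H}x\rangle+\langle e^{t\H}x,\H e^{t\H}x\rangle=\langle (\H+\H^*)e^{t\H}x,e^{t\H}x\rangle.
\]
Because $e^{t\H}$ is invertible, $y:=e^{t\H}x\neq 0$, so the coercivity hypothesis $\langle(\H+\H^*)y,y\rangle>0$ forces $\phi'(t)>0$ for every $t$. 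Thus $\phi$ is strictly increasing, and setting $t=\log a$ translates this into $\|a_1^{\H}x\|<\|a_2^{\H}x\|$ for $0<a_1<a_2$, which is precisely (iv).

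For property (v) I would control $\|c^{\H}\|_{\rm op}$ as $c\downarrow 0$ via the Riesz functional calculus, exactly as in the proof of Proposition~\ref{p:general-bounded-H}. Since $\mathrm{sp}(\H)$ is compact and contained in $\{\Re z>0\}$, one may enclose it by a rectifiable curve $\Gamma$ on which $\Re z\ge \delta$ for some $\delta>0$. Applying \eqref{e:Riesz-calculus} to $f(z)=c^{z}=e^{\log(c)z}$ and using $|c^z|=c^{\Re z}$ gives, for $c\in(0,1)$,
\[
\|c^{\H}\|_{\rm op}\le \frac{\mathrm{Len}(\Gamma)}{2\pi}\Big(\max_{z\in\Gamma}\|(z\cdot\I-\H)^{-1}\|_{\rm op}\Big)\sup_{z\in\Gamma}c^{\Re z}\le C\,c^{\delta},
\]
which tends to $0$ as $c\downarrow 0$ because $\delta>0$. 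Hence $\dist(T_c(x),0)=\|c^{\H}x\|\le \|c^{\H}\|_{\rm op}\|x\|\to 0$, giving (v).

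The main obstacle is property (v): passing from the spectral condition $\Re(\mathrm{sp}(\H))\subset(0,\infty)$ --- a statement about the spectrum --- to an operator-norm decay bound is not automatic, since in general the spectral radius does not dominate the operator norm. The contour estimate above (equivalently, the fact that for a bounded generator the growth bound of the semigroup coincides with its spectral bound) is what closes this gap, and it is the one step that genuinely requires functional-calculus input rather than elementary manipulations. Property (iv), while it is the step where the second, somewhat unusual hypothesis in \eqref{e:l:operator-scaling} enters, reduces cleanly to the sign of the derivative $\phi'$ once the differentiation is justified.
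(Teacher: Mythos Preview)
Your proposal is correct and follows the paper's own proof essentially line by line: properties (i)--(iii) are dismissed the same way, property (iv) is obtained by differentiating $\|c^{\H}x\|^2$ and invoking the coercivity hypothesis (the paper differentiates in $c$ rather than in $t=\log c$, which is immaterial), and property (v) is reduced to the Riesz-calculus contour bound, exactly the argument behind \eqref{e:cH-epsilon} that the paper appeals to.
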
 
	\begin{proof} Properties (i)--(iii) of Definition \ref{def:scaling_action} are immediate.  Indeed,
	we have $(c_1 c_2)^{\cal H} = c_1^{\cal H} c_2^{\cal H},$ $c_1,c_2>0$ and $1^{\cal H} = \I$, while Property (iii) follows 
	from the strong continuity (in operator norm) of $c\mapsto c^{\cal H}$.
	To establish the radial monotonicity (Property (iv) in Definition \ref{def:scaling_action}), it is enough to show that for all 
	$x\not = 0$, the function $\varphi(c):= \langle c^{\cal H} x, c^{{\cal H}} x\rangle_\V$ is strictly increasing in $c>0$.  To this end, we will show that
	$\varphi'(c)>0$ for all $c>0$ and $x\not=0$.  By \eqref{e:Frechet-derivative-c-to-H}, $h^{-1}((c+h)^{\cal H} - c^{\H}) \to {\cal H}c^{\H-1}$, in operator norm, 
	as $h\to 0$.  Thus, for $c>0$,
	\begin{align*}
	\varphi'(c) &= \lim_{h\to 0} \frac{1}{h}\Big( \langle (c+h)^{\cal H}x,(c+h)^{\cal H}x\rangle_{\V} - \langle c^{\cal H}x,(c+h)^{\cal H}x\rangle_{\V} \\
	&\quad\quad \quad\quad + \langle c^{\cal H}x,(c+h)^{\cal H}x\rangle_{\V}- \langle c^{\cal H}x,c^{\cal H}x\rangle_{\V} \Big) \\
	&=  \langle {\cal H}c^{\H-1}x,c^{H}x\rangle_\V +\langle c^{\H},{\cal H}c^{\H-1}x\rangle_\V = 2 c^{-1} \Re \langle {\cal H}c^{\H}x,c^{\H}x\rangle_\V,
        \end{align*}
        which is strictly positive, by assumption. Finally, property (v) follows from the first condition in \eqref{e:l:operator-scaling} as 
        in the proof of Relation \eqref{e:cH-epsilon}.
        \end{proof}

\subsection{Proof of Proposition \ref{P4}.}
\label{Proposition_proof}

Proposition \ref{P4} is the key to establishing the a.e.\ intrinsic stationarity of the tangent fields in Theorem \ref{IRF}. 
This section outlines its proof, which is based on the following lemma and the Egorov and Lusin Theorems.

\begin{lemma}\label{lem:folklore-uc} Let $(K,\rho_K)$ be a compact metric space and   $(E,\rho_E)$ be a 
	metric space. Suppose that $f_n$ and $f : K \to E$ are Borel measurable functions.  
	
	If the function $f$ is continuous, then
	\begin{align}\label{e:lem:folklore-uc-uniform}
	\sup_{x\in K} \rho_{E}(f_n(x),f(x)) \to 0,\ \ \mbox{ as } n\to\infty,
	\end{align}
	if and only if
	\begin{align}\label{e:lem:folklore-uc-sequence}
	{\clb \rho_E(f_n(x_n),f(x))\to 0,\  \ \mbox{ whenever } \rho_K(x_n,x)\to 0.}
	\end{align}
\end{lemma}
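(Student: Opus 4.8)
The plan is to prove the two implications separately, with the forward direction being a direct triangle-inequality argument and the reverse direction relying on the compactness of $K$ together with a subsequence-to-full-sequence construction.

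First I would establish that \eqref{e:lem:folklore-uc-uniform} implies \eqref{e:lem:folklore-uc-sequence}. Suppose $\rho_K(x_n,x)\to 0$. Writing
\begin{align*}
\rho_E(f_n(x_n),f(x)) \le \rho_E(f_n(x_n),f(x_n)) + \rho_E(f(x_n),f(x)),
\end{align*}
the first term is bounded by $\sup_{y\in K}\rho_E(f_n(y),f(y))$, which tends to $0$ by hypothesis, while the second tends to $0$ by the assumed continuity of $f$ at $x$. This step is immediate and uses neither compactness nor measurability.

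For the converse -- the more delicate direction -- I would argue by contradiction. If \eqref{e:lem:folklore-uc-uniform} fails, then there exist $\epsilon_0>0$, a subsequence $n_k\to\infty$, and points $x_{n_k}\in K$ with $\rho_E(f_{n_k}(x_{n_k}),f(x_{n_k}))>\epsilon_0$. Since $K$ is compact, after passing to a further subsequence (which I relabel) I may assume $x_{n_k}\to x_*$ for some $x_*\in K$. The key technical point is that the hypothesis \eqref{e:lem:folklore-uc-sequence} is phrased for genuine sequences indexed by all $n$, not for subsequences; so I would embed the subsequence into a full sequence by setting $y_n:=x_{n_k}$ when $n=n_k$ and $y_n:=x_*$ otherwise. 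Then $y_n\to x_*$, so \eqref{e:lem:folklore-uc-sequence} yields $\rho_E(f_n(y_n),f(x_*))\to 0$, and in particular $\rho_E(f_{n_k}(x_{n_k}),f(x_*))\to 0$ along the subsequence.

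Finally I would combine this with the continuity of $f$: since $x_{n_k}\to x_*$ we have $\rho_E(f(x_{n_k}),f(x_*))\to 0$, and the triangle inequality gives
\begin{align*}
\rho_E(f_{n_k}(x_{n_k}),f(x_{n_k})) \le \rho_E(f_{n_k}(x_{n_k}),f(x_*)) + \rho_E(f(x_*),f(x_{n_k})) \to 0,
\end{align*}
contradicting $\rho_E(f_{n_k}(x_{n_k}),f(x_{n_k}))>\epsilon_0$. I expect the only real obstacle to be the bookkeeping in this reverse direction, namely recognizing that the hypothesis applies only to full sequences and inserting the convergent subsequence into such a sequence so that \eqref{e:lem:folklore-uc-sequence} can legitimately be invoked; the continuity of $f$ and the compactness of $K$ then close the argument cleanly.
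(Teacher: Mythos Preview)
Your proof is correct and follows essentially the same contradiction-plus-compactness route as the paper. Your treatment of the reverse implication is in fact cleaner: you explicitly embed the convergent subsequence into a full sequence $(y_n)$ so that hypothesis \eqref{e:lem:folklore-uc-sequence} can be applied legitimately, whereas the paper applies it directly to the subsequence and inserts an unnecessary preliminary argument that the set $\{x_{n_k}\}$ must be infinite.
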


\begin{proof} (`if') Suppose that \eqref{e:lem:folklore-uc-sequence} holds and assume that \eqref{e:lem:folklore-uc-uniform} fails.
	Then, for some $\epsilon_0>0$, there exist an infinite sequence $n_k\in\mathbb N$  and  $x_{n_k}\in K$, such that 
	$\rho_E(f_{n_k}(x_{n_k}),f(x_{n_k})) \ge \epsilon_0>0$. 
	It is easy to 
	see that $\{x_{n_k}\}$ is also an infinite sequence, since for every $k_0\in\mathbb N$, by \eqref{e:lem:folklore-uc-sequence},
	we have $f_{n_k}(x_{n_{k_0}}) \to f(x_{n_{k_0}}),$ as $n_k\to\infty$. Indeed, had $\{x_{n_k}\}$ been a finite set, for some infinite
	subsequence $\{n_{k}'\}\subset\{n_k\}$, we would have $x_{n_k'} = x_{n_{k_0}}$ and hence $\rho_E( f_{n_k'}(x_{n_k'}), f(x_{n_{k}'})) =
	\rho_E( f_{n_k'}(x_{n_{k_0}}), f(x_{n_{k_0}}))  \to 0$, contradicting 
	the construction of the $x_{n_k}$'s.
	
	The infinite sequence $\{x_{n_k}\}$ is included in the compact $K$, and hence it has a converging subsequence $x_{n_k(m)} \to x$.  
	This, in view of \eqref{e:lem:folklore-uc-sequence}, implies that $f_{n_{k}(m)}(x_{n_k(m)})\to f(x)$ in ${\mathbb V}$.  Since $f$ is continuous 
	at $x$, however,  $f(x_{n_k(m)}) \to f(x).$ This, by the triangle inequality, implies
	\begin{align*}
		& \rho_E(f_{n_k(m)}(x_{n_k(m)}),f(x_{n_k(m)}))\\ &\le \rho_E(f_{n_k(m)}(x_{n_k(m)}),f(x)) + \rho_E(f(x),f(x_{n_k(m)}))
		 \to 0, \text{ as }n_k(m)\to\infty.
	\end{align*}
	This contradicts the assumption that $\rho_E(f_{n_k(m)}(x_{n_k(m)}),f(x_{n_k(m)})) \ge \epsilon_0$.
	
	\medskip
	(`only if') Let $x_n\to x$. By the triangle inequality, we have that
	\begin{align*}
		\rho_E(f_n(x_n),f(x))& \le \rho_E (f_n(x_n),f(x_n)) + \rho_E(f(x_n),f(x))\\
		& \le \sup_{x'\in K} \rho_E (f_n(x'),f(x')) + \rho_E(f(x_n),f(x)),
	\end{align*}
	which converges to zero by \eqref{e:lem:folklore-uc-uniform} and the continuity of $f$.
\end{proof}

The next result is a restatement of Theorem 7.5.1 in \cite{dudley:1989}.

\begin{theorem}[Egorov]
	\label{thm:Egorov}
	Let $(B,{\cal B}, \mu)$ be a finite measure space and $(Y,\rho_Y)$ be a separable metric space. Suppose that $f_n:B \to Y,\ n=1,\cdots$ are
	measurable functions such that, for $\mu$-almost all $x\in B$, 
	\begin{align*}
	f_n(x) \to f(x),\ \mbox{ as }n\to\infty.
	\end{align*}
	Then, for all $\epsilon>0$, there exists a measurable set $B_\epsilon \subset B$, such that
	\begin{align*}
	\mu(B\setminus B_\epsilon) <\epsilon\  \ \mbox{ and } \ \ \sup_{x\in B_\epsilon} \rho_Y(f_n(x),f(x)) \to 0,\ \mbox{ as } n\to\infty.
	\end{align*}
\end{theorem}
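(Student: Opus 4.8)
The plan is to run the classical Borel--Cantelli-style argument that converts a.e.\ pointwise convergence into uniform convergence off a small set, paying attention to measurability in the separable-metric-space setting. First I would record a measurability remark: since each $f_n$ and the a.e.\ limit $f$ are Borel measurable into the separable metric space $(Y,\rho_Y)$, the map $x\mapsto (f_n(x),f(x))$ is measurable into $Y\times Y$, whose Borel $\sigma$-field coincides with the product $\sigma$-field precisely because $Y$ is separable; composing with the continuous distance $\rho_Y$ then shows that $x\mapsto \rho_Y(f_n(x),f(x))$ is a measurable real-valued function. (That $f$ itself is measurable follows because it is the pointwise limit of the $f_n$ on the full-measure convergence set, with an arbitrary measurable choice on the null complement.)

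Next, for each pair $k,n\in\N$ I would introduce the ``good'' sets
\begin{align*}
A_n^k := \bigcap_{j\ge n} \bigl\{ x\in B : \rho_Y(f_j(x),f(x)) < 1/k \bigr\},
\end{align*}
which are measurable by the previous step. For fixed $k$ the family $\{A_n^k\}_n$ is nondecreasing in $n$, and its union contains the set on which $f_j(x)\to f(x)$: indeed, convergence at $x$ means that for the tolerance $1/k$ all but finitely many $\rho_Y(f_j(x),f(x))$ lie below $1/k$, so $x\in A_n^k$ for $n$ large. Since that convergence set has full measure, $\bigcup_n A_n^k$ has full measure, and hence $\mu(A_n^k)\uparrow \mu(B)$ by continuity from below. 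Here the hypothesis $\mu(B)<\infty$ is essential, as it is what lets me pass from $\mu(A_n^k)\to\mu(B)$ to $\mu(B\setminus A_n^k)\to 0$.

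Given $\epsilon>0$, I would then, for each $k$, choose $n_k$ with $\mu(B\setminus A_{n_k}^k)<\epsilon/2^k$ and set $B_\epsilon := \bigcap_{k\ge 1} A_{n_k}^k$, which is measurable as a countable intersection. Subadditivity gives
\begin{align*}
\mu(B\setminus B_\epsilon) \le \sum_{k\ge 1}\mu(B\setminus A_{n_k}^k) < \sum_{k\ge 1}\epsilon\, 2^{-k} = \epsilon.
\end{align*}
Finally, uniform convergence on $B_\epsilon$ is immediate from the construction: for any $\delta>0$ pick $k$ with $1/k<\delta$; since $B_\epsilon\subseteq A_{n_k}^k$, every $x\in B_\epsilon$ and every $j\ge n_k$ satisfy $\rho_Y(f_j(x),f(x))<1/k<\delta$, whence $\sup_{x\in B_\epsilon}\rho_Y(f_j(x),f(x))\le 1/k$ for all $j\ge n_k$, and therefore $\sup_{x\in B_\epsilon}\rho_Y(f_n(x),f(x))\to 0$. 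The statement asks only for a measurable $B_\epsilon$, so no further regularization is required. The only genuinely delicate points are the measurability of the distance functions (handled through separability of $Y$) and the use of the finiteness of $\mu$ in the continuity-from-below step; everything else is bookkeeping with the $\epsilon/2^k$ splitting.
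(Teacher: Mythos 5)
Your proof is correct: the paper itself gives no argument for this theorem, citing it as a restatement of Theorem 7.5.1 in Dudley, and your construction via the sets $A_n^k=\bigcap_{j\ge n}\{x:\rho_Y(f_j(x),f(x))<1/k\}$, continuity from below (using $\mu(B)<\infty$), and the $\epsilon\,2^{-k}$ splitting is precisely the standard proof given there, with the measurability of $x\mapsto\rho_Y(f_n(x),f(x))$ correctly justified through separability of $Y$. Nothing further is needed.
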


We present next a relatively general form of the classic Lusin's theorem stating that every Borel function is nearly continuous.
The proof follows the elegant 3-line argument given in Theorem 1 on page 56 in \cite{loeb:talvila:2003}. We 
provide a bit more detail and tailor the result to the case of metric spaces.

\begin{theorem}[Lusin] 
	\label{thm:Luzin}
	Let $(X,\rho_X)$ be a metric space and $(Y,\rho_Y)$ be a separable metric space.  Let also $f:X\to Y$ be a Borel measurable function and
	$\mu$ be a finite Borel measure $\mu$ on $X$.  
	
	For every $\epsilon>0$, there exists a closed set $F\subset X$, such that $\mu(X\setminus F)<\epsilon$ and $f:F\to Y$ is continuous.
	If $(X,\rho_X)$ is separable and complete, then the set $F$ can be taken to be compact.
\end{theorem}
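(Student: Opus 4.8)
The plan is to follow the short base-approximation argument (the one Loeb--Talvila give). The two ingredients I would assume at the outset are: (a) every finite Borel measure on a metric space is regular, in the sense that each Borel set $A$ can be approximated from the outside by open sets and from the inside by closed sets, i.e. $\mu(A) = \inf\{\mu(U): U \supseteq A \text{ open}\} = \sup\{\mu(C): C \subseteq A \text{ closed}\}$; and (b) a separable metric space is second countable, so that $Y$ admits a countable base $\{V_n\}_{n\geq 1}$ of open sets. These are both standard facts about finite Borel measures on metric spaces (see e.g. Dudley, Theorem 7.1.3).

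First I would fix such a base $\{V_n\}$ for $Y$. For each $n$ the preimage $f^{-1}(V_n)$ is Borel by measurability of $f$, so by outer regularity (a) I can choose an open set $U_n \supseteq f^{-1}(V_n)$ with $\mu(U_n \setminus f^{-1}(V_n)) < \epsilon\, 2^{-n-1}$. Setting $E := \bigcup_{n\geq 1}(U_n \setminus f^{-1}(V_n))$ gives a measurable set with $\mu(E) < \epsilon/2$, and I put $G := X\setminus E$. The crux of the argument is the continuity of $f$ restricted to $G$: since $G$ is disjoint from $U_n\setminus f^{-1}(V_n)$, one has $G\cap U_n = G\cap f^{-1}(V_n)$, and the left-hand side is relatively open in $G$ because $U_n$ is open in $X$. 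As the $V_n$ form a base of $Y$, every open subset of $Y$ pulls back under $f|_G$ to a relatively open subset of $G$, which is exactly the continuity of $f|_G$. This is the ``three-line'' heart of the proof and is essentially formal once the base is in place.

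The remaining issue -- and the one place where ``measurable'' must be upgraded to ``closed'' -- is that $G$ need not be closed. Here I would invoke the inner (closed) regularity in (a) to select a closed set $F \subseteq G$ with $\mu(G\setminus F) < \epsilon/2$; then $\mu(X\setminus F) \leq \mu(E) + \mu(G\setminus F) < \epsilon$, and $f|_F$ is continuous as the restriction of the continuous map $f|_G$. For the final assertion, when $(X,\rho_X)$ is separable and complete I would further use tightness of finite Borel measures on Polish spaces (Ulam's theorem) to pick a compact $K\subseteq F$ with $\mu(F\setminus K)$ as small as desired, absorbing it into $\epsilon$; continuity of $f|_K$ is again inherited by restriction.

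I expect the main obstacle to be not any single computation but the regularity bookkeeping: arranging that the discarded pieces $E$, $G\setminus F$ (and, in the Polish case, $F\setminus K$) sum to less than $\epsilon$ while \emph{simultaneously} guaranteeing that the retained set is genuinely closed (respectively compact), rather than merely measurable. The continuity verification, by contrast, reduces to the elementary set identity $G\cap U_n = G\cap f^{-1}(V_n)$ and the fact that a second-countable target lets one test continuity on a countable base.
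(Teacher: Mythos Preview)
Your proof is correct and follows the same Loeb--Talvila base-approximation idea that the paper uses. The bookkeeping differs in one structural respect: you approximate each $f^{-1}(V_n)$ from \emph{outside} by an open $U_n$, discard $E=\bigcup_n(U_n\setminus f^{-1}(V_n))$, and then invoke inner regularity (and Ulam's theorem in the Polish case) at the end to pass from the measurable $G=X\setminus E$ to a closed (resp.\ compact) $F$. The paper instead approximates both $f^{-1}(V_n)$ and its complement from \emph{inside} by closed sets $F_n\subset f^{-1}(V_n)$ and $F_n'\subset X\setminus f^{-1}(V_n)$, and sets $F=\bigcap_n(F_n\cup F_n')$; this $F$ is automatically closed, and in the Polish case automatically compact (finite unions and countable intersections of compact sets in a metric space remain compact), so no separate regularity pass is needed. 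Your version trades two approximations per $n$ for one, at the cost of the extra inner-regularity step at the end; both routes are sound and yield the same conclusion.
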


\begin{proof} 

We will essentially unpack the argument on page 56 of \cite{loeb:talvila:2003} with small modifications.
	
	By Theorem 7.1.3 on page 175 in \cite{dudley:1989} every finite Borel measure $\mu$ on $(X,\rho_X)$
	is {\em closed regular},  that is, for every Borel set $A$ in $X$, we have
	\begin{align}\label{e:thm:Lizin-1}
	\mu(A) = \sup\{ \mu(F)\, :\, F\subset A, \mbox{ $F$ is closed}\}.
	\end{align}
	Recall that $\mu$ is called regular if the sets $F$ above can be taken to be compact. Ulam's Theorem implies that
	if $(X,\rho_X)$ is separable and complete, then $\mu$ is regular \citep[cf.  Theorem 7.1.4 in][]{dudley:1989}. 
	
	We now fix an $\epsilon>0$ and construct the closed set $F$. Since $(Y,\rho_Y)$ is separable, it is second countable, i.e., its topology has a countable base. Namely, 
	there exists a countable collection of open sets $\{V_n,\ n\in \mathbb{N}\}$ in $Y$ such that every open set $V \subset Y$ can be represented 
	as a union of $V_n$'s, i.e., $V = \cup\{V_n\, :\, V_n\subset V,\ n\in\mathbb{N}\}$.   
	
	Following \cite{loeb:talvila:2003}, by \eqref{e:thm:Lizin-1} since $\mu$ is finite, we can find closed sets
	$F_n \subset f^{-1}(V_n)$ and $F_n' \subset X\setminus f^{-1}(V_n)$ in $X$ (compact if $\mu$ is regular), such that
	\begin{align*}
	\mu( f^{-1}(V_n)\setminus F_n) < \frac{\epsilon}{2^{n+1}}\ \quad \mbox{ and }\ \quad \mu( [X\setminus f^{-1}(V_n)] \setminus F_n') < \frac{\epsilon}{2^{n+1}}.
	\end{align*}
	Observe that 
	\begin{align*}
	\mu(X\setminus (F_n \cup F_n')) = \mu( f^{-1}(V_n) \setminus F_n) + \mu( [X\setminus f^{-1}(V_n)] \setminus F_n') < \frac{\epsilon}{2^n}.
	\end{align*}
	Define $F:= \cap_{n\in\mathbb{N}} (F_n \cup F_n')$ and notice that $F$ is closed and in fact compact if $(X,\rho_X)$ is separable and complete. The above relation implies moreover that
	\begin{align*}
	\mu(X\setminus F) \le \sum_{n\in\mathbb{N}} \mu(X\setminus(F_n \cup F_n')) < \sum_{n\in\mathbb{N}} \frac{\epsilon}{2^n} = \epsilon.
	\end{align*}
	To complete the proof, it remains to show that $f:F\to Y$ is continuous.  To this end, it is enough to show that for every $x\in F$ and every $V_n$ such that $f(x)\in V_n$, 
	there is an open set $U\ni x$ such that $f(U\cap F) \subset f(V_n)$.  Suppose $f(x) \in V_n$ and consider the open set $U:= X\setminus F_{n}'$.
	Since $F\subset F_n\cap F_n'$ and $U \cap F_{n}' = \emptyset$, we have
	\begin{align*}
	U\cap F \subset U\cap F_{n} \subset f^{-1}(V_{n}),
	\end{align*}
	which implies  $f(U\cap F) \subset V_n$.  We have thus established the desired continuity of $f$ on $F$.
\end{proof}

\begin{remark} Loeb and Talvila's proof of Lusin's Theorem \ref{thm:Luzin} is not constructive and it does not use approximation 
	arguments based on the Tietze--Uryson Lemma and  Egorov's theorem as many other proofs in the literature \cite[see, e.g., Theorem 7.5.2  in][]{dudley:1989}. This makes it possible to extend Lusin's theorem to functions taking values in an arbitrary separable metric 
	space.
\end{remark}

We conclude this section with the proof of Proposition~\ref{P4}. 

\begin{proof2}{Proposition}{\ref{P4}}
	By Egorov's Theorem (see Theorem \ref{thm:Egorov}, above), there is a Borel set $B_\epsilon \subset B$ such that 
	$\mu(B\setminus B_\epsilon)<\epsilon/2$ and
	\begin{align}\label{e:unif-conv}
	\sup_{s\in B_\epsilon} \rho_E(F_n(s),G(s)) \to 0,
	\end{align}
	as $n\to\infty$.   Observe that since the Lebesgue measure is closed regular (recall \eqref{e:thm:Lizin-1}), one can choose
	the set $B_\epsilon$ to be closed.  Therefore, $B_\epsilon$ with the usual metric in $\mathbb R^d$ is a complete and 
	separable metric space. Hence, we can apply Lusin's Theorem \ref{thm:Luzin} to $X:=B_\epsilon \subset \R^d$ 
	and $Y:=E$ to conclude that there is a further compact set $K_\epsilon \subset B_\epsilon$, such that 
	$\mu(B_\epsilon \setminus K_\epsilon)<\epsilon/2$ and the function $G:K_\epsilon \to E$ is {\em continuous}.
	
	Observe that 
	\begin{align*}
	\mu(B\setminus K_\epsilon) = \mu(B\setminus B_\epsilon) + \mu(B_\epsilon\setminus K_\epsilon) < \epsilon.
	\end{align*}
	By Lemma \ref{lem:folklore-uc}, the continuity of $G$ on $K_\epsilon$ and the uniform convergence 
	\eqref{e:unif-conv} imply \eqref{e:p:EL}.   
\end{proof2}


\subsection{Supplementary results and proofs for Section~\ref{s:tangent_fields}.}
  \label{ss:Supplementary_proof_section3}
  
The following convergence to types lemma is rather useful. 

\begin{lemma}\label{l:Slutsky} Let $\{T_a,\ a>0\}$ be a scaling action on some complete separable {\clb (not necessarily linear)}
metric space $(\mathbb X,d_{\mathbb X})$. Let also $\xi, \widetilde \xi$ and $\xi_n$ be random elements taking 
values in $\mathbb X$.  Then the following hold.  
\begin{enumerate}
    \item If $\xi$ is non-zero, then $T_{a'}(\xi) \stackrel{d}{=} T_{a''}(\xi)$ implies $a' = a''$.
    \item Suppose that $\xi_n\cid \xi$ and $\wt \xi_n:= T_{a_n}( \xi_n) \cid
    \widetilde \xi$, for some sequence $a_n>0$, where both $\xi$ and $\widetilde \xi$ 
    are non-zero. Then $a_n\to a$ for some $a>0$ and 
	$\widetilde \xi \stackrel{d}{=} T_a (\xi)$.
	\item If $\xi$ is non-zero and $T_{a_n}(\xi)\stackrel{d}{\to} 0$, then $a_n\to 0$.
\end{enumerate}
\end{lemma}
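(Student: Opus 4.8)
The plan is to prove the three parts in the order (i), (iii), (ii), since the argument for (ii) will reuse part (i). Throughout, the two tools I would lean on are that the scalar map $x\mapsto d_{\mathbb X}(x,0)$ is continuous (hence Borel), and that by radial monotonicity (property (iv) of Definition \ref{def:scaling_action}) the real random variable $d_{\mathbb X}(T_a(x),0)$ is strictly increasing in $a$ for each fixed $x\neq 0$.

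For part (i), I would assume without loss of generality that $a'<a''$. Applying the continuous map $d_{\mathbb X}(\cdot,0)$ to the hypothesis $T_{a'}(\xi)\stackrel{d}{=}T_{a''}(\xi)$ gives $d_{\mathbb X}(T_{a'}(\xi),0)\stackrel{d}{=}d_{\mathbb X}(T_{a''}(\xi),0)$ as real random variables, while radial monotonicity gives $d_{\mathbb X}(T_{a'}(\xi),0)\le d_{\mathbb X}(T_{a''}(\xi),0)$ pointwise, with strict inequality on $\{\xi\neq 0\}$, an event of positive probability since $\xi$ is non-zero. I would then invoke the elementary fact that two real random variables $U\le V$ with $U\stackrel{d}{=}V$ satisfy $U=V$ almost surely (compared through their distribution functions), contradicting the strictness and forcing $a'=a''$. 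For part (iii), I would argue by contradiction: if $a_n\not\to 0$ there are $\epsilon_0>0$ and a subsequence with $a_{n_k}\ge\epsilon_0$. Since the limit $0$ is deterministic, $T_{a_n}(\xi)\cid 0$ is equivalent to $d_{\mathbb X}(T_{a_n}(\xi),0)\cip 0$; combined with the monotonicity bound $d_{\mathbb X}(T_{a_{n_k}}(\xi),0)\ge d_{\mathbb X}(T_{\epsilon_0}(\xi),0)$ this forces $d_{\mathbb X}(T_{\epsilon_0}(\xi),0)=0$ almost surely, hence $T_{\epsilon_0}(\xi)=0$ and $\xi=0$ almost surely (each $T_a$ being a bijection that fixes only the origin), a contradiction.

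The crux is part (ii), and the main obstacle is that the continuity axiom (iii) of Definition \ref{def:scaling_action} only controls $T_{a_n}(x_n)$ when $a_n\to a>0$, so it is silent when the scaling factors degenerate to $0$ or blow up. To deal with this I would first isolate a \emph{degeneration lemma}: if $\eta_n\cid\eta$ in $\mathbb X$ and $b_n\to 0$, then $T_{b_n}(\eta_n)\cid 0$. Its proof would pass to a Skorokhod coupling $\eta_n'\to\eta'$ almost surely (available since $\mathbb X$ is separable and complete) and then, for fixed $\delta>0$ and all large $n$ with $b_n\le\delta$, use radial monotonicity together with the continuity of the fixed map $T_\delta$ to bound $\limsup_n d_{\mathbb X}(T_{b_n}(\eta_n'),0)\le d_{\mathbb X}(T_\delta(\eta'),0)$; letting $\delta\downarrow 0$ and invoking property (v) gives $T_{b_n}(\eta_n')\to 0$ almost surely, hence $T_{b_n}(\eta_n)\cid 0$.

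Armed with this lemma I would show that $\{a_n\}$ is bounded and bounded away from $0$. A subsequence $a_{n_k}\to 0$ would give $T_{a_{n_k}}(\xi_{n_k})\cid 0$ by the lemma, contradicting that $\widetilde\xi$ is non-zero; a subsequence $a_{n_k}\to\infty$ would give, writing $\xi_{n_k}=T_{1/a_{n_k}}(T_{a_{n_k}}(\xi_{n_k}))$ and applying the lemma to $\eta_{n_k}:=T_{a_{n_k}}(\xi_{n_k})\cid\widetilde\xi$ with $1/a_{n_k}\to 0$, that $\xi_{n_k}\cid 0$, contradicting that $\xi$ is non-zero. Thus $\{a_n\}\subset[c_0,C_0]\subset(0,\infty)$. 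For any convergent subsequence $a_{n_k}\to a$, a Skorokhod coupling of $\xi_{n_k}\cid\xi$ together with the continuity axiom (iii) would yield $T_{a_{n_k}}(\xi_{n_k})\cid T_a(\xi)$, so that $T_a(\xi)\stackrel{d}{=}\widetilde\xi$. By part (i) the value $a$ is then uniquely determined by $\xi$ and $\widetilde\xi$, so every convergent subsequence of the bounded sequence $\{a_n\}$ has the same limit, whence $a_n\to a$ and $\widetilde\xi\stackrel{d}{=}T_a(\xi)$, completing the proof.
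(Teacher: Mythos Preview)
Your proof is correct and, in two of the three parts, takes a genuinely different route from the paper.

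For part (i) the paper iterates the distributional identity to get $\xi\stackrel{d}{=}T_{c_n}(\xi)$ with $c_n=(a'/a'')^n\downarrow0$, and then uses that the sets $T_{1/c_n}(B_r)$ exhaust $\mathbb X$ to force $\P(\xi\in B_r)=1$ for every $r$. Your argument is shorter and more elementary: you push forward by the continuous map $d_{\mathbb X}(\cdot,0)$ to real random variables, combine the pointwise inequality coming from radial monotonicity with equality in distribution, and invoke the one-line real-variable fact that $U\le V$ and $U\stackrel{d}{=}V$ imply $U=V$ a.s. This avoids both the iteration and the set-theoretic exhaustion.

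For part (ii) the two proofs differ most. To rule out $a_{n'}\to\infty$, the paper works with balls $B_r$ and their images $T_m(B_r)$, using a Portmanteau-type argument on continuity sets (arguing that all but countably many $r$ are continuity radii) to show $\P(\xi\in B_r)\ge\P(\wt\xi\in T_m(B_r))$ and then letting $m\to\infty$. Your degeneration lemma (if $\eta_n\cid\eta$ and $b_n\to0$ then $T_{b_n}(\eta_n)\cid0$) replaces this with a direct Skorokhod/monotonicity/property-(v) argument, and then handles both the $a_{n_k}\to0$ and $a_{n_k}\to\infty$ cases symmetrically via the group property. This is cleaner and sidesteps the continuity-set bookkeeping entirely. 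Once $\{a_n\}\subset[c_0,C_0]$ is established, both proofs finish the same way: Skorokhod coupling plus continuity axiom (iii) identifies every subsequential limit as $T_a(\xi)\stackrel{d}{=}\wt\xi$, and part (i) pins down $a$ uniquely. Part (iii) is essentially the same argument in both write-ups, phrased through $d_{\mathbb X}(\cdot,0)$ in yours and through the balls $B_r^c$ in the paper.
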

\begin{proof}
\label{proof:Slutsky}
 {\em Part (i).} Suppose that $a'<a''$.  Then,
	\begin{align*}
	\xi \stackrel{d}{=} T_{a''}^{-1} \circ T_{a'}(\xi) \equiv T_{a'/a''}(\xi),
	\end{align*}
	which implies $\xi \stackrel{d}{=} T_{c_n} (\xi)$, for all $n\in \N$, where $c_n = (a'/a'')^n\downarrow 0$.  
	Thus, for every $B_r$, we obtain
	\begin{align*}
	\mathbb{P}(\xi \in B_r) = \mathbb{P}( T_{c_n}(\xi) \in B_r) = \mathbb{P}(\xi \in T_{1/c_n} (B_r)).
	\end{align*}
	Since $1/c_n\to \infty$, applying Property \eqref{e:def:scaling_action-v} {\clb applied with $\V$ replaced by $\mathbb X$}
	(recall \eqref{e:action-nested-balls}), we see that $\mathbb{P}(\xi\in B_r) = 1$, for all $r>0$, which contradicts the assumption that $\xi$ is non-zero. \\
	
	{\em Part (ii).}  We will first show that $\{a_n\}$ is bounded away from $0$ and $\infty$. 
	Indeed, suppose that $a_{n'}\to \infty$ for some $n'\to\infty$.   Consider the balls 
	\begin{align*}
	B_r:= \{x\in \mathbb X\, :\, d_{\mathbb X} (x,0)<r\}
	\end{align*} 
	and observe 
	that all but countably many of them are continuity sets for the distribution of $\xi$.  Indeed, the sets 
	$\partial B_r := \overline B_r \setminus B_r,\ r>0$ are pairwise disjoint in $r$ and for each $\epsilon>0$, there are at 
	most $1/\epsilon$ distinct values for $r$, such that $\mathbb{P}(\xi \in \partial B_r)>\epsilon$.
	
	For every $r>0$ such that $\mathbb{P}(\xi \in \partial B_r) =0$, since 
	$\xi_{n'}\stackrel{d}{\to} \xi$, we have
	\begin{align} \label{e:Slutsky-2}
	\begin{split}
	\mathbb{P}(\xi \in B_r) &= \lim_{n'\to\infty} \mathbb{P}(\xi_{n'}\in B_r) = \lim_{n'\to\infty} \mathbb{P}(T_{a_{n'}} (\xi_{n'}) \in T_{a_{n'}}(B_r)) \\ 
	&\ge \limsup_{n'\to\infty} \mathbb{P}(\widetilde \xi_{n'} \in T_m(B_r)), 
	\end{split}
	\end{align}
	where $\widetilde \xi_{n'} := T_{a_{n'}} (\xi_{n'})$ and $m$ is an arbitrary fixed integer.  
	Here, we used the fact that $T_m(B_r) \subset T_{a_{n'}}(B_r)$, for all large 
	enough $n'$, by \eqref{e:def:scaling_action-v} and \eqref{e:action-nested-balls}.
	
	Now, since $T_m$ is a homeomorphism, we have $\partial T_m( B_r) = T_m(\partial B_r)$ are disjoint in $r>0$,
	and by the above argument, for all but countably many $r$'s, we have $\mathbb{P}(\widetilde \xi \in \partial T_m(B_r)) =0$ and hence
	$ \mathbb{P}(\widetilde \xi_{n'} \in T_m(B_r)) \to \mathbb{P}(\widetilde \xi \in T_m(B_r))$, as $n'\to\infty$.  Therefore, in 
	view of \eqref{e:Slutsky-2}, we obtain
	\begin{align*}
	\mathbb{P}(\xi \in B_r) \ge \mathbb{P}(\widetilde \xi \in T_m(B_r)),\ \ \mbox{ for all $m$ and all but countably many $r>0$. }
	\end{align*}
	Relation \eqref{e:def:scaling_action-v}, however, implies that $T_m(B_r)\uparrow E$ as $m\to\infty$, which implies 
	\begin{align*}
	\mathbb{P}(\xi \in B_r) = 1 = \lim_{m\to\infty} \mathbb{P}(\widetilde \xi \in T_m(B_r)),
	\end{align*}
	for all but countably many $r$.  This implies that $\mathbb{P}(\xi = 0)=1$, which is a contradiction.
	
	We have thus shown that the sequence $\{a_n\}$ is bounded above.  One can similarly show that $\{a_n\}$ is bounded away 
	from $0$.  Indeed, by defining $\widetilde a_n:= 1/a_n$, we see that $\xi_n = T_{\widetilde a_n} (\widetilde \xi_n) \stackrel{d}{\to} \xi$. 
	Therefore, repeating the above argument with $a_n, \xi_n$ and $\xi$ replaced by $\widetilde a_n, \widetilde \xi_n$ and $\widetilde \xi$,
	respectively, we see that $\{\widetilde a_n \equiv 1/a_n\}$ is bounded.
	
	We have thus shown that $\{a_n\}$ can only have positive cluster points.  Suppose that $a_{n'} \to a'>0$ and $a_{n''}\to a''>0$, for
	some sub-sequences $n', n''\to\infty$.  Since the space $(\mathbb X,d_{\mathbb X})$ is separable, by the 
	Skorokhod-Dudley representation \citep[cf.\! Theorem 3.30 of][] 
	{kallenberg:1997}, on a suitable probability space we can define $\xi^*$ and $\xi_n^*$ such that 
	\begin{align*}
	\xi_n^*\stackrel{d}{=}\xi_n, \quad \xi^* \stackrel{d}{=} \xi,\quad \mbox{ and }\quad \xi_n^* \to \xi^*,\ \mbox{ almost surely.}
	\end{align*}
	Thus, the continuity property (iii) in Definition \ref{def:scaling_action}, implies that 
	\begin{align*}
	T_{a_{n'}} (\xi_{n'}^*) \to T_{a'}(\xi^*)\quad\mbox{ and }\quad T_{a_{n''}} (\xi_{n''}^*) \to T_{a''}(\xi^*),
	\end{align*}
	almost surely.  Since also $T_{a_n}(\xi_n^*)\stackrel{d}{=} T_{a_n}(\xi_n)\stackrel{d}{\to}\widetilde \xi$, 
	and $\xi \stackrel{d}{=}\xi^*$, we obtain
	\begin{align}\label{e:l:Slutsky-2}
	T_{a'} (\xi) \stackrel{d}{=} \widetilde \xi \stackrel{d}{=} T_{a''}(\xi).
	\end{align}
	By part (i), this is only possible if $a'=a''$.  We have thus shown that the sequence $\{a_n\}$ has a unique cluster point $a=a'=a''>0$ and 
	in view of \eqref{e:l:Slutsky-2}, that $T_a(\xi)\stackrel{d}{=}\widetilde \xi$.\\
	
	{\em Part (iii).} Suppose that $\limsup_{n\to\infty} a_n >0$, i.e., for some subsequence $n'\to\infty$, we have $a_{n'}\ge \epsilon_0>0$, for all $n'$.
	Then, in view of \eqref{e:action-nested-balls}, for all $r>0$, we have
	\begin{align*}
	\mathbb{P}(T_{\epsilon_0}(\xi) \in B_r^c) \le \mathbb{P}( T_{a_{n'}} (\xi) \in B_{r}^c). 
	\end{align*}
	Since $T_{a_{n'}}(\xi)\stackrel{d}{\to} 0$,  the right-hand side vanishes, as $n'\to\infty$.  On the other hand, since $\xi$ is nonzero, so is $T_{\epsilon_0}(\xi)$ and
	the left-hand side is positive for sufficiently small $r>0$.  This contradiction yields $\limsup_{n\to\infty} a_n =0$.
\end{proof}

The next result is used in the proof of Corollary \ref{cor:IRF}, given below.

\begin{lemma}\label{l:continuity} Let $X:=\{X(\lambda),\ \lambda\in\Lambda_k\}$
and $X_n:=\{X_n(\lambda), \lambda\in\Lambda_k\}$ be random fields in 
$S_c(\Lambda_k,\V)$ such that $X_n\cid X$. Then for any sequences $v_n\to 0$ and $r_n\to 1$, we have 
\begin{align*}
 \{X_n(v_n+r_n\cdot\lambda),\ \lambda\in\Lambda_k\} 
    \Cid \{X(\lambda),\ \lambda\in \Lambda_k\}.
\end{align*}
\end{lemma}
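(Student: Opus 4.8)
The statement is a continuous-mapping-type result: convergence in distribution $X_n \Cid X$ in $(S_c(\Lambda_k,\V),\rho)$ is preserved under the perturbation $\lambda \mapsto v_n + r_n\cdot\lambda$ when $v_n\to 0$ and $r_n\to 1$. The plan is to transfer everything to the representation space $\breve S_c(\Lambda_k,\V)$ via the isometry $\mathcal J$ (recall Lemma \ref{l:T-a-widetilde} and the discussion around \eqref{e:iso}), and then exploit the almost-sure continuity afforded by the Skorokhod--Dudley representation, exactly as in the proof of part (ii) of Lemma \ref{l:Slutsky}. Since $(\breve S_c(\Lambda_k,\V),\rho)$ is separable and complete (Lemma \ref{l:S_is_csms}), I would invoke the Skorokhod--Dudley theorem to realize $\breve X_n^*\stackrel{d}{=}\breve X_n$ and $\breve X^*\stackrel{d}{=}\breve X$ on a common probability space with $\rho(\breve X_n^*,\breve X^*)\to 0$ almost surely, i.e.\ locally uniform convergence of the sample paths $\breve X_n^*\to \breve X^*$.

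The core analytic step is then purely deterministic: if $g_n\to g$ locally uniformly in $(C(\R^d,\V),\rho)$ with $g$ continuous, and $v_n\to 0$, $r_n\to 1$, then the shifted-and-scaled paths $t\mapsto g_n(v_n + r_n t)$ converge locally uniformly to $t\mapsto g(t)$. This is where I would use Lemma \ref{lem:folklore-uc}: fixing a compact $\overline B_K$, it suffices to show that $g_n(v_n + r_n t_n)\to g(t)$ in $\V$ whenever $t_n\to t$ in $\overline B_K$. Writing $\tau_n := v_n + r_n t_n$, we have $\tau_n\to t$, and since all $\tau_n$ eventually lie in some fixed larger compact ball, the locally uniform convergence $g_n\to g$ together with the continuity of $g$ gives
\begin{align*}
\dist(g_n(\tau_n), g(t)) \le \sup_{\|\tau\|\le K+1}\dist(g_n(\tau), g(\tau)) + \dist(g(\tau_n), g(t)) \to 0,
\end{align*}
the first term vanishing by local uniform convergence and the second by continuity of $g$. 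Applying this pathwise to $g_n := \breve X_n^*$ and $g := \breve X^*$ yields almost-sure local uniform convergence of the perturbed representations, hence $\rho$-convergence, hence convergence in distribution of the perturbed processes to $\breve X^*\stackrel{d}{=}\breve X$. One must also check that the map sending the function $\breve X_n$ to the representation of $\{X_n(v_n+r_n\cdot\lambda)\}$ is consistent with $\mathcal J$; concretely, the representation of the perturbed field, evaluated at $t$, equals $\breve X_n(v_n+r_n t)$ modulo the normalizing polynomial correction built into \eqref{e:tilde_f1}, and since that correction is continuous and also converges, the argument goes through unchanged.

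The main obstacle I anticipate is bookkeeping around the representation rather than the convergence itself. The field $X_n(v_n+r_n\cdot\lambda)$ is again a random element of $S_c(\Lambda_k,\V)$, but its canonical representation $\mathcal J$ need not be literally $t\mapsto \breve X_n(v_n+r_n t)$, because the construction in \eqref{e:eval1}--\eqref{e:tilde_f2} forces the representation to vanish at the fixed nodes $t_1,\dots,t_{M_k}$; the shifted path $\breve X_n(v_n+r_n\,\cdot)$ need not satisfy that normalization. However, by \eqref{e:equiv_rep} any two representations differ by a polynomial of degree $\le k$, and \eqref{e:translation}, \eqref{eq:extra_condition} guarantee the $\rho$-topology is insensitive to the choice of representation. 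Thus I would argue that $\rho$-convergence of one representation to $\breve X(\,\cdot\,)$ forces convergence of the canonical one, so the subtlety is harmless. Once this identification is settled, the remaining steps are routine.
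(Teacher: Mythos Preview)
Your proposal is correct and takes a genuinely different route from the paper. The paper proceeds via Proposition~\ref{l:tightness}: it verifies the two ingredients (finite-dimensional convergence and strong stochastic equicontinuity) separately. For the finite-dimensional part it does invoke Skorokhod--Dudley, but only in the product space $\V^m$; the bulk of the argument is then a direct tightness computation, bounding $\sup_{\|t\|\le K}\dist(\breve Y_n(t),\breve X_n(t))$ via the explicit form $\lambda_t=\sum_i c_i(t)\delta_{s_i(t)}$ and transferring the equicontinuity of $\{\breve X_n\}$ to $\{\breve Y_n\}$ using \eqref{eq:extra_condition}.

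Your approach applies Skorokhod--Dudley once on the \emph{full} path space $(\breve S_c(\Lambda_k,\V),\rho)$ (legitimate by Lemma~\ref{l:S_is_csms}) and reduces everything to a deterministic locally-uniform convergence statement handled by Lemma~\ref{lem:folklore-uc}. This is shorter and conceptually cleaner: it sidesteps the stochastic equicontinuity verification entirely. The polynomial-correction issue you flag is exactly right, and your resolution is the correct one: the alternate representation $g_n(t):=\breve X_n^*(v_n+r_nt)$ of the perturbed field converges locally uniformly to $\breve X^*$, and the canonical representation $\breve Y_n^*$ differs from $g_n$ by $\sum_j c_j(t)\,g_n(t_j)$ with $g_n(t_j)\to\breve X^*(t_j)=0$ and $c_j$ bounded on compacts, so \eqref{e:translation} and \eqref{eq:extra_condition} force that difference to vanish locally uniformly as well. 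Both proofs ultimately need \eqref{eq:extra_condition} at precisely this point. What the paper's approach buys is that it never leaves the tightness framework of Proposition~\ref{l:tightness}, which is the workhorse used elsewhere; what yours buys is economy.
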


\begin{proof} \label{proof:continuity}
 Let $Y_n(\lambda) = X_n(v_n+r_n\cdot\lambda)$ and $\breve Y_n = Y_n(\lambda_t)$.
 By Proposition \ref{l:tightness}, conditions (i) and (ii) hold for
 $\breve X_n$. We need to show that they also hold for $\breve Y_n$. The proof for (i) is an easy 
 application of the Skorokhod-Dudley representation Theorem 3.30 on page 56 in 
 \cite{kallenberg:1997}. We focus on proving (ii).
	
	First consider the relationship between $\breve X_n$ and $\breve Y_n$. It follows that
	\begin{align*}
	\breve Y_n(t) =  X_n({v_n}+r_n\cdot\lambda_t) = \int \breve X_n(v_n+r_nu)\lambda_t(du),
	\end{align*}
	where one can write 
	\begin{align*}
	\lambda_t(du) = \sum_{i=1}^{M_k+1} c_i(t) \delta_{s_i(t)}(du),
	\end{align*}
	for some continuous functions $c_i(t)\in \mathbb C$ and $s_i(t)\in \R^d,\ i=1,\dots,M_k+1$ {\clb (recall \eqref{e:eval1})}.
	
	Since $\breve X_n (t) = X_n(\lambda_t)$, for any $K$ and $n$ large enough,
	\begin{align}\label{eq:boundness1}
	\begin{split}
		\sup_{\|t\|_{2}\leq K}\dist(\breve Y_n(t),\breve X_n(t)) &\le \sup_{\|t\|_{2}\leq K} \sum_{i=1}^{M_k+1} d_\V \Big(c_i(t)\breve X_n(v_n+r_ns_i(t)),\ c_i(t)\breve X_n(s_i(t))\Big)\\
		&\leq  (M_k+1)\sup_{\substack{\|s \|,\|t\|\leq 2K\\ |c|\leq {C}_K, \|s-t\|\leq \delta_n}}\dist(c\breve X_n(s),c\breve X_n(t)),  
	\end{split}
	\end{align}
	where $\delta_n := |r_n-1|S_K+|v_n| \to 0$ with 
	\begin{align*}
	S_K := \sup_{\substack{\|t\|\leq K\\ i=1,\dots,M_k+1}} \|s_i(t)\| \quad \mbox{ and }\quad {C}_K := \sup_{\substack{\|t\|\leq K\\ i=1,\dots,M_k+1}} |c_i(t)|.
	\end{align*}
	Therefore, for any $\eta, \epsilon >0$,  by (\ref{item:condition2}) in Proposition~\ref{l:tightness} there exists $n$ large enough such that,
	$\mathbb P(A_n < \eta ) > 1-\epsilon$, where
	\begin{align*}
	A_n:=\sup_{\substack{\|s\|,\|t\|\leq 2K \\ \|s-t\|\leq \delta_n}}\dist(\breve X_n(s),\breve X_n(t)).
	\end{align*}
	Then according to \eqref{e:translation}, \eqref{eq:extra_condition} and \eqref{eq:boundness1}, we have on the event $\{ A_n <\eta\}$, that
	\begin{align*}
	\sup_{\|t\|\leq K}\dist(\breve {Y}_n(t),\breve X_n(t))\leq (M_k+1) f_{{C}_K}(\eta),
	\end{align*}
	where for any ${C}_K>0$, $f_{{C}_K}(\eta) := \sup_{\dist(x,y)<\eta,|c|<{C}_K}\dist(cx,cy)\to 0$ as $\eta\to 0$.
	Thus, on $\{ A_n <\eta\}$, we have
	\begin{align*}
	 \sup_{\substack{\|s\|,\|t\|\leq K \\ \|s-t\|\leq \delta_n}}\dist(\breve {Y}_n(s),\breve Y_n(t)) & \leq
	 2  \sup_{\substack{\|t\|\leq K}}\dist(\breve{Y}_n(t),\breve X_n(t)) + \sup_{\substack{\|s\|,\|t\|\leq K \\ \|s-t\|\leq \delta_n}}\dist(\breve X_n(s),\breve X_n(t))\\
	 & \leq 2(M_k+1) f_{{C}_K}(\eta)+\eta.
	\end{align*}
	Thus, the second condition of Proposition \ref{l:tightness} for $\breve Y_n$ follows.
	\end{proof}

\begin{proof2}{Corollary}{\ref{cor:IRF}} Consider the context of the proof of Theorem \ref{IRF}. 
When $T_{s,c} = T_c,\ c>0$, for a {\em fixed} scaling action $T_c$. Relation \eqref{eq:re2} becomes
\begin{align}\label{cor:IRF-1} 
\begin{split}
\xi_n &= \Big\{T_{c_n(s_n)} \circ T_{c_n(s)}^{-1} \circ T_{c_n(s)} X(s,(1/n)\cdot(w_n+\lambda)), \ \lambda\in\Lambda_k\Big\} \\
&=: T_{c_n(s_n)/c_n(s)} \wt\xi_n.
\end{split}
\end{align}
where $c_n(s) := c(s,1/n)$, and we used the fact that $T_{c_n(s)}^{-1} =  T_{1/c_n(s)}.$

Relation \eqref{eq:re1} implies that 
\begin{align*}
\xi_n  \Cid \xi:=\{Y_s(\lambda),\ \lambda\in\Lambda_k\}.
\end{align*}
On the other hand, by Lemma \ref{l:continuity}, we have
\begin{align*}
\wt\xi_n= \{ T_{c_n(s)} X(s,(1/n)\cdot(w_n + \lambda)),\ \lambda\in\Lambda_k\} \Cid \wt\xi:=\{Y_s(w+\lambda),\ \lambda\in\Lambda_k\}.
\end{align*}
Thus, in view \eqref{cor:IRF-1}, we have 
$\wt\xi_n=T_{c_n(s)/ c_n(s_n)} \xi_n  \cid \wt\xi$ and since both $\xi$  and $\wt\xi$ are non-zero, Lemma \ref{l:Slutsky}
 implies that
\begin{align*}
\frac{c_n(s)}{c_n(s_n)} \equiv \frac{c_n(s)}{c_n(s+w_n/n)} \to a_s(\{w_n\})>0.
\end{align*}
One can verify that the limit $a_s(\{w_n\})$ is independent of the choice of the sequence $\{w_n\}$.  Indeed, if there exists another 
$w'_n\to w$ and $s + w'_n/n\in K_{\epsilon}$ then we will have	
	\begin{align*}
		\{Y_s(w+\lambda),\ \lambda\in\Lambda_k\} \overset{d}{=}\{T_{a_s (\{w_n\})}(Y_s(\lambda)),\ \lambda\in\Lambda_k \}
		\overset{d}{=}\{T_{a_s (\{w'_n\})}(Y_s( \lambda)),\lambda\in\Lambda_k\},
	\end{align*}
	which shows $a_s (\{w_n\}) = a_s (\{w'_n\})$ by (i) of Lemma~\ref{l:Slutsky}. 
	Thus, we can just use the notation $a_s(w)$ and we have
	\begin{align}\label{eq:equality_limit}
	\{Y_s(w + \lambda),\ \lambda\in\Lambda_k\} \stackrel{d}{=} \{T_{a_s(w)} Y_s(\lambda),\ \lambda \in\Lambda_k\}.
	\end{align}
	
	To prove that \eqref{e:T_regularity} holds, or equivalently $T_{c_n(s)/c_n(s_n)} \to T_1\equiv \I$, we only need to 
	verify $a_{s}(w) = 1$, which we do next. 	
	By (\ref{eq:equality_limit}), it is easy to see that $a_s({0})=1$ and $a_s(w+u)=a_s(w)a_s(u)$. By Theorem~\ref{pro:self_similar}, there is 
	a positive scalar $\alpha = \alpha(s)>0$, such that 
	\begin{align} \label{e:SS1}
	\{Y_s(r\cdot\lambda),\ \lambda\in\Lambda_k\}\overset{d}{=}\{T_{r^{\alpha}}(Y_s(\lambda)),\ \lambda\in\Lambda_k \},\ \mbox{ for all }r>0.
	\end{align}
	Consider $Y_s(r\cdot(w+\lambda))$ with $r\in \mathbb{N}$. 
	On one hand, \eqref{eq:equality_limit} and \eqref{e:SS1} imply that
	\begin{align}\label{e:Y-w+lambda-1}
	\begin{split}
	\{Y_s(r\cdot(w+\lambda)),\ \lambda\in\Lambda_k\}
	 \overset{d}{=}\{T_{r^\alpha }(Y_s(w+\lambda)),\ \lambda\in\Lambda_k\} 
	\overset{d}{=} \{T_{r^\alpha a_s(w)}(Y_s(\lambda)),\ \lambda\in\Lambda_k\}.
	\end{split}
	\end{align}
	On the other hand, since $a_s(rw) = a_s^r(w)$, for all $r\in \mathbb N$, viewing $Y_s(r\cdot(w+\lambda))$ as
	$Y_s(rw +r\cdot\lambda)$, by \eqref{eq:equality_limit}, we have
	\begin{align} \label{e:Y-w+lambda-2}
	\begin{split}
	\{Y_s(r\cdot(w+\lambda)),\ \lambda\in\Lambda_k\}
	 \overset{d}{=}\{T_{a_s(rw)}(Y_s(r\cdot\lambda)),\ \lambda\in\Lambda_k\} 
	 \overset{d}{=}\{T_{a^r_s(w)r^\alpha}( Y_s(\lambda)),\ \lambda\in\Lambda_k\}.
	\end{split}
	\end{align} 
	Thus, by Lemma \ref{l:Slutsky} applied to \eqref{e:Y-w+lambda-1} and \eqref{e:Y-w+lambda-2}, we obtain 
	$a_s(w)=a^r_s(w)$ for all $r\in\mathbb{N}$, which, since $a_s(w)>0$, implies $a_s(w)=1$.
\end{proof2}

\subsection{Supplemental background and some proofs for Section~\ref{sec:c2s5}.} \label{sec:appdix_integral_def}

In Bochner's Theorem, we need the notion of integration of a $\bbC$-valued function
on $\bbR^d$ with respect to a {\em finite} $\mathbb{T}_+$-valued measure.  
A finite $\mathbb{T}_+$-valued measure is a mapping $\mu:{\cal B}(\R^d)\to \bbT_+$ from the 
class of Borel sets in $\R^d$ to $\bbT_+$ that is {\em countably additive}.  Notice that this readily implies that $\mu(\emptyset) = 0$;
that $\mu$ is monotone, i.e., $\mu(A)\le \mu(B)$ as positive operators for $A\subset B$; and that $\mu$ is finite since 
$\mu(\R^d)\in \bbT_+$.  This notion can be developed along the line of ordinary Lebesgue integration, making use of the 
completeness of $\bbT$. We will provide a brief outline below and leave the details to 
Section \ref{sec:integration} in Supplement. 
(The construction naturally extends to the case of $\sigma$-finite $\bbT_+$-valued measures.)\\

	
	Let $\mu$ be a finite $\mathbb{T}_+$-valued measure. 
	We follow the development of ordinary Lebesgue integration:
	\begin{enumerate}
		\item [(i)]
		For any real nonnegative simple function $f =\sum_{i=1}^k c_i I_{A_i}$, define 
		$\int fd\mu = \sum_{i=1}^k c_i\mu(A_i)$.
		\vskip.2cm
		\item [(ii)]
		For nonnegative measurable functions $f$, let
		\begin{align}\label{e:mono_conv}
		\int fd\mu = \lim_{n\to\infty} \int f_nd\mu
		\end{align}
		in $(\bbT,\|\cdot,\|_{\tr})$ where $\{f_n\}$ is any sequence of simple functions such that 
		\begin{enumerate}
			\item [(a)]
			$f_n\le f_{n+1}$,
			\item [(b)]
			$f_n(x)\uparrow f(x)$ for all $x$,
			\item [(c)]
			$\int f_nd\mu \le \CB$ for all $n$ and some fixed $\CB\in\bbT_+$. 
		\end{enumerate}
		The existence of $\{f_n\}$ satisfying (a) and (b) for any given 
		nonnonegative measurable $f$ follows from standard measure theory. 
		However, we need the extra condition (c) (along with the completeness of $\bbT_+$)
		to ensure that the limit on rhs of (\ref{e:mono_conv}) exists and does not depend on 
		the choice of $\{f_n\}$. Clearly, (c) is automatically fulfilled if $f$ is bounded.
		\vskip.2cm
		\item [(iii)]
		For a general real measurable $f$, let
		\begin{align*}
		\int fd\mu = \int f_+d\mu - \int f_-d\mu 
		\end{align*}
		provided both terms on the right is finite. For a general complex $f$,
		let
		\begin{align*}
		\int fd\mu = \int f_{\mathrm{re}}d\mu +\ii \int f_{\mathrm{im}}d\mu 
		\end{align*}
		where $f_{\mathrm{re}},f_{\mathrm{im}}$ be the real and imaginary parts, respectively.
		\end{enumerate}
		
{\clb It is immediate that $\|\mu\|_{\rm tr}(A):= \|\mu(A)\|_{\rm tr},\ A\in {\cal B}(\R^d)$ defines a finite Borel measure referred to as to the
{\em trace measure} of $\mu$.  The following useful integrability criterion is straightforward.
\begin{proposition}\label{p:mu-integrability} The integral $\int_{\R^d} f d\mu \in \T$ is well-defined, for all $f\in L^1(\R^d,\|\mu\|_{\rm tr})$, and 
$$
\int_{\R^d} f (x)  \|\mu\|_{\rm tr}(dx)  = {\rm trace}\Big( \int_{\R^d} f(x) \mu(dx) \Big).
$$
\end{proposition}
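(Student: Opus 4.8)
The plan is to verify the identity stage by stage along the construction (i)--(iii) of the integral, relying throughout on two standard facts about the trace: it is a bounded (hence continuous) linear functional on $(\bbT,\|\cdot\|_{\rm tr})$ with $|{\rm trace}(T)|\le\|T\|_{\rm tr}$, and it satisfies ${\rm trace}(T)=\|T\|_{\rm tr}$ for every $T\in\bbT_+$ \citep[cf.][]{simon2015comprehensive}. The base case is the nonnegative simple function $f=\sum_{i=1}^k c_i I_{A_i}$ with disjoint $A_i$ and $c_i\ge0$: from $\int f\,d\mu=\sum_i c_i\mu(A_i)\in\bbT_+$, linearity of the trace, and ${\rm trace}(\mu(A_i))=\|\mu(A_i)\|_{\rm tr}=\|\mu\|_{\rm tr}(A_i)$, one gets immediately
\[
{\rm trace}\Big(\int f\,d\mu\Big)=\sum_i c_i\|\mu\|_{\rm tr}(A_i)=\int f\,d\|\mu\|_{\rm tr}.
\]

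Next I would treat a general nonnegative measurable $f$. Choosing simple $f_n\uparrow f$ as in (a)--(b) and writing $T_n:=\int f_n\,d\mu$, the fact that $f_{n+1}-f_n$ is a nonnegative simple function gives $T_{n+1}-T_n=\int(f_{n+1}-f_n)\,d\mu\in\bbT_+$, so $\{T_n\}$ is increasing in the positive-operator order. The decisive point is that for $m\ge n$ the increment $T_m-T_n$ is positive, so its trace norm collapses to a difference of traces:
\[
\|T_m-T_n\|_{\rm tr}={\rm trace}(T_m-T_n)=\int f_m\,d\|\mu\|_{\rm tr}-\int f_n\,d\|\mu\|_{\rm tr},
\]
using the simple-function identity just proved. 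By scalar monotone convergence the right-hand side tends to $0$ exactly when $\int f\,d\|\mu\|_{\rm tr}<\infty$; in that case $\{T_n\}$ is Cauchy in the complete space $(\bbT,\|\cdot\|_{\rm tr})$ and converges to some $T\in\bbT_+$. Letting $m\to\infty$ in $T_n\le T_m$ yields $T_n\le T$, so condition (c) holds with $\CB:=T$, and continuity of the trace gives ${\rm trace}(T)=\lim_n{\rm trace}(T_n)=\int f\,d\|\mu\|_{\rm tr}$. Conversely, if $\int f\,d\|\mu\|_{\rm tr}=\infty$ then ${\rm trace}(T_n)\to\infty$ and no dominating $\CB\in\bbT_+$ can exist, so the integral is undefined; this pins down the integrability criterion. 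Independence of the limit from $\{f_n\}$ I would obtain by testing against the complex measures $\mu_{x,y}(\cdot):=\langle\mu(\cdot)x,y\rangle$, which obey $|\mu_{x,y}|\le\|x\|\,\|y\|\,\|\mu\|_{\rm tr}$: since $\langle T_n x,y\rangle=\int f_n\,d\mu_{x,y}\to\int f\,d\mu_{x,y}$ by scalar dominated convergence, the bilinear form of $T$, and hence $T$ itself, does not depend on the approximating sequence.

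Finally, for arbitrary $f\in L^1(\R^d,\|\mu\|_{\rm tr})$ I would split $f=(f_{\rm re}^+-f_{\rm re}^-)+\ii(f_{\rm im}^+-f_{\rm im}^-)$; each of the four nonnegative parts is dominated by $|f|\in L^1(\|\mu\|_{\rm tr})$ and is therefore integrable by the previous step. Definition (iii) assembles $\int f\,d\mu$ by linearity, and applying the (linear) trace together with the nonnegative case termwise yields $\int f\,d\|\mu\|_{\rm tr}={\rm trace}(\int f\,d\mu)$. The main obstacle is the monotone step: the crux is recognizing that positivity of the increments reduces $\|T_m-T_n\|_{\rm tr}$ to the difference of their traces, converting a bound on scalar integrals into a genuine Cauchy condition in $\bbT$. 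Without the positivity of the $\bbT_+$-valued measure $\mu$ this passage—and with it both the existence and the well-definedness of $\int f\,d\mu$—would fail.
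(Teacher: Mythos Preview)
The paper does not actually give a proof of this proposition; it simply declares the result ``straightforward'' and moves on. Your argument fills in the details correctly and follows the natural route suggested by the three-stage construction (i)--(iii): verify the trace identity on nonnegative simple functions, pass to nonnegative measurable $f$ by monotone approximation, then assemble the general case by splitting into four nonnegative pieces. The crucial observation you single out---that positivity of the increments $T_m-T_n$ collapses $\|T_m-T_n\|_{\rm tr}$ to ${\rm trace}(T_m)-{\rm trace}(T_n)$, turning the scalar $L^1$ bound into a genuine Cauchy condition in $(\bbT,\|\cdot\|_{\rm tr})$---is exactly what makes the monotone step work, and it also delivers the converse (no dominating $\CB$ when $\int f\,d\|\mu\|_{\rm tr}=\infty$), which is the integrability criterion the proposition claims.

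One small remark: the supplement already contains Proposition~\ref{p:converge_integral_operator}, which establishes independence of the limit from the approximating sequence via a sandwich argument with simple functions (part (ii) of Proposition~\ref{pp:simple_int_ineq}). Your alternative route through the complex scalar measures $\mu_{x,y}(\cdot)=\langle\mu(\cdot)x,y\rangle$ and dominated convergence is also valid---the bound $|\mu_{x,y}|\le\|x\|\,\|y\|\,\|\mu\|_{\rm tr}$ holds because $\mu$ is $\bbT_+$-valued (use Cauchy--Schwarz for the positive operator $\mu(A_i)$ and then Cauchy--Schwarz again on the resulting sums over a partition)---and has the advantage of identifying the limit operator directly via its bilinear form.
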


The integral can be readily extended to {\clb $\sigma$-finite} $\mathbb{T}$-valued signed measures.  One can also naturally consider finite signed $\T$-valued
measures, namely $\mu=\mu_+-\mu_-$ where $\mu_+,\mu_-$ are both finite $\mathbb{T}_+$-valued measures, by $\int f d\mu := \int f d\mu_+ - \int f d\mu_-$.  In this case, the 
trace measure of $\mu$ is the (scalar) signed measure defined as $\|\mu\|_{\rm tr}(\cdot) := \|\mu_+\|_{\rm tr} (\cdot) - \|\mu_-\|_{\rm tr} (\cdot)$.}

\vskip.4cm
Next we turn to the stochastic integral in Cram\'er's representation. 
Recall that $\bbL^2(\Omega)$ is the $L^2$-space of all ${\mathbb V}$-valued random 
elements $\eta$ on the probability space $(\Omega,\CF,\pr)$ with
$\E\|\eta\|^2 < \infty$, equipped with the inner product
\begin{align*}
\langle \eta_1,\eta_2\rangle_{\Omega} := \E\langle\eta_1,\eta_2\rangle.
\end{align*}
{\clb 
We briefly discuss the properties of the orthogonal random measure
$\xi = \{\xi(A),\ A\in {\cal B}(\R^d)\}$ in Definition \ref{def:orthogonal-measure}.  Condition (ii) therein readily implies the orthogonality 
as well as finite additivity of the measure $\xi$.  Indeed, 
\begin{equation}\label{e:strong-orthogonality}
 \E [ \xi(A) \otimes \xi(B) ] = \mu(\emptyset) = 0,\ \ \ \mbox{ for all disjoint $A, B \in {\cal B}(\R^d)$,}
\end{equation}
where $0$ stands for the zero operator in $\T$.   This implies in particular that $\langle \xi(A),\xi(B)\rangle_\Omega =0$ (cf Remark \ref{rem:strong-orthogonality}). 
We have, moreover, that for all (orthogonal) projection operators $\Pi:\V \to \V$, the random measure $\eta(\cdot):= \Pi \xi(\cdot)$ is also an orthogonal measure with 
control measure $\Pi \mu(\cdot) \Pi$.

Now, to show finite additivity, observe that for disjoint Borel sets $A$ and $B$,
\begin{align*}
& \E \|\xi(A\cup B) -\xi(A) - \xi(B)\|^2  \\
& \quad\quad \quad  = \E {\rm trace}\Big ( (\xi(A\cup B) -\xi(A) - \xi(B)) \otimes ( \xi(A\cup B) -\xi(A) - \xi(B)) \Big )\\
& \quad\quad \quad   = {\rm trace} \Big( \mu(A\cup B) - \mu(A) - \mu(B) \Big)= 0,
\end{align*}
by the fact that the $\E$ and ${\rm trace}$ operators can be exchanged and the finite additivity of $\mu$. This shows $\xi(A\cup B) = \xi(A) + \xi(B)$, almost surely.

Finally, Condition (i) in Definition \ref{def:orthogonal-measure} along with the established finite additivity implies the $\sigma$-additivity of $\xi$, in the sense
that for any sequence of pairwise disjoint $A_n\in {\cal B}(\R^d),\ n=1,2,\cdots$, we have
$$
\xi\Big(\bigcup_{n=1}^\infty A_n\Big) = \sum_{n=1}^\infty \xi(A_n),\ \ \ \mbox{ almost surely,}
$$ 
where the latter series converges in $\bbL^2(\Omega)$.

\begin{remark}\label{rem:strong-orthogonality} Observe that for random vectors $X, Y\in  \bbL^2(\Omega)$, we have that
$$
\E [ X\otimes Y ] =0 \ \ \ \mbox{ implies } \ \ \ \langle X,Y\rangle_{\Omega} = 0,
$$
but the converse implication is not always true.  Thus, the orthogonality condition in \eqref{e:strong-orthogonality} is stronger than requiring 
 simply $\langle \xi(A), \xi(B)\rangle_\Omega=0$. 
\end{remark}
}

Introduce the finite Borel scalar measure 
\begin{align*}
\|\mu\|_{\rm tr}(A) := {\rm trace}(\mu(A))\equiv\|\mu(A)\|_{\tr},\ \ \ A \in {\cal B}(\R^d).
\end{align*}
It is easy to verify (cf Lemma \ref{l:cross-cov} in Supplement)
that, for all $A\in\mathcal{B}(\bbR^d)$, 
\begin{align*}
\|\xi(A)\|_\Omega^2 = \|\mu(A)\|_{\rm tr} = \|\mu\|_{\rm tr}(A).
\end{align*}

Next, for  an orthogonal random measure $\xi$ with control measure $\mu$, 
we sketch the construction of the stochastic integral $\CI_\xi f := \int_{\R^d} f(t) \xi(dt)$ defined 
for all functions $f\in \bbL^2(\bbR^d,\|\mu\|_{\rm tr})$,\ i.e., all measurable $f:\bbR^d\to \C$ with 
$\int_{\R^d} | f(t)|^2 \|\mu\|_{\rm tr} (dt)<\infty$.
For any simple function $\psi(x) = \sum_{j=1}^n c_jI_{A_j}(x)$, 
where $A_j\in\mathcal{B}(\bbR^d), A_j\cap A_{j'}=\emptyset$  when $j\neq j'$, define the integral
\begin{align*}
\CI_\xi(\psi) = \sum_{j=1}^n c_j\xi(A_j). 
\end{align*}
{\clb The integral $\CI_\xi(\psi)$ takes values in $\bbL_\xi^2(\Omega)$, defined as the closure of 
\begin{align*}
\mathrm{span}(\xi):=\Big\{\sum_{j=1}^n c_i \xi(A_j), c_j\in\bbC, A_j\in\mathcal{B}(\bbR^d), n=1,2,\ldots\Big\}
\end{align*} 
in $\bbL^2(\Omega)$.}
Property (ii) in Definition \ref{def:orthogonal-measure} entails
\begin{align*}
\|\CI_\xi(\psi)\|_\Omega^2 = \int_{\R^d} |\psi(x)|^2 \|\mu\|_{\rm tr}(dx). 
\end{align*}
Therefore, $\CI_\xi$ is an isometric linear mapping between the class of simple functions in the
$L^2$-space $\bbL^2(\bbR^d,\|\mu\|_{\rm tr})$ {\clb and the Hilbert space $\mathbb L^2_\xi(\Omega)$}.

As the class of simple functions is dense in $\bbL^2(\bbR^d,\|\mu\|_{\rm tr})$ 
and the integrals $\CI_\xi(\psi)$ are dense in $\bbL_\xi^2(\Omega)$, 
the linear operator $\CI_\xi$ can be uniquely extended to an isometric linear 
mapping between $\bbL^2(\bbR^d,\|\mu\|_{\rm tr})$ and $\bbL_\xi^2(\Omega)$.  This
completes the construction of the stochastic integral
\begin{align*}
\int_{\R^d} f(x) \xi(dx):=\CI_\xi(f) , \ \ \ f\in\bbL^2(\bbR^d,\|\mu\|_{\rm tr}).
\end{align*}
{\clb Observe, moreover that for all $f,g\in  \bbL^2(\bbR^d,\|\mu\|_{\rm tr})$, we have
$$
\E [ \CI_\xi(f) \otimes \CI_\xi(g)] = \int_{\R^d} f(x)\overline{g}(x) \mu(dx),
$$
where the last integral is well-defined in view of Proposition \ref{p:mu-integrability}. 
}

 \begin{proof2}{Proposition}{\ref{p:real-IRFk}} 
 Part (i): If $Y$ is a real IRF$_k$ then $Y$ is also a real IRF$_{k+1}$ with trivial polynomial spectral characteristic. Indeed, since for all $\lambda \in\Lambda_{k+1}$,
 we have $\partial_{j_1,\cdots,j_d}^{k+1} \wh\lambda(0)=0$, Relation \eqref{e:IRF_representation} becomes
 \begin{align*}
 Y(\lambda) = \int_{\R^d} \frac{\wh \lambda(u)}{1\wedge\|u\|^{k+1}} \xi(du) =: \int_{\R^d} \frac{\wh \lambda(u)}{1\wedge\|u\|^{k+2}} \eta(du),
 \end{align*}
 where $\eta (du):= (1\wedge \|u\|^{k+2}) (1\wedge \|u\|^{k+1})^{-1} \xi(du)$.  By taking $\lambda\in\Lambda_{k+1}(\R)$, since $Y(\lambda) = \overline{Y(\lambda)}$ and
 $\overline{\wh \lambda(u)} = \wh \lambda(-u)$,  we obtain that 
 \begin{align*}
  Y(\lambda) = \int_{\R^d} \frac{\wh \lambda(u)}{1\wedge\|u\|^{k+2}} \overline{\eta(-du)},\ \ \lambda\in\Lambda_{k+1}(\R).
 \end{align*}
 By \eqref{e:Y-extension}, the last relation continues to hold for all complex $\lambda\in\Lambda_{k+1}$.  Hence, appealing to the uniqueness of the representation 
 \eqref{e:IRF_representation} of $Y$ viewed as an IRF$_{k+1}$ (with $k$ replaced by $k+1$ and $\xi$ by $\eta$),  we obtain
 \begin{align*}  
 \eta(du) = \overline{\eta(-du)}, \ \ \mbox{ almost surely},
 \end{align*}
 or equivalently $\xi(du) = \overline{\xi(-du)}$, a.s., which shows that the orthogonal measure $\xi$ is Hermitian.
 
 The fact that $\xi$ is Hermitian and $Y$ a real IRF$_k$, imply that 
 for all $\lambda\in\Lambda_k(\R)$,
 \begin{align*}
 Y(\lambda) - \int_{\R^d} \frac{\wh \lambda(u)}{1\wedge\|u\|^{k+1}} \xi(du) = \sum_{j_1,\cdots,j_d} \partial_{j_1,\cdots,j_d}^{k+1} \wh\lambda(0) Z_{j_1,\cdots,j_d}\ \  \mbox{ is real.}
 \end{align*}
 By taking suitable {\em real} $\lambda$'s for which $ \partial_{j_1,\cdots,j_d}^{k+1} \wh\lambda(0)$ vanish for all but each 
 one term in the sum, we obtain that all $(\ii)^{k+1} Z_{j_1,\cdots,j_d}$ must be real.
 
 Conversely, if $\xi$ is Hermitian and the $(\ii)^{k+1} Z_{j_1,\cdots,j_d}$'s are real, it is immediate that $Y(\lambda)$ is real for all 
 $\lambda\in\Lambda_k(\R)$.  This completes the proof of part (i). Part (ii) is an immediate consequence of (i).
 
 Part (iii): One can define $Y$ as in \eqref{e:Y-extension}, where all $(\ii)^{k+1} Z_{j_1,\cdots,j_d}$'s are real and $\xi$ Hermitian.  By part (i), this entails that
 $Y$ is a real IRF$_k$.  Suppose now that $\wt Y(\lambda) = a Y(\lambda) + \ii b Y^\prime(\lambda)$ and observe that
 \begin{align*}
 {\cal C}_{\wt Y}(\lambda,\mu) &= a^2\E[ Y(\lambda)\otimes Y(\mu) ] + b^2\E[ Y^\prime(\lambda)\otimes Y^\prime(\mu) ] 
 + ab \ii \Big[ \E  Y^\prime(\lambda) \otimes Y(\mu) - \E  Y(\lambda) \otimes Y^\prime(\mu)  \Big] \\
 & =  \E[ Y(\lambda)\otimes Y(\mu) ] = {\cal C}_{Y}(\lambda,\mu),
 \end{align*}
 since $a^2 + b^2 = 1$ and the cross-covariance terms cancel.   This shows that $\wt Y$ and $Y$ have the same covariance structure.
  \end{proof2}

\begin{lemma}\label{l:identification_operator}
	Let $T(x,y)$ be a bivariate mapping from $\mathbb{D}\times \mathbb{D}$ to $\mathbb{F}$, 
	where both $\mathbb{D}$ and $\mathbb{F}$ are linear spaces
	over $\mathbb{C}$. Assume that $T$ is sesquilinear form with $T(cx,y) = cT(x,y)$ 
	and $T(x,cy) = \bar c T(x,y), x, y\in D, c\in\mathbb{C}$. Then, for $x,y\in \mathbb{D}$,
	\begin{align*}
	T(x,y) = \frac{\ii-1}{2} (T(x,x)+T(y,y)) + {1\over 2} T(x+y,x+y)-\frac{\ii}{2} T(\ii x+y, \ii x+y).
	\end{align*}
	\end{lemma}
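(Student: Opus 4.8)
The statement is a polarization identity, expressing the sesquilinear form $T$ in terms of its values on the diagonal. The plan is to prove it by direct expansion, using only the postulated sesquilinearity ($T$ linear in its first slot, conjugate-linear in its second), and then to collect terms.

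First I would abbreviate $a := T(x,x)$, $b := T(y,y)$, $p := T(x,y)$, $q := T(y,x)$, so that the goal is to show that the right-hand side equals $p$. Expanding $T(x+y,x+y)$ by bilinearity in each slot immediately gives $T(x+y,x+y) = a + p + q + b$. The only delicate computation is $T(\ii x + y, \ii x + y)$, and here I would expand into four terms and evaluate each using the scalar rules: $T(\ii x, \ii x) = \ii\,\overline{\ii}\,T(x,x) = \ii(-\ii)a = a$, then $T(\ii x, y) = \ii p$ and $T(y, \ii x) = \overline{\ii}\,q = -\ii q$, and $T(y,y) = b$. Hence $T(\ii x + y, \ii x + y) = a + \ii p - \ii q + b$.

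Next I would substitute these two expansions into the right-hand side and collect the coefficients of $a$, $b$, $p$, and $q$ separately. The coefficient of $a$ is $\tfrac{\ii-1}{2} + \tfrac12 - \tfrac{\ii}{2}$, which vanishes, and by the symmetry of the diagonal contributions the coefficient of $b$ vanishes identically the same way. The coefficient of $p$ is $\tfrac12 + \tfrac12 = 1$ (the $-\tfrac{\ii}{2}$ multiplying $\ii p$ contributes $\tfrac12$), while the coefficient of $q$ is $\tfrac12 - \tfrac12 = 0$. Thus the right-hand side reduces to $p = T(x,y)$, as claimed.

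There is no genuine obstacle here; the argument is an elementary algebraic verification. The only point requiring care is the correct bookkeeping of complex conjugation in the second (conjugate-linear) slot: one must not forget that pulling a scalar $\ii$ out of the second argument produces $\overline{\ii} = -\ii$, which is precisely what makes the off-diagonal term $q$ drop out and the term $p$ survive with coefficient one. I would simply present the two expansions and the coefficient comparison, since the identity then follows at once.
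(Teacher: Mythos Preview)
Your proposal is correct and takes exactly the same approach as the paper: expand $T(x+y,x+y)$ and $T(\ii x+y,\ii x+y)$ using sesquilinearity, then combine. The paper's proof simply records the two expansions and leaves the (immediate) coefficient comparison to the reader, while you spell it out; the argument is identical.
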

	\begin{proof} The proof is trivial by noticing the identities:
	\begin{align*}
	T(x+y,x+y) & = T(x,x) + T(y,y) + T(x,y) + T(y,x), \\
	T(\ii x+y,\ii x+y) & = T(x,x) + T(y,y) + \ii T(x,y) - \ii T(y,x).
	\end{align*}
	\end{proof}

\subsection{Proofs for Section \ref{s:operator-ss}.}

We start with an auxiliary result needed for the proof of Theorem \ref{thm:operator-ss} below.

\begin{lemma}\label{l:Frechet-derivative} For any ${\cal A}\in \bbT$ and any bounded linear operator ${\H}$, define 
$f(c):= c^{{-\H}} {\cal A} c^{{-\H}^*},\ c>0$.  The function $f:(0,\infty)\to \bbT$ is continuously Fr\'echet differentiable in 
$(\bbT,\|\cdot\|_{\rm tr})$ with derivative
\begin{align*}
f'(c) = -c^{-1} (\H f(c) + f(c)\H^*) =   -c^{-1} c^{{-\H}} (\H {\cal A} + {\cal A} \H^*) c^{{-\H}^*}, c>0.
\end{align*}
That is, 
\begin{align}\label{e:l:Frechet-derivative}
\| f(c+h) - f(c) - f'(c) h\|_{\rm tr} = o(h), \ \ \mbox{ as $h\to 0$.}
\end{align}
\end{lemma}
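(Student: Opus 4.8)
The plan is to reduce the claim to the Fréchet differentiability of $c\mapsto c^{\H}$ in operator norm, already recorded in Relation \eqref{e:Frechet-derivative-c-to-H}, together with a product rule that mediates between the operator norm (in which the outer factors are differentiable) and the trace norm (in which $f$ takes its values). First I would set $g_1(c):=c^{{-\H}}$ and $g_2(c):=c^{{-\H}^*}$. Applying \eqref{e:Frechet-derivative-c-to-H} with $\H$ replaced by $-\H$ and by $-\H^*$, respectively, these maps are continuously Fréchet differentiable from $(0,\infty)$ into the bounded operators on $\V$ equipped with $\|\cdot\|_{\rm op}$, with
\[
g_1'(c) = -c^{-1}c^{{-\H}}\H, \qquad g_2'(c) = -c^{-1}c^{{-\H}^*}\H^*.
\]
Here I use that $\H$ commutes with $c^{{-\H}}=e^{-\log(c)\H}$, so that $c^{{-\H}}\H=\H c^{{-\H}}$, which I will invoke again at the end.

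Next I would establish the product rule. Writing $f(c)=g_1(c){\cal A}\,g_2(c)$ and adding and subtracting,
\[
f(c+h)-f(c) = [g_1(c+h)-g_1(c)]\,{\cal A}\,g_2(c+h) + g_1(c)\,{\cal A}\,[g_2(c+h)-g_2(c)].
\]
The essential tool is the submultiplicative bound $\|S{\cal A}T\|_{\rm tr}\le \|S\|_{\rm op}\|{\cal A}\|_{\rm tr}\|T\|_{\rm op}$ (cf.\ \eqref{e:BAC-inequality}), which converts the operator-norm expansions $g_i(c+h)-g_i(c)=h\,g_i'(c)+o(h)$ into trace-norm expansions once the factors are sandwiched around ${\cal A}$. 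Using in addition that $\|g_2(c+h)\|_{\rm op}$ stays bounded near $c$ and that $\|g_1'(c)\|_{\rm op},\|{\cal A}\|_{\rm tr}<\infty$, each remainder and cross term is $o(h)$ in trace norm, so that
\[
f(c+h)-f(c) = h\big[g_1'(c){\cal A}\,g_2(c) + g_1(c){\cal A}\,g_2'(c)\big] + o(h),
\]
which is exactly \eqref{e:l:Frechet-derivative} with $f'(c)=g_1'(c){\cal A}\,g_2(c)+g_1(c){\cal A}\,g_2'(c)$. Substituting the explicit forms of $g_1',g_2'$ and again using $\H c^{{-\H}}=c^{{-\H}}\H$, the derivative collapses to
\[
f'(c) = -c^{-1}c^{{-\H}}\H{\cal A}c^{{-\H}^*} - c^{-1}c^{{-\H}}{\cal A}c^{{-\H}^*}\H^* = -c^{-1}\big(\H f(c)+f(c)\H^*\big),
\]
the claimed formula.

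Finally, continuity of $f'$ in $(\bbT,\|\cdot\|_{\rm tr})$ would follow from the continuity of $c\mapsto g_1(c),g_2(c),g_1'(c),g_2'(c)$ in $\|\cdot\|_{\rm op}$ (again from \eqref{e:Frechet-derivative-c-to-H}) together with the same submultiplicative bound. The main obstacle I anticipate is purely bookkeeping: one must consistently measure the outer factors in $\|\cdot\|_{\rm op}$ and the sandwiched trace-class operator in $\|\cdot\|_{\rm tr}$, and verify that \emph{every} error term — in particular the mixed term $h\,g_1'(c){\cal A}[g_2(c+h)-g_2(c)]$, which is $O(h^2)$, as well as the genuine $o(h)$ remainders coming from the expansions of $g_1,g_2$ — is indeed $o(h)$ in trace norm rather than merely in operator norm.
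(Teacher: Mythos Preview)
Your proposal is correct and takes essentially the same approach as the paper: both use the Fr\'echet differentiability of $c\mapsto c^{-\H}$ in operator norm from \eqref{e:Frechet-derivative-c-to-H}, the same add-and-subtract decomposition of $f(c+h)-f(c)$, and the bound \eqref{e:BAC-inequality} to transfer the operator-norm expansions to trace-norm expansions. Your treatment is in fact slightly more explicit about the commutation $\H c^{-\H}=c^{-\H}\H$ and about the continuity of $f'$, but the underlying argument is identical.
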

\begin{proof}{\clb  Observe that  $(c+h)^{-{\cal H}} -c^{-{\cal H}} = \sum_{n=1}^\infty (- {\cal H})^n [ (\log(c+h))^n - (\log c)^n]/n!$. 
Applying the mean value theorem to the terms $(\log(c+h))^n - (\log c)^n$, and using the triangle inequality for $\|\cdot\|_{\rm op}$,
one can } show that for all $c>0$, 
\begin{align}\label{e:Freceht-diff}
\Big\| (c+h)^{{-\H}} - c^{{-\H}} - c^{{-\H}} \ ({-\H}) c^{-1}h\Big\|_{\rm op} = o(h),\ \ \mbox{ as }h\to 0. 
\end{align}
That is,  $c\mapsto c^{{-\H}}$  and similarly $c\mapsto c^{{-\H}^*}$ are Fr\'echet differentiable in the Hilbert space ${\mathbb V}$.

On the other hand, by Proposition IV.5.4 on page 62 of \cite{Traces_and_Determinants_book}, for any two bounded operators ${\cal B}$ and ${\cal C}$ and a trace class operator $\A\in \bbT$, we have that ${\cal BAC}\in \bbT$ and moreover
\begin{align}\label{e:BAC-inequality}
\|{\cal BAC} \|_{\rm tr} \le \|{\cal B}\|_{\rm op} \|{\cal A}\|_{\rm tr} \|{\cal C}\|_{\rm op}.
\end{align}
This inequality can be used to show that the Fr\'echet differentiability of $c\mapsto c^{{-\H}}$ and $c\mapsto c^{{-\H}^*}$ 
in the operator norm induced by the Hilbert space norm in ${\mathbb V}$ entails the Fr\'echet differentiability in trace-norm.  

Indeed, for all $c>0$ and $h>-c$, we have
\begin{align*}
f(c+h) - f(c) &= (c+h)^{{-\H}} {\cal A} (c+h)^{{-\H}^*} - c^{{-\H}} {\cal A} c^{{-\H}^*} \\
& = ((c+h)^{{-\H}} - c^{{-\H}}) {\cal A} (c+h)^{{-\H}^*} + c^{{-\H}} {\cal A} ((c+h)^{{-\H}^*} - c^{{-\H}^*}).
\end{align*}
Now, in view of \eqref{e:Freceht-diff} and using the inequality \eqref{e:BAC-inequality}, we obtain
\begin{align*}
\| f(c+h) - f(c) - h c^{{-\H}-1}({-\H}) {\cal A} (c+h)^{{-\H}^*} - h(c+h)^{{-\H}} {\cal A} ({-\H}^*) c^{{-\H}^*-1}\|_{\rm tr} = o(h),
\end{align*}
as $h\to 0$.  The continuity of $c\mapsto c^{{-\H}}$ and $c\mapsto c^{{-\H}^*}$ in the operator norm and another application of
\eqref{e:BAC-inequality} entails \eqref{e:l:Frechet-derivative}.
\end{proof}

\label{sec:proofs:operator-ss}
\begin{proof2}{Theorem}{\ref{thm:operator-ss}}
  Fix $c>0$ and define the rescaled IRF$_k$ process $\wt Y(\lambda):= c^{{-\H}} Y(c\cdot \lambda),\ \lambda\in\Lambda_k$.
Observe that $Y$ is covariance ${\H}$-self-similar if and only if $\wt Y$ and $Y$ have the same covariance structure or, equivalently,
the same spectral characteristics $(\chi,{\cal Q})$. 

In view of  \eqref{e:IRF-cov-spectral}, for all $\lambda,\mu\in \Lambda_k$, we have
 \begin{align} \label{e:thm:operator-ss-1}
 \begin{split}
 \mathcal{C}_{\wt Y}(\lambda,\mu) & = \E [ \wt Y (\lambda) \otimes \wt Y(\mu)]  
 = c^{{-\H}} \E [ Y(c\cdot \lambda)\otimes Y(c\cdot \mu)] c^{{-\H}^*} \\
 & = \int_{\bbR^d} \wh{c\cdot\lambda}(u) \ol{\wh{c\cdot\mu}}(u) c^{{-\H}} \chi_k(du) c^{{-\H}^*} + c^{{-\H}} \mathcal{Q}(c\cdot\lambda*\wt{c\cdot\mu}) c^{{-\H}^*} \\
 & =  \int_{\bbR^d} \wh{\lambda}(x) \ol{\wh{\mu}}(x) c^{{-\H}} \chi_k(dx/c) c^{{-\H}^*} + c^{{-\H}} c^{2k+2} \mathcal{Q}( \lambda*\wt\mu) c^{{-\H}^*}, 
\end{split}
 \end{align}
 where in the last relation we used that $\wh{c\cdot\lambda}(u) = \wh{\lambda}(cu)$, the change of variables $u:= x/c$, and the
 fact that $\mathcal{Q}(c\cdot \nu)= c^{2k+2} \mathcal{Q}(\nu)$ for all $\nu\in \Lambda_{2k+1}$.
 
Relation \eqref{e:thm:operator-ss-1} shows that the spectral characteristics of 
$\wt Y$ are 
\begin{align*}
(c^{{-\H}} \chi_k(dx/c) c^{{-\H}^*}, c^{k+1{-\H}} \mathcal{Q}(\cdot) c^{k+1{-\H}^*}).
\end{align*}  
Hence, by the uniqueness of the spectral representation in \eqref{e:IRF-cov-spectral}, the IRF$_k$ process
 $Y$ and $\wt Y$ have the same covariance structure, if and only if Relation \eqref{e:thm:operator-ss-2} holds. 
 This completes the proof of part {\em (i)}. \\
 
 Part {\em (iv)} is an immediate consequence of part {\em (ii)} and Theorem \ref{th:integral_representation_IRF}, where the random 
 measure $\xi(dx)/(1\wedge |x|^{k+1})$ therein, is now written in polar coordinates as $W(dr d\theta)$.  
 Thus, in the remainder of the proof we focus on establishing the {\em disintegration formula} \eqref{e:chi-k-polar} (part {\em (ii)}) and
 the uniqueness of the measure $\sigma$ (part {\em (iii)}).\\
 
Define the $\bbT$-valued set-mapping
\begin{align}\label{e:sigma-formula}
\sigma(B):= {\H} \chi_k( (1,\infty)\times B) +  \chi_k( (1,\infty)\times B) {\H}^{*},\ \ B\in {\cal B}(\mathbb S),
\end{align}
where in short, we write $\chi_k(A\times B)$ for $\chi_k(\{(r,\theta)\in A\times B\})$, with
$A\subset (0,\infty)$ and $B\subset \bbS$.  The fact that $\chi_k$ is a $\bbT_+$-valued measure readily implies  
$\sigma(\emptyset)=0$ and the countable additivity of $\sigma$. Note also that $\sigma$ is finite, since 
for all $B\in {\cal B}(\bbS)$,  by \eqref{e:BAC-inequality},
\begin{align*}
\|\sigma(B) \|_{\rm tr} \le (\|{\H}\|_{\rm op}  + \|{\H}^*\|_{\rm op} ) \|  \chi_{k}((1,\infty)\times \bbS)\|_{\rm tr} <\infty.
\end{align*}

We will argue next that $\sigma(B)$ is positive and hence it defines a $\bbT_+$-valued measure on $\bbS$.  We will also 
show that for all $c>0$ and $B\in {\cal B}(\bbS)$, we have
\begin{align}\label{e:polar-decomposition}
\chi_k( (c,\infty)\times B ) = \int_{c}^\infty u^{{-\H}} \sigma(B) u^{{-\H}^*} \frac{du}{u}.
\end{align}
This fact and a standard $\pi-\lambda$ argument then entail that \eqref{e:chi-k-polar-new} holds.  Indeed, the right-hand side of
\eqref{e:chi-k-polar-new} defines a $\sigma$-finite $\bbT_+$-valued measure, say $\wt \chi_k$, on ${\cal B}(\R^d\setminus\{0\})$.  By 
Relation \eqref{e:polar-decomposition}, the measures $\chi_k$ and $\wt\chi_k$ agree on the semi-ring of rectangle sets 
$(c,\infty)\times B$. Since the latter generates the $\sigma$-field ${\cal B}(\R^d\setminus\{0\})$, by considering projections on
a fixed CONS, it can be seen that the two $\bbT_+$-valued measures coincide.

We now prove that $\sigma$ is a finite $\bbT_+$-valued measure and show \eqref{e:polar-decomposition}.  In view of 
Lemma \ref{l:Frechet-derivative} (above), it follows that the function
\begin{align*}
f(u) := u^{{-\H}} \chi_k((1,\infty)\times B) u^{{-\H}^*}. 
\end{align*}
is Fr\'echet continuously differentiable in $(\mathbb T,\|\cdot\|_{\rm tr})$ with derivative:
\begin{align} \label{e:polar-decomposition-1}
\begin{split}
f'(u)& = - u^{-1} u^{{-\H}} \Big({\H} \chi_k((1,\infty)\times B) +  \chi_k((1,\infty)\times B) {\H}^*\Big) u^{{-\H}^*} \\
&= -u^{-1} u^{{-\H}} \sigma(B)u^{{-\H}^*},\ \ u>0. 
\end{split}
\end{align}
Observe that by the operator-scaling property for $\chi_k$ in \eqref{e:thm:operator-ss-2}, we have
\begin{align*}
f(u) = \chi_k( (u,\infty)\times B),\ u>0.
\end{align*}
The monotonicity of the $\bbT_+$-valued measure $\chi_k$ then implies that
\begin{align*}
\chi_k((u,u+h]\times B) = f(u) - f(u+h) \in \bbT_+,
\end{align*}
for all $h>0$.  This shows that $-f'(u) \in \bbT_+$ and by setting $u=1$, we obtain $- f'(1) = \sigma(B) \in \bbT_+$, which shows that
$\sigma(B)\in \bbT_+$, completing the proof that the so-defined set-mapping in \eqref{e:sigma-formula} is a finite $\bbT_+$-valued measure. 

Now, using \eqref{e:polar-decomposition-1} and a straightforward extension of the fundamental theorem of calculus to 
Bochner integrals, we obtain
\begin{align*}
\int_{c}^\infty u^{{-\H}} \sigma(B) u^{{-\H}^*} \frac{du}{u} &= - \int_c^\infty f'(u)du = f(c) \\
& = c^{{-\H}} \chi_k((1,\infty)\times B) c^{{-\H}^*}.
\end{align*}
(Note that $f(u) = \chi_k((u,\infty)\times B) \downarrow 0$ as $u\uparrow \infty$.)
The latter, in view of the scaling property of $\chi_k$, equals $\chi_k((c,\infty)\times B)$, completing the
proof of \eqref{e:polar-decomposition} and part {\em (ii)}.\\

{\em Part (iii).} We now show that  $\sigma$ is uniquely determined by $\chi_k$, alone. 
By definition, we have 
\begin{align*}
\sigma(B) = -\frac{d}{dc}{\Big\vert}_{c=1} \chi_k( (c,\infty)\times B),\ \  B\in {\cal B}(\bbS),
\end{align*}
where the latter is interpreted as the Fr\'echet derivative in $(\bbT,\|\cdot\|)$ of the function $c\mapsto  \chi_k( (c,\infty)\times B)$, 
evaluated at $c=1$.  This shows that $\sigma$ is uniquely determined in terms of the measure $\chi_k$ and it does not depend on 
the choice of the exponent operator ${\H}$, which need not be unique (see e.g., Remark \ref{rem:non-unique-H}).
 \end{proof2}

\subsection{Proofs for Section \ref{sec:related-work-and-examples}.} \label{sec:proofs:related-work-and-examples}

\begin{proof2}{Proposition}{\ref{p:stationary-tangent-example}}
Stationarity is immediate, provided that the stochastic integral in \eqref{e:MRFk} is well-defined.  
To this end, it suffices to show that
\begin{align}\label{e:MRFk-1}
\begin{split}
\int_0^\infty\int_{\bbS} \|f_s(u,\theta)\|_{\rm op}^2 du\|\mu\|_{\rm tr} (d\theta) 
&:=   \int_0^\infty\int_{\bbS} (1\wedge u)^{2(k+1)}  \Big\| u^{-\H} \A(\theta)  \Big\|_{\rm op}^2 u^{-1} du \|\mu\|_{\rm tr}(d\theta) \\
&\le \int_{\bbS} \| \A(\theta)\|_{\rm op}^2 \|\mu\|_{\rm tr}(d\theta) \int_0^\infty  (1\wedge u)^{2(k+1)}  \| u^{-\H}\|_{\rm op}^2 u^{-1} du.
\end{split}
\end{align}

By the assumption \eqref{e:MRFk-A-condition}, the first integral in \eqref{e:MRFk-1} is finite.  It remains to show that the second one therein is
also finite.  {\clb This, however, readily follows from the inequality
\begin{align}\label{e:uniform-bound-H}
(1\wedge u)^{2(k+1)}  u^{-1}  \| u^{-\H}\|_{\rm op}^2  \le C\times \Big( 1_{(0,1)}(u) u^{2\delta-1} + 1_{[1,\infty)}(u) u^{-2\epsilon-1} \Big),
\end{align}
where $\epsilon>0$ is as in \eqref{e:MRFk-H-condition} and $\delta>0$ is such that $\Re({\rm sp}(\H)) \subset (\epsilon, k+1-\delta)$. 
Relation  \eqref{e:uniform-bound-H} can be established exactly as in the proof of \eqref{e:cH-epsilon} using \eqref{e:MRFk-H-condition} 
and Riesz functional calculus. This completes the proof of \eqref{e:uniform-bound-H} and part {\em (i)}.}

\medskip
To prove part {\em (ii)}, it suffices to establish that, for all $s,t\in B(0,M),\ M>0$,
\begin{align}\label{e:diff-bound}
\E[ \|X(s) - X(t)\|^2] \le \int_0^\infty \int_{\bbS}  \| f_s(u,\theta) - f_{t}(u,\theta)\|_{\rm op}^2 du \|\mu\|_{\rm tr}(d\theta)  \le C \| s-t\|^{2\zeta}.
\end{align}
Indeed, consider the Gaussian $\V$-valued variables $\xi_{s,t} := (X(s)-X(t))/\sigma_{s,t}$, where $\sigma_{s,t}^2:= \E [ \| X(s)-X(t)\|^2]$,
and where by convention $\xi_{s,t}:=0$ if $\sigma_{s,t} =0$.  By 
Corollary  \ref{c:uniform-tail-boubnds-for-Gaussian-norms-in-a-Hilnbert-space}  
(with $\theta:=1/4$ therein) we have that
\begin{align*}
 \E [\|\xi_{s,t} \|^{p}] \le c_p \E \exp\{\|\xi_{s,t}\|^2/4\} \le c_p \sqrt{2} 
\end{align*}
for all $p>0$ and some finite universal constant $c_p$.  The last bound and Relation \eqref{e:diff-bound}
can be equivalently written as
\begin{align*}
\E [\|X(s)-X(t)\|^p] \le C_p\sqrt{2} \|s-t\|^{p\zeta}.
\end{align*}
This, in view of Proposition \ref{p:tightness-via-moments}, implies the existence of a $\gamma$-H\"older continuous version of  
$\{X(s),\ s\in B(0,M)\}$ for all $\gamma \in (0,\zeta-d/p)$. Taking $p$ large, we see that every $\gamma\in (0,\zeta)$ is a possible H\"older 
exponent.  We shall continue to denote this continuous-path version of the process by $\{X(s)\}$. \\

Now, we turn to proving \eqref{e:diff-bound}. We have
\begin{align}\label{e:diff-f-bound}
\|f_s(u,\theta) - f_{t}(u,\theta)\|_{\rm op}^2& \le  2 |e^{\ii s^\top u\theta} - e^{\ii t^\top u\theta}|^2  
(1\wedge u)^{2(k+1)} u^{-1}  \| \A(\theta)\|_{\rm op}^2   \| u^{-\H}\|_{\rm op}^2 
\end{align}
By Relations \eqref{e:MRFk-A-condition} and \eqref{e:uniform-bound-H}, we have
\begin{align*}
I_{s,t}&:= \int_0^\infty \int_{\bbS} \|f_s(u,\theta) - f_{t}(u,\theta)\|_{\rm op}^2 du\|\mu\|_{\rm tr}(d\theta) \\
& \le C  \int_0^\infty \sup_{\theta\in \bbS} |e^{\ii (s-t)^\top u\theta} -1|^2 \Big (u^{2(\replace{k+1-\|\H\|_{\rm op}}{\delta} )-1} 1_{[0,1]}(u) + u^{-2\epsilon -1} 
1_{[1,\infty)}(u) \Big) du.
\end{align*}
Thus, Lemma \ref{l:term-I1} applied with $\Delta:= (s-t)^\top\theta$, $\gamma:= 2\replace{(k+1-\|\H\|_{\rm op})}{\delta}>0$ and $\epsilon:= 2\epsilon$, yields
\begin{align*}
I_{s,t} \le C  \|s-t\|^{2 (\epsilon\wedge 1)} (  1+ |\log(\|s-t\|)|^2
 1_{\{ \epsilon=1\}}).
\end{align*}
This implies that \eqref{e:diff-bound} holds with any $\zeta < 1\wedge \epsilon$.\\

{\em Part (iii).} Consider the measures in \eqref{e:eval1} and observe that 
\begin{align}\label{e:lambda-t-recall}
\lambda_t = \delta_t - \sum_j c_j(t) \delta_{t_j},
\end{align} 
where the $c_j(t)$'s are polynomials in $t$ of degrees up to $k$ and the $t_j$'s are some fixed points in $\R^d$.   In view of 
Proposition \ref{p:tightness-via-moments}, to prove \eqref{e:tangent-field-example} it is enough to work with 
\begin{align*}
\breve X_r(t) &:= r^{-\H} X(s_0+r\cdot \lambda_t) \\
& = r^{-\H} \int_0^\infty\int_{\bbS} e^{\ii s_0^\top u\theta} \Big( e^{\ii rs^\top u\theta} - \sum_j c_j(s) e^{\ii rt_j^\top u\theta} \Big) (1\wedge u)^{k+1} u^{-\H-1/2} \A(\theta) W(du,d\theta)\\
& =  \int_0^\infty\int_{\bbS} \wh \lambda_t(ru\theta) (1\wedge u)^{k+1} (ru)^{-\H-1/2} \A(\theta) \widetilde W(du,d\theta),
\end{align*}
where $\widetilde W(du,d\theta) := e^{\ii u s_0^\top \theta} r^{1/2} W(du,d\theta)$.   {\clb We will show that since the $\V$-valued Gaussian 
random measure $W$ is circularly symmetric and self-similar (recall \eqref{e:circular-symmetry}), we have 
\begin{equation}\label{e:tilde-W=Wr}
 \{\widetilde W(du,d\theta)\} \stackrel{fdd}{=} \{W(d(r\cdot u),d\theta)\}.
\end{equation}  
Indeed, since $W$ is standard Gaussian, it is not only orthogonal but also an independently scattered measure.  This readily
implies that $\wt W$ is also independently scattered.  It is straightforward that $\wt W$ has
control measure $r du \mu(d\theta)$.  It remains to show that $\wt W$ is standard, i.e., its real and imaginary components are independent and
identically distributed. Indeed, for all bounded Borel
$A\subset \R^d\setminus\{0\}$, in view of \eqref{e:W-WR-WI}, we have
\begin{align*}
\wt W(A) &= \int_A \cos(\varphi_{u,\theta}) r^{1/2} W_\R(du,d\theta) - \int_A \sin(\varphi_{u,\theta})r^{1/2} W_{\mathbb I}(du,d\theta) \\
& \ \ \ + \ii \Big( \int_A \sin(\varphi_{u,\theta}) r^{1/2} W_\R(du,d\theta) + \int_A \cos(\varphi_{u,\theta}) r^{1/2} W_{\mathbb I}(du,d\theta)\Big)\\
& =: \Re(\wt W(A)) + \ii \Im(\wt W(A)),
\end{align*} 
where $\varphi_{u,\theta} := u s_0^\top \theta$.
Using the fact that $W_\R$ and $W_{\mathbb I}$ are independent and identically distributed real orthogonal measures, we obtain that 
$\Re(\wt W(A))$ and $\Im(\wt W(A))$ are independent and identically distributed.  This, since $\wt W$ is independently scattered and Gaussian, 
completes the proof of \eqref{e:tilde-W=Wr}. } 

Therefore, by \eqref{e:tilde-W=Wr},
\begin{align} \label{e:star}
\begin{split}
\{\breve X_r(t),\ t\in \R^d\} &\stackrel{d}{=}\Big\{\int_0^\infty\int_{\bbS} \wh \lambda_t(v\theta) (1\wedge (v/r))^{k+1} v^{-\H-1/2} \A(\theta) 
W(dv,d\theta),\ t\in\R^d \Big\} \\ 
&=: \{ X_r(t),\ t\in \R^d\},  
\end{split}
\end{align}
where we made the change of variables $v:= ru$.  

This relation readily implies that the convergence in \eqref{e:tangent-field-example} holds in the sense of finite-dimensional distributions.
Indeed, writing $\breve Y(t) := \int_0^\infty \int_{\bbS} \wh \lambda_t(v\theta) v^{-\H-1/2} \A(\theta) 
W(dv,d\theta)$, with the {\em same} measure defining the $X_r(t)$'s in \eqref{e:star}, it is enough to show that 
\begin{align}\label{e:Y-Xr}
\begin{split}
& \E \| X_r(t) - \breve Y(t)\|^2 \\
& \le \int_0^\infty \int_{\bbS} |\wh\lambda_t(v\theta)|^2 |\cdot  |(1\wedge (v/r))^{k+1} - 1 |^2\cdot  \Big\| v^{-\H-1/2} \A(\theta) \Big\|_{\rm op}^2 \|\mu\|_{\rm tr}(d\theta) dv \to 0,
\end{split}
\end{align}
as $r\to 0$.  It is easy to see that since $\lambda_t\in \Lambda_k$, we have
$
| \wh\lambda_t(x)| \le C (1\wedge \|x\|)^{k+1},\ \ x\in \R^d
$
Therefore, in view of Relations \eqref{e:MRFk-A-condition} and \eqref{e:uniform-bound-H}, 
the fact that $|(1\wedge (v/r))^{k+1} - 1 |\to 0,\ r\to 0$ and the Dominated Convergence Theorem imply \eqref{e:Y-Xr},
proving the convergence of the finite-dimensional distributions.  To complete the proof of \eqref{e:tangent-field-example}, we will establish the
tightness of $\{\breve X_r(\cdot),\ r\in (0,1)\}$.  Since the $\breve X_r$'s are Gaussian, as argued above, in view of Proposition 
\ref{p:tightness-via-moments} it is enough to show that for all $M>0$, there exist $C>0$ and $\zeta>0$ such that
\begin{align*}
\sup_{r\in (0,1)} \E \| \breve X_r(s) - \breve X_r(t)\|^2 \le C \|s-t\|^{2\zeta},\ \ \mbox{ for all } s,t\in B(0,M).
\end{align*}
{\clb This is because for $k\ge 0$, the tightness condition on $\{\breve X_r(s_0),\ r\in(0,1)\}$ is automatically fulfilled if one takes $s_0=t_1$ (recall \eqref{e:tilde_f2} and
 Remark \ref{rem:tightness_at-a-point})}.

To this end, we begin with some key observations about the measures $\lambda_t$.   
Since the $c_j(t)$'s in \eqref{e:lambda-t-recall} are fixed polynomials, we have
\begin{align}\label{e:lambda-hat-1}
\begin{split}
| \wh \lambda_s(v\theta) - \wh\lambda_t(v\theta)|^2 &\le | e^{\ii s^\top v\theta} - e^{\ii t^\top v\theta } |^2 + \sum_j |c_j(s) - c_j(t)|^2 \\
&\le C v^2\| s-t\|^2 + C_M \| s-t\|^2  \le C_M (1\vee v^2) \|s-t\|^2,
\end{split}
\end{align}
for all $v>0$ and $s,t\in B(0,M)$, where the constant $C_M$ does not depend on $v,s,t$ and $\theta\in\bbS$.

On the other hand, in view of \eqref{e:lambda-t-recall}, the $\hat\lambda_t$'s are uniformly bounded for $t\in B(0,M)$ and since 
$\lambda_t\in\Lambda_k$ they annihilate polynomials of degree up to $k$ and hence
$\wh \lambda_t(v\theta) = {\cal O}(v^{k+1})$. This implies that
\begin{align}\label{e:lambda-hat-2}
\sup_{t,s\in B(0,M), \ \theta \in \bbS} |  \wh \lambda_s(v\theta) - \wh\lambda_t(v\theta)|^2\le C_M(1\wedge v^{2(k+1)}).
\end{align}

We are now ready to estimate the difference moments.    For all $s,t\in B(0,M)$, we have
\begin{align*}
\E \| \breve X_r(s) - \breve X_r(t)\|^2 &\le \int_0^\infty\int_{\bbS} | \wh \lambda_s(v\theta) - \wh\lambda_t(v\theta)|^2  \Big\| v^{-\H-1/2} \A(\theta) \Big\|_{\rm op}^2 \|\mu\|_{\rm tr}(d\theta) dv\\
&=:\int_{0}^\infty \int_{\bbS} g_{s,t}(v,\theta) \|\mu\|_{\rm tr} (d\theta) dv.
\end{align*}

Observe first that by \eqref{e:lambda-hat-2} the bound in  \eqref{e:uniform-bound-H} applies and by  \eqref{e:MRFk-A-condition}, we have
\begin{align}\label{e:h-bound}
\int_{\bbS} g_{s,t}(v,\theta) \|\mu\|_{\rm tr} (d\theta) \le C \Big ( v^{2\replace{(k+1-\|\H\|_{\rm op})}{\delta}-1} 1_{(0,1)}(v) + v^{-2\epsilon -1} 1_{[1,\infty)}(v)\Big),
\end{align}
where the latter function is integrable in $v$ over $(0,\infty)$.  

The rest of the strategy is as follows.  We will consider the integral of $g_{s,t}$ over three regions $(v,\theta)\in (0,\alpha]\times \bbS$, 
$(\alpha,1/\alpha)\times \bbS$ and $[1/\alpha,\infty)\times \bbS$.  We will choose $\alpha = 1\wedge \|s-t\|^\kappa$ for some $\kappa>0$,
such that each of the three integrals can be dominated by $C \alpha^\eta$, for some positive $\eta$.  Namely, let
\begin{align*}
I_1(s,t) + I_2(s,t) + I_3(s,t) := \Big(\int_{(0,\alpha]\times \bbS} + \int_{(\alpha,1/\alpha)\times\bbS} + \int_{[1/\alpha,\infty)\times \bbS} \Big) 
g_{s,t}(v,\theta) \|\mu\|_{\rm tr}(d\theta) dv.
\end{align*}
In view of \eqref{e:h-bound}, we have
\begin{align*}
I_1(s,t) \le C \int_0^\alpha  v^{2\replace{(k+1-\|\H\|_{\rm op})}{\delta}-1} dv \le  C \alpha^{ 2\replace{(k+1-\|\H\|_{\rm op})}{\delta}}
\end{align*}
and
\begin{align*}
I_3(s,t) \le C\int_{1/\alpha}^\infty v^{-2\epsilon-1} dv \le C \alpha^{2\epsilon}.
\end{align*}
Now, for the middle piece, using the bound in \eqref{e:lambda-hat-1}, we obtain
\begin{align*}
I_2(s,t) \le C \|s-t\|^2  \int_{\alpha}^{1/\alpha} (1\vee v^2) dv \le C \|s-t\|^2 \alpha^{-3}.
\end{align*}
Setting $\|s-t\|^2 \alpha^{-3} = \alpha^{2(\epsilon \wedge \delta)}$, we see that
$
\alpha = 1 \wedge \|s-t\|^{2/(3+2(\epsilon\wedge \delta ))}
$
yields the desired bound
$
 \E \| \breve X_r(s) - \breve X_r(t)\|^2\le \sum_{i=1}^3 I_i(s,t) \le C \|s-t\|^{2\zeta},
$ 
uniformly in $r\in (0,1)$, where  $\zeta = 2(\epsilon\wedge \delta)/(3+2(\epsilon\wedge \delta))>0$.
\end{proof2}

\begin{lemma} \label{l:term-I1} {\em (i)} There exists a constant $C$ such that 
\begin{align*}
\int_1^\infty | e^{\ii \Delta u} - 1 |^2 u^{-1-\epsilon} du 
\le C \left\{\begin{array}{ll}
|\Delta|^{2} &,\ \mbox{ if }\epsilon>2\\
|\Delta|^2 |\log|\Delta|| &,\ \mbox{ if }\epsilon=2\\
|\Delta|^{\epsilon} &,\ \mbox{ if }0<\epsilon<2.
\end{array} \right.
\end{align*}

{\em (ii)} Also, for any $\gamma>0$, there is a constant $C_\gamma>0$, such that
$
\int_0^1  | e^{\ii \Delta u} - 1 |^2 u^{\gamma-1} du \le C_\gamma |\Delta|^2.
$
\end{lemma}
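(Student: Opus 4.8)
\textbf{Plan.} The statement is a pair of elementary estimates on oscillatory integrals. The plan is to control $|e^{\ii\Delta u}-1|$ by the two competing bounds $|e^{\ii\Delta u}-1|\le 2$ (trivially) and $|e^{\ii\Delta u}-1|\le |\Delta| u$ (from $|e^{\ii x}-1|\le|x|$), and then split each integral at the natural crossover scale $u\sim 1/|\Delta|$, where the two bounds balance. This reduces both parts to integrating pure powers of $u$ over suitable ranges, after which the stated cases in (i) fall out from whether the exponent of $u$ makes the power integrable or not.

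\textbf{Part (ii).} First I would dispose of the easier finite-range integral. On $(0,1)$ the factor $u^{\gamma-1}$ with $\gamma>0$ is integrable near $0$, and using $|e^{\ii\Delta u}-1|\le|\Delta|u$ gives
\[
\int_0^1 |e^{\ii\Delta u}-1|^2 u^{\gamma-1}\,du \le |\Delta|^2 \int_0^1 u^{\gamma+1}\,du = \frac{|\Delta|^2}{\gamma+2},
\]
so $C_\gamma = 1/(\gamma+2)$ works, with no case distinction needed.

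\textbf{Part (i).} For the infinite-range integral I would assume $0<|\Delta|\le 1$ (the regime of interest; for $|\Delta|$ bounded below the integral is trivially $\le C|\Delta|^2$ since it is a bounded quantity) and split at $u_0:=1/|\Delta|\ge 1$. On the inner piece $1\le u\le u_0$, use $|e^{\ii\Delta u}-1|^2\le |\Delta|^2 u^2$, giving $|\Delta|^2\int_1^{u_0} u^{1-\epsilon}\,du$; on the outer piece $u\ge u_0$, use $|e^{\ii\Delta u}-1|^2\le 4$, giving $4\int_{u_0}^\infty u^{-1-\epsilon}\,du = (4/\epsilon)u_0^{-\epsilon}=(4/\epsilon)|\Delta|^\epsilon$. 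The inner integral $\int_1^{u_0} u^{1-\epsilon}\,du$ is where the three cases arise: if $\epsilon>2$ the integrand is integrable at infinity so it is $O(1)$ and the inner piece contributes $O(|\Delta|^2)$, which dominates $|\Delta|^\epsilon$; if $\epsilon=2$ the integral is $\log u_0=|\log|\Delta||$, contributing $|\Delta|^2|\log|\Delta||$; if $0<\epsilon<2$ the integral is $\sim u_0^{2-\epsilon}=|\Delta|^{\epsilon-2}$, and the inner piece contributes $|\Delta|^2\cdot|\Delta|^{\epsilon-2}=|\Delta|^\epsilon$, matching the outer piece. Adding the two pieces in each regime and taking the larger term yields the stated bounds.

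\textbf{Main obstacle.} There is no real obstacle here; this is a routine splitting argument. The only point requiring a little care is bookkeeping the boundary case $\epsilon=2$ (where the logarithm appears) and verifying that the constant $C$ can be taken uniform in $\Delta$ over the relevant range—in particular handling $|\Delta|\ge 1$ separately so that the $|\log|\Delta||$ factor does not cause trouble. I would state the estimate for $0<|\Delta|\le 1$ and remark that for $|\Delta|>1$ all three right-hand sides are bounded below by a positive constant while the integral is uniformly bounded, so the inequality persists after enlarging $C$.
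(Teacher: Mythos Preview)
Your proof is correct and essentially matches the paper's approach. The paper's own proof is a one-line sketch invoking the change of variables $x:=|\Delta|u$; after that substitution the integral becomes $|\Delta|^\epsilon\int_{|\Delta|}^\infty |e^{\ii x}-1|^2 x^{-1-\epsilon}\,dx$, and analyzing this by splitting at $x=1$ with the bounds $|e^{\ii x}-1|\le|x|$ and $|e^{\ii x}-1|\le 2$ is exactly your splitting at $u_0=1/|\Delta|$ rewritten in the new variable.
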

\begin{proof} Noting that $| e^{\ii \Delta u} - 1 |^2 = 2(1 - \cos(\Delta u)) = {\cal O}(\Delta^2),$ as $\Delta\to 0$, 
the claim in part {\em (ii)} is immediate.  Part {\em (i)} follows by straightforward calculus by considering the change of variables 
$x:= |\Delta| u$.
\end{proof}

\subsection{Proofs for Section \ref{ss:T_rrx}.}\label{sec:proofs:scalar-actions}

\begin{proof2}{Proposition}{\ref{proposition:GC}}  Theorem \ref{thm:operator-ss} applied to ${\cal H}:= H\cdot \I$ implies
the scaling property of $\chi_k$ and the disintegration formula \eqref{e:chi_k-disintegration-scalar} in part {\em (ii)}.  Part {\em (iii)}
follows from Proposition \ref{p:general-bounded-H}.  We need only prove that $0<H\le k+1$ and the dichotomy claim {\em (i)}.

Recall the decomposition in \eqref{e:Y-decomposition} and observe that both components
$Y_{(\chi,0)}$ and $Y_{(0,{\cal Q})}$ are covariance $H$-self-similar.  Notice, however, that $Y_{(0,{\cal Q})}$ is always 
either $(k+1)$-self-similar or zero. In particular, for all $c>0$,
\begin{align*}
c^H Y_{(0,{\cal Q})}(\lambda) \stackrel{d}{=}  Y_{(0,{\cal Q})}(c\cdot \lambda) \stackrel{d}{=} 
 c^{k+1} Y_{(0,{\cal Q})}(\lambda),\ \ \  \lambda\in \Lambda_k
\end{align*}
This, since $H$ is scalar, implies $H= k+1$ unless $Y_{(0,{\cal Q})} \equiv 0$ (recall Lemma \ref{l:Slutsky}).  

Thus, the polynomial spectral component ${\cal Q}$ is non-trivial only if $H=k+1$.  To complete the proof of {\em (i)}, it remains to show
that if $Y_{(\chi,0)}$ is non-zero, then its self-similarity exponent is in the range  $0<H < k+1$.  Firstly, note that 
$Y_{(\chi,0)}(c\cdot \lambda) \stackrel{d}{=} c^{H} Y_{(\chi,0)} (\lambda) \to 0$, in probability, as $c\downarrow 0$, by 
the continuity of $Y$.  This implies that $H>0$.  

Now, observe that by the scaling property of $\chi_k$, we have, for all $r\in (0,1)$, 
\begin{align*}
r^{-2H} \chi_k( (1,\infty)\times \bbS) &= \chi_k((r,\infty)\times \bbS) = \int_{\{r<\|u\| \} } \frac{1}{1\wedge \|u\|^{2k+2}} \chi(du) \\
&=
\int_{\{ r< \|u\|\le 1\}} \frac{1}{\|u\|^{2k+2}} \chi(du) + \chi(\{ \|u\|>1\} )\\
&= r^{-(2k+2)} \int_{\{r< \| u\|\le 1\}} \Big(\frac{r}{\|u\|} \Big)^{2k+2} \chi(du) + \chi(\{ \|u\|>1\}).
\end{align*}
Since $\chi$ is a finite $\bbT_+$-valued measure, however, the last integral vanishes as $r\downarrow 0$, by the Dominated Convergence Theorem.  Thus, by multiplying the last expression by $r^{2k+2}$, we obtain
\begin{align*}
r^{2( k+1 -H)}  \chi_k( (1,\infty)\times \bbS)  \to 0,
\end{align*}
as $r\to 0$, which means that $H<k+1$, since by the scaling property $\chi_k((r,\infty)\times \bbS)\not = 0$, for any (all) $r>0$.  
This completes the proof.
\end{proof2}

\begin{proof2}{Theorem} {\ref{p:K_nu_polar}}  

The proof of {\em Part (i)} is as follows. Since $H<k+1$, the polynomial ${\mathcal Q}$ in the spectral characteristic 
of ${\cal K}$ is zero and $\chi$ satisfies \eqref{e:chi_k-disintegration-scalar} in polar coordinates.  
Let $\nu\in\Lambda_{2k+1}$.  Using that
$
\wh \nu (r\theta ) = \int_{\mathbb R^d}  e^{i r \theta^\top t }\nu(dt), 
$
Relation \eqref{e:K-of_nu} in polar coordinates becomes
\begin{align}\label{e:K(nu)-polar}
\begin{split}
{\cal K}(\nu) &= \int_{\mathbb S^{d-1}}\int_0^\infty \int_{\R^d} e^{\ii r \theta^\top t} \nu(dt) r^{-(2H+1)} dr \sigma(d\theta) \\
& = \int_{\mathbb S^{d-1}} \int_0^\infty \int_{\R^d} 
\left(  {\Big(} e^{\ii r  \theta^\top t } - \sum_{j=0}^{\lfloor 2H\rfloor}
        \frac{(\ii r \theta^\top t)^j}{j!} {\Big)}  \nu(dt) \right) r^{-(2H+1)} dr \sigma(d\theta) \\
& = \int_{\mathbb S^{d-1}}  \int_{\R^d} \int_0^\infty
\left(  {\Big(} e^{\ii r   \theta^\top t } - \sum_{j=0}^{\lfloor 2H\rfloor}
        \frac{(\ii r  \theta^\top t )^j}{j!} {\Big)}  r^{-(2H+1)} dr  \right) \nu(dt) \sigma(d\theta)
\end{split}
\end{align}
where the second relation follows from the fact that $\nu\in \Lambda_{2k+1}$ and therefore we could add a polynomial in $t$ of degree 
$\lfloor 2H\rfloor \le 2k+1$ without changing the integral.  We will justify next the interchange of the inner two integrals leading to
\eqref{e:K(nu)-polar} and compute the inner integral, therein. 

For all integer $m\ge 0$, one can show that
\begin{align}\label{e:exp-expansion}
\Big| e^{\ii z} - \sum_{j=0}^m \frac{(\ii z)^j}{j!} \Big| \le  \min\Big\{ \frac{|z|^{m+1}}{(m+1)!}, \frac{2 |z|^{m}}{m!} \Big\},\quad z\in\R
\end{align}
\citep[see e.g., page 298 in][] {resnick:1999book}. Thus, for $0\le \lfloor 2H\rfloor<  2H+1 < \lfloor 2H\rfloor +1 $ (recall $2H$ is not integer),
we obtain
\begin{align*}
\int_0^\infty \Big| e^{\ii r \theta^\top t } - \sum_{j=0}^{\lfloor 2H\rfloor} \frac{(\ii r \theta^\top t)^j}{j!} \Big| r^{-(2H+1)}dr  <\infty.
\end{align*}
This allows us to interchange the order of the two inner integrals in \eqref{e:K(nu)-polar}. 
 Doing so and making the change of variables  $z:= r |  \theta^\top t |$, we obtain
\begin{align*}
& \int_0^\infty {\Big(} e^{\ii r \theta^\top t } - \sum_{j=0}^{\lfloor 2H \rfloor} \frac{(\ii r \theta^\top t)^j}{j!} \Big) r^{-(2H+1)}dr \\
\quad & =  |\theta^\top t  |^{2H} \int_0^\infty {\Big(} e^{\ii z\,  {\rm sign}(\theta^\top t) }   
- \sum_{j=0}^{\lfloor 2H \rfloor} \frac{(\ii z\, {\rm sign} (\theta^\top t) )^j}{j!} \Big) z^{-(2H+1)}dz\\
\quad & = | \theta^\top t  |^{2H} \int_0^\infty \Big(\cos(z) - \sum_{0\le 2j\le \lfloor 2H \rfloor,\ j\in\mathbb N } \frac{(-1)^{j} z^{2j}}{2j!}\Big) z^{-(2H+1)} dz\\
\quad & + { \ii }\, {\rm sign} (\theta^\top t)  | \theta^\top t |^{2H}  \int_0^\infty \Big(\sin(z) 
     - \sum_{1\le 2j+1\le \lfloor 2H \rfloor,\ j\in\mathbb N} \frac{(-1)^{j} z^{2j+1}}{(2j+1)!}\Big) z^{-(2H+1)} dz\\
\quad & =  | \theta^\top t |^{2H}  I({H}) + \ii  (\theta^\top t)^{<2H>} J({H}),
\end{align*}
where $I({H})$ and $J({H})$ are in \eqref{e:I_and_J}.   Note that this argument also demonstrates
that $I({H})$ and $J({H})$ are well-defined.

By substituting the last expression in the right-hand side of \eqref{e:K(nu)-polar}, we obtain
\begin{align*}
{\cal K}(\nu) &= I({H}) \int_{\mathbb S^{d-1}} \int_{\mathbb R^d} |  \theta^\top t |^{2H} \nu(dt) \sigma(d\theta) \\
&\quad\quad\quad + \ii J({H}) \int_{\mathbb S^{d-1}} \int_{\mathbb R^d} ( \theta^\top t)^{<2H>} \nu(dt) \sigma(d\theta)\\
&= I({H}) \int_{\mathbb S^{d-1}} |(\theta,\cdot)|^{2H} (\nu) \sigma(dt) + \ii J({H})  \int_{\mathbb S^{d-1}} (\theta,\cdot)^{<2H>} (\nu) \sigma(dt). 
\end{align*}
This completes the proof of \eqref{e:p:K_nu_polar}.

\medskip
{\em Part (ii):} Suppose now that $2H \in \Z$, where $1\le 2H \le  2k+1$ and observe that  $2H$ may be either even or odd.
Using the fact that $\nu\in\Lambda_{2k+1}$, we get
\begin{align*}
\widehat \nu(r\theta) &= \int_{\bbR^d}{\Big(}  e^{\ii r \theta^\top t} - \sum_{j=0}^{2H-1} \frac{(\ii r \theta^\top t )^j}{j!}
  - 1_{[0,1]}(r) \frac{(\ii r \theta^\top t)^{2H}}{(2H)!}  \Big) \nu(dt) \\
  &=: \int_{\bbR^d} f(r\theta^\top t,\, r) \nu(dt).
\end{align*}
Relation \eqref{e:exp-expansion} implies that $|f(r\theta^\top t,\, r)| = {\cal O}( |r|^{2H+1})$ as $r\to 0$ and
because of the presence of the indicator function $1_{[0,1]}(r)$, we have $|f(r\theta^\top t,\, r)| = {\cal O}(|r|^{2H-1})$ as $r\to\infty$.
Therefore, 
\begin{align*}
\int_0^\infty | f(r\theta^\top t,\, r)| r^{-(2H+1)} dr <\infty,
\end{align*}
and by Fubini, we obtain
\begin{align}\label{e:p:K_nu_polar-integer-case}
 & \int_0^\infty \widehat \nu(r\theta) r^{-(2H+1)} dr  = \int_{\bbR^d} \int_0^\infty f(r\theta^\top t,\, r) r^{-(2H+1)} dr  \nu(dt) \nonumber \\
 & = \int_{\bbR^d} \Big[ |\theta^\top t|^{2H}
 \int_0^\infty \Big( e^{\ii z \kappa } - \sum_{j=0}^{2H-1} \frac{ (\ii z\kappa)^{j}}{j!} 
    - 1_{[0,1]}(z/|\theta^\top t|) \frac{(\ii z\kappa)^{2H}}{(2H)!} \Big) z^{-(2H+1)} dz \Big] \nu(dt)\\
    &=: \int_{\bbR^d} I_{H}(t)\, \nu(dt) \nonumber
\end{align}
where we made the change of variables $z:= r|\theta^\top t|$ and where $ \kappa := {\rm sign}(\theta^\top t)$. 

{\em We now consider two cases.}  Suppose fist that $2H$ is even. Upon separating the real and imaginary parts
the integral in the r.h.s.\ of \eqref{e:p:K_nu_polar-integer-case} becomes:
\begin{align}\label{e:p:K_nu_polar-integer-case-1}
I_{H}(t)  & =  | \theta^\top t |^{2H} \int_0^\infty \Big( \cos(z) - \sum_{0\le 2j\le 2H-1,\ j\in\Z} \frac{ (-1)^j z^{2j}}{(2j)!} 
- 1_{[0,1]}(z/|\theta^\top t|) \frac{(-1)^{H} z^{2H}}{(2H)!} \Big) z^{-(2H+1)} dz  \nonumber\\
& +\ii\,  \kappa |\theta^\top t|^{2H}\int_0^\infty \Big( \sin(z) - \sum_{1\le 2j+1\le 2H-1,\ j\in\Z} \frac{ (-1)^j z^{2j+1}}{(2j+1)!} 
 \Big) z^{-(2H+1)} dz\nonumber\\
 & =:  (\theta^\top t)^{2H} I_{H,\cos}(t) + \ii  (\theta^\top t)^{<2H>} J({H}).
 \end{align}
Observe that the integral in the imaginary part above equals $J({H})$ in \eqref{e:I_and_J} and where in the real part we dropped the absolute
value around $\theta^\top t$ since $2H$ is even.

Since $ (\theta^\top t)^{2H}$ is a polynomial in $t$ of degree $2H<2k+1$, we have $\int_{\bbR^d}  (\theta^\top t)^{2H} \nu(dt) =0$.
Therefore, 
\begin{align}\label{e:p:K_nu_polar-integer-case-2}
\int_{\bbR^d} (\theta^\top t)^{2H} I_{H,\cos}(t) \nu(dt) = \int_{\bbR^d} (\theta^\top t)^{2H}  ( I_{H,\cos}(t) - \mathcal{C}_{H,\cos}  ) \nu(dt), 
\end{align}
where 
\begin{align*}
\mathcal{C}_{H,\cos} :=  \int_0^\infty \Big( \cos(z) - \sum_{0\le 2j\le 2H-1,\ j\in\Z} \frac{ (-1)^j z^{2j}}{(2j)!} 
- 1_{[0,1]}(z) \frac{(-1)^{H} z^{2H}}{(2H)!} \Big) z^{-(2H+1)} dz,
\end{align*}
which is almost the same as the first integral in \eqref{e:p:K_nu_polar-integer-case-1}
except that the indicator function $1_{[0,1]}(z)$ no longer depends on $t$.  Notice again that 
$\mathcal{C}_{H,\cos}$ is well-defined.  We thus obtain
\begin{align}\label{e:p:K_nu_polar-integer-case-3}
\begin{split}
 I_{H,\cos}(t) - \mathcal{C}_{H,\cos} &= \int_{0}^\infty \Big(1_{[0,1]}(z) - 1_{[0,|\langle \theta,t\rangle|]}(z) \Big)  \frac{(-1)^{H} z^{2H}}{(2H)!} z^{-(2H+1)} dz \\
 & =\frac{(-1)^{H}}{(2H)!} \int_{|\theta^\top t |}^1  z^{-1} dz = \frac{(-1)^{H+1}}{(2H)!} \log (|\theta^\top t|).
\end{split}
\end{align}
In view of \eqref{e:p:K_nu_polar-integer-case-1}, \eqref{e:p:K_nu_polar-integer-case-2}, and \eqref{e:p:K_nu_polar-integer-case-3},
we get
\begin{align*}
{\cal K}(\nu) &= \int_{\mathbb S^{d-1}}\int_{\bbR^d} I_{H}(t)\nu(dt) \sigma(d\theta)\\
& = \frac{(-1)^{H+1}}{(2H)!}  \int_{\mathbb S^{d-1}}\int_{\bbR^d} (\theta^\top t) ^{2H} \log | \theta^\top t | \nu(dt) \sigma(d\theta) \\
&\quad\quad + \ii\, J({H}) \int_{\mathbb S^{d-1}}\int_{\bbR^d} (\theta^\top t) ^{<2H>}  \nu(dt) \sigma(d\theta)\\
& = \frac{(-1)^{H+1}}{(2H)!}  \int_{\mathbb S^{d-1}} \Big[ (\theta, \cdot ) ^{2H} \log |(\theta,\cdot) | \Big] (\nu) \sigma(d\theta)
+ \ii\, J({H}) \int_{\mathbb S^{d-1}} ( \theta,\cdot)^{<2H>}(\nu)  \sigma(d\theta),
\end{align*}
which completes the expression in the case when $2H$ is an even integer.\\

{\em Suppose now that $2H$ is odd.}  With a very similar argument, using the fact that $\nu\in\Lambda_{2k+1}$ annihilates polynomials
of degree $2k+1\ge 2H$, we obtain that the right-hand side of \eqref{e:p:K_nu_polar-integer-case} equals
\begin{align}\label{e:2H-odd}
\begin{split}
 \int_{\R^d} |\theta^\top t|^{2H} I(H) \nu(dt) +&
 \ii \int_{\R^d} \kappa |\theta^\top t|^{2H}  \int_0^\infty \Big( \sin(z) - \sum_{1\le 2j+1 \le 2H-1} \frac{(-1)^j z^{2j+1}}{(2j+1)!} \\
 &\quad\quad\quad\quad \quad\quad\quad\quad
- 1_{[0,1]}(z/|\theta^\top t|) \frac{(-1)^{H-1/2} z^{2H}}{(2H)!} \Big) z^{-(2H+1)}dz \nu(dt) \\
 = I(H) |(\theta,\cdot)|^{2H}(\nu) + &\ii   \int_0^\infty \Big[ \int_{\R^d} (\theta^\top t)^{2H} (1_{[0,1]}(z) - 1_{[0,1/|\theta^\top t|]}(z)) \frac{(-1)^{H-1/2}z^{-1}}{(2H)!} \nu(dt) \Big] dz,
\end{split}
\end{align}
where we used the fact that  $\kappa |\theta^\top t|^{2H} = (\theta^\top t)^{2H}$ and the last relation is obtained with the same 
strategy as in \eqref{e:p:K_nu_polar-integer-case-2}.  More precisely, 
applying Fubini and using that $\int_{\R^d} (\theta^\top t)^{2H} \nu(dt) = 0$, allows us to eliminate the terms involving 
$\sin(z)$ and $z^{2j+1}$.  At the same time, we add the term $(\theta^\top t)^{2H} 1_{[0,1]}(z) z^{-1}$, which is a 
polynomial in $t$ also annihilated by $\nu$.  

Now, since the inner integrand in the r.h.s.\ of  \eqref{e:2H-odd} is integrable with respect to $z$, another application of Fubini 
shows that right-hand side of \eqref{e:p:K_nu_polar-integer-case} equals
\begin{align*}
& I(H) |(\theta,\cdot)|^{2H}(\nu) + \frac{\ii (-1)^{H-1/2} }{(2H)!} \int_{\R^d} (\theta^\top t)^{2H}  \Big[ \int_{1/|\theta^\top t|}^ 1 z^{-1}dz \Big] \nu(dt) \\
& = I(H) |(\theta,\cdot)|^{2H}(\nu) + \frac{\ii (-1)^{H+1/2} }{(2H)!} \int_{\R^d} (\theta^\top t)^{2H} \log(|\theta^\top t|)\nu(dt).
\end{align*}
This leads to the desired expression
\begin{align*}
{\cal K}(\nu) = I({H}) \int_{\mathbb S^{d-1}} |(\theta,\cdot)| ^{2H}(\nu)  \sigma(d\theta)
+ \ii\, \frac{(-1)^{{H}+1/2}}{(2H)!}  \int_{\mathbb S^{d-1}} \Big[ (\theta, \cdot) ^{2H} \log |( \theta,\cdot) | \Big] (\nu) \sigma(d\theta),
\end{align*}
completing the proof. 
 \end{proof2}

\section*{Acknowledgements.}
\addcontentsline{toc}{section}{Acknowledgements}
SS and TH were partially supported by the NSF Grant DMS-1916226 
{\em The Argo Data and Functional Spatial Processes}.   
We dedicate this paper to the memory of Mark Marvin Meerschaert (1955-2020). Mark was 
a great visionary, mentor, and friend to us. His work has inspired and guided us in this paper and elsewhere.
We are very grateful to Rafail Kartsioukas for his exceptionally careful reading of the manuscript and help with fixing a number of important mathematical errors.   We thank Yimin Xiao for an inspiring discussion and pointing out important references to the available 
literature. We are also very indebted to two anonymous referees for their exceptionally detailed and insightful comments, which helped us correct a number of errors and improve the manuscript. 



%

 
  


\addcontentsline{toc}{section}{References}
\bibliographystyle{abbrvnat}      

\begin{thebibliography}{67}
\providecommand{\natexlab}[1]{#1}
\providecommand{\url}[1]{\texttt{#1}}
\expandafter\ifx\csname urlstyle\endcsname\relax
  \providecommand{\doi}[1]{doi: #1}\else
  \providecommand{\doi}{doi: \begingroup \urlstyle{rm}\Url}\fi

\bibitem[Abry and Didier(2018)]{abry:didier:2018}
P.~Abry and G.~Didier.
\newblock Wavelet estimation for operator fractional {B}rownian motion.
\newblock \emph{Bernoulli}, 24\penalty0 (2):\penalty0 895--928, 2018.

\bibitem[Abry et~al.(2019)Abry, Wendt, Jaffard, and
  Didier]{abry:wendt:jaffard:didier:2019}
P.~Abry, H.~Wendt, S.~Jaffard, and G.~Didier.
\newblock Multivariate scale-free temporal dynamics: {F}rom spectral
  ({F}ourier) to fractal (wavelet) analysis.
\newblock \emph{Comptes Rendus Physique}, 20\penalty0 (5):\penalty0 489 -- 501,
  2019.

\bibitem[{Amblard} and {Coeurjolly}(2011)]{amblard:coeurjolly:2011}
P.~{Amblard} and J.~{Coeurjolly}.
\newblock Identification of the multivariate fractional {B}rownian motion.
\newblock \emph{IEEE Transactions on Signal Processing}, 59\penalty0
  (11):\penalty0 5152--5168, 2011.
\newblock \doi{10.1109/TSP.2011.2162835}.

\bibitem[Baek et~al.(2014)Baek, Didier, and Pipiras]{baek:didier:pipiras:2014}
C.~Baek, G.~Didier, and V.~Pipiras.
\newblock On integral representations of operator fractional {B}rownian fields.
\newblock \emph{Statist. Probab. Lett.}, 92:\penalty0 190--198, 2014.
\newblock ISSN 0167-7152.
\newblock \doi{10.1016/j.spl.2014.05.015}.
\newblock URL \url{https://doi.org/10.1016/j.spl.2014.05.015}.

\bibitem[Berschneider(2012)]{Berschneider:2012de}
G.~Berschneider.
\newblock Spectral representation of intrinsically stationary fields.
\newblock \emph{Stochastic Process. Appl.}, 122\penalty0 (12):\penalty0
  3837--3851, 2012.
\newblock ISSN 0304-4149.
\newblock \doi{10.1016/j.spa.2012.07.005}.
\newblock URL \url{https://doi.org/10.1016/j.spa.2012.07.005}.

\bibitem[Berschneider and Sasv\'{a}ri(2018)]{Berschneider2018}
G.~Berschneider and Z.~Sasv\'{a}ri.
\newblock Spectral theory of stationary random fields and their
  generalizations. {A} short historical survey.
\newblock In \emph{Indefinite Inner Product Spaces, {S}chur analysis, and
  Differential Equations}, volume 263 of \emph{Oper. Theory Adv. Appl.}, pages
  217--235. Birkh\"{a}user/Springer, Cham, 2018.

\bibitem[Bierm\'{e} and Lacaux(2020)]{bierme:lacaux:2020}
H.~Bierm\'{e} and C.~Lacaux.
\newblock Fast and exact synthesis of some operator scaling {G}aussian random
  fields.
\newblock \emph{Appl. Comput. Harmon. Anal.}, 48\penalty0 (1):\penalty0
  293--320, 2020.
\newblock ISSN 1063-5203.
\newblock \doi{10.1016/j.acha.2018.05.004}.
\newblock URL \url{https://doi.org/10.1016/j.acha.2018.05.004}.

\bibitem[Bierm\'{e} et~al.(2007)Bierm\'{e}, Meerschaert, and
  Scheffler]{bierme:meerschaert:scheffler:2007}
H.~Bierm\'{e}, M.~M. Meerschaert, and H.-P. Scheffler.
\newblock Operator scaling stable random fields.
\newblock \emph{Stochastic Process. Appl.}, 117\penalty0 (3):\penalty0
  312--332, 2007.
\newblock ISSN 0304-4149.
\newblock \doi{10.1016/j.spa.2006.07.004}.
\newblock URL \url{https://doi.org/10.1016/j.spa.2006.07.004}.

\bibitem[Billingsley(1999)]{billingsley:1999}
P.~Billingsley.
\newblock \emph{Convergence of Probability Measures}.
\newblock Wiley Series in Probability and Statistics: Probability and
  Statistics. John Wiley \& Sons, Inc., New York, second edition, 1999.
\newblock ISBN 0-471-19745-9.
\newblock \doi{10.1002/9780470316962}.
\newblock URL \url{https://doi.org/10.1002/9780470316962}.
\newblock A Wiley-Interscience Publication.

\bibitem[Bochner(1948)]{bochner1948vorlesungen}
S.~Bochner.
\newblock \emph{Vorlesungen {\"u}ber Fouriersche integrale}.
\newblock Chelsea Publishing Company, 1948.

\bibitem[Chil\`es and Delfiner(2012)]{chiles:delfner:2012}
J.-P. Chil\`es and P.~Delfiner.
\newblock \emph{Geostatistics}.
\newblock Wiley Series in Probability and Statistics. John Wiley \& Sons, Inc.,
  Hoboken, NJ, second edition, 2012.
\newblock ISBN 978-0-470-18315-1.
\newblock URL \url{https://doi.org/10.1002/9781118136188}.
\newblock Modeling spatial uncertainty.

\bibitem[Conway(2007)]{conway2019course}
J.~B. Conway.
\newblock \emph{A Course in Functional Analysis}.
\newblock Springer, 2007.

\bibitem[Cram\'{e}r(1942)]{cramer1942}
H.~Cram\'{e}r.
\newblock On harmonic analysis in certain functional spaces.
\newblock \emph{Ark. Mat. Astr. Fys.}, 28B\penalty0 (12):\penalty0 7, 1942.
\newblock ISSN 0004-2080.

\bibitem[Davydov and Paulauskas(2017)]{davydov:paulauskas:2017}
Y.~Davydov and V.~Paulauskas.
\newblock Lamperti type theorems for random fields, 2017.
\newblock arXiv:1705.00182.

\bibitem[Didier and Pipiras(2011)]{didier:pipiras:2011}
G.~Didier and V.~Pipiras.
\newblock Integral representations and properties of operator fractional
  {B}rownian motions.
\newblock \emph{Bernoulli}, 17\penalty0 (1):\penalty0 1--33, 2011.
\newblock ISSN 1350-7265.
\newblock \doi{10.3150/10-BEJ259}.
\newblock URL \url{https://doi.org/10.3150/10-BEJ259}.

\bibitem[Didier et~al.(2017)Didier, Meerschaert, and
  Pipiras]{didier:meerschaert:pipiras:2017}
G.~Didier, M.~M. Meerschaert, and V.~Pipiras.
\newblock Exponents of operator self-similar random fields.
\newblock \emph{J. Math. Anal. Appl.}, 448\penalty0 (2):\penalty0 1450--1466,
  2017.
\newblock ISSN 0022-247X.
\newblock \doi{10.1016/j.jmaa.2016.11.055}.
\newblock URL \url{https://doi.org/10.1016/j.jmaa.2016.11.055}.

\bibitem[Didier et~al.(2018)Didier, Meerschaert, and
  Pipiras]{didier:meerschaert:pipiras:2018}
G.~Didier, M.~M. Meerschaert, and V.~Pipiras.
\newblock Domain and range symmetries of operator fractional {B}rownian fields.
\newblock \emph{Stochastic Process. Appl.}, 128\penalty0 (1):\penalty0 39--78,
  2018.
\newblock ISSN 0304-4149.
\newblock \doi{10.1016/j.spa.2017.04.003}.
\newblock URL \url{https://doi.org/10.1016/j.spa.2017.04.003}.

\bibitem[Diestel and Uhl(1977)]{diestel1977vector}
J.~Diestel and J.~Uhl.
\newblock \emph{Vector Measures}.
\newblock Mathematical Surveys and Monographs. American Mathematical Society,
  1977.
\newblock ISBN 9780821873748.
\newblock URL \url{https://books.google.com/books?id=EQFjD90fXWAC}.

\bibitem[Dobrushin(1979)]{Dobrushin:1979vi}
R.~L. Dobrushin.
\newblock {G}aussian and their subordinated self-similar random generalized
  fields.
\newblock \emph{The Annals of Probability}, 7:\penalty0 1--28, 1979.

\bibitem[Dudley(2002)]{dudley:1989}
R.~M. Dudley.
\newblock \emph{Real Analysis and Probability}, volume~74 of \emph{Cambridge
  Studies in Advanced Mathematics}.
\newblock Cambridge University Press, Cambridge, 2002.
\newblock ISBN 0-521-00754-2.
\newblock \doi{10.1017/CBO9780511755347}.
\newblock URL \url{https://doi.org/10.1017/CBO9780511755347}.
\newblock Revised reprint of the 1989 original.

\bibitem[Dugundji(1966)]{Dugundji:1966wg}
J.~Dugundji.
\newblock \emph{Topology}.
\newblock Allyn and Bacon, Inc., Boston, Mass., 1966.

\bibitem[D\"{u}ker(2020)]{duker:2020}
M.-C. D\"{u}ker.
\newblock Limit theorems in the context of multivariate long-range dependence.
\newblock \emph{Stochastic Process. Appl.}, 130\penalty0 (9):\penalty0
  5394--5425, 2020.
\newblock ISSN 0304-4149.
\newblock \doi{10.1016/j.spa.2020.03.011}.
\newblock URL \url{https://doi.org/10.1016/j.spa.2020.03.011}.

\bibitem[Durand and Roueff(2020)]{durand2020spectral}
A.~Durand and F.~Roueff.
\newblock Spectral analysis of weakly stationary processes valued in a
  separable {H}ilbert space.
\newblock 2020.
\newblock arXiv: 1910.08491.

\bibitem[Falconer(1986)]{falconer:1986}
K.~J. Falconer.
\newblock \emph{The geometry of fractal sets}, volume~85 of \emph{Cambridge
  Tracts in Mathematics}.
\newblock Cambridge University Press, Cambridge, 1986.
\newblock ISBN 0-521-25694-1; 0-521-33705-4.

\bibitem[Falconer(2002)]{falconer:2002}
K.~J. Falconer.
\newblock Tangent fields and the local structure of random fields.
\newblock \emph{J. Theoret. Probab.}, 15\penalty0 (3):\penalty0 731--750, 2002.
\newblock ISSN 0894-9840.
\newblock \doi{10.1023/A:1016276016983}.
\newblock URL \url{https://doi.org/10.1023/A:1016276016983}.

\bibitem[Falconer(2003)]{falconer:2003}
K.~J. Falconer.
\newblock The local structure of random processes.
\newblock \emph{J. London Math. Soc. (2)}, 67\penalty0 (3):\penalty0 657--672,
  2003.
\newblock ISSN 0024-6107.
\newblock \doi{10.1112/S0024610703004186}.
\newblock URL \url{https://doi.org/10.1112/S0024610703004186}.

\bibitem[Gel'fand and Vilenkin(1964)]{gelfand:vilenkin:1964d}
I.~M. Gel'fand and N.~Y. Vilenkin.
\newblock \emph{Generalized Functions. {V}ol. 4: {A}pplications of harmonic
  analysis}.
\newblock Translated by Amiel Feinstein. Academic Press, New York - London,
  1964, 1964.

\bibitem[Gnedenko(1943)]{gnedenko:1943}
B.~Gnedenko.
\newblock Sur la distribution limite du terme maximum d'une s\'erie
  al\'eatoire.
\newblock \emph{Ann.\ of Math.\ (2)}, 44:\penalty0 423--453, 1943.
\newblock ISSN 0003-486X.

\bibitem[Gohberg et~al.(2000)Gohberg, Goldberg, and
  Krupnik]{Traces_and_Determinants_book}
I.~Gohberg, S.~Goldberg, and N.~Krupnik.
\newblock \emph{Traces and Determinants of Linear Operators}, volume 116 of
  \emph{Operator Theory: Advances and Applications}.
\newblock Birkh\"{a}user Verlag, Basel, 2000.
\newblock ISBN 3-7643-6177-8.
\newblock \doi{10.1007/978-3-0348-8401-3}.
\newblock URL \url{https://doi.org/10.1007/978-3-0348-8401-3}.

\bibitem[Holmes(1979)]{holmes1979mathematical}
R.~B. Holmes.
\newblock Mathematical foundations of signal processing.
\newblock \emph{SIAM Review}, 21\penalty0 (3):\penalty0 361--388, 1979.

\bibitem[Horv\'{a}th and Kokoszka(2012)]{horvath:kokoszka:2012}
L.~Horv\'{a}th and P.~Kokoszka.
\newblock \emph{Inference for functional data with applications}.
\newblock Springer Series in Statistics. Springer, New York, 2012.
\newblock ISBN 978-1-4614-3654-6.
\newblock \doi{10.1007/978-1-4614-3655-3}.
\newblock URL \url{https://doi.org/10.1007/978-1-4614-3655-3}.

\bibitem[Hsing and Eubank(2015)]{Hsing2015}
T.~Hsing and R.~Eubank.
\newblock \emph{Theoretical Foundations of Functional Data Analysis, with an
  Introduction to Linear Operators}.
\newblock Wiley Series in Probability and Statistics. John Wiley \& Sons, Ltd.,
  Chichester, 2015.
\newblock ISBN 978-0-470-01691-6.
\newblock \doi{10.1002/9781118762547}.
\newblock URL \url{https://doi.org/10.1002/9781118762547}.

\bibitem[Hult and Lindskog(2006)]{hult:lindskog:2006}
H.~Hult and F.~Lindskog.
\newblock Regular variation for measures on metric spaces.
\newblock \emph{Publ. Inst. Math. (Beograd) (N.S.)}, 80(94):\penalty0 121--140,
  2006.
\newblock ISSN 0350-1302.
\newblock \doi{10.2298/PIM0694121H}.
\newblock URL \url{http://dx.doi.org/10.2298/PIM0694121H}.

\bibitem[Jurek(1984)]{jurek:1984}
Z.~J. Jurek.
\newblock Polar coordinates in {B}anach spaces.
\newblock \emph{Bull. Polish Acad. Sci. Math.}, 32\penalty0 (1-2):\penalty0
  61--66, 1984.
\newblock ISSN 0239-7269.

\bibitem[Kallenberg(1997)]{kallenberg:1997}
O.~Kallenberg.
\newblock \emph{Foundations of Modern Probability}.
\newblock Probability and its Applications (New York). Springer-Verlag, 1997.

\bibitem[Kallianpur(1970)]{kallianpur:1970}
G.~Kallianpur.
\newblock Zero-one laws for {G}aussian processes.
\newblock \emph{Transactions of the American Mathematical Society},
  149:\penalty0 199--211, 1970.

\bibitem[Kallianpur and Mandrekar(1971)]{kallianpur1971spectral}
G.~Kallianpur and V.~Mandrekar.
\newblock Spectral theory of stationary {H}-valued processes.
\newblock \emph{Journal of Multivariate Analysis}, 1\penalty0 (1):\penalty0
  1--16, 1971.

\bibitem[Kechagias and Pipiras(2015)]{kechagias:pipiras:2015}
S.~Kechagias and V.~Pipiras.
\newblock Definitions and representations of multivariate long-range dependent
  time series.
\newblock \emph{J. Time Series Anal.}, 36\penalty0 (1):\penalty0 1--25, 2015.
\newblock ISSN 0143-9782.
\newblock \doi{10.1111/jtsa.12086}.
\newblock URL \url{https://doi.org/10.1111/jtsa.12086}.

\bibitem[Khintchine(1934)]{khintchine1934korrelationstheorie}
A.~Khintchine.
\newblock Korrelationstheorie der station{\"a}ren stochastischen prozesse.
\newblock \emph{Mathematische Annalen}, 109\penalty0 (1):\penalty0 604--615,
  1934.

\bibitem[Kuczma(2009)]{Kuczma:2009cc}
M.~Kuczma.
\newblock \emph{An introduction to the Theory of Functional Equations and
  Inequalities}.
\newblock Birkh\"{a}user Verlag, Basel, second edition, 2009.
\newblock ISBN 978-3-7643-8748-8.
\newblock \doi{10.1007/978-3-7643-8749-5}.
\newblock URL \url{https://doi.org/10.1007/978-3-7643-8749-5}.
\newblock Cauchy's equation and Jensen's inequality, Edited and with a preface
  by Attila Gil\'{a}nyi.

\bibitem[Kuratowski(1977)]{Kuratowski:1977ty}
K.~Kuratowski.
\newblock \emph{Introduction to Set Theory and Topology}.
\newblock PWN---Polish Scientific Publishers, Warsaw; Pergamon Press,
  Oxford-New York-Toronto, Ont., 1977.
\newblock Containing a supplement, ``Elements of algebraic topology'' by
  Ryszard Engelking, Translated from the Polish by Leo F. Boro\'{n},
  International Series of Monographs in Pure and Applied Mathematics, 101.

\bibitem[Kwapie\'n and Woyczy\'nski(1992)]{kwapien:woyczynski:1992}
S.~Kwapie\'n and N.~A. Woyczy\'nski.
\newblock \emph{Random Series and Stochastic Integrals: Single and Multiple}.
\newblock Birkh{\"a}user, Boston, 1992.

\bibitem[Lamperti(1962)]{lamperti:1962}
J.~Lamperti.
\newblock Semi-stable stochastic processes.
\newblock \emph{Trans. Amer. Math. Soc.}, 104:\penalty0 62--78, 1962.
\newblock ISSN 0002-9947.
\newblock \doi{10.2307/1993933}.
\newblock URL \url{https://doi.org/10.2307/1993933}.

\bibitem[Ledoux and Talagrand(1991)]{ledoux:talagrand:1991}
M.~Ledoux and M.~Talagrand.
\newblock \emph{Probability in Banach Spaces: Isoperimetry and Processes}.
\newblock Springer-Verlag, New York, 1991.

\bibitem[Li and Xiao(2011)]{yuqiang:xiao:2011}
Y.~Li and Y.~Xiao.
\newblock Multivariate operator-self-similar random fields.
\newblock \emph{Stochastic Process. Appl.}, 121\penalty0 (6):\penalty0
  1178--1200, 2011.
\newblock ISSN 0304-4149.
\newblock \doi{10.1016/j.spa.2011.02.005}.
\newblock URL \url{https://doi.org/10.1016/j.spa.2011.02.005}.

\bibitem[Loeb and Talvila(2004)]{loeb:talvila:2003}
P.~A. Loeb and E.~Talvila.
\newblock Lusin's theorem and {B}ochner integration.
\newblock \emph{Sci. Math. Jpn.}, 60\penalty0 (1):\penalty0 113--120, 2004.
\newblock ISSN 1346-0862.

\bibitem[Mason and Xiao(2001)]{mason:xiao2001}
J.~D. Mason and Y.~Xiao.
\newblock Sample path properties of operator-self-similar {G}aussian random
  fields.
\newblock \emph{Teor. Veroyatnost. i Primenen.}, 46\penalty0 (1):\penalty0
  94--116, 2001.
\newblock ISSN 0040-361X.
\newblock \doi{10.1137/S0040585X97978749}.
\newblock URL \url{https://doi.org/10.1137/S0040585X97978749}.

\bibitem[Matheron(1973)]{matheron:1973}
G.~Matheron.
\newblock The intrinsic random functions and their applications.
\newblock \emph{Advances in Appl. Probability}, 5:\penalty0 439--468, 1973.
\newblock ISSN 0001-8678.
\newblock \doi{10.2307/1425829}.
\newblock URL \url{https://doi.org/10.2307/1425829}.

\bibitem[Meerschaert(1984)]{meerschaert:1984}
M.~M. Meerschaert.
\newblock \emph{Multivariate Domains of Attraction and Regular Variation}.
\newblock PhD thesis, University of Michigan, Ann Arbor, 1984.

\bibitem[Meerschaert and Scheffler(2001)]{meerschaert:scheffler:2001book}
M.~M. Meerschaert and H.-P. Scheffler.
\newblock \emph{Limit distributions for sums of independent random vectors}.
\newblock Wiley Series in Probability and Statistics: Probability and
  Statistics. John Wiley \& Sons Inc., New York, 2001.
\newblock ISBN 0-471-35629-8.
\newblock Heavy tails in theory and practice.

\bibitem[Melrose(2013)]{melrose:2013_notes}
R.~Melrose.
\newblock \emph{Functional Analysis}.
\newblock MIT, \url{http://math.mit.edu/~rbm/18.102-S13/FunctAnal.pdf}, 2013.
\newblock Lecture Notes for 18.102.

\bibitem[Munkres(2000)]{Munkres:2000wg}
J.~R. Munkres.
\newblock \emph{Topology}.
\newblock Prentice Hall, Inc., Upper Saddle River, NJ, 2000.
\newblock ISBN 0-13-181629-2.
\newblock Second edition of [ MR0464128].

\bibitem[Murphy(1997)]{murphy:1997}
G.~J. Murphy.
\newblock Positive definite kernels and {H}ilbert {$C^\ast$}-modules.
\newblock \emph{Proc. Edinburgh Math. Soc. (2)}, 40\penalty0 (2):\penalty0
  367--374, 1997.
\newblock ISSN 0013-0915.
\newblock \doi{10.1017/S0013091500023804}.
\newblock URL \url{https://doi.org/10.1017/S0013091500023804}.

\bibitem[Neeb(1998)]{Neeb1998}
K.-H. Neeb.
\newblock Operator-valued positive definite kernels on tubes.
\newblock \emph{Monatsh. Math.}, 126\penalty0 (2):\penalty0 125--160, 1998.
\newblock ISSN 0026-9255.
\newblock \doi{10.1007/BF01473583}.
\newblock URL \url{https://doi.org/10.1007/BF01473583}.

\bibitem[Pellonp\"{a}\"{a} and Ylinen(2011)]{pellonpaa:yilinen:2011}
J.-P. Pellonp\"{a}\"{a} and K.~Ylinen.
\newblock Modules, completely positive maps, and a generalized {KSGNS}
  construction.
\newblock \emph{Positivity}, 15\penalty0 (3):\penalty0 509--525, 2011.
\newblock ISSN 1385-1292.
\newblock \doi{10.1007/s11117-010-0104-6}.
\newblock URL \url{https://doi.org/10.1007/s11117-010-0104-6}.

\bibitem[Perrin et~al.(2001)Perrin, Harba, Berzin-Joseph, Iribarren, and
  Bonami]{perrin2001nth}
E.~Perrin, R.~Harba, C.~Berzin-Joseph, I.~Iribarren, and A.~Bonami.
\newblock nth-order fractional {B}rownian motion and fractional {G}aussian
  noises.
\newblock \emph{IEEE Transactions on Signal Processing}, 49\penalty0
  (5):\penalty0 1049--1059, 2001.

\bibitem[Preiss(1987)]{preiss:1987}
D.~Preiss.
\newblock Geometry of measures in {${\bf R}^n$}: distribution, rectifiability,
  and densities.
\newblock \emph{Ann. of Math. (2)}, 125\penalty0 (3):\penalty0 537--643, 1987.
\newblock ISSN 0003-486X.
\newblock \doi{10.2307/1971410}.
\newblock URL \url{https://doi.org/10.2307/1971410}.

\bibitem[Ramsay and Silverman(2005)]{ramsay:silverman:2005}
J.~O. Ramsay and B.~W. Silverman.
\newblock \emph{Functional data analysis}.
\newblock Springer Series in Statistics. Springer, New York, second edition,
  2005.
\newblock ISBN 978-0387-40080-8; 0-387-40080-X.

\bibitem[Resnick(1999)]{resnick:1999book}
S.~I. Resnick.
\newblock \emph{A Probability Path}.
\newblock Birkh\"auser Boston Inc., Boston, MA, 1999.
\newblock ISBN 0-8176-4055-X.

\bibitem[Rolewicz(1985)]{rolewicz1985}
S.~Rolewicz.
\newblock \emph{Metric Linear Spaces}, volume~20 of \emph{Mathematics and its
  Applications (East European Series)}.
\newblock D. Reidel Publishing Co., Dordrecht; PWN---Polish Scientific
  Publishers, Warsaw, second edition, 1985.
\newblock ISBN 90-277-1480-0.

\bibitem[Sasv\'{a}ri(2009)]{Sasvari:2009}
Z.~Sasv\'{a}ri.
\newblock Correlation functions of intrinsically stationary random fields.
\newblock In \emph{Modern Analysis and Applications. {T}he {M}ark {K}rein
  {C}entenary {C}onference. {V}ol. 1: {O}perator theory and related topics},
  volume 190 of \emph{Oper. Theory Adv. Appl.}, pages 451--470. Birkh\"{a}user
  Verlag, Basel, 2009.
\newblock \doi{10.1007/978-3-7643-9919-1_28}.
\newblock URL \url{https://doi.org/10.1007/978-3-7643-9919-1_28}.

\bibitem[Sasv\'{a}ri(2013)]{Sasvari:2013wn}
Z.~Sasv\'{a}ri.
\newblock \emph{Multivariate Characteristic and Correlation Functions},
  volume~50 of \emph{De Gruyter Studies in Mathematics}.
\newblock Walter de Gruyter \& Co., Berlin, 2013.
\newblock ISBN 978-3-11-022398-9; 978-3-11-022399-6.
\newblock \doi{10.1515/9783110223996}.
\newblock URL \url{https://doi.org/10.1515/9783110223996}.

\bibitem[Shen(2019)]{shen:phd_thesis:2019}
J.~Shen.
\newblock \emph{Local {S}tructure of {R}andom {F}ields - {P}roperties and
  {I}nference}.
\newblock ProQuest LLC, Ann Arbor, MI, 2019.
\newblock ISBN 978-1687-99522-3.
\newblock URL
  \url{http://gateway.proquest.com/openurl?url_ver=Z39.88-2004&rft_val_fmt=info:ofi/fmt:kev:mtx:dissertation&res_dat=xri:pqm&rft_dat=xri:pqdiss:27614372}.
\newblock Thesis (Ph.D.)--University of Michigan.

\bibitem[Simon(2015)]{simon2015comprehensive}
B.~Simon.
\newblock \emph{A comprehensive Course in Analysis}.
\newblock American Mathematical Society Providence, Rhode Island, 2015.

\bibitem[Stein(1999)]{MR1697409}
M.~L. Stein.
\newblock \emph{Interpolation of Spatial Data}.
\newblock Springer Series in Statistics. Springer-Verlag, New York, 1999.
\newblock ISBN 0-387-98629-4.
\newblock \doi{10.1007/978-1-4612-1494-6}.
\newblock URL \url{https://doi.org/10.1007/978-1-4612-1494-6}.
\newblock Some theory for Kriging.

\bibitem[van Delft and Eichler(2020)]{van2020note}
A.~van Delft and M.~Eichler.
\newblock A note on {H}erglotz’s theorem for time series on function spaces.
\newblock \emph{Stochastic Process. Appl.}, 130\penalty0 (6):\penalty0
  3687--3710, 2020.

\bibitem[Yosida(2012)]{yosida2012functional}
K.~Yosida.
\newblock \emph{Functional Analysis}.
\newblock Classics in Mathematics. Springer Berlin Heidelberg, 2012.
\newblock ISBN 9783642618598.
\newblock URL \url{https://books.google.com/books?id=yj4mBQAAQBAJ}.

\end{thebibliography}


\newpage

\hypertarget{supp}{\centerline{\large \sc SUPPLEMENT}}
\addcontentsline{toc}{section}{S\ \ \ Supplement}

\renewcommand{\theequation}{S.\arabic{section}.\arabic{equation}}
\renewcommand{\thesection}{S.\arabic{section}}

\setcounter{section}{0}
	\vspace{1cm}
	The rest of the appendix contains the supplement to the main paper.  In order to differentiate sections and results 
	in the supplement from  those in the main paper, we add a character ``S" in front of sections, lemmas, etc., in the supplement.  
	
	
\section{Notation and preliminaries on Hilbert spaces.}
	
	The purpose of this section is to fix some notation and collect basic facts on Hilbert spaces used in the main paper and 
	the proofs. The details can be found in most standard functional analysis texts such as \cite{conway2019course}.
		
	Fix a separable Hilbert space $\V$ over the field of complex numbers $\mathbb C$ 
	with inner product $\langle f,g\rangle$ and norm $\|f\| := \sqrt{\langle f,f\rangle}$,  for $f,g\in \V$.  
	We will focus on bounded linear operators ${\cal A}:\V \to \V$, namely linear operator with a bounded 
	operator norm: 
	$$
	\|{\cal A}\|_{\rm{op}} = \sup_{f:\|f\|=1} \|{\cal A} f\|.
	$$ 
	Observe that for any two bounded operators ${\cal A}$ and ${\cal B}$, we have
	$
	\| {\cal A} {\cal B}\|_{\rm op} \le \| {\cal A}\|_{\rm op}\| {\cal B}\|_{\rm op}.
	$
	
	The adjoint of ${\cal A}$ is denoted as ${\cal A}^*$, and we have
	$$
	 \langle {\cal A} f,g\rangle = \langle f,{\cal A}^* g\rangle = \overline{\langle {\cal A}^* g, f\rangle}, \quad
	\mbox{for all $f,g\in \V$}.
	$$
	${\cal A}$ is self adjoint if ${\cal A} = {\cal A}^*$, which holds if and only if 
	$ \langle {\cal A} f,f\rangle\in\bbR$ for all 
	$f\in\V$.

	For self-adjoint operators ${\cal A}$ and ${\cal B}$, 
	write ${\cal A} \le {\cal B}$ or ${\cal B} \ge {\cal A}$ whenever
	$$
	\langle f, {\cal A}f \rangle \le \langle f, {\cal B}f\rangle, \quad \mbox{for all } f\in \V.
	$$
	In particular, ${\cal A}$ is positive definite (or just positive) if ${\cal A} \ge 0$.
		
	The outer  (or tensor) product of two elements $f,g\in \V$, denoted by
	$f\otimes g$, is the operator that maps $h$ to $\langle h,g\rangle f$ on $\V$.
	Clearly, $(f\otimes g)^* = g \otimes f$.
	For a compact, self-adjoint operator ${\cal A}$ with spectral decomposition
	$$
	{\cal A} = \sum_{j=1}^\infty \lambda_j e_j \otimes e_j,
	$$
	where the $\lambda_j\in\bbR, e_j\in\V$ are the eigenvalues and eigenfunctions of ${\cal A}$, 
	let 
	\begin{align} \label{e:op_pm}
	{\cal A}^\pm =\sum_{j=1}^\infty \lambda^\pm_j e_j\otimes e_j,
	\end{align}
	where $a^\pm = \max\{0,\pm a\},\ a\in \R$, and
	$$
	|{\cal A}| := A^+ + A^-.
	$$
	
	The trace of an operator ${\cal A}:\V\to \V$ is 
	$$
	{\rm trace}(A) = \sum_{j=1}^\infty \langle A e_j, e_j\rangle
	$$
	if it is well defined, where $\{e_j\}$ is any CONS of $\V$. 
	
	An operator ${\cal A}:\V\to \V$ is trace class (or nuclear) if the self-adjoint positive operator 
	$({\cal A}^* {\cal A})^{1/2} = $ the square-root operator of ${\cal A}^* {\cal A}$ 
	has finite trace, in which case the trace norm of ${\cal A}$ is defined as
	$$
	\|{\cal A}\|_{\rm tr} := {\rm trace}(\mathcal{B}) = \sum_{j=1}^\infty \lambda_j,
	$$
	where $\{\lambda_j\}$ is the set of eigenvalues of $({\cal A}^* {\cal A})^{1/2}$ or singular values of ${\cal A}$ 
	(counting multiplicities).  The space of trace-class operators equipped with the trace norm will be 
	denoted by $\bbT$, which is a Banach space. 
	The collection of positive definite trace-class operators is denoted by
	$\mathbb{T}_+$.

	\section{Cross covariance operators.}
	
	For a fixed probability space
	 $(\Omega,{\cal F},\P)$, the class of $\V$-valued random elements will be denoted by ${\cal L}^0(\V)$.  
	 We shall also work with 
	 the class ${\cal L}^2(\V)$ of all $X\in {\cal L}^0(\V)$ such that $\E \|X\|^2 <\infty$.
	  The space ${\cal L}^2(\V)$ becomes a Hilbert space with respect to the inner product
	  $ \E [ \langle X,Y\rangle ]$.

	The expectation of $\V$-valued random elements can be defined in the sense of Bochner.  
	See, for example, \cite{diestel1977vector} or \cite{yosida2012functional}
	for details, or Section 2.5 in \cite{Hsing2015} for a brief treatment on Bochner's integral.
	
	For two zero-mean
	random elements $X,Y\in {\cal L}^2(\V)$, it is also natural to consider the cross covariance operator
	$$
	{\mathcal C}(X,Y):= \E [ X \otimes Y],
	$$
	where $X\otimes Y$ is a random element taking values in the space of trace-class operators $\bbT$.  
	The latter expectation can also be defined in the sense of Bochner in the Banach space $(\bbT,\|\cdot\|_{\rm tr})$.
	 For completeness, we briefly review next the Bochner integral in this setting.
	
	\subsection{The Bochner integral in $(\bbT,\|\cdot\|_{\rm tr})$.} \label{supp:sec:Bochner}
	
	The set of trace class operators $(\bbT,\|\cdot\|_{\rm tr})$ is a separable Banach space.
	We will need to consider integrals of measurable functions $f: (\Omega,{\cal F},\mu) \to (\bbT, {\cal B}(\bbT)),$
	for a finite measure space $(\Omega,{\cal F},\mu)$, where ${\cal B}(\bbT)$ is the Borel $\sigma$-field on $\bbT$.
	
	Such integrals can be defined in a standard way in the sense of Bochner as discussed in the references mentioned 
	above. Here, we only give a simple criterion needed for our purposes, akin to Theorem 2.6.5 in \cite{Hsing2015}.
        
	\begin{theorem}\label{thm:Bochner-integrability} Let  $f:\Omega\to \bbT$ be a measurable function. 
	 If $\int_\Omega \|f\|_{\rm tr}d \mu <\infty$, then $f$ is Bochner integrable
	 and
	 $$
	 \left\| \int_\Omega f d\mu \right \|_{\rm tr} \le \int_\Omega \|f\|_{\rm tr}d \mu.
	 $$ 
	 \end{theorem}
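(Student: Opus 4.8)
The statement to prove is Theorem~\ref{thm:Bochner-integrability}, the standard dominated-integrability criterion for the Bochner integral in the separable Banach space $(\bbT,\|\cdot\|_{\rm tr})$. The plan is to follow the usual two-stage construction of the Bochner integral: first for simple ($\bbT$-valued) functions, then by an approximation-and-completeness argument for general measurable $f$ satisfying $\int_\Omega \|f\|_{\rm tr}\,d\mu<\infty$. The key structural fact that makes everything work is that $(\bbT,\|\cdot\|_{\rm tr})$ is a \emph{separable} Banach space, hence \emph{complete}, which is recorded earlier in the excerpt (the space of trace-class operators equipped with the trace norm is a Banach space).

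\textbf{Key steps.}
First I would invoke the Pettis measurability theorem: since $(\bbT,\|\cdot\|_{\rm tr})$ is separable, the measurable function $f$ is strongly measurable, i.e.\ there exists a sequence of countably-valued (hence, after truncation, simple) measurable functions $f_n:\Omega\to\bbT$ with $\|f_n(\omega)-f(\omega)\|_{\rm tr}\to 0$ for $\mu$-a.e.\ $\omega$, and one may arrange $\|f_n(\omega)\|_{\rm tr}\le \|f(\omega)\|_{\rm tr}+1$ or, more sharply, $\|f_n\|_{\rm tr}\le 2\|f\|_{\rm tr}$. For a simple function $g=\sum_{i=1}^k A_i\,\mathbbm 1_{E_i}$ with $A_i\in\bbT$ and $\mu(E_i)<\infty$, one defines $\int_\Omega g\,d\mu:=\sum_{i=1}^k A_i\,\mu(E_i)\in\bbT$, and the triangle inequality for $\|\cdot\|_{\rm tr}$ gives immediately
\begin{align*}
\Big\|\int_\Omega g\,d\mu\Big\|_{\rm tr}\le \sum_{i=1}^k \|A_i\|_{\rm tr}\,\mu(E_i)=\int_\Omega \|g\|_{\rm tr}\,d\mu.
\end{align*}
Next I would check that $\{\int_\Omega f_n\,d\mu\}$ is Cauchy in $(\bbT,\|\cdot\|_{\rm tr})$: applying the simple-function bound to $f_n-f_m$ and then dominated convergence (the scalar integrands $\|f_n-f_m\|_{\rm tr}$ are dominated by $4\|f\|_{\rm tr}\in L^1(\mu)$ and tend to $0$ a.e.) yields
\begin{align*}
\Big\|\int_\Omega f_n\,d\mu-\int_\Omega f_m\,d\mu\Big\|_{\rm tr}\le \int_\Omega \|f_n-f_m\|_{\rm tr}\,d\mu\longrightarrow 0.
\end{align*}
By completeness of $\bbT$ the limit $\int_\Omega f\,d\mu:=\lim_n \int_\Omega f_n\,d\mu$ exists; a further standard check shows it is independent of the approximating sequence. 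Finally, passing to the limit in the simple-function inequality $\|\int f_n\,d\mu\|_{\rm tr}\le\int\|f_n\|_{\rm tr}\,d\mu$, using continuity of the norm on the left and the scalar dominated convergence theorem on the right, delivers the claimed bound $\|\int_\Omega f\,d\mu\|_{\rm tr}\le\int_\Omega\|f\|_{\rm tr}\,d\mu$.

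\textbf{Main obstacle.}
The genuinely nontrivial input is the strong measurability / approximation step, i.e.\ producing simple functions converging a.e.\ in trace norm with a uniform integrable majorant; this rests on separability of $\bbT$ and is where one must be a little careful rather than merely computational. Everything downstream (the simple-function norm bound, the Cauchy estimate, and the passage to the limit) is routine once the scalar dominated convergence theorem is applied to the real-valued integrands $\|f_n\|_{\rm tr}$ and $\|f_n-f_m\|_{\rm tr}$. Since this is a textbook fact (cf.\ Theorem~2.6.5 in \cite{Hsing2015}), I would present the argument concisely, emphasizing the role of completeness and separability of $(\bbT,\|\cdot\|_{\rm tr})$ and citing the standard references \cite{diestel1977vector,yosida2012functional} for the general Bochner construction.
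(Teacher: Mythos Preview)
Your argument is correct and is exactly the textbook construction of the Bochner integral in a separable Banach space. The paper, however, takes a more concrete route that exploits the operator structure of $\bbT$: instead of invoking Pettis measurability to produce abstract simple functions, it approximates $f$ by the finite-rank truncations $f_n(t):=\Pi_n f(t)\Pi_n$, where $\Pi_n=\sum_{j=1}^n e_j\otimes e_j$ is the orthogonal projection onto the span of the first $n$ elements of a fixed CONS. The two key estimates $\|{\cal A}-\Pi_n{\cal A}\Pi_n\|_{\rm tr}\le 2\|{\cal A}\|_{\rm tr}$ and $\|{\cal A}-\Pi_n{\cal A}\Pi_n\|_{\rm tr}\to 0$ (the latter from a trace-class approximation result in \cite{Traces_and_Determinants_book}) then allow the scalar DCT to yield $\int_\Omega\|f-f_n\|_{\rm tr}\,d\mu\to 0$, and the paper concludes by citing the Bochner-integrability criterion of Theorem~2.6.4 in \cite{Hsing2015} rather than rebuilding the integral from scratch. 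Your approach has the advantage of being self-contained and applicable to any separable Banach space; the paper's approach avoids the Pettis machinery by using an explicit, structure-aware approximating sequence, at the cost of citing an external criterion for the final step.
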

	 \begin{proof} 
	  Fix a CONS $\{e_j,\ j\in\mathbb N\}$ of $\V$, let $\Pi_n := \sum_{j=1}^n e_j\otimes e_j$ 
	  be the projection operator onto ${\rm span}\{e_j,\ j=1,\dots,n\}$ and define 
	   ${\cal A}_n:=\Pi_n{\cal A} \Pi_n$.  
	  It is easy to see that 
	  \begin{equation}\label{e:thm:Bochenr-integrability-0}
	  \| {\cal A} - {\cal A}_n\|_{\rm tr} \le 2 \|{\cal A}\|_{\rm tr} \quad 
	  \mbox{ and }\quad \|{\cal A} - {\cal A}_n\|_{\rm tr} \to 0,\ \mbox{ as }n\to\infty.
	  \end{equation}
	  Indeed, Proposition IV.2.3 on page 51 in \cite{Traces_and_Determinants_book} implies that 
	  $$\| {\cal A}_n\|_{\rm tr} = \| \Pi_n {\cal A} \Pi_n \|_{\rm tr} \le \| \Pi_n\|_{\rm op}^2 \|{\cal A}\|_{\rm tr} = \|{\cal A}\|_{\rm tr}$$ 
	  and
	  hence the inequality in \eqref{e:thm:Bochenr-integrability-0} follows from the triangle inequality.  The convergence in
	   \eqref{e:thm:Bochenr-integrability-0} is a consequence of Theorem IV.5.5 on page 63 in  \cite{Traces_and_Determinants_book}.
	  
	  In view of Theorem 2.6.4 in \cite{Hsing2015}, since $\int_{\Omega} \| f\|_{\rm tr} d\mu <\infty$, to prove the Bochner integrability of
	  $f$, it suffices to show that 
	  \begin{equation}\label{e:thm:Bochenr-integrability-1}
	   \int_{\Omega} \| f- f_n\|_{\rm tr} d\mu \to 0,\quad \mbox{ as }n\to\infty,
	  \end{equation}
	  where $f_n(t) := \Pi_n f(t) \Pi_n$.  This, however, follows from \eqref{e:thm:Bochenr-integrability-0} and the 
	  Lebesgue dominated convergence theorem.
	  \end{proof}

	\begin{remark} It is well-known that if ${\cal O}:\bbT \to \mathbb F$ is a bounded linear operator into another Banach space 
	$(\mathbb F,\|\cdot\|_{\mathbb F})$, and $f:\Omega\to \bbT$ is Bochner integrable, then ${\cal O}f$ is Bochner integrable and
        $$
          {\cal O}\int_{\R^d} fd\mu = \int_{\R^d} {\cal O}f d\mu.
        $$
        In particular, since ${\rm trace}:\bbT \to \bbR$ is a continuous linear functional, we obtain
        \begin{equation}\label{e:trace-commutes-with-integral}
        {\rm trace}{\Big(}  \int_{\R^d} f d\mu {\Big)} = \int_{\R^d} {\rm trace}(f) d\mu,
        \end{equation}
        for all Bochner integrable $f$.
       \end{remark}

	\subsection{Existence and continuity of cross covariance operators.} \label{supp:existence-cross-cov}
		
	For $X,Y\in {\cal L}^2(\V)$, using Theorem \ref{thm:Bochner-integrability}, we will show next that the 
	{\em cross covariance} operator is well-defined the sense of Bochner.

	\begin{lemma} \label{l:cross-cov} 	
	Let $X,Y\in {\cal L}^2(\V)$, then $X\otimes Y \in {\cal L}^1(\bbT)$. This implies that 	
	\begin{equation}\label{e:CXY}
	{\mathcal C}(X,Y):= \E [ X\otimes Y],
	\end{equation}
 is a well-defined element of $\bbT$.  We have moreover that 
	  \begin{equation}\label{e:l:cross-cov-bound}
	  \|{\mathcal C}(X,Y)\|_{\rm tr} \le \E \| X\otimes Y\|_{\rm tr} = \E \Big[ \|X\| \|Y\| \Big] \le (\E \|X\|^2 )^{1/2}  (\E \|Y\|^2 )^{1/2},
	\end{equation}
	and 
	 \begin{equation}\label{e:l:cross-cov-XX}
	\|{\mathcal C}(X,X)\|_{\rm tr} = \E [\|X\|^2].
         \end{equation}
\end{lemma}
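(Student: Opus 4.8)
The plan is to reduce the entire statement to two elementary pointwise identities for rank-one (outer-product) operators, to feed these into the Bochner-integrability criterion of Theorem~\ref{thm:Bochner-integrability}, and finally to invoke Cauchy--Schwarz together with the interchange of trace and Bochner integral. First I would record, for arbitrary $f,g\in\V$, that the operator $f\otimes g$ is rank one and satisfies
\[
\|f\otimes g\|_{\rm tr} = \|f\|\,\|g\| \qquad\text{and}\qquad {\rm trace}(f\otimes g) = \langle f,g\rangle .
\]
The trace-norm identity follows by computing $(f\otimes g)^*(f\otimes g) = \|f\|^2\,(g\otimes g)$ from $(f\otimes g)^* = g\otimes f$, taking the positive square root $\|f\|\,\|g\|^{-1}(g\otimes g)$ (trivial when $g=0$), and summing its single nonzero singular value; the trace identity is immediate from the definition of $\otimes$ and an orthonormal expansion via Parseval's relation.

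Next I would verify that $\omega\mapsto X(\omega)\otimes Y(\omega)$ is a genuine $\bbT$-valued random element. Since $X,Y$ are $\V$-valued random elements and the map $(x,y)\mapsto x\otimes y$ is continuous from $\V\times\V$ into $(\bbT,\|\cdot\|_{\rm tr})$ — as one sees from the additivity of $\otimes$ in each slot and the bound $\|x\otimes y - x'\otimes y'\|_{\rm tr}\le \|x-x'\|\,\|y\| + \|x'\|\,\|y-y'\|$ — the composition is Borel measurable into $\bbT$. The pointwise identity then gives $\|X\otimes Y\|_{\rm tr} = \|X\|\,\|Y\|$ almost surely, so by Cauchy--Schwarz in ${\cal L}^2$,
\[
\E\|X\otimes Y\|_{\rm tr} = \E\big[\|X\|\,\|Y\|\big] \le \big(\E\|X\|^2\big)^{1/2}\big(\E\|Y\|^2\big)^{1/2} < \infty,
\]
because $X,Y\in{\cal L}^2(\V)$. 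This is precisely the hypothesis of Theorem~\ref{thm:Bochner-integrability}, which yields $X\otimes Y\in{\cal L}^1(\bbT)$, the well-definedness of ${\mathcal C}(X,Y)=\E[X\otimes Y]\in\bbT$, and the bound $\|{\mathcal C}(X,Y)\|_{\rm tr}\le \E\|X\otimes Y\|_{\rm tr}$. Chaining these inequalities gives \eqref{e:l:cross-cov-bound}.

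For \eqref{e:l:cross-cov-XX} I would first observe that ${\mathcal C}(X,X)$ is positive: for each $h\in\V$, evaluating the bounded linear functional ${\cal A}\mapsto\langle h,{\cal A}h\rangle$ under the Bochner integral gives $\langle h,{\mathcal C}(X,X)h\rangle = \E\langle h,(X\otimes X)h\rangle = \E|\langle h,X\rangle|^2 \ge 0$. Since a positive trace-class operator satisfies $\|{\cal A}\|_{\rm tr}={\rm trace}({\cal A})$, and since trace commutes with the Bochner integral by \eqref{e:trace-commutes-with-integral}, I then obtain
\[
\|{\mathcal C}(X,X)\|_{\rm tr} = {\rm trace}\big(\E[X\otimes X]\big) = \E\big[{\rm trace}(X\otimes X)\big] = \E[\|X\|^2],
\]
using the pointwise identity ${\rm trace}(X\otimes X)=\langle X,X\rangle=\|X\|^2$. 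The only step requiring genuine care is the $\bbT$-valued measurability of $\omega\mapsto X(\omega)\otimes Y(\omega)$, which is what legitimizes regarding $X\otimes Y$ as an element of ${\cal L}^1(\bbT)$; once that is in place, everything else is a direct application of the rank-one identities and the previously established properties of the Bochner integral.
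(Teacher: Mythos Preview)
Your proof is correct and follows essentially the same route as the paper's: both compute $(f\otimes g)^*(f\otimes g)=\|f\|^2(g\otimes g)$ to obtain $\|X\otimes Y\|_{\rm tr}=\|X\|\,\|Y\|$, apply Cauchy--Schwarz and Theorem~\ref{thm:Bochner-integrability} for integrability and the bound, and then use positivity of ${\mathcal C}(X,X)$ together with \eqref{e:trace-commutes-with-integral} for the equality. The only difference is that you explicitly address the $\bbT$-valued measurability of $\omega\mapsto X(\omega)\otimes Y(\omega)$ via continuity of the outer product, a point the paper leaves implicit.
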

\begin{proof}  Note that $X\otimes Y \in \bbT$ and since  $(X\otimes Y)^* (X\otimes Y)  = \|X\|^2 (Y\otimes Y),$
we have that $\| X\otimes Y\|_{\rm tr}^2 = {\rm trace}( \|X\|^2 (Y\otimes Y) ) = \|X\|^2\|Y\|^2.$   
This shows that
$$
\| X\otimes Y\|_{\rm tr} = \|X\|\|Y\|,
$$ 
and hence by the Cauchy-Schwartz inequality,
$$
\E \| X\otimes Y\|_{\rm tr} \le  \E\| X\|\|Y\| \le  (\E \|X\|^2 )^{1/2}  (\E \|Y\|^2 )^{1/2}  <\infty
$$
Theorem \ref{thm:Bochner-integrability} then implies that $X\otimes Y$ is  Bochner integrable,
${\mathcal C}(X,Y)$ is well-defined, and \eqref{e:l:cross-cov-bound} holds.

Finally, note $X\otimes X$ and ${\mathcal C}(X,X)$ are both positive operators and hence
\begin{equation}\label{e:l:cross-cov-1} 
\|X\otimes X\|_{\rm tr} = {\rm trace}( X\otimes X)\  \mbox{ and }\ 
 \|{\mathcal C}(X,X)\|_{\rm tr} = {\rm trace}({\mathcal C}(X,X)).
\end{equation}
Since the trace is a bounded linear functional, we have 
$$ 
{\rm trace}(\E (X\otimes X) )=\E ({\rm trace}(X\otimes X)) = \E \|X\|^2,
$$
which in view of \eqref{e:l:cross-cov-1}  yields \eqref{e:l:cross-cov-XX}.
\end{proof}

For zero-mean random variables in ${\cal L}^2(\V)$, i.e., $\E[X]=\E[Y]=0$, the operator 
${\mathcal C}(X,Y)$ defined by \eqref{e:CXY} is referred to as
the cross-covariance operator of $X,Y$. 

Continuity of the covariance is a key element in Bochner's Theorem. 
The following result is the counterpart to the classical fact that for cross covariance functions, continuity at 
the diagonal implies continuity everywhere.

\begin{proposition}\label{supp:p:continuity-cross-cov}
Let $X=\{X(t),t\in\bbR^d\}$ be a $\V$-valued process. Then $X$ is ${\cal L}^2$-continuous, namely,
$\E \|X(t') - X(s')\|^2 \to 0$ as $s',t'\to t$,  if and only if, for every $t\in\R^d$,
\begin{equation}\label{e:Cx}
 \|{\mathcal C}_X(s',t') - {\mathcal C}_X(t,t)\|_{\rm tr}\to 0,\ \ \mbox{ as }(s',t') \to (t,t).
\end{equation}
In this case, we also have $\|{\mathcal C}_X(s',t') - {\mathcal C}_X(s,t)\|_{\rm tr}\to 0$, as $(s',t') \to (s,t)$.  
\end{proposition}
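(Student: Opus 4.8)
The plan is to prove the equivalence in Proposition~\ref{supp:p:continuity-cross-cov} by first establishing a key polarization-type bound that controls the trace-norm difference of two cross-covariance operators by the $L^2$-increments of the underlying random elements. Concretely, for $\V$-valued random elements $U, U', W, W' \in \mathcal{L}^2(\V)$, I would first show the estimate
\begin{align}\label{e:plan-bound}
\| \mathcal{C}(U,W) - \mathcal{C}(U',W')\|_{\rm tr} \le \big(\E\|U-U'\|^2\big)^{1/2} \big(\E\|W\|^2\big)^{1/2} + \big(\E\|U'\|^2\big)^{1/2} \big(\E\|W-W'\|^2\big)^{1/2}.
\end{align}
This follows from the bilinearity of the outer product, writing $U\otimes W - U'\otimes W' = (U-U')\otimes W + U'\otimes (W-W')$, taking expectations, and applying the trace-norm bound \eqref{e:l:cross-cov-bound} from Lemma~\ref{l:cross-cov} to each term.

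Granting \eqref{e:plan-bound}, the ``only if'' direction is immediate: if $X$ is $\mathcal{L}^2$-continuous, then applying \eqref{e:plan-bound} with $U=X(s'), U'=X(t), W=X(t'), W'=X(t)$ gives
\begin{align*}
\| \mathcal{C}_X(s',t') - \mathcal{C}_X(t,t)\|_{\rm tr} \le \big(\E\|X(s')-X(t)\|^2\big)^{1/2} \big(\E\|X(t')\|^2\big)^{1/2} + \big(\E\|X(t)\|^2\big)^{1/2}\big(\E\|X(t')-X(t)\|^2\big)^{1/2},
\end{align*}
and both increments vanish as $(s',t')\to(t,t)$ by $\mathcal{L}^2$-continuity, while $\E\|X(t')\|^2$ stays bounded (again by $\mathcal{L}^2$-continuity, $\E\|X(t')\|^2 \to \E\|X(t)\|^2$). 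This yields \eqref{e:Cx}. The more general joint-continuity statement $\|\mathcal{C}_X(s',t') - \mathcal{C}_X(s,t)\|_{\rm tr}\to 0$ as $(s',t')\to(s,t)$ is handled identically by taking $U'=X(s), W'=X(t)$ in \eqref{e:plan-bound}.

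For the ``if'' direction, I would expand the $L^2$-increment in terms of cross-covariances. Using \eqref{e:l:cross-cov-XX}, write
\begin{align*}
\E\|X(t')-X(s')\|^2 &= \|\mathcal{C}_X(t',t') - \mathcal{C}_X(t',s') - \mathcal{C}_X(s',t') + \mathcal{C}_X(s',s')\|\text{-type expansion},
\end{align*}
or more carefully, $\E\|X(t')-X(s')\|^2 = {\rm trace}\big(\mathcal{C}_X(t',t') - \mathcal{C}_X(t',s') - \mathcal{C}_X(s',t') + \mathcal{C}_X(s',s')\big)$, since the trace commutes with expectation \eqref{e:trace-commutes-with-integral}. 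Assuming \eqref{e:Cx}, as $s',t'\to t$ each of the four operators converges in trace norm to $\mathcal{C}_X(t,t)$, so the alternating sum tends to $0$ in trace norm; since $|{\rm trace}(\cdot)| \le \|\cdot\|_{\rm tr}$, the $L^2$-increment vanishes, giving $\mathcal{L}^2$-continuity.

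The main obstacle, such as it is, is the bookkeeping in the ``if'' direction: one must be careful that \eqref{e:Cx} is a statement only about the diagonal limit $(s',t')\to(t,t)$, so I cannot directly invoke continuity of $\mathcal{C}_X$ at off-diagonal points (indeed, establishing that is the \emph{conclusion} of the first half). The clean way around this is precisely to reduce everything to the diagonal via the trace expansion of $\E\|X(t')-X(s')\|^2$ above, where all four arguments approach the single diagonal point $(t,t)$, so only the hypothesis \eqref{e:Cx} is needed. I expect no serious analytic difficulty beyond verifying \eqref{e:plan-bound} and ensuring the $\E\|X(t')\|^2$ factors remain bounded, both of which are routine consequences of Lemma~\ref{l:cross-cov} and the Cauchy--Schwarz inequality.
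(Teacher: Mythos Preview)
Your proposal is correct and takes essentially the same approach as the paper: the ``only if'' direction via the bilinear decomposition of the outer product together with the Cauchy--Schwarz bound \eqref{e:l:cross-cov-bound}, and the ``if'' direction by expanding $\E\|X(t')-X(s')\|^2$ as the trace of the four-term operator combination and using the diagonal hypothesis \eqref{e:Cx}. The only cosmetic difference is that the paper writes the ``if'' step as $\|\mathcal{D}(s',t')\|_{\rm tr} = \E\|X(t')-X(s')\|^2$ directly via \eqref{e:l:cross-cov-XX} (since $\mathcal{D}$ is positive), whereas you pass through the trace and the bound $|{\rm trace}(\cdot)|\le\|\cdot\|_{\rm tr}$; these are equivalent.
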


\begin{proof}
 Suppose first that $\{X(t)\}$ is ${\cal L}^2$-continuous. 
By \eqref{e:l:cross-cov-bound} 
\begin{align*}
\|{\mathcal C}_X(s',t) - {\mathcal C}_X(s,t)\|_{\rm tr} \le (\E \|X(t)\|^2)^{1/2}    (\E \|X(s') - X(s)\|^2)^{1/2} \to 0,
\end{align*}
as $s'\to s$.  Thus, the triangle inequality implies
$$
\|{\mathcal C}_X(s',t') - {\mathcal C}_X(s,t)\|_{\rm tr} 
\le \|{\mathcal C}_X(s',t') - {\mathcal C}_X(s,t')\|_{{\rm tr}} + \|{\mathcal C}_X(s,t') - {\mathcal C}_X(s,t)\|_{\rm tr} \to 0, 
$$
as $(s',t') \to (s,t)$, where we also used the elementary fact that ${\cal L}^2$-continuity implies
the continuity of $t\mapsto \E [ \|X(t)\|] $.  We have thus shown the continuity of the cross covariance operator in 
the trace norm.

Conversely, assume \eqref{e:Cx}, i.e., the continuity of ${\mathcal C}_X$ on the diagonal.
In the sense of Bochner on the space $(\bbT,\|\cdot\|_{\rm tr})$, 
we have
\begin{align*}
{\mathcal D} (s',t') &:=  \E (X(t') - X(s'))\otimes ( X(t') - X(s'))  \\
 & = {\mathcal C}_X(t',t') + {\mathcal C}_X(s',s') - ({\mathcal C}_X(t',s') + {\mathcal C}_X(s',t')),
\end{align*}
which converges to zero in trace norm, by \eqref{e:Cx}.  
Relation \eqref{e:l:cross-cov-XX} in Lemma \ref{l:cross-cov}, however, yields 
$
 \|{\mathcal D}(s',t')\|_{\rm tr}  =\E \|X(t') - X(s')\|^2 \to 0
$
as $s',t'\to t$, proving the desired continuity of $\{X(t)\}$.
\end{proof}

\section{Integration Theory for $\bbT_+$-valued measures.}\label{sec:integration}

	
	Let $\mu$ be a $\T_+$-valued measure on ${\cal B}(\bbR^d)$, i.e., $\mu$ is a countably additive
	function on ${\cal B}(\bbR^d)$ taking values in $\T_+$.
	Below we develop integration of a real or complex valued Borel measurable function
	with an operator-valued measure $\mu$. 
	
	We follow the development of ordinary Lebesgue integration:
	\begin{enumerate}[label=(\roman*)]
		\item[(1)]
		For any real nonnegative simple function $f =\sum_{i=1}^k c_i I_{A_i}$, define 
		$\int fd\mu = \sum_{i=1}^k c_i\mu(A_i)$.
		\item[(2)]
		For nonnegative measurable functions $f$, let
		\begin{align}\label{e:mono_conv:supp}
		\int fd\mu = \lim_{n\to\infty} \int f_nd\mu
		\end{align}
		in $(\bbT,\|\cdot,\|_{\tr})$ where $\{f_n\}$ is any sequence of simple functions such that 
		\begin{enumerate}[label=(\alph*)]
			\item
			$f_n\le f_{n+1}$,
			\item
			$f_n(x)\uparrow f(x)$ for all $x$,
			\item
			$\int f_nd\mu \le \CB$ for all $n$ and some fixed $\CB\in\bbT_+$. 
		\end{enumerate}
	
	The existence of $\{f_n\}$ satisfying (a) and (b) for any given 
		nonnonegative measurable $f$ follows from standard measure theory. 
		However, we need the extra condition (c) (along with the completeness of $\bbT_+$)
		to ensure that the limit on rhs of (\ref{e:mono_conv:supp}) exists and does not depend on 
		the choice of $\{f_n\}$.  Since $\mu(\bbR^d)\in\T_+$, (c) is automatically fulfilled if $f$ is bounded.
		\item[(3)]
		For a general real measurable $f$, let
		\begin{align*}
		\int fd\mu = \int f_+d\mu - \int f_-d\mu 
		\end{align*}
		provided both terms on the right is finite. For a general complex $f$,
		let
		\begin{align*}
		\int fd\mu = \int f_{\mathrm{Re}}d\mu +\ii \int f_{\mathrm{Im}}d\mu 
		\end{align*}
		where $f_{\mathrm{Re}}$ and $f_{\mathrm{Im}}$ are the real and imaginary parts, respectively.
		\end{enumerate}

The integral can be further extended to $\mathbb{T}$-valued signed measures, namely $\mu=\mu_+-\mu_-$
where $\mu_+,\mu_-$ are both $\mathbb{T}_+$-valued measures, by $\int f d\mu := \int f d\mu_+ - \int f d\mu_-$.	

The following propositions justify the definition of the integration described above.

		\begin{proposition} \label{pp:monotone_convergence}
		Let $\CB$ and $\CT_n, n \ge 1$, be operators in $\bbT_+$ such that 
		$\CT_n \le \CT_{n+1} \le \CB$ for all $n$. 
		Then $\CT_n$ converges to a limit in $\bbT_+$.
	\end{proposition}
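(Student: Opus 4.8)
The plan is to exploit the fact that on the positive cone $\bbT_+$ the trace norm coincides with the trace, which is a monotone linear functional; this reduces the convergence of the operators to the convergence of a bounded monotone sequence of real numbers, after which completeness of $\bbT$ does the rest.

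First I would record two elementary facts about positive trace-class operators. For any $\mathcal{A}\in\bbT_+$ one has $\|\mathcal{A}\|_{\tr} = {\rm trace}(\mathcal{A}) = \sum_j \langle \mathcal{A} e_j, e_j\rangle$ for any CONS $\{e_j\}$ of $\V$; and if $0 \le \mathcal{A} \le \mathcal{C}$, then term by term $\langle \mathcal{A} e_j, e_j\rangle \le \langle \mathcal{C} e_j, e_j\rangle$, so ${\rm trace}(\mathcal{A}) \le {\rm trace}(\mathcal{C})$. Applying this monotonicity to the hypothesis $\CT_n \le \CT_{n+1} \le \CB$, the real sequence $a_n := {\rm trace}(\CT_n)$ is nondecreasing and bounded above by ${\rm trace}(\CB) < \infty$, hence converges to a finite limit.

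Next I would show that $\{\CT_n\}$ is Cauchy in $(\bbT,\|\cdot\|_{\tr})$. For $m>n$ the operator $\CT_m - \CT_n$ is positive, by $\CT_n \le \CT_m$, so the first fact yields $\|\CT_m - \CT_n\|_{\tr} = {\rm trace}(\CT_m - \CT_n) = a_m - a_n$; since $(a_n)$ is convergent and thus Cauchy, so is $\{\CT_n\}$. Because $\bbT$ is complete in the trace norm, there exists $\CT \in \bbT$ with $\|\CT_n - \CT\|_{\tr} \to 0$.

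Finally I would verify $\CT \in \bbT_+$. Since $\|\cdot\|_{\rm op} \le \|\cdot\|_{\tr}$, convergence in trace norm implies operator-norm convergence; continuity of the adjoint operation then forces $\CT = \CT^*$, and $\langle \CT f, f\rangle = \lim_n \langle \CT_n f, f\rangle \ge 0$ for every $f \in \V$, so $\CT$ is positive. There is essentially no hard part here; the only points requiring attention are that the identity $\|\CT_m - \CT_n\|_{\tr} = a_m - a_n$ uses positivity of the difference (which is exactly where the monotonicity hypothesis enters) and that positivity is preserved under the limit (which follows from the operator-norm, hence weak, convergence).
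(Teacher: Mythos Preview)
Your proof is correct and follows essentially the same approach as the paper: both use that for $m>n$ the difference $\CT_m-\CT_n$ is positive, so $\|\CT_m-\CT_n\|_{\tr}={\rm trace}(\CT_m)-{\rm trace}(\CT_n)$, and the boundedness of the monotone real sequence ${\rm trace}(\CT_n)\le{\rm trace}(\CB)$ then gives the Cauchy property. The only minor addition in your argument is the explicit verification that the limit lies in $\bbT_+$, which the paper leaves implicit by invoking ``completeness of $\bbT_+$''.
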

	
	\begin{proof}
		For $i\ge j$,
		\be
		\|\CT_i-\CT_j\|_{\tr} = \tr(\CT_i-\CT_j) = \|\CT_i\|_{\tr}-\|\CT_j\|_{\tr}.
		\ee
		Now, $\|\CT_n\|_{\tr}$ is nondecreasing and bounded by $\|\CB\|_{\tr}$ for all $n$. 
		Hence, $\|\CT_n\|_{\tr}$ converges to some finite nonnegative limit. Consequently, 
		\be
		\lim_{n\to\infty} \sup_{i\ge j\ge n} \|\CT_i-\CT_j\|_{\tr} = 0.
		\ee
		This shows that $\{\CT_n\}$ is Cauchy and has a limit by the completeness of $\bbT_+$.
	\end{proof}
	\begin{proposition} \label{pp:simple_int_ineq}
		Let $f_n$ be a sequence of nonnegative simple functions with $f_n\le f_{n+1}$ for
		each $n$ and such that $\int f_n d \mu\le\CB$ for some $\CB\in\bbT$. 
		\begin{enumerate}
			\item
			$\int f_nd\mu$ converges in the space of trace class operators. 
			\item
			If $\lim_{n\to\infty} f_n(x) \ge g(x)$ for all $x$ for some simple function $g$, 
			then $\lim_{n\to\infty} \int f_n d\mu \ge \int g d\mu$.
			\end{enumerate}
	\end{proposition}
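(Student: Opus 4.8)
The plan is to dispatch the two parts in turn, treating (1) as an immediate consequence of the already-established monotone convergence principle and reserving the real work for (2).

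For part (1), I would first record the monotonicity of the integral on simple functions: if $f\le g$ are nonnegative simple functions, then writing both over a common refinement $\{B_i\}$ as $f=\sum_i c_i I_{B_i}$ and $g=\sum_i d_i I_{B_i}$ with $0\le c_i\le d_i$, one has $\int g\,d\mu-\int f\,d\mu=\sum_i(d_i-c_i)\mu(B_i)\in\bbT_+$ because each $\mu(B_i)\ge 0$. Applying this to $f_n\le f_{n+1}$ shows $\CT_n:=\int f_n\,d\mu$ is nondecreasing in $\bbT_+$. The hypothesis $\CT_n\le\CB$ together with $\CT_n\ge 0$ forces $\CB$ to be self-adjoint and positive (the order relation is only defined for self-adjoint operators, and $0\le\CT_n\le\CB$ gives $\langle f,\CB f\rangle\ge 0$), so $\CB\in\bbT_+$. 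Proposition \ref{pp:monotone_convergence} then yields convergence of $\CT_n$ in $\bbT_+$, proving (1).

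For part (2), the idea is the classical $c\in(0,1)$ truncation argument adapted to the operator-valued setting. Fix $c\in(0,1)$ and write the simple function $g=\sum_{i=1}^k a_i I_{A_i}$ with disjoint $A_i$ and $a_i\ge 0$. Define $E_n:=\{x:f_n(x)\ge c\,g(x)\}$. Since $\{f_n\}$ is nondecreasing the $E_n$ increase, and since $\lim_n f_n(x)\ge g(x)$ one checks $E_n\uparrow\bbR^d$ (on $\{g=0\}$ every point lies in every $E_n$, while on $\{g>0\}$ one has $\lim_n f_n\ge g>cg$, so $x\in E_n$ eventually). From $f_n\ge c\,g\,I_{E_n}$ and the monotonicity of the integral on simple functions established above, I obtain the operator inequality $\int f_n\,d\mu\ge c\int g\,I_{E_n}\,d\mu=c\sum_i a_i\,\mu(A_i\cap E_n)$. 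Continuity from below of the $\bbT_+$-valued measure $\mu$ (a consequence of its $\sigma$-additivity: $\mu(A_i\cap E_n)\to\mu(A_i)$ in $\|\cdot\|_{\tr}$ as $A_i\cap E_n\uparrow A_i$) then gives $c\int g\,I_{E_n}\,d\mu\to c\int g\,d\mu$ in trace norm. Letting $n\to\infty$ and then $c\uparrow 1$ should yield $\lim_n\int f_n\,d\mu\ge\int g\,d\mu$.

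The main obstacle, and the step requiring the most care, is justifying the two limit passages within the operator ordering. The inequalities $\int f_n\,d\mu-c\int g\,I_{E_n}\,d\mu\ge 0$ are preserved in the limit only because the positive cone $\bbT_+$ is closed in $(\bbT,\|\cdot\|_{\tr})$: if $\mathcal S_m\to\mathcal S$ in trace norm with $\mathcal S_m\ge 0$, then trace-norm convergence dominates operator-norm convergence, so $\langle f,\mathcal S f\rangle=\lim_m\langle f,\mathcal S_m f\rangle\ge 0$ and $\mathcal S$ is self-adjoint, whence $\mathcal S\ge 0$. I would state and use this closedness explicitly. With it, the $n\to\infty$ limit gives $\CT\ge c\int g\,d\mu$, where $\CT=\lim_n\int f_n\,d\mu$ is furnished by part (1), and the subsequent $c\uparrow 1$ limit (again using $c\int g\,d\mu\to\int g\,d\mu$ in trace norm and closedness of $\bbT_+$) completes the argument.
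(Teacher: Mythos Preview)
Your proof is correct and follows essentially the same approach as the paper. Part (1) is identical: monotonicity of the integral on simple functions via a common refinement, then Proposition \ref{pp:monotone_convergence}. For part (2) you use the multiplicative truncation $E_n=\{f_n\ge cg\}$ with $c\in(0,1)$, whereas the paper uses the additive variant $E_n=\{f_n+\epsilon>g\}$ restricted to the support $A=\cup A_i$ of $g$; both are standard and structurally the same (continuity from below of $\mu$ on $A_i\cap E_n\uparrow A_i$, then send the perturbation parameter to its limit). Your explicit appeal to the closedness of $\bbT_+$ in $(\bbT,\|\cdot\|_{\tr})$ to pass operator inequalities to the limit is a nice point that the paper leaves implicit.
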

	\begin{proof}
		Note that nonnegative simple functions $g\le f$ can be formulated as
		$f=\sum_{i=1}^k c_i I_{A_i}$, $g=\sum_{i=1}^k d_i I_{A_i}$ where $0\le c_i\le d_i$ and the
		$A_i$ are disjoint. Then it is easy to conclude that
		\be
		\int g d \mu = \sum_{i=1}^k c_i \mu(A_i) \le \sum_{i=1}^k d_i \mu(A_i) = \int f d\mu.
		\ee
		Thus, part (i) follows readily from Proposition \ref{pp:monotone_convergence}.
		
		To prove (ii),  let $g=\sum_{i=1}^k d_i I_{A_i}$, $A=\cup_{i=1}^k A_i$, and
		\be
		E_n=\{x:f_n(x) +\epsilon>g(x)\}
		\ee
		for some fixed $\epsilon > 0$.
		Since
		\be
		f_n\ge f_n I_{E_n\cap A} \ge (g-\epsilon) I_{E_n\cap A},
		\ee
		we have 
		\be
		\int f_n d\mu &\ge& \int (g-\epsilon) I_{E_n\cap A} d\mu \\
		&\ge& \int g I_{E_n\cap A} d\mu - \epsilon\mu(A) \\
		&=&\sum_{i=1}^k c_i \mu(E_n\cap A_i) - \epsilon\mu(A).
		\ee
		Letting $n\to\infty$, by the fact $E_n\cap A_i\uparrow A_i$,
		\be
		\lim_{n\to\infty} \int f_n d\mu \ge \sum_{i=1}^k c_i \mu(A_i) - \epsilon\mu(A)
		= \int g d\mu- \epsilon\mu(A)
		\ee
		Since $\epsilon>0$ is arbitrary, (ii) follows.
	\end{proof}
	\begin{proposition}\label{p:converge_integral_operator}
		Let $f$ be a nonnegative measurable function and $\{f_n\}$ be an increasing sequence
		of nonnegative simple functions satisfying (a)-(c) above.
		Then the limit $\lim_{n\to\infty} \int f_nd\mu$ does not depend on the particular sequence $\{f_n\}$.
	\end{proposition}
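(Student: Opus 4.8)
The plan is to compare any two admissible sequences by means of the monotone inequality already established in Proposition~\ref{pp:simple_int_ineq}. Let $\{f_n\}$ and $\{g_m\}$ be two increasing sequences of nonnegative simple functions, both satisfying conditions (a)--(c) and both converging pointwise to $f$. By Proposition~\ref{pp:simple_int_ineq}(i) (equivalently Proposition~\ref{pp:monotone_convergence}) the limits
$$
A := \lim_{n\to\infty} \int f_n\, d\mu \qquad\mbox{and}\qquad B := \lim_{m\to\infty}\int g_m\, d\mu
$$
both exist in $(\bbT_+,\|\cdot\|_{\rm tr})$, so the goal reduces to showing $A = B$.

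First I would fix $m$ and regard $g_m$ as the simple function ``$g$'' in part (ii) of Proposition~\ref{pp:simple_int_ineq}. Since $f_n(x)\uparrow f(x)$ and $g_m(x)\le f(x)$ for every $x$, we have $\lim_{n} f_n(x) = f(x)\ge g_m(x)$, and part (ii) yields the operator inequality $A \ge \int g_m\, d\mu$ for each fixed $m$.

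Next I would let $m\to\infty$. The one genuinely non-routine point is that the order $\ge$ on self-adjoint operators is preserved under trace-norm limits: if $\int g_m\, d\mu \to B$ in $\|\cdot\|_{\rm tr}$, then, since for any trace-class $T$ one has $|\langle h, T h\rangle|\le \|T\|_{\rm op}\|h\|^2 \le \|T\|_{\rm tr}\|h\|^2$, each quadratic form $h\mapsto \langle h, (\cdot) h\rangle$ is trace-norm continuous. Hence the inequalities $\langle h, A h\rangle \ge \langle h, (\int g_m\, d\mu) h\rangle$ pass to the limit to give $\langle h, A h\rangle \ge \langle h, B h\rangle$ for all $h\in\V$, i.e.\ $A\ge B$.

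Finally, interchanging the roles of $\{f_n\}$ and $\{g_m\}$ produces the reverse inequality $B\ge A$ by the identical argument. Antisymmetry of the order on self-adjoint operators then forces $A = B$: indeed $A\ge B$ and $B\ge A$ give $\langle h, (A-B) h\rangle = 0$ for all $h$, and since $A - B$ is self-adjoint this implies $A - B = 0$. I do not anticipate any real obstacle beyond carefully recording the limit-preservation of the operator order; the rest is a direct application of Proposition~\ref{pp:simple_int_ineq} together with the symmetry between the two sequences.
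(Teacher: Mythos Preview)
Your proof is correct and follows essentially the same route as the paper: apply Proposition~\ref{pp:simple_int_ineq}(ii) with one sequence against each fixed term of the other, let $m\to\infty$, and then swap roles. You simply spell out two points the paper leaves implicit—that the operator order passes to trace-norm limits, and that antisymmetry of $\ge$ on self-adjoint operators gives $A=B$.
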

	
	\begin{proof}
		Suppose there are two sequences $\{f_n\}$ and $\{g_n\}$ of simple functions
		both satisfying (a)-(c). Then we have
		\be
		\lim_{n\to\infty} f_n \ge g_m, \ \lim_{n\to\infty} g_n \ge g_m
		\ee
		for all $m$, and, by Proposition \ref{pp:simple_int_ineq},
		\be
		\lim_{n\to\infty} \int f_n d\mu \ge \int g_m d\mu,
		\ \lim_{n\to\infty} \int g_n d\mu \ge \int f_m d\mu.
		\ee
		The result follows by letting $m\to\infty$.
	\end{proof}

   \section{Proofs and auxiliary results for Section \ref{sec:c2s5}.}

	\subsection{Proof of Theorem~\ref{th:operator_Bochner} (Bochner).}
	\label{suppsec:thm4.2_proof}
		
	We begin by defining the notion of tightness for $\mathbb{T}_+$-valued measures as follows.
	
	\begin{definition}\label{def:tightness}
A sequence of finite $\T_+$-valued measures $\{\mu_n\}$  is said to be tight if the following two conditions hold:
\begin{enumerate} [label=(\roman*)]
	\item
	There exists some  ${\cal B} \in \T_+$ such that $\mu_n(\R^d)\le {\cal B}$ for all $n$.
	\item
	For any $\epsilon>0$, there is a compact set $K_\epsilon \subset \R^d$ such that
	$$
	\sup_n \| \mu_n(\R^d\setminus K_\epsilon)\|_{\tr} <\epsilon.
	$$
\end{enumerate}
	\end{definition}
	

	We now proceed to prove Theorem \ref{th:operator_Bochner}.
	By Lemma~\ref{l:continuity_CK} below,  $\CK(t)$ is continuous in trace norm. 
	We first consider the case
	\ben\label{e:traceK}
	\int_{\bbR^d} \|\CK(t)\|_{\tr} \, dt < \infty,
	\een
	which will be relaxed in the second part of the proof.
	Let $\Pi_n = \sum_{i=1}^n e_i\otimes e_i$ be the projection operator onto 
	$\mathrm{span}\{e_1,\ldots,e_n\}$ where $\{e_i\}$ is CONS of $\V$. 
	Define $\CK_{n}(t) = \Pi_n\CK(t)\Pi_n$. 
	By Proposition IV.2.3 on page 51 in 
	\cite{Traces_and_Determinants_book}, $\|\CK_n(t)-\CK_n(s)\|_{\tr}\leq \|\CK(t)-\CK(s)\|_{\tr}$ 
	and therefore $\CK_n(t)$ is also continuous in trace norm. 
	The same argument entails that
	\begin{align} \label{e:KnKbound}
	\int_{\bbR^d} \|\CK_n(t)\|_{\tr} \, dt \le \int_{\bbR^d} \|\CK(t)\|_{\tr}\, dt < \infty.
	\end{align}
	By Theorem \ref{thm:Bochner-integrability}, 
	the following Bochner's integrals in $\bbT$ are well defined: 
	\ben\label{e:Kn_hatKn}
	\widehat \CK_n(x) := \frac1{(2\pi)^{d}}\int_{\bbR^d} e^{-\ii t^\top x} \CK_n(t)dt,\quad
	\widehat \CK(x) :=\frac1{(2\pi)^{d}}\int_{\bbR^d} e^{-\ii t^\top x} \CK(t)dt.
	\een
	It follows from Theorem IV.5.5 on page 63 in  \cite{Traces_and_Determinants_book},
	Theorem \ref{thm:Bochner-integrability}, \eqref{e:KnKbound}, and the Dominated Convergence Theorem, that
	\ben\label{e:conv_tr}
	\|\CK_n(t)-\CK(t)\|_{\tr} \to 0 \quad\hbox{and}\quad \|\widehat\CK_n(x)-\widehat\CK(x)\|_{\tr} \to 0
	\een
	for all $t,x$.
	For each $f,g\in\V$, define the complex-valued functions
	\be
	{\mathcal K}_{f,g}(t) = \langle \CK(t) g, f\rangle \quad\hbox{and}\quad {\mathcal K}_f (t) = {\mathcal K}_{f,f}(t).
	\ee
	Since $\CK(\cdot)$ is positive definite and continuous at $0$ in operator 
	norm (entailed by continuity in trace norm), ${\mathcal K}_f(\cdot)$ is a
	positive-definite function and continuous at $0$. It is also integrable by (\ref{e:traceK}). 
	We obtain, by the classical Bochner Theorem, that 
	\ben\label{e:F_inversion}
	{\mathcal K}_f(t) = \int_{\bbR^d} e^{\ii t^\top x} \widehat {\mathcal K}_f(x)dx \quad\hbox{and}\quad
	\widehat {\mathcal K}_f(x) := {\frac{1}{(2\pi)^d}} \int_{\bbR^d} e^{-\ii t^\top x}{\mathcal K}_f(t)dt, 
	\een
	where $\widehat {\mathcal K}_f(x)\ge 0$ for all $x$.
	Since $\langle \widehat \CK(x) f, f\rangle = \widehat {\mathcal K}_f(x) \ge 0$, 
	for all $f\in \V$, it follows that $\widehat{\CK}(x)$ is a positive operator for all $x$.
	Notice that, by (\ref{e:F_inversion}) 
	and Lemma~\ref{l:identification_operator},
	\begin{align*}
	{\mathcal K}_{n}(t) &= \sum_{i=1}^n\sum_{j=1}^n {\mathcal K}_{e_i,e_j}(t)\, e_i\otimes e_j \\
	&= \sum_{i=1}^n\sum_{j=1}^n \left(\frac{\ii-1}{2}({\mathcal K}_{e_i}(t)+{\mathcal K}_{e_j}(t))-\frac{\ii}{2}{\mathcal K}_{\ii e_j+ e_i}(t)+\frac{1}{2}{\mathcal K}_{e_i+e_j}(t)\right) \, e_i\otimes e_j \\
	&= \sum_{i=1}^n\sum_{j=1}^n \int_x e^{\ii t^\top x} 
	\left(\frac{\ii-1}{2}(\widehat {\mathcal K}_{e_i}(x)+\widehat {\mathcal K}_{e_j}(x))-\frac{\ii}{2}\widehat {\mathcal K}_{\ii e_j+e_i}(x)+\frac{1}{2}\widehat {\mathcal K}_{e_i+e_j}(x)\right) dx \, e_i\otimes e_j\\
	&=\int _{\bbR^d} e^{\ii t^\top x} \widehat {\mathcal K}_n(x)dx,
	\end{align*}
	which, by (\ref{e:conv_tr}), implies
	\ben\label{e:F_inversion_1}
	\CK(t) = \int_{\bbR^d} e^{\ii t^\top x} \widehat\CK(x) dx.
	\een
	To summarize, we have shown under (\ref{e:traceK}) that
	\ben\label{e:FT}
	\CK(t) = \int_{\bbR^d} e^{\ii t^\top x}\widehat\CK(x)dx \quad\hbox{and}\quad 
	\widehat\CK(x) =  \frac{1}{(2\pi)^d} \int_{\bbR^d} e^{-\ii t^\top x} \CK(t)dt \in \T_+.
	\een
	Next, relax (\ref{e:traceK}) and define
	\ben\label{e:smoothing}
	\CK_\sigma(t) = e^{-\sigma^2\|t\|^2/2} \CK(t)
	= \frac{1}{(\sqrt{2\pi}\sigma)^{d}} \int_{\bbR^d}  \CK(t)e^{\ii t^\top y}e^{-\|y\|^2/(2\sigma^2)}dy,  \ \sigma\ge 0.
	\een
	Note that $\CK(t)e^{\ii t^\top y}$ is positive definite in $t$. 
	As a convex combination of positive definite functions, $\CK_\sigma(\cdot)$ is also positive definite. 
	Since $\int_{\bbR^d} \|\CK_\sigma (t)\|_{\tr} dt < \infty$, it follows from \eqref{e:FT} that
	\ben\label{e:smoothing1}
	\CK_\sigma(t) = \int_{\bbR^d} e^{\ii t^\top x}\widehat\CK_\sigma(x)dx \quad\hbox{and}\quad 
	\widehat\CK_\sigma(x) = \frac{1}{(2\pi)^d} \int_{\bbR^d} e^{-\ii t^\top x} \CK_\sigma(t)dt \in \T_+
	\een
	for any $\sigma\ge 0$. Define $\mu_\sigma(\cdot) := \int_.\widehat\CK_\sigma(x)dx$
	and suppose now that $\{\mu_\sigma\}$ is tight, which will be established in the last step of the proof. 
	For any ${\mathbb T}_+$-valued measure $\mu$ and $e_i,e_j$ in CONS, 
	write $\mu^{(i,j)}(A) = \langle \mu(A)e_j, e_i\rangle$ for any Borel set $A$.
	By Lemma~\ref{l:weak_convergence_operator}, for any sequence $\sigma_n\to 0$, 
	the tightness of $\{\mu_{\sigma_n}\}$ implies there exist a  $\T_+$-valued measure 
	$\mu$ and a subsequence sequence $n'$ of $n$ 
	such that
	\begin{equation*}
	\int_{\bbR^d} h(x) \mu_{\sigma_{n'}}^{(i,j)}(dx) \to \int_{\bbR^d} h(x) \mu^{(i,j)}(dx)
	\end{equation*}
	for all $i,j$ and all bounded and continuous functions $h$. 
	Since $\|\CK_\sigma(t)-\CK(t)\|_{\tr}\to 0$ by the definition of $\CK_\sigma$, 
	we have, for all $i,j$,
	\be
	\langle \CK(t) e_j, e_i \rangle &=& \lim_{n'\to\infty}\langle  \CK_{\sigma_{n'}}(t)  e_j, e_i \rangle
	=\lim_{n'\to\infty}\left\langle \int_x e^{\ii t^\top x}\mu_{\sigma_{n'}}(dx) e_j, e_i \right\rangle\\
	&=&\lim_{n'\to\infty}\int_{\bbR^d} e^{\ii t^\top x}\left\langle \mu_{\sigma_{n'}}(dx) e_j, e_i \right\rangle
	=\lim_{n'\to\infty}\int_{\bbR^d} e^{-\ii t^\top x}\mu_{\sigma_{n'}}^{(i,j)}(dx)\\
	&=&\int_{\bbR^d} e^{\ii t^\top x}\mu^{(i,j)}(dx)=\left\langle \int_{\bbR^d} e^{\ii t^\top x}\mu(dx) e_j, e_i \right\rangle,
	\ee
	where the interchange of inner product and integration can be easily justified by the properties of 
	Bochner's integral. 	Thus, for any $f,g\in \V$, 
	\be
	\langle \CK(t) g, f \rangle=\left\langle \int_{\bbR^d} e^{\ii t^\top x}\mu(dx) g, f \right\rangle,
	\ee
	which entails that $\CK(t) = \int_{\bbR^d} e^{\ii t^\top x}\mu(dx)$. Finally, we have 
	$$
	 {\mathcal K}_f(t) = \int_{\bbR^d} e^{\ii t^\top x}\langle \mu(dx)f, f \rangle.
	$$ Since this is a Fourier transform
	which does not depend on the sequence $\sigma_n$ or $\sigma_{n'}$, we conclude that $\mu$ is unique.

	It remains to show that $\{\mu_\sigma\}$ is tight.
	By (\ref{e:smoothing}) and \eqref{e:smoothing1}, we have 
	\be
	\mu_\sigma(\bbR^d) = \CK_\sigma(0)
	=  \CK(0) \in \T_+,
	\ee
	which implies (i) of the tightness definition. 
	Next,
	\begin{align} \label{e:tightness_1}
	\begin{split}
	& {1\over (2T)^d} \int_{t\in [-T,T]^d} \CK_\sigma(t)dt \\
	&= {1\over (2T)^d} \int_{t\in [-T,T]^d} \ \int_{x\in\bbR^d} e^{\ii t^{\top}x}\widehat\CK_\sigma(x)dxdt \\
	&= \int_{x\in\bbR^d} \prod _{j=1}^d {\sin Tx_j\over Tx_j} \widehat\CK_\sigma(x)dx \\
	&\le \int_{x\in [-b,b]^d} \prod _{j=1}^d \left|{\sin Tx_j\over Tx_j}\right|  \widehat\CK_\sigma(x)dx
	+ \int_{x\not\in[-b,b]^d} \prod _{j=1}^d \left|{\sin Tx_j\over Tx_j}\right|  \widehat\CK_\sigma(x)dx \\
	&\le \mu_\sigma([-b,b]^d) + {1\over (Tb)^d} \mu_\sigma(([-b,b]^d)^c) \\
	&= \mu_\sigma(\bbR^d) - (1-1/(Tb)^d) \mu_\sigma(([-b,b]^d)^c),
	\end{split}
	\end{align}
	where ``$\leq$" here is the operator inequality. 
	By (\ref{e:tightness_1}), with $b=T^{-1} (1-2^{-d})^{-1/d}$ so that $1-1/(Tb)^d=2^{-d}$, we have
	\be
	 \mu_\sigma(([-b,b]^d)^c) &\le& 2^d\left(\mu_\sigma(\bbR)-
	{1\over (2T)^d} \int_{[-T,T]^d} \CK_\sigma(t)dt  \right) \\
	&=& {1\over T^d} \int_{[-T,T]^d} (\CK(0)-\CK_\sigma(t))dt.
	\ee
	Thus,
	\be
	\left\|\mu_\sigma(([-b,b]^d)^c)\right\|_{\tr} 
	\le {1\over T^d} \int_{[-T,T]^d} \|\CK(0)-\CK_\sigma(t)\|_{\tr}\, dt.
	\ee
	By the triangle inequality,
	\be
	\|\CK(0)-\CK_\sigma(t)\|_{\tr}
	\le (1-e^{-\sigma^2t^2/2})\|\CK(0)\|_{\tr} + e^{-\sigma^2t^2/2}\|\CK(0)-\CK(t)\|_{\tr} \to 0,
	\ee
	by the continuity of $\CK(t)$ at $0$.  This show that
	$\left\|\mu_\sigma(([-b,b]^d)^c)\right\|_{\tr}  \to 0$ as $T\to 0$ and
	establishes the tightness of $\{\mu_\sigma\}$.
	\ep

	\begin{lemma}\label{l:continuity_CK}
		Assume that $\{\CK(t),t\in\bbR\}$ is a collection of  operators satisfying the
		assumptions of Theorem~\ref{th:operator_Bochner}. Then we have 
		\be
		\lim_{\delta\to 0}\sup_{|t-s|<\delta}\|\CK(t)-\CK(s)\|_{\tr}\to 0.
		\ee
	\end{lemma}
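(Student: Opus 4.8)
The plan is to reduce the operator-valued claim to the classical scalar fact that a positive-definite function continuous at the origin is uniformly continuous, by testing $\CK$ against a rich family of positive operators, and then to recover the \emph{trace} norm from these scalar functions through spectral projections.

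First I would extract two elementary consequences of the weak positive-definiteness in Definition \ref{def:pos-def-function}. Taking $n=1$ gives $\CK(0)\ge 0$, so $\|\CK(0)\|_{\tr}<\infty$. The key observation is that for \emph{every} bounded operator $P\ge 0$ the scalar function $\phi_P(t):=\tr(\CK(t)P)$ is positive-definite: for any $c_j\in\bbC$ and $t_j\in\bbR$, the operator $M:=\sum_{j,j'}c_j\overline{c_{j'}}\CK(t_j-t_{j'})$ is positive by hypothesis, whence
\[
\sum_{j,j'}c_j\overline{c_{j'}}\phi_P(t_j-t_{j'})=\tr(MP)=\tr\big(M^{1/2}PM^{1/2}\big)\ge 0.
\]
Moreover $\phi_P(0)=\tr(\CK(0)P)\ge 0$ is real and $\phi_P$ inherits continuity at $0$, since $|\phi_P(t)-\phi_P(0)|\le \|\CK(t)-\CK(0)\|_{\tr}\|P\|_{\rm op}$.

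Next, for each such $\phi_P$ I would invoke the standard positive-definite inequality
\[
|\phi_P(t)-\phi_P(s)|^2\le 2\phi_P(0)\big(\phi_P(0)-\Re\phi_P(t-s)\big),
\]
which follows at once from the Kolmogorov feature-map representation $\phi_P(u-v)=\langle\Phi(u),\Phi(v)\rangle$ together with Cauchy--Schwarz. The reason for carrying $P$ explicitly is that both factors on the right are controlled \emph{uniformly} over all $0\le P\le I$: one has $\phi_P(0)\le\|\CK(0)\|_{\tr}$ and $\phi_P(0)-\Re\phi_P(t-s)=\Re\tr\big((\CK(0)-\CK(t-s))P\big)\le \|\CK(0)-\CK(t-s)\|_{\tr}$. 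Writing $w(u):=\|\CK(0)-\CK(u)\|_{\tr}$, this yields the single uniform estimate $|\phi_P(t)-\phi_P(s)|\le \sqrt{2\|\CK(0)\|_{\tr}\,w(t-s)}$ valid for every $0\le P\le I$.

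The step I expect to be the main obstacle is converting this family of scalar estimates into a bound on the \emph{trace} norm of $A:=\CK(t)-\CK(s)$: a naive quadratic-form argument only recovers operator-norm continuity, which is far too weak, and the trace norm must be teased out through the positive test operators. I would split $A=\Re A+\ii\,\Im A$ into self-adjoint parts (each trace-class, hence compact) and, for the self-adjoint operator $\Re A$, use its spectral projections $P_+,P_-$ onto its positive and negative eigenspaces to write $\|\Re A\|_{\tr}=\tr(\Re A\,P_+)-\tr(\Re A\,P_-)$. Cyclicity of the trace and self-adjointness of $P_\pm$ give $\tr(\Re A\,P_\pm)=\Re\big(\phi_{P_\pm}(t)-\phi_{P_\pm}(s)\big)$, and symmetrically $\tr(\Im A\,Q_\pm)=\Im\big(\phi_{Q_\pm}(t)-\phi_{Q_\pm}(s)\big)$ for the spectral projections $Q_\pm$ of $\Im A$. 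Since $P_\pm,Q_\pm$ all satisfy $0\le\cdot\le I$, the uniform bound of the previous paragraph applies to each of the four terms, producing
\[
\|\CK(t)-\CK(s)\|_{\tr}\le \|\Re A\|_{\tr}+\|\Im A\|_{\tr}\le 4\sqrt{2\|\CK(0)\|_{\tr}\,\|\CK(0)-\CK(t-s)\|_{\tr}}.
\]
Because $\|\CK(0)-\CK(u)\|_{\tr}\to 0$ as $u\to 0$ by the assumed trace-norm continuity at the origin, taking the supremum over $|t-s|<\delta$ and letting $\delta\to0$ yields the asserted uniform continuity; the same computation goes through verbatim on $\bbR^d$. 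Note that this route uses only weak positive-definiteness and continuity at $0$, so it does not circularly rely on Theorem \ref{th:operator_Bochner} or its corollaries.
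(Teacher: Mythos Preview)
Your proof is correct and follows the same overall strategy as the paper's: build scalar positive-definite functions from $\CK$, apply the classical increment inequality $|\phi(t)-\phi(s)|^2\le C\,\phi(0)\big(\phi(0)-\Re\phi(t-s)\big)$, and reassemble the trace norm. The difference is only in the last step. The paper tests $\CK$ against rank-one operators $e_i\otimes e_i$, choosing $\{e_i\}$ to be an eigenbasis of $\CK(t)-\CK(s)$, and then applies Cauchy--Schwarz to the sum $\sum_i|\CK_{e_i}(t)-\CK_{e_i}(s)|$ to obtain the slightly sharper bound $2\sqrt{\|\CK(0)\|_{\tr}\|\CK(0)-\CK(t-s)\|_{\tr}}$. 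Your route --- testing against arbitrary $0\le P\le I$ and then specializing $P$ to the spectral projections of the self-adjoint parts $\Re A$ and $\Im A$ of $A=\CK(t)-\CK(s)$ --- is arguably cleaner on one point: positive-definiteness only gives $\CK(-u)=\CK(u)^*$, so $\CK(t)-\CK(s)$ need not be normal, and the paper's identification $\|\CK(t)-\CK(s)\|_{\tr}=\sum_i|\langle(\CK(t)-\CK(s))e_i,e_i\rangle|$ in an ``eigenbasis'' is delicate in general. Your decomposition into self-adjoint pieces sidesteps this entirely, at the cost of a harmless factor $2\sqrt{2}$ in the constant.
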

	\begin{proof}
		For any element $f\in\V$, denote $\CK_f = \langle \CK f, f  \rangle$. It follows that $\CK_f(t)$ is positive definite. By taking $c_1=c_2 =1$ and $t_1=0$, $t_2 = t$, one will have ${\rm Im}(\CK_f(t)) = - {\rm Im}(\CK_f(-t))$. Similarly, by taking $c_1=1$, $c_2=i$, $t_1=0$, $t_2=t$, we will have ${\rm Re}(\CK_f(t)) = {\rm Re}(\CK_f(-t))$. Therefore $\CK_f(t) = \overline{\CK_f(-t)}$. By taking $t_1=t,t_2=s,t_3=0$, we will have the following matrix to be nonnegative definite
		\begin{eqnarray*}
			\begin{pmatrix}
				\CK_f(0)	& \CK_f(t-s)  & \CK_f(t)\\ 
				\overline{\CK_f(t-s)}	& \CK_f(0) &\CK_f(s) \\ 
				\overline{\CK_f(t)}	& \overline{\CK_f(s)} & \CK_f(0).
			\end{pmatrix}
		\end{eqnarray*}
		As a result, its determinant will be nonnegative, that is
		\be
		0&\leq& \CK_f^3(0) - \CK_f(0)[|\CK_f(t)-\CK_f(s)|^2+|\CK_f(t-s)|^2] \\&&- 2 \ {\rm Re}([\CK_f(s)\CK_f(t)(\CK_f(0)-\CK_f(t-s))])\\
		&\leq&\CK_f^3(0) - \CK_f(0)\Big [ |\CK_f(t)-\CK_f(s)|^2+|\CK_f(t-s)|^2 \Big] \\
		& & \quad\quad\quad- 2 \CK_f^2(0)|\CK_f(0)-\CK_f(t-s)|,
		\ee
		where in the last line we applied the fact $|\CK_f(t)|\leq \CK_f(0)$ for all $t$. Rearranging terms then gives 
		\be
		|\CK_f(t)-\CK_f(s)|^2 &\leq& \CK_f^2(0) - |\CK_f(t-s)|^2 + 2\CK_f(0)|\CK_f(0)-\CK_f(t-s)|\\
		&\leq&4\CK_f(0)|\CK_f(0)-\CK_f(t-s)|,
		\ee
		where we again use the fact that $|\CK_f(t-s)|\leq \CK_f(0)$. 
		
		Then, for CONS $\{e_i\}$ that is the system of eigenfunction of $\CK(t)-\CK(s)$, we have
		\be
		\| \CK(t)-\CK(s) \|_{\tr} &=& \sum_{i} |\CK_{e_i}(t)-\CK_{e_i}(s)|\\
		&\leq& \sum_{i} 2\sqrt{\CK_{e_i}(0)|\CK_{e_i}(0)-\CK_{e_i}(t-s)|}\\
		&\leq&2\sqrt{\sum_{i}\CK_{e_i}(0)\sum_{i}|\CK_{e_i}(0)-\CK_{e_i}(t-s)|}\\
		&\leq&2\sqrt{\|\CK(0) \|_{\tr}\|\CK(0)-\CK(t-s) \|_{\tr}},
		\ee
		where the last expression converges to $0$ uniformly as $|t-s|\to 0$. 
	\end{proof}

\begin{lemma} \label{l:weak_convergence_operator}
Let $\{\mu_n\}$  be be a class of finite $\T_+$-valued measures. If $\{\mu_n\}$ is tight in the sense
of Definition \ref{def:tightness}, then 
		there exists a finite $\T_+$-valued measure $\mu$ and an infinite subsequence $n'$ such that 
		\begin{equation}\label{e:muij}
		\int_{\R^d} h(x) \mu_{n'}^{(i,j)}(dx) \to \int_{\R^d} h(x) \mu^{(i,j)}(dx),
		\end{equation}
		for all $i,j$ and all bounded and continuous functions $h$, where $\mu^{(i,j)}(A) 
		= \langle \mu(A) e_j, e_i\rangle$.
\end{lemma}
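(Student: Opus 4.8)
The plan is to reduce this operator-valued compactness statement to the scalar Prokhorov theorem applied to the countable family of matrix-entry measures $\mu_n^{(i,j)}(\cdot) = \langle \mu_n(\cdot) e_j, e_i\rangle$, to extract a single subsequence by diagonalization, and then to reassemble the scalar weak limits into a $\T_+$-valued measure $\mu$ with $\mu^{(i,j)} = $ the limits. The engine throughout is operator positivity: since each $\mu_n(A)\ge 0$, the map $(f,g)\mapsto \langle \mu_n(A)f,g\rangle$ is a positive semidefinite sesquilinear form, so Cauchy--Schwarz gives $|\mu_n^{(i,j)}(A)| \le (\mu_n^{(i,i)}(A))^{1/2}(\mu_n^{(j,j)}(A))^{1/2}$ for every Borel $A$; summing over a partition and applying Cauchy--Schwarz for sums yields the total-variation bound $|\mu_n^{(i,j)}|(B) \le (\mu_n^{(i,i)}(B))^{1/2}(\mu_n^{(j,j)}(B))^{1/2}$.

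First I would control the diagonal entries. The measures $\mu_n^{(i,i)}$ are finite and nonnegative, uniformly bounded by $\langle \mathcal{B} e_i,e_i\rangle$ by condition (i) of Definition \ref{def:tightness}, and uniformly tight by (ii), since $\mu_n^{(i,i)}(\R^d\setminus K_\epsilon) \le \|\mu_n(\R^d\setminus K_\epsilon)\|_{\tr} < \epsilon$. The total-variation bound above then transfers both the uniform mass bound and the uniform tightness to every off-diagonal family $\{\mu_n^{(i,j)}\}$. Applying the scalar Prokhorov theorem (to the Jordan components of the real and imaginary parts, each dominated by $|\mu_n^{(i,j)}|$ and hence uniformly bounded and tight) gives weak relative compactness for each fixed pair $(i,j)$. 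As there are only countably many pairs, a diagonal extraction produces one subsequence $n'$ along which $\mu_{n'}^{(i,j)} \to \gamma^{(i,j)}$ weakly for all $i,j$, each $\gamma^{(i,j)}$ being a finite complex Borel measure. Passing to the limit on continuity sets preserves the Hermitian symmetry $\gamma^{(j,i)} = \overline{\gamma^{(i,j)}}$ and the bound $|\gamma^{(i,j)}(A)| \le (\gamma^{(i,i)}(A)\gamma^{(j,j)}(A))^{1/2}$, and yields $\sum_{i=1}^m \gamma^{(i,i)}(A) = \lim_{n'} \tr(\mu_{n'}(A)) \le \tr(\mathcal{B})$, so that $\sum_i \gamma^{(i,i)}$ is a finite measure.

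It then remains to build $\mu$ and check $\mu^{(i,j)} = \gamma^{(i,j)}$. For each Borel $A$ I would define the sesquilinear form $Q_A$ on the dense span of $\{e_i\}$ by $Q_A(e_j,e_i) := \gamma^{(i,j)}(A)$. The estimate $Q_A(f,f) \le Q_{\R^d}(f,f) = \lim_{n'} \langle \mu_{n'}(\R^d)f,f\rangle \le \|\mathcal{B}\|_{\rm op}\|f\|^2$ shows $Q_A$ is bounded, and once it is known to be nonnegative it extends to a bounded positive operator $\mu(A)$ with $\langle \mu(A)e_j,e_i\rangle = \gamma^{(i,j)}(A)$ and $\tr(\mu(A)) = \sum_i \gamma^{(i,i)}(A) < \infty$, so $\mu(A)\in\T_+$. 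Countable additivity in trace norm then follows from $\|\mu(\cup_{k>N}A_k)\|_{\tr} = \sum_i \gamma^{(i,i)}(\cup_{k>N}A_k) \to 0$, using finiteness of $\sum_i \gamma^{(i,i)}$. This identifies $\mu$ as the sought $\T_+$-valued measure, and the displayed convergence $\int h\,d\mu_{n'}^{(i,j)} \to \int h\,d\gamma^{(i,j)} = \int h\,d\mu^{(i,j)}$ is exactly the weak convergence already extracted.

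The main obstacle is the nonnegativity of $Q_A$ for \emph{every} Borel set $A$. For a fixed $f = \sum_{j\le m} c_j e_j$ the set function $A\mapsto Q_A(f,f)$ is a finite real signed measure $\nu_f$, and weak convergence only supplies $\nu_f(A) = \lim_{n'}\langle \mu_{n'}(A)f,f\rangle \ge 0$ on continuity sets of $\nu_f$. I would upgrade this to all Borel sets by a regularity argument: the continuity sets form a generating algebra (for instance balls whose boundary spheres are $|\nu_f|$-null, and all but countably many radii about each center qualify), and outer regularity of the finite signed measure $\nu_f$ lets one approximate an arbitrary Borel set by such sets, which forces the negative part of $\nu_f$ to vanish. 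This passage from continuity sets to arbitrary Borel sets is the only genuinely delicate point; the remainder is routine bookkeeping with the scalar Prokhorov theorem and the trace bound.
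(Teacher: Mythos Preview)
Your proof is correct and follows the same overall strategy as the paper's: reduce to scalar Prokhorov on the matrix entries, diagonalize, and then reassemble a $\T_+$-valued measure from the scalar limits. The technical details differ in a few places.

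For off-diagonal entries, the paper avoids signed measures entirely by using polarization (Lemma~\ref{l:identification_operator}) to write each $\mu_n^{(i,j)}$ as a fixed linear combination of nonnegative measures $\mu_{n,f}(\cdot)=\langle \mu_n(\cdot)f,f\rangle$ for $f\in\{e_i,e_j,e_i+e_j,\ii e_i+e_j\}$; Prokhorov then applies directly. Your Cauchy--Schwarz bound on total variation followed by Prokhorov on Jordan components achieves the same end.

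For building $\mu(A)$, the paper works with the finite-rank truncations $\mu(A;m)=\sum_{i,j\le m}\gamma^{(i,j)}(A)\,e_i\otimes e_j$, shows $0\le \mu(A;m)\le\mathcal B$, and then invokes a separate compactness lemma (Lemma~\ref{l:trace}) to pass to an HS-limit. Your direct construction via the sesquilinear form $Q_A$ is more economical, though note that the inequality $Q_A(f,f)\le Q_{\R^d}(f,f)$ already presupposes $Q_{\R^d\setminus A}(f,f)\ge 0$, so the boundedness step must logically follow, not precede, the positivity step you defer to the final paragraph.

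For positivity on all Borel sets, the paper uses a $\pi$--$\lambda$ argument (Proposition~\ref{pp:meas_equiv}): the signed measure $\nu_f(\cdot)=Q_\cdot(f,f)$ and its unique nonnegative extension from the field $\mathcal F$ of common continuity sets are both countably additive and agree on $\mathcal F$, hence agree on $\sigma(\mathcal F)=\mathcal B(\R^d)$. Your regularity argument is equivalent; just be careful that the ``continuity sets'' you need are joint continuity sets of all $\gamma^{(i,j)}$ with $i,j\le m$ (finitely many conditions, so still only countably many bad radii), not merely of $\nu_f$.
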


\begin{proof} In this proof, the term measure without any qualification refers to the usual $\R_+$-valued measure.
Let $\{e_j\}$ be a CONS which we fix as the eigenfunctions of the operator ${\cal B}$ in Definition \ref{def:tightness}.  
First consider the signed measures $\mu_n^{(i,j)}(A) = \langle \mu_n (A) e_j, e_i\rangle$.
The tightness of $\{\mu_n\}$ implies that, for any $f$, the sequence of finite measures 
$\{\mu_{n,f}(A) := \langle \mu_n(A) f, f \rangle\}$ is tight. By Prokhorov's Theorem,
		there exist a finite measure $\mu_f$ and an infinite subsequence $n'$ such that
		$$
		\int_{\R^d} h(x) \mu_{n',f}(dx) \to \int_{\R^d} h(x) \mu_f(dx)
		$$
		for all bounded and continuous functions $h$. In particular, for each $i$
		there exist a finite measure $\mu^{[i,i]}$ and an infinite subsequence $n'$ such that
		$$
		\int_{\R^d} h(x) \mu_{n'}^{(i,i)}(dx) \to \int_{\R^d} h(x) \mu^{[i,i]}(dx)
		$$
		for all bounded and continuous functions $h$.
		Applying the identity (Lemma S.4.2 of supplement)
		\begin{align} \label{e:mu_ij}
		\begin{split}
		\mu_n^{(i,j)} (A) = & {\ii-1\over 2} (\mu_n^{(i,i)}(A) + \mu_n^{(j,j)}(A))
		+ {1\over 2} \langle e_i+e_j,  \mu_n (A) (e_i+e_j) \rangle \\
		& -{\ii\over 2} \langle e_i+\ii e_j, \mu_n(A) (e_i+ \ii e_j) \rangle,
		\end{split}
		\end{align}
		we also conclude that, for each $i,j$, there is an infinite subsequence $n'$ such that
		$$
		\int_{\R^d} h(x) \mu_{n'}^{(i,j)}(dx) \to \int_{\R^d} h(x) \mu^{[i,j]}(dx)
		$$
		for all bounded and continuous functions $h$, where $\mu^{[i,j]}$ is defined as in \eqref{e:mu_ij} 
		with $\mu_n^{(i,i)}$ replaced by $\mu^{[i,i]}$.
		Note that $\mu^{[i,j]}$ is in general a complex-valued set-function for $i\not=j$. 
		Since the set of pairs $(i,j)$ is countable, a straightforward argument shows that the 
		convergence holds along a common subsequence $n'$ for all $i,j\in \mathbb N$.
		
		Construct, for a fixed $m$,
		$$
		\mu (A;m) := \sum_{i=1}^m \sum_{j=1}^m \mu^{[i,j]} (A) e_i\otimes e_j, \ A\in{\cal B}(\R^d).
		$$
		We consider the properties of $\mu (A;m)$ for any $m$ and Borel set $A$.
		
		\begin{enumerate} [label=(\roman*)]
		\item
		$\mu(\cdot; m)$ is $\sigma$-additive as a finite sum of countably additive set-functions $\mu^{[i,j]}$.
		
		\item $\mu(A;m)\ge 0$.
		
		{\em Proof.} We know that $\mu(A;m)\ge 0$ for $A\in {\cal F},$ where  
		\begin{align} \label{e:calE}
		\mathcal{F} = \{ A\in {\cal B}(\R^d): \mu^{[i,i]}(\partial A) = 0 \mbox{ for all $i$}\}.  
				\end{align}
		Indeed, the weak convergence $\mu_{n'}^{(i,i)}\stackrel{w}{\to} \mu^{[i,i]}$ implies that $\mu_{n'}^{(i,i)}(A)\to \mu^{[i,i]}(A)$ 
		for all $A\in {\cal F},\ i\in\mathbb N$, and hence 
		$$
		  \Pi_m \mu_{n'}(A) \Pi_m = \sum_{i,j=1}^m \mu_{n'}^{(i,j)}(A) e_i\otimes e_j\to \mu(A;m),
		 $$ 
		as finite rank operators, where $\Pi_m:= \sum_{i=1}^m e_i\otimes e_i$.  This shows that $\mu(A;m)\ge 0$.
		 Note that ${\cal F}$ is a field (i.e., a nonempty collection of sets containing the whole space and
		closed under finite unions and complements).
		Consider the measure $\mu_f (\cdot;m) := \langle f, \mu(\cdot;m) f\rangle$ on ${\cal F}$
		for any fixed $f\in\V$. 
		By the measure extension theorem, there exists a unique measure 
		$\wt{\mu_f}(\cdot;m)$ on ${\cal B}(\R^d)$ that agrees with $\mu_f(\cdot;m)$ 
		on ${\cal F}$. Since both $\wt{\mu_f}(\cdot;m)$ and $\mu_f(\cdot;m)$ 
		are countably additive on ${\cal B}(\R^d)=\sigma({\cal F})$, Proposition \ref{pp:meas_equiv} implies 
		that $\wt{\mu_f} (\cdot;m)\equiv \mu_f(\cdot;m)$ on ${\cal B}(\R^d)$.
		Thus, $\mu_f(A;m)\ge 0$ for all Borel sets $A$ and all $f\in\V$.  
		This together with (i) prove that $\mu(\cdot;m)$ is a $\mathbb T_+$-valued 
		measure.
		
		\item $\mu(A;m) \le {\cal B}$.
		
		{\em Proof.} By (ii), weak convergence and the tightness assumption, for all $m$, Borel set $A$, and $f\in\V$,
		\begin{align*}
		\langle f, \mu(A;m) f\rangle 
		\le \langle f, \mu(\R^d;m)  f\rangle 
		= \lim_{n'\to\infty} \langle f, \mu_{n'}(\R^d;m) f\rangle 	
		\le \langle f, {\cal B} f\rangle.
		\end{align*}
		
		\item
		$\mu(A;m) = \Pi_m \mu(A;m')\Pi_m$ for all $m'>m$.
		\end{enumerate}
		
		Fix any Borel set $A$. By Lemma \ref{l:trace}, for any infinite sequence $m$ there exists a subsequence $m'$ such that
		$\|\mu(A;m') - \mu(A)\|_{\rm HS}\to 0$ for some $\mu(A)\in\T_+$ along $m'$. It follows that 
		$$
		\langle \mu(A) e_j, e_i\rangle = \langle \mu(A;\ell) e_j, e_i \rangle 
		= \mu^{[i,j]}(A)\mbox{ for $\ell\ge i \vee j$}.
		$$
		Thus, $\mu(A)$ does not depend on $m'$ and we have
		\begin{align} \label{e:mu_conv}
		\lim_{m\to\infty} \|\mu(A;m) - \mu(A)\|_{\rm HS}=0.
		\end{align}
		This implies, in particular, that $\mu(A)\ge 0$.
		
		It remains to show $\mu(A)$ is countably additive.
		Suppose $B_k\downarrow \emptyset$. It follows that
		\begin{align*}
		\sup_m \|\mu(B_k;m)\|_{\tr} & =  \sup_m\sum_{j=1}^\infty \langle \mu(B_k;m) e_j, e_j\rangle\\
		&\le \sum_{j=1}^\infty  \sup_m \langle \mu(B_k;m) e_j, e_j\rangle 
		= \sum_{j=1}^\infty \langle \mu(B_k;j) e_j, e_j\rangle,
 		\end{align*}
		since, by Property (iv), $\langle \mu(B_k;m) e_j, e_j\rangle = 0$ 
		or $\langle \mu(B_k;j) e_j, e_j\rangle$ depending on $m < j$ or $\ge j$. 		
		Note that $\mu(B_k;j)\le {\cal B} \in \T_+$ for all $j$ by Property (iii) above. Therefore,
		$$
		\langle \mu(B_k;j)e_j, e_j\rangle \le  B_j:= \langle {\cal B} e_j, e_j\rangle,
		$$ 
		where $\|{\cal B}\|_{\rm tr } = \sum_{j} B_j<\infty$. We also have
		$\langle \mu(B_k;j)e_j, e_j\rangle\to 0$ as $k\to\infty$ for fixed $j$ by the continuity property of 
		measure. Thus, by the DCT, 
		\begin{align} \label{e:mu_cont1}
		\sup_m \|\mu(B_k;m)\|_{\tr} \to 0 	\mbox{ as $k\to\infty$}.
		\end{align}
		By \eqref{e:mu_conv}, \eqref{e:mu_cont1} and the triangle inequality, 
		\begin{align} \label{e:mu_cont2}
		\|\mu(B_k)\|_{\rm HS} = \lim_{m\to\infty}\|\mu(B_k;m)\|_{\rm HS} \le \sup_m \|\mu(B_k;m)\|_{\rm tr} \to 0
		\mbox{ as $k\to\infty$}.
		\end{align}
		Let $A_i, i\ge 1$, be disjoint Borel sets.
		Since $\mu(\cdot)$ is finitely additive,
		\begin{align}\label{e:HS-to-0}
		\Big\|\mu(\cup_{i=1}^\infty A_i) - \sum_{i=1}^k \mu(A_i)\Big\|_{\rm HS} 
		= \left\|\mu(\cup_{i=k+1}^\infty A_i)\right\|_{\rm HS} \to 0,
		\end{align}
		by \eqref{e:mu_cont2}.  Letting ${\cal T}_k:= \sum_{i=1}^k \mu(A_k) \le {\cal B} \in \bbT_+$, since 
		${\cal T}_k\le {\cal T}_{k+1},\ k\in \mathbb N$, 
		Proposition \ref{pp:monotone_convergence} implies that there exists an operator ${\cal T}\in \bbT_+$, 
		such that $\|{\cal T}_k - {\cal T}\|_{\rm tr} \to 0$.   This also implies that $\|{\cal T}_k -{\cal T}\|_{\rm HS}\to 0$ and hence by 
		\eqref{e:HS-to-0}, ${\cal T} = \mu(\cup_{i=1}^\infty A_i)$ and we have
		$$
		\Big\| \mu(\cup_{i=1}^\infty A_i) - \sum_{i=1}^k \mu(A_i)\Big\|_{\rm tr} \to 0,\ \ \mbox{ as }k\to\infty,
		$$
		which proves the $\sigma$-additivity of $\mu$ in $(\bbT_+,\|\cdot\|_{\rm tr})$.
		 
		\end{proof}

\begin{proposition} \label{pp:meas_equiv}
Let ${\cal F}$ be a field of subsets of $\R^d$ 
and $\mu_1, \mu_2$ be finite countably additive set functions on $\sigma({\cal F})$ that agree on ${\cal F}$.
Then $\mu_1\equiv\mu_2$.
\end{proposition}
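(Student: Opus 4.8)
The plan is to invoke Dynkin's $\pi$-$\lambda$ theorem, so the whole argument reduces to packaging the hypotheses into the right two set-systems. First I would observe that a field is automatically closed under finite intersections (by De Morgan's laws, since it is closed under complements and finite unions), so ${\cal F}$ is a $\pi$-system. This is the only structural fact about ${\cal F}$ that the argument actually uses, and it comes for free.

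Next I would introduce the \emph{agreement class}
\begin{align*}
{\cal D} := \{ A \in \sigma({\cal F})\, :\, \mu_1(A) = \mu_2(A) \}.
\end{align*}
By hypothesis ${\cal F} \subset {\cal D}$, and I would show that ${\cal D}$ is a $\lambda$-system (Dynkin system). There are three things to check. That $\R^d \in {\cal D}$ is immediate since $\R^d \in {\cal F}$. That ${\cal D}$ is closed under proper differences, i.e.\ $A \subset B$ with $A,B \in {\cal D}$ implies $B \setminus A \in {\cal D}$, follows from writing $\mu_i(B\setminus A) = \mu_i(B) - \mu_i(A)$ and subtracting; this step is exactly where finiteness of $\mu_1,\mu_2$ is needed, as the subtraction requires $\mu_i(A) < \infty$. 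Finally, closure under increasing limits, $A_n \uparrow A$ with $A_n \in {\cal D}$ implies $A \in {\cal D}$, follows from continuity from below of countably additive set functions, namely $\mu_i(A) = \lim_n \mu_i(A_n)$, so that the equality $\mu_1(A_n) = \mu_2(A_n)$ passes to the limit.

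Having verified that ${\cal D}$ is a $\lambda$-system containing the $\pi$-system ${\cal F}$, the $\pi$-$\lambda$ theorem gives $\sigma({\cal F}) \subset {\cal D}$. Since ${\cal D} \subset \sigma({\cal F})$ by construction, we conclude ${\cal D} = \sigma({\cal F})$, which is precisely the assertion that $\mu_1 \equiv \mu_2$ on $\sigma({\cal F})$. I do not anticipate any genuine obstacle here: this is a textbook uniqueness statement, and the only point requiring care is the use of finiteness in the difference step (if one wished to relax finiteness one would instead need a $\sigma$-finite exhausting sequence inside ${\cal F}$ and argue on each piece separately, but that is not needed under the stated hypotheses).
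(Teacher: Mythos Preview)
Your proof is correct and takes essentially the same approach as the paper: both define the agreement class, verify it is a $\lambda$-system (you use the proper-differences/increasing-limits axioms, the paper uses the equivalent complements/countable-disjoint-unions axioms), and apply the $\pi$-$\lambda$ theorem using that ${\cal F}$ is a $\pi$-system. The arguments are interchangeable.
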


\begin{proof}
Let
$$
{\cal L} = \{B \in \sigma({\cal F}): \mu_1(B) = \mu_2(B)\}.
$$
First, ${\cal F} \subset {\cal L}$ by assumption.
For $B\in {\cal L}$, since $\R^d\in{\cal F} \subset {\cal L}$,
$$
\mu_1(B^c) = \mu_1(\R^d) - \mu_1(B) = \mu_2(\R^d) - \mu_2(B)
= \mu_2(B^c).
$$
So ${\cal L}$ is closed under the operation of complement.
We can similarly show that ${\cal L}$ is closed under countable disjoint unions.
Thus, ${\cal L}$ is a $\lambda$-system containing ${\cal F}$, and we conclude 
${\cal L}= \sigma({\cal F})$ by the $\pi-\lambda$ Theorem. 
\end{proof}

As usual, we say that a sequence $\{x_n\} \subset \V$ is relatively compact if there there is an $x\in\V$ such 
that $\|x_{n'}-x\|\to 0$ along some subsequence. The following results is straightforward;  
see e.g., Proposition 30 in \cite{melrose:2013_notes}.

\begin{lemma} \label{l:r-compact} Let $\V$ be a separable Hilbert space. 
A sequence $\{x_n\} \subset \V$ is relatively compact if and only if {\em (i)} $\sup_n \|x_n\| <\infty$ and {\em (ii)}
for some (any) CONS $\{e_j\}$, we have 
$$
\lim_{N\to\infty} \sup_{n} \sum_{i\ge N} |\langle x_n,e_i\rangle|^2 = 0.
$$
\end{lemma}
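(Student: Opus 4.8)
The plan is to reduce the statement to the standard fact that in a complete metric space a set is relatively compact if and only if it is totally bounded, and then to control total boundedness by splitting each $x_n$ into a finite-dimensional ``head'' and a ``tail'' governed by condition (ii). Throughout, I would fix a CONS $\{e_j\}$ and write $Q_N := \sum_{i<N} e_i\otimes e_i$ for the orthogonal projection onto $\mathrm{span}\{e_1,\dots,e_{N-1}\}$ and $P_N := \I - Q_N$ for its complementary tail projection, so that $\|P_N x\|^2 = \sum_{i\ge N}|\langle x,e_i\rangle|^2$ and both $P_N,Q_N$ are contractions on $\V$.

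For the ``only if'' direction, suppose $\{x_n\}$ is relatively compact. Then its closure is compact, hence bounded, which gives (i). For (ii), fix $\epsilon>0$; by total boundedness I cover $\{x_n\}$ by finitely many balls of radius $\epsilon$ with centers $y_1,\dots,y_k\in\V$. Since each $y_j$ lies in $\V$, its tail satisfies $\|P_N y_j\|\to 0$ as $N\to\infty$, so I may choose $N$ with $\max_j\|P_N y_j\|<\epsilon$. For any $n$, picking $y_j$ with $\|x_n-y_j\|<\epsilon$ and using that $P_N$ is a contraction yields $\|P_N x_n\|\le \|x_n-y_j\|+\|P_N y_j\|<2\epsilon$, whence $\sup_n\|P_N x_n\|\le 2\epsilon$. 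As $\epsilon$ is arbitrary this is exactly (ii), and the argument uses nothing special about the CONS, so it holds for every CONS.

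For the ``if'' direction, assume (i) and (ii) for some CONS. Given $\epsilon>0$, use (ii) to pick $N$ with $\sup_n\|P_N x_n\|<\epsilon$. The vectors $Q_N x_n$ lie in the finite-dimensional space $\mathrm{span}\{e_1,\dots,e_{N-1}\}$ and are bounded by (i), so by the Heine--Borel theorem the set $\{Q_N x_n\}$ is totally bounded and can be covered by finitely many $\epsilon$-balls. Since $\|x_n-Q_N x_n\|=\|P_N x_n\|<\epsilon$, enlarging these balls to radius $2\epsilon$ gives a finite cover of $\{x_n\}$. Hence $\{x_n\}$ is totally bounded, and because $\V$ is complete its closure is compact, i.e.\ $\{x_n\}$ is relatively compact. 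Combining the two directions shows that relative compactness, (i)+(ii) for some CONS, and (i)+(ii) for every CONS are all equivalent.

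The argument is essentially routine; the only point requiring a little care is the interplay of the quantifiers. One must verify that the necessity direction delivers (ii) for an \emph{arbitrary} CONS while the sufficiency direction needs it only for a \emph{single} CONS, so that the claimed equivalence genuinely covers both readings of ``some (any)''. The projection-contraction estimates and the finite-dimensional compactness reduction are standard and present no real obstacle.
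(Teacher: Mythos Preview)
Your proof is correct and follows the standard total-boundedness argument; the paper itself does not give a proof of this lemma but simply cites Proposition~30 in Melrose's lecture notes, so there is nothing to compare beyond noting that your route is exactly the classical one.
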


\begin{lemma}\label{l:trace} Let $0\le \A_n \le \B$, where ${\rm trace}(\B)<\infty$.  Then, there exists a subsequence $n'\to\infty$ and an operator $\A\in\mathbb T_+$, such that 
$$
\|\A_{n'} - \A\|_{\rm HS} \to 0
$$
where $\|\cdot\|_{\rm HS}$ stands for the Hilbert-Schmidt norm.
\end{lemma}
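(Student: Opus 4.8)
The plan is to regard each $\A_n$ as an element of the separable Hilbert space $\mathrm{HS}(\V)$ of Hilbert--Schmidt operators on $\V$ and to extract a convergent subsequence via Lemma \ref{l:r-compact}. Fix a CONS $\{e_j\}$ of $\V$ and recall that $\{e_i\otimes e_j:\ i,j\in\N\}$ is a CONS of $\mathrm{HS}(\V)$, so that the coordinates of an operator $\mathcal S$ in this basis are its matrix entries $\langle \mathcal S e_j,e_i\rangle$. First I would record the entrywise bound produced by positivity: writing $B_j:=\langle \B e_j,e_j\rangle$, so that $\sum_j B_j={\rm trace}(\B)<\infty$, the hypothesis $0\le \A_n\le \B$ together with Cauchy--Schwarz applied to $\langle \A_n e_j,e_i\rangle=\langle \A_n^{1/2}e_j,\A_n^{1/2}e_i\rangle$ gives
\begin{align*}
|\langle \A_n e_j,e_i\rangle|\le \langle \A_n e_j,e_j\rangle^{1/2}\langle \A_n e_i,e_i\rangle^{1/2}\le (B_iB_j)^{1/2}.
\end{align*}
In particular ${\rm trace}(\A_n)=\sum_j\langle\A_n e_j,e_j\rangle\le {\rm trace}(\B)$, so each $\A_n$ is trace class and hence Hilbert--Schmidt.

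Next I would verify the two conditions of Lemma \ref{l:r-compact} for $\{\A_n\}$, applying that lemma with the underlying Hilbert space taken to be $\mathrm{HS}(\V)$. Condition (i) is the uniform norm bound $\|\A_n\|_{\rm HS}^2=\sum_{i,j}|\langle \A_n e_j,e_i\rangle|^2\le \sum_{i,j}B_iB_j=({\rm trace}\,\B)^2<\infty$. Condition (ii), the uniform smallness of the coordinate tails, is immediate from the same estimate: after fixing any enumeration of the double-indexed CONS $\{e_i\otimes e_j\}$, the tail sums of $|\langle \A_n e_j,e_i\rangle|^2$ are dominated by the corresponding tails of the summable, $n$-independent array $(B_iB_j)$, which vanish uniformly in $n$. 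Lemma \ref{l:r-compact} then yields a subsequence $n'$ and an operator $\A\in \mathrm{HS}(\V)$ with $\|\A_{n'}-\A\|_{\rm HS}\to 0$, which is exactly the asserted convergence.

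The one point that genuinely requires care -- and the step I would treat as the main obstacle -- is checking that the Hilbert--Schmidt limit $\A$ in fact lies in $\mathbb T_+$, since $\mathbb T_+$ is not closed in the HS topology. Here I would use that $\|\cdot\|_{\rm op}\le\|\cdot\|_{\rm HS}$, so $\A_{n'}\to\A$ in operator norm and therefore $\langle \A f,f\rangle=\lim_{n'}\langle \A_{n'}f,f\rangle\ge 0$ for every $f\in\V$, giving $\A\ge 0$. Moreover HS convergence forces convergence of each diagonal entry, so $\langle \A e_j,e_j\rangle=\lim_{n'}\langle \A_{n'}e_j,e_j\rangle\le B_j$, whence ${\rm trace}(\A)=\sum_j\langle \A e_j,e_j\rangle\le {\rm trace}(\B)<\infty$; a positive operator of finite trace is trace class, so $\A\in\mathbb T_+$, completing the argument. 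Should one prefer to bypass Lemma \ref{l:r-compact}, the same conclusion follows by a direct diagonal extraction making every entry $\langle \A_n e_j,e_i\rangle$ converge, defining $\A$ through the limiting matrix, and invoking dominated convergence on $\N\times\N$ against the summable dominating array $4B_iB_j$ to obtain HS convergence.
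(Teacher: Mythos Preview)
Your proof is correct and takes a genuinely different, more direct route than the paper's.

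The paper works with the square roots $\CC_n:=\A_n^{1/2}$: it fixes a CONS $\{e_i\}$, applies Lemma~\ref{l:r-compact} \emph{in $\V$} to each column sequence $\{\CC_n e_j\}_n$ (using $\sum_{i\ge N}|\langle \CC_n e_j,e_i\rangle|^2\le\sum_{i\ge N}\|\CC_n e_i\|^2\le\sum_{i\ge N}B_i$), extracts a common subsequence along which $\CC_{n'}e_j\to \CC(e_j)$ for every $j$, extends $\CC$ linearly and checks it is bounded and Hilbert--Schmidt, proves $\|\CC_{n'}-\CC\|_{\rm HS}\to 0$, and finally obtains $\|\A_{n'}-\A\|_{\rm HS}\to 0$ from the algebraic identity $\CC_{n'}^2-\CC^2=(\CC_{n'}-\CC)(\CC_{n'}+\CC)+\CC(\CC_{n'}-\CC)+(\CC-\CC_{n'})\CC$.

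You instead apply Lemma~\ref{l:r-compact} once, \emph{in the Hilbert space $\mathrm{HS}(\V)$}, to the sequence $\{\A_n\}$ itself. The key device that makes this work is the Cauchy--Schwarz bound $|\langle \A_n e_j,e_i\rangle|\le(B_iB_j)^{1/2}$, which yields a summable, $n$-independent majorant $(B_iB_j)$ for the squared matrix entries and hence both the uniform HS-norm bound and the uniform tail condition at one stroke. This avoids the square-root detour, the separate column extraction, the extension-and-boundedness verification for $\CC$, and the final algebraic step. Your closing check that $\A\in\bbT_+$ (positivity via operator-norm convergence, trace-class via the diagonal bound $\langle\A e_j,e_j\rangle\le B_j$) is exactly what is needed, since HS-limits of trace-class operators need not be trace class in general. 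The paper's approach has the minor feature that it also produces the convergent square roots, but for the lemma as stated your argument is shorter and more transparent.
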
 \begin{proof} Fix a CONS $\{e_i\}$ and
consider the positive, self-adjoint Hilbert-Schmidt operators $\CC_n$ such that  $\A_n = \CC_n^2$.  
We have 
\begin{equation}\label{e:Cn-le-B}
\langle \A_n e_i,e_i\rangle = \| \CC_n e_i\|^2 \le \langle \B e_i, e_i\rangle =:B_i,\ \ \mbox{ where }\|\B\|_{\rm tr} = \sum_{i} B_i <\infty.
\end{equation}
We will first show that for all $e_j$ the sequence $\{\CC_n e_j\}$ is relatively compact in $\V$.  Indeed, 
$\sup_n \| \CC_n e_j\| <\infty$ and by the Cauchy-Schwartz inequality and \eqref{e:Cn-le-B}, we have
$$
\sup_n \sum_{i\ge N}  | \langle \CC_n e_j, e_i\rangle|^2  = \sup_n \sum_{i\ge N}  | \langle e_j, \CC_n e_i\rangle|^2 \le \sup_n \sum_{i\ge N}  \|\CC_n e_i\|^2 \le \sum_{i\ge N} B_i.
$$
Lemma \ref{l:r-compact} implies the desired relative compactness.  Thus, for some $n'\to\infty$, we have
$\|\CC_{n'} e_j - \CC(e_j)\|\to 0$ for some $\CC(e_j)\in \V$.  By passing to a further subsequence, we can assume that the latter
convergence holds for all $j$.  In view of \eqref{e:Cn-le-B}, we have 
\begin{equation}\label{e:C2}
\|\CC(e_j)\|^2 \le B_j \ \ \mbox{ and hence }\ \ \sum_j \| \CC(e_j)\|^2 \le \|\B\|_{\rm tr} <\infty.
\end{equation}

We shall argue next that $\CC$ can be extended by linearity to a bounded linear Hilbert-Schmidt operator and show 
that $\| \CC_n - \CC\|_{\rm HS}\to 0$. For $x =\sum_{i} x_j e_j \in \V$, consider
\begin{equation}\label{e:Cej}
\CC(x) := \sum_{j} x_j \CC(e_j).
\end{equation}
The latter series converges in $\V$.  Indeed, by the Cauchy-Schwartz inequality and \eqref{e:C2}, for any $J\subset \N$,
we have 
$$
\Big(\sum_{j\in J} \| x_j \CC(e_j)\| \Big)^{2}  \le \Big(\sum_{j\in J} |x_j|^2\Big)  \Big( \sum_{j\in J} \|\CC(e_j)\|^2\Big)\le \|\B\|_{\rm tr} \sum_{j\in J} |x_j|^2.
$$
This proves that \eqref{e:Cej} converges in norm and in fact implies that $\CC(x)$ is a bounded linear operator with 
$\|\CC\|_{\rm op} \le \|\B\|_{\rm tr}^{1/2}$.  Furthermore, clearly $\CC$ is self-adjoint and by Relation \eqref{e:C2} we have that 
$\CC$ is a Hilbert-Schmidt operator and $\CC^2$ is trace class.  

Observe now that
\begin{equation}\label{e:Cn'-C}
\|\CC_{n'} - \CC\|_{\rm HS}^2 = \sum_i \|(\CC_{n'} - \CC)e_i\|^2 \le \sum_{i=1}^{N} \| \CC_{n'} e_i - \CC e_i\|^2 + 4 \sum_{i>N} B_i,
\end{equation}
where we used \eqref{e:Cn-le-B}, \eqref{e:C2}, the triangle inequality for the norm in $\V$ and the simple bound 
$(a+b)^2 \le 2a^2 +  2b^2,\ a,b\ge 0$.  For every $\epsilon>0$, one can pick $N$ large enough such that $\sum_{i>N}B_i< \epsilon/8$. 
Holding $N$ fixed and appealing to the convergence $\|\CC_{n'} e_i - \CC e_i\|\to 0$, as $n'\to\infty$,
 one can make the first term in the right-hand side of
\eqref{e:Cn'-C} smaller than $\epsilon/2$, for all $n'$ large enough. This argument shows that 
$$
\|\CC_{n'} - \CC\|_{\rm HS} \to 0.
$$ 
Now,
$$
\CC_{n'}^2 - \CC^2 = (\CC_{n'}-\CC)(\CC_{n'}+\CC) + \CC(\CC_{n'}-\CC) + (\CC-\CC_{n'}) \CC.
$$  
Thus, $\|\A_{n'} -\A\|_{\rm HS} = \|\CC_{n'}^2 - \CC^2\|_{\rm HS} \to 0$.

\end{proof}

	\subsection{Proof of Theorem~\ref{th:IRF_k_spectral_operator_value_1}.}\label{suppsec:thm4.3_proof}

	We first prove (ii). For simplicity, we only illustrate proofs for $d=1$. The extension to the general case of $d$ is straightforward. Also, since the ``if\," part is obvious, it suffices to focus on the ``only if\," part by assuming that
	$\mathcal{K}: \bbR^d\mapsto \mathbb{T}$ is continuous and conditionally positive definite of degree $k$.
	
	Define 
	\begin{align} \label{e:C_mu}
	C_\mu(h) = \mathcal{K}(\mu*\wt\mu+h), \quad h\in\bbR, \mu\in\Lambda_k.
	\end{align}
	We first observe that $C_\mu$ is positive definite, i.e., 
	\begin{align*}
	\sum_{i=1}^n\sum_{j=1}^n c_i\bar c_j C_\mu(t_i-t_j) \ge 0 \quad
	\mbox{for all $c_i\in\bbC, t_i\in\bbR, i =1,\ldots, n$}.
	\end{align*}
	This follows simply from 
	\begin{align*}
	\sum_{i=1}^n\sum_{j=1}^n c_i\bar c_j C_\mu(t_i-t_j) 
	= \mathcal{K}\left((\mu*\lambda)*\wt{\mu*\lambda}\right) \ge 0,
	\end{align*}
	where $\lambda:=\sum_{i=1}^n c_i\delta_{t_i}$, since $\mu*\lambda\in\Lambda_k$
	and $\mathcal{K}$ is conditionally positive definite. 
	Thus, $C_\mu$  admits a spectral representation by Theorem \ref{th:operator_Bochner} 
	(Bochner's Theorem) where we denote by $\tau_\mu$ the spectral measure.
	
	Note also that
	\begin{align} \label{e:C_mu*nu}
	C_{\mu*\nu}(h) =  C_{\mu}(\nu*\wt\nu+h) = C_{\nu}(\mu*\wt\mu+h), \quad h\in\bbR, \mu, \nu\in\Lambda_k.
	\end{align} 
	If $\mu=\sum_{k=1}^m c_k \delta_{x_k}$ then
	\be
	C_\nu(\mu*\widetilde\mu+h) = \int \sum_{k=1}^m\sum_{l=1}^m c_k\bar c_le^{\ii (h+x_k-x_l)u}\tau_\nu(du)
	= \int e^{\ii hu} |\widehat\mu(u)|^2\tau_\nu(du)
	\ee
	where $\widehat\mu(u) = \int e^{\ii ux} \mu(dx)$. By \eqref{e:C_mu*nu} and the uniqueness of spectral measure, 
	\ben\label{e:equiv1}
	|\widehat\mu(u)|^2\tau_\nu(du) \equiv |\widehat\nu(u)|^2\tau_\mu(du), \quad \mu, \nu\in\Lambda_k.
	\een
	
	Let $\{\mu_n\}\in \Lambda_k$ and $\mu_0$ be as in Lemma~\ref{l:reference_measure} where 
	 $\mu_n\to\mu_0$ weakly and $\widehat\mu_0(u) \not = 0$ for $u\not=0$. Since $\mathcal{K}$ 
	 is continuous, we define a positive definite $C_\mu$ by taking a limit in \eqref{e:C_mu}, and conclude that
	 \eqref{e:equiv1} holds for any $\mu \in \Lambda_k$ and $\nu=\mu_0$.
	Define the measure
	\ben\label{e:sigma:supp}
	\chi(du) = \begin{cases} {\tau_{\mu_0}(du)\over |\widehat\mu_0(u)|^2}(1\wedge |u|^{2k+2}) & u\neq 0,\\
		0 & u=0.
	\end{cases}
	\een
	We will show $\chi$ is a $\T_+$-valued measure in Proposition~\ref{pp:K}. 
	For $\mu\in\Lambda_k$, let $S_\mu := \{ u: |\widehat{\mu}(u)|^2= 0 \}$, which contains $0$. 
	By Lemma~\ref{l:reference_measure}, $\widehat\mu_0(u)\not=0$ for $u\not=0$.
	Thus, for $u \not\in S_\mu$, both $\widehat\mu(u)$ and $\widehat\mu_0(u)$ are nonzero and hence 
	\eqref{e:equiv1} entails that
	\begin{align*}
	{\tau_{\mu_0}(du) \over |\widehat\mu_0(u)|^2} = {\tau_{\mu}(du) \over |\widehat\mu(u)|^2}.
		\end{align*}
	This implies that
	\begin{align}\label{e:RHS}
	\int e^{\ii hu} \tau_\mu(du) = \int  \frac{e^{\ii hu}|\widehat\mu(u)|^2}{1\wedge |u|^{2k+2}} \chi(du)
	+ \int_{u\in S_\mu} e^{\ii hu}\tau_\mu(du).
	\end{align}
	Since $\widehat\mu(z), z\in\mathbb{C}$, is an entire function, the set of zeros is countable by the 
	the Identity Theorem. Also, for $u\in S_\mu\setminus\{0\}$, we have $\widehat\mu(u) = 0$ and 
	$\widehat\mu_0(u) \not= 0$, which, by \eqref{e:equiv1} with $\nu=\mu_0$, entails $\tau_\mu(\{u\})=0$.
	Therefore, \eqref{e:RHS} becomes
	\ben\label{e:RHS1}
	\int e^{\ii hu} \tau_\mu(du) = \int \frac{e^{\ii hu}|\widehat\mu(u)|^2}{1\wedge |u|^{2k+2}} \chi(du)+\tau_{\mu}(\{0\}).
	\een
	Define
	\begin{align} \label{e:K0_1}
	\mathcal{K}_0(h)=\int \frac{e^{\ii hu} - I_B(u) P(uh)}{1\wedge |u|^{2k+2}} \chi(du),
	\end{align}
	where $B$ is a bounded interval containing $0$ and
	\be
	P(x)=\sum_{j=0}^{2k+1}  (\ii x)^{j}/j!.
	\ee
	We will establish in Proposition~\ref{pp:K} that $\mathcal{K}_0(h)$ is well defined for every $h$. 
	Notice that in \eqref{e:RHS1}, LHS $= C_\mu(h)$ and hence RHS is equal to
	\ben\label{eq:K_02C_mu}
	\begin{split}
		C_\mu(h) &=  \int \frac{\sum_{i=1}^m\sum_{j=1}^m c_i\bar c_j e^{\ii(h+x_i-x_j)u}}{1\wedge |u|^{2k+2}} \chi(du) +\tau_{\mu}(\{0\})\\
		&= \sum_{i=1}^m\sum_{j=1}^m c_i\bar c_j \int \frac{e^{\ii(h+x_i-x_j)u}-I_B(u) P(u(h+x_i-x_j))}{{1\wedge |u|^{2k+2}}}
		\chi(du) +\tau_{\mu}(\{0\})\\
		&= {\mathcal K}_0(\mu*\widetilde{\mu}+h) + \tau_{\mu}(\{0\}),
	\end{split}
	\een
	since $P(\mu*\widetilde\mu) = 0$. The rest of the proof focuses on the property of $\tau_{\mu}(\{0\})$. 
	
	For any $\mu,\nu\in\Lambda_k$, define
	\be
	C_{\mu,\nu}(h) = \mathcal{K}(\mu*\wt\nu+h)
	\ee
	By Lemma~\ref{l:identification_operator},
	\begin{align} \label{e:mu_nu_1}
	C_{\mu,\nu}(h) = \frac{\ii-1}{2}(C_\mu(h)+C_\nu(h))+\frac{1}{2}C_{\mu+\nu}(h)-\frac{\ii}{2}C_{\ii\mu+\nu}(h),
	\end{align}
	and
	\begin{align} \label{e:mu_nu_2}
	\begin{split}
	\mathcal{K}_0(\mu*\widetilde{\nu}+h) & = \frac{\ii-1}{2}(\mathcal{K}_0(\mu*\widetilde{\mu}+h)+\mathcal{K}_0(\nu*\widetilde{\nu}+h))\\
	& \hspace{1cm} +\frac{1}{2}\mathcal{K}_0((\mu+\nu)*\widetilde{(\mu+\nu)}+h)
	-\frac{\ii}{2}\mathcal{K}_0((\ii\mu+\nu)*\widetilde{(i\mu+\nu)}+h).
	\end{split}
	\end{align}
	Thus, it follows from \eqref{eq:K_02C_mu} that
	\begin{align} \label{e:mu_nu_3}
	C_{\mu,\nu}(h) - \mathcal{K}_0(\mu*\widetilde{\nu}+h) 
	=  \frac{\ii-1}{2}(\tau_\mu(\{0\})+\tau_\nu(\{0\}))+\frac{1}{2}\tau_{\mu+\nu}(\{0\})-\frac{\ii}{2}\tau_{\ii\mu+\nu}(\{0\}).
	\end{align}
	Define 
	\be
	F(h_1,h_2) = {\mathcal K}(h_1 - h_2) - \mathcal{K}_0(h_1-h_2),
	\ee
	and
	$$
	F(\mu, \nu) = \iint F(h_1,h_2) d\mu(h_1)d\nu(h_2), \quad \mu,\nu \in \Lambda_k.
	$$ 
	Applying \eqref{e:mu_nu_3}, we obtain
	\begin{align} \label{e:F_1}
	\begin{split}
	F(\mu+h_1,\nu+h_2) &= C_{\mu,\nu}(h_1-h_2)-\mathcal{K}_0(\mu*\widetilde{\nu}+h_1-h_2)\\
	&=\frac{\ii-1}{2}(\tau_\mu(\{0\})+\tau_\nu(\{0\}))+\frac{1}{2}\tau_{\mu+\nu}(\{0\})
	-\frac{\ii}{2}\tau_{\ii\mu+\nu}(\{0\}).
	\end{split}
	\end{align}
	Since this expression does not depend on $h_1,h_2$ for all $\mu, \nu\in \Lambda_k$,
	Lemma~\ref{l:bivariate_polynomial} can be invoked to give
	\begin{align} \label{e:F_2}
	F(h_1,h_2) = \sum_{l=1}^k G_l^{(1)}(h_1)(h_2)^l + \sum_{l=1}^k G_l^{(2)}(h_2)(h_1)^l + C_0(h_1-h_2)^{2k+2},
	\end{align}
	where the $G_l^{(i)}$ are arbitrary functions. Then, \eqref{e:F_1} and \eqref{e:F_2} entail that
	\begin{align*}
	F(\mu,\mu) = \tau_{\mu}(\{0\}) = C_0 w(\mu*\widetilde{\mu})
	\end{align*}
	where $w(h) = h^{2k+2}$ and $\mu$ is any arbitrary measure in $\Lambda_k$ such that 
	$w(\mu*\widetilde{\mu})\not = 0$. Thus, we conclude that
	\begin{align*}
	C_0 = {\tau_{\mu}(\{0\}) \over w\left(\mu*\widetilde{\mu}\right)},
	\end{align*}
	which does not depend on $\mu\in\Lambda_k$.
	It follows that	
	\begin{align*}
	\mathcal{K}_1(h) := \int \frac{e^{\ii hu} - I_B(u) P(uh)}{1\wedge \|u\|^{2k+2}} \chi(du) + C_0h^{2k+2}
	\end{align*}
	is an equivalent version of $\mathcal{K}$ in the sense that $\mathcal{K}_1(\mu*\wt\nu) = 
	\mathcal{K}(\mu*\wt\nu), \mu,\nu\in\Lambda_k$.
	To prove uniqueness of $\chi$, assume there is another $\chi'$ that can be used in the representation
	of the generalized covariance operator. Then, for any $\mu\in\Lambda_k$,  we have 
	\begin{align} \label{e:chi=chi'}
	\int \frac{e^{\ii hu}|\widehat{\mu}(u)|^2}{{1\wedge |u|^{2k+2}}}\chi(du) 
	\equiv \int \frac{e^{\ii hu}|\widehat{\mu}(u)|^2}{{1\wedge |u|^{2k+2}}}\chi'(du).
	\end{align}
	By Lemma~\ref{l:reference_measure} and weak convergence, \eqref{e:chi=chi'} holds for $\mu=\mu_0$
	as well. Since $\widehat\mu_0(u)\not=0$ for $u\not=0$, it follows immediately that $\chi=\chi'$. 
	This completes the proof of (ii).
	
	We next prove (i). The first statement is obviously true. The proof of the second
	statement follows the exact same line of arguments as the proof of (ii), except we let 
	\begin{align*}
	C_\mu(h) = C_Y(\mu+h,\mu) \quad\mbox{and}\quad  C_{\mu,\nu}(h) = C_Y(\mu+h, \nu).
	\end{align*}
	
	\ep
	
	\begin{proposition} $\chi(du)$ defined in \eqref{e:sigma:supp} is a finite $\T_+$-valued measure and \label{pp:K} $\mathcal{K}_0$  
	defined in \eqref{e:K0_1} is well defined and $\bbT$-valued function whenever $X$ is mean-square continuous.
	\end{proposition}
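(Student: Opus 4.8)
The plan is to read off both assertions from a single structural fact: that $\chi$ has a \emph{bounded nonnegative scalar density} with respect to the finite $\bbT_+$-valued measure $\tau_{\mu_0}$. First I would record the status of $\tau_{\mu_0}$. By construction $C_{\mu_0}(h)=\lim_n C_{\mu_n}(h)$ is positive definite, and the mean-square continuity of $X$ yields continuity of the cross-covariance (Proposition~\ref{supp:p:continuity-cross-cov}), hence continuity of $C_{\mu_0}$ at $0$ in $\|\cdot\|_{\rm tr}$. Bochner's Theorem (Theorem~\ref{th:operator_Bochner}) then supplies a unique finite $\bbT_+$-valued spectral measure $\tau_{\mu_0}$ with $\tau_{\mu_0}(\R)=C_{\mu_0}(0)\in\bbT_+$. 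Writing $g(u):=(1\wedge|u|^{2k+2})/|\widehat{\mu_0}(u)|^2$ for $u\neq0$ and $g(0):=0$, definition \eqref{e:sigma:supp} reads $\chi(du)=g(u)\,\tau_{\mu_0}(du)$, so the whole first assertion reduces to controlling the nonnegative scalar function $g$.

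The finiteness of $\chi$ is then exactly the claim that $g$ is bounded, and here I would invoke the quantitative properties of the reference measure from Lemma~\ref{l:reference_measure}: $\mu_0$ is engineered so that $\widehat{\mu_0}$ vanishes to order $k+1$ at the origin, so that $|\widehat{\mu_0}(u)|^2$ is comparable to $|u|^{2k+2}$ as $u\to0$ and thereby cancels the numerator $1\wedge|u|^{2k+2}\sim|u|^{2k+2}$ there, while $|\widehat{\mu_0}(u)|$ stays bounded away from $0$ for $|u|\ge1$, where the numerator equals $1$. Consequently $g$ is bounded on $\R\setminus\{0\}$. Since $g\ge0$ is bounded and measurable and $\tau_{\mu_0}$ is a finite $\bbT_+$-valued measure, the integration theory of Section~\ref{sec:integration} makes $\chi(A)=\int_A g\,d\tau_{\mu_0}$ a well-defined $\bbT_+$-valued set function; its $\sigma$-additivity follows from that of $\tau_{\mu_0}$ together with dominated convergence in trace norm, and $\|\chi(\R)\|_{\rm tr}\le\|g\|_\infty\,\|\tau_{\mu_0}(\R)\|_{\rm tr}<\infty$. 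With $\chi(\{0\})=0$ by definition, $\chi$ is a finite $\bbT_+$-valued measure carrying no mass at the origin.

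For the second assertion I would fix $h$ and bound the scalar integrand $\phi_h(u):=(e^{\ii hu}-I_B(u)P(uh))/(1\wedge|u|^{2k+2})$ of \eqref{e:K0_1}. On $B$, the Taylor estimate \eqref{e:exp-expansion} (with $m=2k+1$ and $z=uh$) gives $|e^{\ii hu}-P(uh)|\le|uh|^{2k+2}/(2k+2)!$, which cancels the denominator near $0$ and, together with the boundedness of $B$, shows $\phi_h$ is bounded on $B$ for fixed $h$. Off $B$ the indicator drops out, and since $B$ contains a neighborhood of $0$ the denominator $1\wedge|u|^{2k+2}$ is bounded below there while $|e^{\ii hu}|=1$, so $\phi_h$ is again bounded. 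Thus $\phi_h$ is a bounded measurable complex function, and $\mathcal K_0(h)=\int\phi_h\,d\chi$ is the integral of a bounded function against a finite $\bbT_+$-valued measure; decomposing $\phi_h$ into real/imaginary and positive/negative parts, the integration theory of Section~\ref{sec:integration} shows this integral exists in $(\bbT,\|\cdot\|_{\rm tr})$, so $\mathcal K_0$ is a well-defined $\bbT$-valued function.

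The main obstacle is the boundedness of $g$, i.e.\ the finiteness of $\chi$: it rests entirely on the two-sided control of $|\widehat{\mu_0}(u)|^2$ — vanishing of order exactly $2k+2$ at the origin and a strictly positive lower bound at infinity — which is precisely what the construction of $\mu_0$ in Lemma~\ref{l:reference_measure} is designed to deliver, and which could not be achieved by any single element of $\Lambda_k$ (whose Fourier transforms are trigonometric polynomials with recurrent near-zeros). Once this quantitative input is granted, both the finiteness of $\chi$ and the $\bbT$-valuedness of $\mathcal K_0$ are routine consequences of the bounded-density representation and the integration theory already developed.
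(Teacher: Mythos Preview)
Your argument hinges on the claim that the density $g(u)=(1\wedge|u|^{2k+2})/|\widehat{\mu_0}(u)|^2$ is bounded, and in particular that $|\widehat{\mu_0}(u)|^2$ is \emph{comparable} to $|u|^{2k+2}$ near the origin. Lemma~\ref{l:reference_measure} does not deliver this. It gives $\widehat{\mu_0}(u)\neq0$ for $u\neq0$ and a uniform lower bound on $|\widehat{\mu_0}|$ away from any neighborhood of~$0$, but says nothing about the precise order of vanishing at~$0$. Worse, the construction in its proof builds $\mu_0$ as a sum of \emph{symmetric} measures in $M_c(k)$, so $\widehat{\mu_0}$ is real and even. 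When $k$ is even, the first surviving Taylor coefficient of $\widehat{\mu_0}$ at~$0$ occurs at the even index $k+2$ (not $k+1$), forcing $|\widehat{\mu_0}(u)|^2=O(|u|^{2k+4})$ and hence $g(u)\gtrsim |u|^{-2}\to\infty$ as $u\to0$. So $g$ is genuinely unbounded in that case, and the ``bounded density'' route fails.

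The paper's proof avoids estimating $|\widehat{\mu_0}|$ near~$0$ altogether. It exploits the key identity \eqref{e:equiv1}, which says $\tau_{\mu_0}(du)/|\widehat{\mu_0}(u)|^2 = \tau_\mu(du)/|\widehat{\mu}(u)|^2$ away from zeros, and chooses the explicit $\mu=(\delta_1-\delta_0)^{*(k+1)}\in\Lambda_k$. For this $\mu$ one has $|\widehat{\mu}(u)|^2=(2(1-\cos u))^{k+1}\ge (u^2/2)^{k+1}$ on $B=(-\sqrt6,\sqrt6)$, so $\int_B |u|^{2k+2}\,\tau_\mu(du)/|\widehat{\mu}(u)|^2$ is bounded by a constant times $\tau_\mu(B)$, which is finite. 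The point is not that $g$ is bounded, but that the singular behavior of $1/|\widehat{\mu_0}(u)|^2$ near~$0$ is exactly compensated by the vanishing of $\tau_{\mu_0}$ there --- a compensation made visible only after switching to the auxiliary pair $(\mu,\tau_\mu)$. Your treatment of $\mathcal{K}_0$ via boundedness of $\phi_h$ is fine once $\chi$ is known to be finite; the gap is solely in the finiteness of $\chi$.
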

	\begin{proof}
		It suffices to show that there exists a neighborhood $B$ containing $0$ such that
		\ben\label{e:target1_sigma}
		\|\chi(B)\|_{\tr} = \left\|\int_B \frac{1\wedge |u|^{2k+2}}{|\widehat \mu_0(u)|^2} 
		\tau_{\mu_0}(du) \right\|_{\tr}<\infty,
		\een
		\ben\label{e:target2_sigma}
		\|\chi(B^c)\|_{\tr} = \left\|\int_{B^c} \frac{1\wedge |u|^{2k+2}}{|\widehat \mu_0(u)|^2} 
		\tau_{\mu_0}(du) \right\|_{\tr}<\infty
		\een
		\ben\label{e:target1}
		\left\|\int_{B}\frac{\left|e^{\ii hu} - P(uh)\right|}{1\wedge|u|^{2k+2}} \chi(du)\right\|_{\tr}< \infty,
		\een
		and
		\ben\label{e:target2}
		\left\|\int_{B^c}\frac{1}{1\wedge|u|^{2k+2}} \chi(du)\right\|_{\tr}< \infty.
		\een
		By Taylor's expansion,
		\be
		\left\|\int_B |e^{\ii hu}-P(uh)|\chi(du)\right\|_{\tr}\leq \left\| {|h|^{2k+2}\over (2k+2)!} \int_B |u|^{2k+2}\chi(du)\right\|_{\tr}
		\ee
		and
		\be
		2(1-\cos u) \ge u^2-u^4/12 > u^2/2, \ |u| < \sqrt{6}.
		\ee
		Thus, taking $B=(-\sqrt{6},\sqrt{6})$, $\mu=(\delta_1-\delta_0)^{*(k+1)}$, the $(k+1)$-convolution power of the measure 
		$\delta_1-\delta_0$, we have $|\wh \mu(u)|= | 1- e^{\ii u}|^{k+1}$, and since $\tau_{\mu_0}(\{0\}) =0$, we obtain
			\be
			&&\left\|\int_B \frac{1\wedge |u|^{2k+2}}{|\widehat \mu_0(u)|^2} \tau_{\mu_0}(du) \right\|_{\tr}
			\leq \left\|\int_B \frac{ |u|^{2k+2}}{|\widehat \mu_0(u)|^2}\tau_{\mu_0}(du) \right\|_{\tr}\\
			&& \leq \left\|\int_B \frac{ |u|^{2k+2}}{|\widehat \mu(u)|^2} \tau_{\mu}(du) \right\|_{\tr}\leq 
			\left\|\int_B \tau_{\mu}(du) \right\|_{\tr}<\infty,
			\ee
			and
			\be
			&&\left\|\int_B \frac{|e^{\ii hu}-P(uh)|}{1\wedge |u|^{2k+2}}\chi(du)\right\|_{\tr}\leq \left\| {|h|^{2k+2}\over (2k+2)!} \int_B \frac{|u|^{2k+2}}{1\wedge |u|^{2k+2}}\chi(du)\right\|_{\tr}\\
			&& \leq {|h|^{2k+2}\over 2(2k+2)!} \left\|\int_B \frac{|\widehat{\mu}(u)|^{2}}{1\wedge |u|^{2k+2}}\chi(du)\right\|_{\tr}<\infty,
			\ee
			where the last inequality follows from the facts that both sides of \eqref{e:RHS} are finite for any $h$ (and in particular for $h=0$) and $\mu\in\Lambda_k$.  
			This establishes \eqref{e:target1_sigma} and \eqref{e:target1}.

		To prove \eqref{e:target2}, take $\mu_0$  as the measure in Lemma~\ref{l:reference_measure}
		and in the proof of Theorem~\ref{th:IRF_k_spectral_operator_value_1}.	
		By Lemma~\ref{l:reference_measure}, we can pick
		$\delta$ such that $\inf_{u\in B^c}|\widehat{\mu}_0(u)|>\delta > 0$. Therefore, 
		\be
		\left\|\int_{B^c}\frac{(1\wedge |u|^{2k+2})}{|\widehat\mu_0(u)|^2} \tau_{\mu_0}(du)\right\|_{\tr}\leq \left\|\delta^{-2}\int_{B^c}\tau_{\mu_0}(du)\right\|_{\tr}<\infty,
		\ee
		\be
		\left\|\int_{B^c} \frac{1}{1\wedge|u|^{2k+2}} \chi(du)\right\|_{\tr}\leq \left\|\delta^{-2} \int \frac{|\widehat\mu_{0}(u)|^2}{1\wedge |u|^{2k+2}} \chi(du)\right\|_{\tr}<\infty
		\ee
		which proves \eqref{e:target2_sigma} and \eqref{e:target2}.
		
	\end{proof}

Let 
$$
M_f(k) = \{\mu=\mu_1*\cdots*\mu_{k+1}: \mu_i\in\Lambda_0\}.
$$
Also, define the larger class
\begin{align*}
M_c(k) &= \Big\{\mu_1*\cdots*\mu_{k+1}: 1(\mu_i) = 0, \\
 & \quad\quad\quad \text{ $\mu_i$'s are finite signed measures with 
	compact supports} \Big\}.
\end{align*}
Note that $M_c(k)$ contains discrete as well as diffuse measures. 
It is easy to see that any $\mu\in M_c(k)$ annihilates polynomials of degree $k$. 

\begin{lemma}\label{l:M_f2M_c+bound2} 
	\begin{enumerate}[label=(\roman*)]
	\item \label{l:M_f2M_c}	
	For any $k\in \mathbb{N}$, $M_f(k)$ is a dense set in $M_c(k)$ with respect to the weak topology.
	\item \label{l:bound2} 
	For any open set $O$ containing $0$, there exists $\mu\in M_c(k)$ and $\delta>0$ such that $\mu$ is symmetric, $\|\mu\|_{\rm TV}<\infty$, where $\|\mu\|_{\rm TV}$ stands for the total variation of $\mu$, and for any $u\in O^c$,
	$\widehat{\mu}(u)>\delta$.
	\item \label{l:completeness_M_f}
	Let $f(h):\R\mapsto \mathbb{T}$ be a continuous function in trace norm. If for some $k\in \mathbb{N}$, $f(\mu)=0$ for any $\mu\in M_f(k)$, then $f(\lambda)=0$ for any $\lambda\in\Lambda_{k}$.
	\end{enumerate}
	\end{lemma}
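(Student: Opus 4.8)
\emph{Plan.} Throughout I would take $d=1$, as in the surrounding proofs; the three parts are largely independent and I would treat them in turn.

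\emph{Part \ref{l:M_f2M_c}.} Fix $\mu=\mu_1*\cdots*\mu_{k+1}\in M_c(k)$, so each factor $\mu_i$ is a finite signed measure with compact support and $\mu_i(\R)=0$. The first step is to approximate each factor by a finitely supported mass--zero measure: partition $\mathrm{supp}(\mu_i)$ into finitely many small cells $\{C\}$, pick $x_C\in C$, and set $\nu_i^{(n)}=\sum_C \mu_i(C)\,\delta_{x_C}$. Then $\nu_i^{(n)}(\R)=\mu_i(\R)=0$, so $\nu_i^{(n)}\in\Lambda_0$; moreover $\|\nu_i^{(n)}\|_{\rm TV}\le\|\mu_i\|_{\rm TV}$, the supports remain in a fixed compact set, and a modulus--of--continuity estimate against bounded continuous test functions shows $\nu_i^{(n)}\to\mu_i$ weakly as the mesh shrinks. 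It then remains to pass to the limit inside the convolution, i.e.\ to show $\nu_1^{(n)}*\cdots*\nu_{k+1}^{(n)}\to\mu$ weakly; the left-hand side lies in $M_f(k)$, which proves density. For this I would first establish the auxiliary fact that if $\alpha_n\to\alpha$ and $\beta_n\to\beta$ weakly, with total variations uniformly bounded and all supports inside a single fixed compact, then $\alpha_n*\beta_n\to\alpha*\beta$ weakly, and then iterate over the $k+1$ factors. The auxiliary fact is proved by the splitting $\iint g(x+y)\,\alpha_n(dx)\beta_n(dy)-\iint g\,d(\alpha*\beta)=\int\psi_n\,d(\alpha_n-\alpha)+\iint g(x+y)\,\alpha(dx)(\beta_n-\beta)(dy)$, where $\psi_n(x)=\int g(x+y)\beta_n(dy)$; the second term vanishes by weak convergence of $\beta_n$, while for the first I would note that $\{\psi_n\}$ is uniformly bounded and equicontinuous (using uniform continuity of $g$ on compacts and the uniform total-variation bound), hence converges uniformly on compacts by Arzel\`a--Ascoli, which together with the uniform support/variation bounds and weak convergence of $\alpha_n$ kills the first term.

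\emph{Part \ref{l:bound2}.} This part is by explicit construction. Let $U$ be the uniform probability measure on $[-1,1]$, whose Fourier transform (in the convention $\widehat\mu(u)=\int e^{\ii ux}\mu(dx)$) is $\sin(u)/u$, and set $\nu:=\delta_0-U$. Then $\nu$ is a symmetric, compactly supported, mass--zero finite signed measure with $\widehat\nu(u)=1-\sin(u)/u\ge 0$, vanishing only at $u=0$. I would take $\mu:=\nu^{*(k+1)}\in M_c(k)$, which is symmetric with $\|\mu\|_{\rm TV}\le 2^{k+1}$ and $\widehat\mu(u)=\bigl(1-\sin(u)/u\bigr)^{k+1}$. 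Since $O$ is open and contains $0$, its complement $O^c$ is closed and contained in some $\{|u|\ge\eta\}$; on that set $\sin(u)/u$ is continuous, tends to $0$ at infinity, and is strictly below $1$, so it attains a supremum equal to some $1-\delta_0<1$. Hence $\widehat\mu\ge\delta_0^{\,k+1}$ on $O^c$, and any $\delta\in(0,\delta_0^{\,k+1})$ works.

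\emph{Part \ref{l:completeness_M_f}.} The key observation is that the $(k+1)$-st finite difference is realized by an element of $M_f(k)$: for $x,h\in\R$,
\[
(\delta_{x+h}-\delta_x)*(\delta_h-\delta_0)^{*k}=(\delta_h-\delta_0)^{*(k+1)}*\delta_x=\sum_{j=0}^{k+1}\binom{k+1}{j}(-1)^{k+1-j}\delta_{x+jh},
\]
and its factors $\delta_{x+h}-\delta_x,\ \delta_h-\delta_0$ all lie in $\Lambda_0$, so this measure belongs to $M_f(k)$. Evaluating $f$ against it gives $\Delta_h^{k+1}f(x)=0$ for all $x,h\in\R$, where $\Delta_h^{k+1}$ is the usual iterated forward-difference operator. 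I would then invoke the classical Fr\'echet characterization: a continuous function all of whose $(k+1)$-st differences vanish is a polynomial of degree $\le k$. In the $\mathbb{T}$-valued setting this follows by applying bounded linear functionals $\ell\in\mathbb{T}^*$ (so that $\Delta_h^{k+1}(\ell\circ f)=\ell(\Delta_h^{k+1}f)=0$) to reduce to the scalar statement, and then recovering the trace-class coefficients via finite differences/evaluation at $k+1$ points. Consequently $f$ is a $\mathbb{T}$-valued polynomial of degree $\le k$; since every $\lambda\in\Lambda_k$ annihilates polynomials of degree $\le k$, we conclude $f(\lambda)=\int f\,d\lambda=0$.

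\emph{Main obstacle.} The genuinely technical step is the sequential continuity of convolution under weak convergence in Part~\ref{l:M_f2M_c}: because both factors vary with $n$, one cannot simply quote a standard fixed-kernel result, and the equicontinuity/Arzel\`a--Ascoli argument (combined with the uniform total-variation and compact-support control that the discretization supplies) is what makes the limit pass through the convolution. Part~\ref{l:bound2} is elementary once the uniform-density candidate $\delta_0-U$ is chosen, and Part~\ref{l:completeness_M_f} reduces, via the finite-difference identity above, to the (classical, but vector-valued) Fr\'echet polynomial theorem, whose only subtlety is the passage from the scalarized statement back to the trace-class--valued $f$.
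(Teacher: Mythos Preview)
Your proposal is correct. The paper's own ``proof'' consists entirely of citations---Part~(i) to Lemma~2.5 of Sasv\'ari (2009), Part~(ii) to Lemma~3.2 of Sasv\'ari (2009) with a remark that the measure can be taken symmetric, and Part~(iii) to Lemma~C.9.4 of Sasv\'ari (2013)---so you are supplying self-contained arguments in place of those references. Your discretize-then-convolve approach in Part~(i), the explicit $(\delta_0-U)^{*(k+1)}$ construction in Part~(ii), and the finite-difference/Fr\'echet-polynomial argument in Part~(iii) are exactly the kind of direct proofs the cited lemmas would contain, and each step checks out (in particular $1-\sin(u)/u\ge 0$ with equality only at $u=0$, and the $(k+1)$-fold forward difference does lie in $M_f(k)$).
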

	
	\begin{proof}
	Part (i) follows readily from Lemma 2.5 of \cite{Sasvari:2009}. The proof of (ii) is almost exactly the same 
	as the proof for Lemma 3.2 of \cite{Sasvari:2009}.  One only needs to notice that $\mu$ in the proof of their original lemma can be made to be symmetric under our scenario.  Part (iii) is a trivial extension of Lemma C.9.4 of \cite{Sasvari:2013wn}.
	\end{proof}

\begin{lemma}\label{l:reference_measure}
	There exists a measure $\mu$ on $\R$ that satisfies
	\begin{enumerate}
		\item\label{item:property1} $\|\mu\|_{\rm TV}<\infty$.
		\item For any open set $O$ containing $0$, there exists $\delta>0$ such that for any $u\in O^c$, $\widehat\mu(u)>\delta.$
		\item There exists a sequence of $\{\mu_n\}\subset M_f(k)$ converging to $\mu$ weakly.
	\end{enumerate}
\end{lemma}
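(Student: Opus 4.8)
The plan is to first dispose of property (3) and thereby reduce the lemma to a single transform-positivity statement. By part \ref{l:M_f2M_c} of Lemma \ref{l:M_f2M_c+bound2}, $M_f(k)$ is weakly dense in $M_c(k)$; hence, if I can exhibit the reference measure $\mu$ as an element of $M_c(k)$ satisfying (1) and (2), then property (3) follows at once by taking $\{\mu_n\}\subset M_f(k)$ with $\mu_n\to\mu$ weakly. Moreover, since $M_c(0)$ — the finite signed measures with compact support and total mass zero — is a genuine linear space, and since $\nu^{*(k+1)}\in M_c(k)$ with $\widehat{\nu^{*(k+1)}}=\widehat{\nu}^{\,k+1}$ for any $\nu\in M_c(0)$, it suffices to produce one $\nu\in M_c(0)$ whose Fourier transform is strictly positive off the origin and bounded below away from the origin.

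The construction I would use is $\nu:=\delta_0-\rho$, where $\rho$ is a symmetric, compactly supported, absolutely continuous probability measure (e.g.\ the uniform law on $[-1,1]$). Then $\nu$ is a symmetric finite signed measure with compact support and $\nu(\R)=1-1=0$, so $\nu\in M_c(0)$ and $\mu:=\nu^{*(k+1)}\in M_c(k)$, with $\|\mu\|_{\rm TV}\le\|\nu\|_{\rm TV}^{\,k+1}<\infty$, giving (1). Its transform is $\widehat\mu(u)=(1-\widehat\rho(u))^{k+1}$.

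To verify (2) I would argue as follows. Because $\rho$ is a symmetric probability measure, $\widehat\rho$ is real with $\widehat\rho(0)=1$ and $|\widehat\rho(u)|\le 1$; because $\rho$ is non-lattice (absolutely continuous and nondegenerate), $\widehat\rho(u)<1$ for every $u\neq 0$, so $\widehat\mu(u)>0$ for all $u\neq 0$. For the uniform lower bound, fix $\epsilon>0$: the Riemann--Lebesgue lemma yields $M$ with $\widehat\rho(u)\le 1/2$ for $|u|\ge M$, while on the compact set $\{\epsilon\le|u|\le M\}$ continuity gives $\sup\widehat\rho<1$; combining, $\sup_{|u|\ge\epsilon}\widehat\rho(u)=1-\eta$ for some $\eta>0$, whence $\widehat\mu(u)\ge\eta^{\,k+1}=:\delta>0$ on $\{|u|\ge\epsilon\}$. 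Since any open $O\ni0$ contains some interval $(-\epsilon,\epsilon)$, we have $O^c\subset\{|u|\ge\epsilon\}$, which is exactly property (2). This $\nu$ is the single-measure, all-neighborhoods strengthening of what part \ref{l:bound2} of Lemma \ref{l:M_f2M_c+bound2} supplies for one fixed neighborhood.

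The main obstacle is precisely achieving the lower bound uniformly over the complements of \emph{all} neighborhoods of $0$ with a single fixed measure, i.e.\ preventing $\widehat\mu$ from decaying at infinity. This forces $\mu$ to carry a point mass: any purely absolutely continuous part would make $\widehat\mu\to 0$ by Riemann--Lebesgue, defeating the bound for unbounded $O^c$. At the same time, strict positivity off the origin forbids a lattice structure, which is why the compensating mass $\rho$ must be taken diffuse and non-lattice. The $\delta_0-\rho$ construction is the device that reconciles these two competing requirements — the atom supplies non-decay, the diffuse non-lattice part supplies strict positivity away from $0$ — and raising it to the convolution power $k+1$ keeps everything inside $M_c(k)$, so that density can deliver property (3).
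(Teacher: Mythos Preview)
Your proof is correct, but it takes a genuinely different route from the paper's. The paper does not produce a single explicit measure; instead it invokes part \ref{l:bound2} of Lemma~\ref{l:M_f2M_c+bound2} once for each shrinking neighborhood $V_n=[-1/n,1/n]$ to obtain symmetric $\mu_n\in M_c(k)$ with $\|\mu_n\|_{\rm TV}\le 2^{-n}$ and $\widehat{\mu_n}>0$ on $V_n^c$, and then sets $\mu:=\sum_n\mu_n$; property~(iii) comes from approximating each partial sum via part~\ref{l:M_f2M_c}. Your construction $\mu=(\delta_0-\rho)^{*(k+1)}$ is more elementary and explicit: it bypasses part~\ref{l:bound2} entirely, needing only part~\ref{l:M_f2M_c} for the density claim, and it identifies clearly the structural reason the reference measure must carry an atom (non-decay of $\widehat\mu$ at infinity) balanced against a diffuse non-lattice component (strict positivity off $0$). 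The paper's summation argument, on the other hand, is more modular---it works whenever an analogue of part~\ref{l:bound2} is available, without needing to exhibit a concrete $\rho$---which may be an advantage in more abstract settings such as higher dimensions or general locally compact abelian groups.
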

\begin{proof}
	For $V_n = [-1/n,1/n]$, by \ref{l:bound2} of Lemma~\ref{l:M_f2M_c+bound2}, there exists $\{\mu_n\}\subset M_c(k)$ be a set of symmetric measures satisfying $\|{\mu}_n\|_{\rm TV}\leq 1/2^n$ and $\mu_n(u)>0$ for $u\notin V_n$. Then define $\mu = \sum_n \mu_n$, which obviously satisfies properties (i) and (ii). Property (iii) is guaranteed by (i) of Lemma~\ref{l:M_f2M_c+bound2} and the fact $\sum_{n=1}^M \mu_n\to \mu$ weakly. 
\end{proof}

The following result can be easily obtained from Proposition 3 in \cite{Berschneider:2012de} and its proof.
\begin{lemma}\label{l:coef_poly}
	Let $f(h):\R\mapsto \mathbb{T}$. If for some $k\in \mathbb{N}$, $f(\lambda)=0$ for any $\lambda\in\Lambda_{k}$, then  $f$ has the following decomposition
	\be
	f(h) = \sum_{i=0}^{k}a_ih^i,
	\ee
	where $a_i = \sum_{j=0}^{M_{k}}b_{ij}f(x_j)$ for some constants $b_{ij}, x_j, i = 0,\dots, k+1, j = 0,\dots, M_{k}$.
	\end{lemma}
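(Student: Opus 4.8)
The plan is to recognize that this lemma is essentially an immediate corollary of the representation construction in \eqref{e:eval1}, so almost no new work is required. Fix $d=1$ as in the rest of the proof of Theorem~\ref{th:IRF_k_spectral_operator_value_1}, and recall from Section~\ref{s:2.1} that for every $t\in\R$ the signed measure
\begin{align*}
\lambda_t = \delta_t - (\delta_{t_1},\ldots,\delta_{t_{M_k}}) B^{-1}\bb(t)
\end{align*}
belongs to $\Lambda_k$, where $\bb(t)=(m_1(t),\ldots,m_{M_k}(t))^\top$ collects the monomials of degree at most $k$ and $B=(\bb(t_1),\ldots,\bb(t_{M_k}))$ is an invertible matrix built from the fixed nodes $t_1,\ldots,t_{M_k}$ (here $M_k=k+1$).

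The first step is to apply the hypothesis $f(\lambda)=0$ for all $\lambda\in\Lambda_k$ to this particular element $\lambda=\lambda_t$. Since $f(\lambda)=\int f\,d\lambda=\sum_i c_i f(t_i)$ for $\lambda=\sum_i c_i\delta_{t_i}$, this reads
\begin{align*}
0 = f(\lambda_t) = f(t) - (f(t_1),\ldots,f(t_{M_k})) B^{-1}\bb(t),
\end{align*}
and hence $f(t) = (f(t_1),\ldots,f(t_{M_k})) B^{-1}\bb(t)$ for every $t\in\R$. The remaining step is purely algebraic: because $B^{-1}$ has constant scalar entries and each coordinate of $\bb(t)$ is a monomial of degree at most $k$, the right-hand side is a $\mathbb{T}$-valued polynomial in $t$ of degree at most $k$. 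Writing $m_i(t)=t^{i-1}$ and collecting the coefficient of each power, I would obtain $f(t)=\sum_{i=0}^{k}a_i t^i$ with $a_i=\sum_{j=1}^{M_k}(B^{-1})_{ji}f(t_j)$, which is precisely the claimed form with $b_{ij}:=(B^{-1})_{ji}$ and $x_j:=t_j$. The general $d\ge 1$ case is identical, with the powers $\{t^i\}$ replaced by the monomial basis $\{m_i\}$ of degree at most $k$.

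There is no genuine analytic obstacle: the argument uses neither continuity nor measurability of $f$, relying only on the membership $\lambda_t\in\Lambda_k$ already verified in Section~\ref{s:2.1}. The one point deserving a line of care — and the only thing I would make explicit — is that the coefficients $(B^{-1})_{ji}$ are independent of $t$, which is clear since $B$ is assembled from the fixed nodes $t_j$ alone; this is what upgrades the identity from a merely $t$-dependent linear combination of the values $f(t_j)$ to a fixed-coefficient polynomial in $t$. (The off-by-one in the stated index ranges is harmless, as one may always pad with extra nodes carrying zero coefficients.)
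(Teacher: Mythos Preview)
Your argument is correct and entirely self-contained: applying the hypothesis to the measures $\lambda_t\in\Lambda_k$ from \eqref{e:eval1} immediately yields $f(t)=(f(t_1),\ldots,f(t_{M_k}))B^{-1}\bb(t)$, which is the desired polynomial representation with the stated coefficient structure.

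The paper does not actually give its own proof of this lemma; it simply cites Proposition~3 of \cite{Berschneider:2012de} and its proof. Your route is therefore not so much different as it is more explicit: you have noticed that the representation machinery already built in Section~\ref{s:2.1} of this paper furnishes the argument directly, without appeal to an external reference. This is a genuine improvement in self-containedness, since the construction of $\lambda_t$ and the verification that it lies in $\Lambda_k$ are already done in the paper, and the remaining step is one line of linear algebra. Your parenthetical remarks on the constancy of $B^{-1}$ and on the index padding are the right points of care, and nothing else is needed.
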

\begin{lemma}\label{l:bivariate_polynomial}
	Assume that the bivariate function $F(h_1,h_2)$ from $\bbR^2$ to $\mathbb{T}$ is continuous in trace norm.   If $F(h_1+\mu,h_2+\nu)$
	does not depend on $h_1,h_2$ for any two measures $\mu,\nu\in\Lambda_k$, then $F(h_1,h_2)$ has the following decomposition
	\be
	F(h_1,h_2) = \sum_{l=1}^k G_l^{(1)}(h_1)(h_2)^l + \sum_{l=1}^k G_l^{(2)}(h_2)(h_1)^l + C_0(h_1-h_2)^{2k+2},
	\ee
	where $C_0$ is an arbitrary operator in $\T$ and the $G_l^{(i)}$'s are arbitrary $\T$-valued functions. 
\end{lemma}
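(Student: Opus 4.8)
The plan is to reduce the bivariate statement to the one–variable characterization of polynomials and then patch the resulting pieces together. I would first isolate the following univariate fact: if $g\colon\R\to\T$ is continuous and $\int g(\cdot+s)\,\mu(ds)$ is constant for every $\mu\in\Lambda_k$, then $g$ is a polynomial of degree at most $k+1$. To prove it, specialize $\mu$ to the iterated–difference measure $\mu_r:=(\delta_r-\delta_0)^{*(k+1)}\in\Lambda_k$, so that $\Delta_r^{k+1}g$ is constant in its argument for every step $r>0$, whence $\Delta_r^{k+2}g\equiv0$ for all $r$. The classical characterization of polynomials by vanishing higher differences (valid for continuous $\T$–valued functions after composing with bounded linear functionals, the operator coefficients being recovered through the explicit formula of Lemma~\ref{l:coef_poly} applied at level $k+1$) then yields that $g$ is a polynomial of degree at most $k+1$.

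Next I would feed the hypothesis into this fact one variable at a time. For fixed $h_2$ and $\nu\in\Lambda_k$, put $g(h_1):=\int F(h_1,h_2+t)\,\nu(dt)$; since $\int g(h_1+s)\,\mu(ds)=\iint F(h_1+s,h_2+t)\,\mu(ds)\nu(dt)$ is constant in $h_1$ for every $\mu$, the univariate fact shows $h_1\mapsto\int F(h_1,h_2+t)\,\nu(dt)$ is a polynomial of degree $\le k+1$ whose coefficients $a_i(h_2,\nu)$ are, by the coefficient formula in Lemma~\ref{l:coef_poly}, continuous in $h_2$ and linear in $\nu$. The symmetric statement holds with $h_1,h_2$ and $\mu,\nu$ interchanged.

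The heart of the argument is extracting the single top–degree term. Integrating the degree–$(k+1)$ polynomial in $h_1$ against $\mu$ and using that $\mu\in\Lambda_k$ kills every power $(h_1+s)^i$ with $i\le k$, while $\int(h_1+s)^{k+1}\mu(ds)=\int s^{k+1}\mu(ds)=:m_{k+1}(\mu)$ is independent of $h_1$, one finds the constant value of the double integral equals $a_{k+1}(h_2,\nu)\,m_{k+1}(\mu)$; choosing $\mu$ with $m_{k+1}(\mu)\neq0$ forces $a_{k+1}$ to be independent of $h_2$. Since $a_{k+1}(\nu)$ is, up to a nonzero factor, a $(k+1)$st difference in $h_1$ of $F$ integrated against $\nu$, and that difference is a polynomial of degree $\le k+1$ in the second variable, integrating against $\nu\in\Lambda_k$ isolates its top coefficient, giving $a_{k+1}(\nu)=C_0\,m_{k+1}(\nu)$ for a fixed $C_0\in\T$. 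As $(h_1-h_2)^{2k+2}$ itself satisfies the hypothesis and has coefficient of $h_1^{k+1}$ equal to $\binom{2k+2}{k+1}(-1)^{k+1}m_{k+1}(\nu)$ after the same $\nu$–integration, subtracting a suitable multiple $C_0c_k(h_1-h_2)^{2k+2}$ produces $\tilde F:=F-C_0c_k(h_1-h_2)^{2k+2}$ whose top coefficients vanish in both variables.

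Finally I would show $\tilde F$ has the lower–order form. Vanishing of its top coefficient means $\int\Delta_{r,1}^{k+1}\tilde F(h_1,h_2+t)\,\nu(dt)=0$ for all $\nu,r$, so $h_2\mapsto\Delta_{r,1}^{k+1}\tilde F(h_1,h_2)$ integrates to zero against every $\nu\in\Lambda_k$ and is therefore a polynomial of degree $\le k$ in $h_2$; symmetrically $\Delta_{r',2}^{k+1}\tilde F$ is degree $\le k$ in $h_1$. Hence $\Delta_{r,1}^{k+1}\Delta_{r',2}^{k+1}\tilde F\equiv0$ for all $r,r'$, and the bivariate analogue of the polynomial characterization (again obtained by applying the univariate fact in each variable) gives $\tilde F(h_1,h_2)=\sum_{l}A_l(h_1)h_2^l+\sum_l B_l(h_2)h_1^l$ with arbitrary continuous coefficient functions, the $l=0$ terms being the single–variable functions that the hypothesis plainly permits. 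Recombining with the subtracted term yields the asserted decomposition, with $G_l^{(1)},G_l^{(2)}$ the $A_l,B_l$ and $C_0$ the operator found above. The main obstacle is precisely this extraction–of–the–top–term step: one must use the constancy (not merely the vanishing) of the mixed $(k+1,k+1)$ difference, together with both one–variable reductions, to prove that the only genuinely bivariate high–degree contribution is a scalar multiple of $(h_1-h_2)^{2k+2}$, and to keep careful track of the degree bookkeeping that separates the $i\le k$ or $j\le k$ monomials from the unique $h_1^{k+1}h_2^{k+1}$ term.
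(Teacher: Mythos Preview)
The proposal is correct and follows essentially the same strategy as the paper: reduce to the univariate polynomial characterization (the paper via Lemmas~\ref{l:M_f2M_c+bound2}(iii) and~\ref{l:coef_poly}, you via iterated differences, which amounts to the same thing), then isolate the single genuinely bivariate monomial $h_1^{k+1}h_2^{k+1}$ and absorb everything else into the one-variable sums. The only cosmetic difference is the order of operations: you extract $C_0$ and subtract $C_0c_k(h_1-h_2)^{2k+2}$ first and then decompose the remainder, whereas the paper first writes $F(h_1,h_2)=\sum_{i=0}^{k+1}G_{1,i}(h_1)h_2^i+\sum_{i=0}^{k}G_{2,i}(h_2)h_1^i$ directly and then shows that $G_{1,k+1}$ is itself a polynomial of degree $\le k+1$, after which reorganizing terms gives the claimed form.
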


\begin{proof}
	
	Fix $h_1,\mu\in\Lambda_k$. By ~\ref{l:completeness_M_f} of Lemma~\ref{l:M_f2M_c+bound2}, 
	we obtain for any $\nu\in\Lambda_{k+1}$, $F(h_1+\mu,h_2+\nu) =0$. Thus, by Lemma~\ref{l:coef_poly},
	\be
	F(h_1+\mu,h_2) =\sum_{i=0}^{k+1}G_{1,i}(h_1+\mu)h_2^i  \quad\mbox{where}\quad
	G_{1,i}(h) = \sum_{j=0}^{M_{k+1}}b_{ij}F(h,x_j).
	\ee
	Therefore, again by Lemma~\ref{l:coef_poly},  
	\be
	F(h_1,h_2) = \sum_{i=0}^{k+1}G_{1,i}(h_1)h_2^i + \sum_{i=0}^{k}G_{2,i}(h_2)h_1^i.
	\ee
	Notice that for any $\mu,\nu\in\Lambda_k$, $F(h_1+\mu,h_2+\nu) = G_{1,k+1}(h_1+\mu)C(\nu)$,
	where $C(\nu) = \int h^{k+1}\nu(dx)$ is some constant. Since, by assumption $G_{1,k+1}(h_1+\mu)$
	does not depend on $h_1$, we conclude that $G_{1,k+1}(\mu)=0$ for all $\mu\in M_f({k+1})$. Applying
	\ref{l:completeness_M_f} of Lemma~\ref{l:M_f2M_c+bound2} and Lemma~\ref{l:coef_poly} 
	again, we get $G_{1,k+1}(h_1) = \sum_{i=0}^{k+1}a'_ih_1^i$. 
	The result follows after reorganizing the terms.
	\end{proof}
	
	\subsection{Proof of Theorem~\ref{th:integral_representation_stationary}.} \label{supp:proof:Cramer-stationary}
	
	Recall the notation in Section~\ref{sec:appdix_integral_def}.
	Since
	\begin{align*}
	 {\rm trace} (X(s)\otimes X(t)) &=  \sum_j \langle X(s)\otimes X(t) e_j, e_j\rangle\\
	 &= \sum_j \langle X(s), e_j\rangle \overline{\langle X(t), e_j \rangle}
	 = \langle X(s), X(t)\rangle,
	 \end{align*}
	we have, by Bochner's Theorem, 
	\begin{align} \label{e:Bochner_fact}
	\langle X(s), X(t) \rangle_\Omega = \mathbb{E}\langle X(s),X(t)\rangle 
	= {\rm trace} (C(t-s)) = \int e^{\ii (t-s)^\top x} \|\mu\|_\tr (dx),
	\end{align}
	where $\mu$ the spectral measure of the stationary covariance operator, and $ \|\mu\|_\tr(A) :=  {\rm trace}(\mu(A))$.
	
	Let $\bbL$ be the linear span of the functions $\{e^{\ii t^\top\cdot}, t\in \R^d\}$. 
	First, note that $\bbL$ is dense in $\bbL^2(\R^d, \|\mu\|_\tr)$. To see this, assume 
	$g\in \bbL^2(\R^d, \|\mu\|_\tr)$ and 
	\be
	\int e^{\ii t^\top x}\overline{g(x)} \|\mu\|_\tr(dx) = 0 \mbox{ for all $t$}.
	\ee
	Then $\nu(dx):= \overline{g(x)}\|\mu\|_\tr (dx)$ is a finite measure and thus the above implies
	that $g$ corresponds to the zero element in $\bbL^2(\R^d, \|\mu\|_\tr)$.

	Define ${\mathbb H}(X) = \overline{{\rm span}(X(t), t \in \bbR^d)}$, the closed linear span in
	$\bbL^2(\Omega)$, and the linear mapping
	\be
	\CJ_X:  \sum_{k = 1}^{n}c_ke^{\ii t_k^\top x} \mapsto  
	\sum_{k = 1}^{n} c_k X(t_k), \ \bbL \mapsto \bbH(X).
	\ee
	By \eqref{e:Bochner_fact},
	\be 
	\|\CJ_X(f)\|_{\Omega} = \|f\|_{\bbL^2(\R^d, \|\mu\|_\tr)}, \ f\in\bbL.
	\ee
	By the denseness of $\bbL$, $\CJ_X$ can be readily extended to an isometric isomorphism, 
	still denoted as $\CJ_X$, from $\bbL^2(\R^d, \|\mu\|_\tr)$ to ${\mathbb H}(X)$.

	
	
	For  $f,g\in \bbL^2(\R^d, \|\mu\|_\tr)$ and $e\in \V$, we have
	\be
	\langle \mathbb{E}[\CJ_X(f)\otimes \CJ_X(g)] e, e \rangle = \int f(x)\overline{g(x)}\langle \mu(dx) e, e\rangle.
	\ee
	Indeed, this obviously holds or $f,g\in \bbL$, and, by the denseness of $\bbL$, the extension to
	$\bbL^2(\R^d, \|\mu\|_\tr)$ is straightforward.
	Then, applying Lemma~\ref{l:identification_operator}, it follows that
	\begin{align}\label{eq:equity_operator}
	\mathbb{E}[\CJ_X(f)\otimes \CJ_X(g)] = \int f(x)\overline{g(x)}\mu(dx)
	\hbox{ for all $f,g\in \bbL^2(\R^d, \|\mu\|_\tr)$}.
	\end{align}
	Define $\xi$ by 
	\be
	\xi(A) = \CJ_X(\mathbf{1}_{A}), \ A\in\mathcal{B}(\bbR^d).
	\ee
	By isometry, for any sequence of Borel sets $A_n$ satisfying $A_n\to \emptyset$, 
	\be
	\|\xi(A_n)\|_{\Omega} = \|\mathbf{1}_{A_n}\|_{\bbL^2(\R^d, \|\mu\|_\tr)} =  \|\mu(A_n)\|_\tr
	\to 0.
	\ee
	For any disjoint Borel sets $A$ and $B$, we have by \eqref{eq:equity_operator} that
	\be
	\mathbb{E}[\xi(A)\otimes \xi(B)] =0. 
	\ee
	Finally, by \eqref{eq:equity_operator}, for any Borel set $A$, 
	\be\label{eq:structure}
	\mathbb{E}[\xi(A)\otimes \xi(A)] =\mu(A).
	\ee
	Thus, we have proved that $\xi$ is a random orthogonal measure with control measure 
	$\mu$ (cf Definition \ref{def:orthogonal-measure})
	 and therefore $\int e^{itx}d\xi(x)$ is well-defined by the construction in Section~\ref{sec:appdix_integral_def}.

	 It remains to show 
	$$
	X(t) = \int_{\R^d} e^{\ii t^\top x} \xi(dx).
	$$
	By \eqref{eq:equity_operator}, for any Borel set $A$, 
	\be
	\mathbb{E}[X(t)\otimes \xi(A)] = \int e^{it^\top x} \mathbf{1}_{A}(x)\mu(dx). 
	\ee
	Next, observe that, for any fixed $s$, $\int_{\R^d} e^{\ii s^\top x} \xi(dx)\in \bbH(X)$ since it is the limit of
	$\CJ_X(g_n)$ where $g_n(x) = \sum_k c_k^n\mathbf{1}_{A_k^n}(x),n\in\N, A_{k}\cap A_{l}=\emptyset$ when $k\neq l$, are step functions converging to $e^{is^\top x}$ in $\bbL^2(\R^d, \|\mu\|_\tr)$ as $n\to \infty$.
	Consequently, 
	\be
	\mathbb{E}\left[X(t)\otimes  \int_{\R^d} e^{\ii s^\top x} \xi(dx)\right] 
	= \int e^{\ii (t-s)^\top x} \mu(dx) = \mathbb{E}[X(t)\otimes X(s)].
	\ee
	Thus, we have 
	\be
	\mathbb{E}\left[\sum_{i=1}^m c_iX(t_i)\otimes Y(s)\right]=0
	\ee
	for $Y(s):= \int_{\R^d} e^{\ii s^\top x} \xi(dx)-X(s)$, which is in $\bbH(X)$.	
	Since linear combinations $\sum_{i=1}^m c_iX(t_i)$ are dense in $\bbH(X)$, 
	we conclude that $\E (Y\otimes Y) = 0$ which implies that $Y=0$ in $\bbH(X)$.
	
	To prove the uniqueness of $\xi$, let $\eta$ be a random orthogonal measure on $\R^d$ 
	both with structure measure $\chi$. Then we will have
	\be
	\int e^{\ii t^\top x}\eta(dx) = \int e^{\ii t^\top x}\xi(dx),
	\ee
	and, applying again the denseness of $\bbL$, we obtain
	\be
	\int h(x)\eta(dx) = \int h(x)\xi(dx), \ \ \mbox{ almost surely},\ \forall h\in \bbL^2(\R^d, \|\mu\|_\tr).
	\ee
	Taking $h = \mathbf{1}_B$ for any Borel set $B$ immediately leads to the conclusion $\eta \equiv \xi$. 
	\ep
	
	\subsection{Proof of Theorem~\ref{th:integral_representation_IRF}.}
	\label{supp:th:integral_representation_IRF}
	
	We will focus on the ``only if" part since the ``if" part is obvious.
	By Theorem~\ref{th:integral_representation_stationary}, for any $\lambda\in\Lambda_k$, 
	\be
	Y(\lambda+t) = \int e^{\ii t^\top x} \xi_{\lambda}(dx)
	\ee
	for an orthogonal random measure $\xi_{\lambda}$ with control measure $\tau_{\lambda}$.
	Let $\mu_0$ be the measure defined in the proof of Theorem~\ref{th:IRF_k_spectral_operator_value_1}.
	By the proof of Theorem~\ref{th:IRF_k_spectral_operator_value_1}, we also have
	\be
	Y(\mu_0+t) = \int e^{\ii t^\top x} \xi_{\mu_0}(dx)
	\ee
	for an orthogonal random measure
	$\xi_{\mu_0}$ with control measure $\tau_{\mu_0}$. Thus,
	\be
	Y(\lambda*\mu_0+t) = \int e^{\ii t^\top x} \hat\mu_0(x)\xi_{\lambda}(dx)
	= \int e^{\ii t^\top x} \hat\lambda(x) \xi_{\mu_0}(dx).
	\ee
	By the uniqueness of the orthogonal random measure in the integral representation of a stationary
	process, we have
	\begin{align} \label{e:equiv_rm}
	\hat\mu_0(x)\xi_{\lambda}(dx) \equiv \hat\lambda(x) \xi_{\mu_0}(dx).
	\end{align}	
	By \eqref{e:RHS1}, for any $\mu=\mu_0$ and $\lambda$, we have
	\be
	\int \frac{e^{\ii t^\top x}|\widehat{\mu}(x)|^2}{1\wedge|x|^{2k+2}} \chi(dx) 
	= \int e^{\ii t^\top x}\tau_\mu(dx)-\tau_\mu(\{0\}). 
	\ee
	Now, define $\xi(\{0\}) = 0$, and for any Borel set $B$,
	\be
	\xi(B) = \int_{\R\backslash \{0\}} \mathbf{1}_B(x)\frac{1\wedge |x|^{k+1}}{\hat\mu_0(x)}\xi_{\mu_0}(dx). 
	\ee
	Then $\xi$ is a well-defined orthogonal random measure with control measure $\chi$. Set
	\be
	U(t) := Y(t) - \int \frac{e^{\ii t^\top x} - P(t^\top x)}{1\wedge|x|^{k+1}} \xi(dx),
	\ee
	where $P(x) = \sum_{j=0}^{k} (\ii x)^j/j!,\ x\in \R$.
	The integral exists by a similar argument to that of Proposition~\ref{pp:K}. Therefore, for any $\lambda\in \Lambda_k$, 
	\begin{align}\label{eq:polynomial_Yt}
	\begin{split}
	U(\lambda +t) & = \int e^{\ii t^\top x}\xi_{\lambda}(dx) - \int_{\R^d\backslash\{0\}}e^{\ii t^\top x}
	\frac{\hat\lambda(x)}{1\wedge|x|^{k+1}} \xi(dx) \\ 
	& = \int e^{\ii t^\top x}\xi_{\lambda}(dx) - \int_{\R^d\backslash\{0\}}e^{\ii t^\top x}
	\frac{\hat\lambda(x)}{\hat\mu_0(x)} \xi_{\mu_0}(dx) \\
	& = \int e^{\ii t^\top x}\xi_{\lambda}(dx) - \int_{\R^d\backslash\{0\}}e^{\ii t^\top x} \xi_{\lambda}(dx) \\
	& = \xi_\lambda(\{0\})
	\end{split}
	\end{align}
	where we applied \eqref{e:equiv_rm}.
	By a trivial extension of the proof of Lemma C.9.4 in \cite{Sasvari:2013wn}, it follows that 
	$U$ is a random polynomial of degree no greater than $k+1$. The uniqueness of $\xi$ and Properties (iii) of the theorem 
	are also easily established. \ep

\section{Gaussian elements in a Hilbert space.} \label{sec:supp:Gaussian}

Gaussian distributions in general Banach spaces have been studied extensively (see e.g.\ the monographs of \cite{ledoux:talagrand:1991} and 
\cite{kwapien:woyczynski:1992}, and the references therein.)
 In this section, we provide a simple self-contained review of some basic results in the special case when the Banach space 
 is a {\em separable Hilbert space} $\V$. The treatment may be of independent interest since it does not use advanced tools.

 \subsection{Real and complex Hilbert spaces.} \label{sec:real-irreversible}
 
 It is well-known that the mean and covariance structure determine the distribution of {\em real} Gaussian vectors.  Perhaps surprisingly, the
 mean and covariance alone are not enough to determine the distribution of complex Gaussian vectors. To clarify the issue, we start with a simple 
 example in the case $\V = \C^m$. 

\begin{example}\label{ex:supp:complex-Gauss}
 Let $Z = X+ \ii Y$, where $X$ and $Y$ are independent and Normally distributed random vectors in $\R^m$ having zero means and 
 variance-covariance matrices $\Sigma_X = \E [ X X^\top]$ and $\Sigma_Y = \E [ Y Y^\top]$.  Observe that the covariance operator of $Z$ in $\C^m$,
 in the standard basis, can be expressed as the matrix:
 $$
{\cal C}_Z =  \E [ Z\otimes Z] \equiv \E [ Z \overline Z^\top] = \Sigma_X  + \Sigma_Y.
 $$
 On the other hand, the real vector $\wt Z:= X+Y$ has the same covariance operator in $\C^m$ as $Z$.  This clearly shows that the distribution of 
 zero-mean Gaussian random elements in complex Hilbert spaces are {\em not determined} by their covariance operators alone.  To determine their distribution,
 one needs the additional information about the cross-covaraince between the real and imaginary parts, that is, the pseudo-covariance
 $
 {\cal C}_{Z,\overline Z} = \E [Z\otimes \overline Z].
 $
 \end{example}
 
 The purpose of this section is to clarify how one can deal with this issue in an abstract complex Hilbert space.  We start by recalling and expanding on the 
 notion of a real and imaginary part in a complex Hilbert space outlined in Section \ref{sec:real-complex} of the main paper.\\

{\em Real and imaginary parts in a complex Hilbert space.} Note that in an abstract complex Hilbert space (over the field $\bbC$) the notion of a real and imaginary part 
is not well-defined unless one fixes a basis. Let ${\cal E}:=\{e_j,\ j\in\N\}$ be a {\em fixed} CONS of $\V$.  Then one can postulate that the CONS ${\cal E}$ is
{\em real} and for each $z = \sum_{j} z_j e_j\in \V$, with coordinates $z_j:=\langle z,e_j\rangle,$ we can define
\begin{equation}\label{e:supp:real-complex}
\Re(z) \equiv \Re_{\cal E}(z):= \sum_{j} \Re(z_j) e_j\ \ \ \mbox{ and }\ \ \ \Im(z) \equiv \Im_{\cal E}(z):= \sum_{j} \Im(z_j) e_j,
\end{equation}
as the {\em real} and {\em imaginary} parts of $z$, relative to the CONS ${\cal E}$.  (Should one change the basis ${\cal E}$ the notions of real and imaginary part will change.)
Notice that $\V_\R :=\{ z\in\V\, :\, \Im(z)=0\}$ is invariant to addition and multiplication by real scalars and it becomes a real Hilbert space, with the inner product inherited from 
$\V$.  All elements of $\V$ that belong to $\V_\R$ will be referred to as real.

For $z\in \V$, we shall write $z = \Re(z) + \ii \Im(z)$ and naturally define the {\em complex conjugate} $\overline{z} := \Re(z) - \ii \Im(z)$.   The complex conjugate operation
as well as the real and imaginary part operators extend to $\V$-valued random elements in a straightforward manner
and we shall say that $x\in\V$ is real if $x\in \V_\R$, i.e., if its imaginary part is zero.  

The complex conjugate of a linear operator ${\cal A}:\V\to \V$ is defined as:
$
\overline \A(x):= \overline {\A(\overline x)},\ \ x\in \V.
$
This implies that $\overline {\A(x)} = \overline \A (\overline x)$, the operator $\overline \A$ is also {\em linear} and one can 
define the real and imaginary parts of $\A$ in as:
\begin{equation}\label{e:re-im-operator}
\Re(\A):=\frac{\A + \overline \A}{2}\ \ \ \mbox{ and }\ \ \ \Im(\A):= \frac{\A-\overline \A}{2\ii}. 
\end{equation}
Thus, $\A = \Re(\A) + \ii \Im(\A)$ and the usual operations with complex numbers and vectors extend 
to the operator Banach algebra over the complex Hilbert space $\V$.  We shall say that an operator $\A$ is {\em real} if
$\A = \Re(\A)$ (i.e., $\Im(\A) = 0$).  Observe that this is the case if and only if $\A(\V_{\R}) \subset \V_\R$.\\	

{\em Complexification of a real Hilbert space. } On the other hand, suppose that one starts with a {\em real} Hilbert space $\V_\R$ with inner product 
$\langle\cdot,\cdot\rangle_{\V_\R}$.  Then to be able to apply the results in Sections \ref{sec:Bochner} and \ref{ss:spectral_notation_1}, one needs to extend $\V_\R$ to a 
Hilbert space over $\C$.  This can be done with the standard method of {\em complexification}.  Namely, consider the set
 $\V:= \V_\R + \ii \V_\R$ of all pairs of $(x,y)\in \V_\R\times \V_\R$ 
written as $z:= x+\ii y,\ x,y\in\V_\R$, where by definition 
 \begin{equation}\label{e:real-complex-constructive}
 \Re(z):=x\ \ \ \mbox{ and }\ \ \ \Im(z):= y
 \end{equation}
 are the real and imaginary parts of $z$.  The complex conjugate operation is $\overline{z} := \Re(z) -\ii \Im(z)$ and the scalar multiplication is 
 $(\alpha + \ii \beta)\cdot z := (\alpha\cdot x - \beta\cdot y)+ \ii (\alpha\cdot y + \beta\cdot x),\ \ \alpha,\beta\in\R,\ x,y\in\V_\R$.
 The inner product in $\V$ is defined as  $\langle z,z'\rangle := (\langle x,x'\rangle_{\V_\R}+\langle y,y'\rangle_{\V_\R}) + \ii (\langle y,x'\rangle_{\V_\R}- \langle x,y'\rangle_{\V_\R})$, 
 where $z'=x'+\ii y',\ x',y'\in\V_\R$.   Thus, it is easy to see that $\V$ becomes a Hilbert space over $\C$ and $\V_\R =\{ z\, :\, \Im(z) = 0\}$ is trivially 
 embedded in $\V$.  The complex conjugate operation
as well as the real and imaginary part operators extend to $\V$-valued random elements in a straightforward manner
and we shall say that $x\in\V$ is real if $x\in \V_\R$, i.e., if its imaginary part is zero. 

If one fixes a CONS ${\cal E}:=\{e_j\}$ in $\V_\R$, then it readily follows that ${\cal E}$ this is also a CONS of $\V$
 and the definition \eqref{e:supp:real-complex} of the real an imaginary parts operators relative to ${\cal E}$ coincides with \eqref{e:real-complex-constructive}.  Thus,
 the notions of real, imaginary parts, as well as complex conjugate of $\V$-valued random elements and linear operators ${\cal A}: \V\to\V$ are exactly the
 same as outlined in \eqref{e:re-im-operator} above, for example.

 \subsection{Fundamentals.}

We shall assume that as in Section \ref{sec:real-irreversible}, $\V$ is a complex Hilbert space with a fixed real CONS
so that we can consider the complex conjugates, real, and imaginary parts of the elements and operators on $\V$.  Sometimes, it is
helpful to view $\V$ as $\V_\R + \ii \V_\R$ obtained by the method of complexification from the real Hilbert space $\V_\R$.
 
\begin{definition}\label{def:Gaussian-dist} A $\V$-valued random element $X$ is said to be Gaussian if $\langle X,f\rangle$ is a (complex)
Gaussian random variable, for all $f\in \V$.  A random variable in $\C$ is said to be complex Gaussian if its real and imaginary parts are 
jointly real Gaussian variables.
\end{definition}

We shall need the following simple but fundamental result, which entails well-known classic bounds on the tail behavior of the norm 
of a Gaussian vector (see Remark \ref{rem:Fernique} below).

\begin{proposition}\label{prop:zero-one} Let $Z_j,\ j\in\N$ be jointly Gaussian zero-mean real random variables
with ${\rm Var}(Z_j) = \sigma_j^2$ and let
$$
\xi := \sum_{j\in\N} Z_j^2\ \ \mbox{ and }\ \ \sigma^2:= \sum_{j\in\N} \sigma_j^2,
$$
which may take infinite values. Then, the following are equivalent:\\

{\em (i)} $\sigma^2 \equiv \sum_{j\in\N} \sigma_j^2 <\infty$\\

{\em (ii)} $\pr(\xi <\infty) = 1$\\

{\em (iii)} $\pr( \xi <\infty)>0$.\\

If one (and hence all) of the above conditions holds, then
\begin{equation}\label{e:MGF-bounds-for-Gauss-norm-2}
 e^{\theta \sigma^2} \le \E \Big[ e^{\theta \xi } \Big] \le  \frac{1}{\sqrt{1-2 \theta \sigma^2}},\ \ \ \mbox{ for all } -\infty< \theta<\frac{1}{2\sigma^2}.
\end{equation}
We emphasize that the above inequality is valid, in particular, for all {\em negative} $\theta$.
\end{proposition}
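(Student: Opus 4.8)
The plan is to exploit the structure of $\xi = \sum_{j} Z_j^2$ as an increasing limit of finite partial sums and to reduce the problem to a computation for jointly Gaussian vectors, where everything is finite-dimensional. First I would establish the implications (i) $\Rightarrow$ (ii) $\Rightarrow$ (iii) and then close the loop with (iii) $\Rightarrow$ (i). The easy direction is (ii) $\Rightarrow$ (iii), which is immediate. For (i) $\Rightarrow$ (ii), I would set $\xi_n := \sum_{j=1}^n Z_j^2$, note that $\xi_n \uparrow \xi$ almost surely, and argue that $\E[\xi_n] = \sum_{j=1}^n \sigma_j^2 \le \sigma^2 < \infty$; by the monotone convergence theorem $\E[\xi] = \sigma^2 < \infty$, so $\xi < \infty$ almost surely. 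This gives (ii).

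The main content is the converse (iii) $\Rightarrow$ (i), where a zero-one phenomenon must be extracted. Here I would use the moment generating function bound for {\em negative} $\theta$. For any finite $n$, the vector $(Z_1,\dots,Z_n)$ is jointly Gaussian with covariance matrix $\Sigma_n$; diagonalizing $\Sigma_n$ produces independent Gaussians $W_1,\dots,W_n$ with variances equal to the eigenvalues $\tau_1,\dots,\tau_n$ of $\Sigma_n$, and $\xi_n = \sum_j W_j^2$ in distribution. Using the standard scalar identity $\E[e^{\theta W^2}] = (1 - 2\theta\tau)^{-1/2}$ valid for $\theta < 1/(2\tau)$ (in particular all $\theta \le 0$), independence yields
\begin{align*}
\E\big[ e^{\theta \xi_n} \big] = \prod_{j=1}^n (1 - 2\theta \tau_j)^{-1/2}.
\end{align*}
Since $\sum_j \tau_j = \operatorname{trace}(\Sigma_n) = \sum_{j=1}^n \sigma_j^2$, I would apply elementary inequalities relating $\prod(1-2\theta\tau_j)^{-1/2}$ to $e^{\theta \sum \tau_j}$ to obtain, for each fixed $\theta < 1/(2\sigma^2)$ with $\sigma^2$ replaced by a suitable partial-sum bound, the two-sided estimate in \eqref{e:MGF-bounds-for-Gauss-norm-2} at the level of $\xi_n$. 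The lower bound $e^{\theta\sum\tau_j} \le \prod(1-2\theta\tau_j)^{-1/2}$ follows from $1+x \le e^x$ applied to $x = -2\theta\tau_j$ (so that $(1-2\theta\tau_j)^{-1/2} \ge e^{\theta\tau_j}$ for $\theta \le 0$, and similarly on the relevant range); the upper bound follows from the reverse comparison on the same range.

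For (iii) $\Rightarrow$ (i), I would take a {\em negative} $\theta$, say $\theta = -1$. By Fatou's lemma (or monotone convergence applied to $e^{-\xi_n} \downarrow e^{-\xi}$) together with the per-$n$ lower bound $\E[e^{-\xi_n}] \ge \prod_{j=1}^n (1+2\tau_j)^{-1/2}$ — wait, here the direction must be chosen carefully: for $\theta<0$ the function $e^{\theta x}$ is bounded and decreasing, and $\E[e^{\theta\xi_n}] \ge \E[e^{\theta\xi}]$ with $\E[e^{\theta\xi_n}] = \prod_{j=1}^n(1-2\theta\tau_j)^{-1/2} = \prod_{j=1}^n(1+2|\theta|\tau_j)^{-1/2}$. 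If $\sigma^2 = \infty$, then $\sum_j \tau_j^{(n)} \to \infty$ along the partial traces, forcing $\prod_j (1+2|\theta|\tau_j)^{-1/2} \to 0$, hence $\E[e^{\theta\xi}] = 0$, which is impossible since condition (iii) guarantees $\pr(\xi<\infty)>0$ and thus $e^{\theta\xi}>0$ on a set of positive probability. This contradiction yields $\sigma^2 < \infty$, i.e.\ (i). Finally, the stated bound \eqref{e:MGF-bounds-for-Gauss-norm-2} follows by passing to the limit $n\to\infty$ in the two-sided $\xi_n$ estimate using monotone convergence on both sides. The main obstacle I anticipate is the bookkeeping in the eigenvalue comparison and ensuring the inequalities hold uniformly on the prescribed $\theta$-range so that the limit passage is legitimate; the convergence arguments themselves are routine once the scalar Gaussian moment generating function identity is in hand.
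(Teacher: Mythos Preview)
Your overall strategy matches the paper's: diagonalize the covariance of $(Z_1,\dots,Z_n)$, compute the Laplace/moment generating function of $\xi_n$ as a product over eigenvalues, bound that product in terms of the trace $v_n=\sum_{j\le n}\sigma_j^2$, and pass to the limit. The implications (i)$\Rightarrow$(ii)$\Rightarrow$(iii) and the contradiction argument for (iii)$\Rightarrow$(i) via $\E[e^{-\xi}]>0$ are exactly as in the paper.

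The one place where your write-up is vague is the phrase ``the upper bound follows from the reverse comparison on the same range.'' There is no literal reverse of $1+x\le e^x$, so this needs to be made precise. What you actually need is
\[
\prod_{j=1}^n(1-2\theta\tau_j)\ \ge\ 1-2\theta\sum_{j=1}^n\tau_j \;=\; 1-2\theta v_n,
\]
valid for all $\theta<1/(2v_n)$ (for $\theta\ge 0$ this is the Weierstrass product inequality $\prod(1-a_j)\ge 1-\sum a_j$ with $a_j\in[0,1)$; for $\theta<0$ it is $\prod(1+a_j)\ge 1+\sum a_j$ with $a_j\ge 0$). This immediately gives $\E[e^{\theta\xi_n}]\le (1-2\theta v_n)^{-1/2}$ and, for $\theta<0$, forces $\E[e^{\theta\xi_n}]\to 0$ when $v_n\to\infty$. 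The paper obtains the same bound by a slightly different device: it observes that $\sum_j\log(1-2\theta\tau_j)$ is concave in $(\tau_j)$ and hence attains its minimum over the simplex $\{\tau_j\ge 0,\ \sum\tau_j=v_n\}$ at an extreme point, yielding $\log(1-2\theta v_n)$. Either argument closes the gap; your lower bound via $1-x\le e^{-x}$ is fine (the paper uses Jensen instead), and your limit passages by monotone/dominated convergence are correct.
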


Before we give the proof, we make a few comments on the important consequences of this result.

\begin{remark} Proposition \ref{prop:zero-one} is a type of zero-one law for zero-mean 
Gaussian sequences stating that their squares are summable with probability zero or one. Many general
zero-one laws for Gaussian sequences exist \citep[see, e.g.,][] {kallianpur:1970}.
\end{remark}

\begin{remark}\label{rem:Fernique}
Let $\{e_j,\ j\in \N\}$ be a CONS of the Hilbert space $\V$.  Then, provided one (and hence all) of
the conditions in Proposition \ref{prop:zero-one} hold, we have that
$$
X:= \sum_{j\in\N} Z_j e_j
$$
is a well-defined random element in $\V$.  Since $\V$ is separable, all probability distributions on
$\V$ are tight (cf Ulam's tightness theorem, or more generally, Theorem 11.5.1 in \cite{dudley:1989}). Thus,
$X$ is {\em automatically} a Radon random element in the terminology in Section 2 of \cite{ledoux:talagrand:1991}.  That is,
for every $\epsilon>0$, there is a compact set ${\mathcal K}_\epsilon \subset E$, such that $\P(X\in {\mathcal K}_\epsilon) \ge 1-\epsilon$.

In this context, Proposition \ref{prop:zero-one} implies that every Gaussian random element $X$ in $\V$ is square integrable, in
the sense that $\E [\|X\|^2]<\infty$.  Moreover, the upper bound in \eqref{e:MGF-bounds-for-Gauss-norm-2} recovers the 
well-known result that $\E[ e^{\theta \|X\|^2} ] <\infty$, for all $\theta < 1/2\sigma^2$.  This result is valid in much greater generality 
for (Radon) Gaussian random variables taking values in a Banach space 
\citep[see, e.g., Corollary 3.2 on page 59-60 in][] 
{ledoux:talagrand:1991}. 
\end{remark}

\begin{proof}[Proof of Proposition \ref{prop:zero-one}] Since $\E [Z_j^2] = \sigma_j^2$, part {\em (i)} implies {\em (ii)} by 
the Tonelli-Fubini theorem.  Trivially, {\em (ii)}$\Rightarrow$ {\em (iii)}.

We now prove the implication {\em (iii)}$\Rightarrow$ {\em (i)}. Suppose that $\xi = \sum_{j=1}^\infty Z_j^2$, which is finite with some positive probability. 
For all fixed $n$, using the joint Gaussianity of the $Z_j$'s, we have that
$$
\xi_n:= \sum_{j=1}^n Z_j^2 = \sum_{j=1}^n \lambda_{n,j} Z_{n,j}^2,
$$
where $Z_{n,j},\ j=1,\cdots,n$ are independent standard normal and where $\lambda_{n,j}\ge 0$ are such that 
\begin{equation}\label{e:vn-lambda}
 v_n:= \sum_{j=1}^n \sigma_j^2 = \sum_{j=1}^n \lambda_{n,j}.
\end{equation}  
Since $0\le \xi_n\le \xi$ and $\P[\xi <\infty]>0$, for all $t>0$, we have
\begin{equation}\label{e:Xn-Laplace}
0< \E[ e^{-t \xi}]\le  \E [ e^{-t \xi_n}] = \prod_{j=1}^n \E [ e^{-t\lambda_{n,j} Z_{n,j}^2}] = \prod_{j=1}^{n} \frac{1}{\sqrt{1+2t \lambda_{n,j}}},
\end{equation}
where in the last two equalities we used the independence of the $Z_{n,j}$'s and their normality.  
Upon taking logs, and changing the sign, we obtain
\begin{equation}\label{e:log-lambda}
\frac{1}{2} \sum_{j=1}^n \log(1+2t \lambda_{n,j}) \le -\log \E [e^{-t \xi}] <\infty.
\end{equation}
The function $f(\lambda_1,\cdots,\lambda_n):= 2^{-1} \sum_{j=1}^n \log(1+2\lambda_j),\ \lambda_j\ge 0$ is concave.   
Therefore, its minimum in the simplex 
$$
\Big\{\lambda = (\lambda_i)_{i=1}^n\, :\, \lambda_i\ge 0, i=1,\cdots,n, \sum_{i=1}^n\lambda_i = t v_n\Big\}
$$
is attained at one of the extremal points $(tv_n,0,\cdots,0),\ \cdots, (0,\cdots,0,t v_n)$.  Since 
$$
f(t v_n,0,\cdots,0)=\cdots=f(0,\cdots,0,t v_n)=\frac12 \log(1+2 t v_n),
$$
from \eqref{e:log-lambda}, we obtain
\begin{equation}\label{e:log-vn}
\sup_{n\in\N} \frac{1}{2} \log(1+ 2 t v_n) \le -\log \E [e^{-t \xi}] <\infty,
\end{equation}
which in turn implies that $\sup_{n\in\N} v_n = \sum_{n=1}^\infty \sigma_n^2 <\infty$, completing the proof of (i).\\

Assume now that one and hence all of the conditions (i)--(iii) hold. Since $\E [\xi ] = \sigma^2<\infty$, the lower bound in 
\eqref{e:MGF-bounds-for-Gauss-norm-2} follows by appealing to the Jensen's inequality for the convex function $x\mapsto e^{\theta x}$.

We now prove the upper bound in \eqref{e:MGF-bounds-for-Gauss-norm-2}. Notice first that the case $\theta := -t <0$ follows 
by taking $n\to\infty$ in \eqref{e:log-vn}.  For the case $0\le \theta < 1/(2\sigma^2)$, as above let $\xi_n = \sum_{j=1}^n Z_j^2$ and as in
\eqref{e:Xn-Laplace}, we obtain
$$
 \E [ e^{\theta \xi_n}] = \prod_{j=1}^n \E [ e^{\theta \lambda_{n,j} Z_{n,j}^2}] = \prod_{j=1}^{n} \frac{1}{\sqrt{1-2\theta \lambda_{n,j}}},
$$
where now $-t$ is replaced by $\theta$ such that $\theta \in [0, 1/(2v_n))$.  As before, consider the concave function
$g(\lambda_1,\cdots,\lambda_n):= 2^{-1} \sum_{i=1}^n\log(1-2\lambda_i)$, over the simplex 
$$
\{ \lambda= (\lambda_i)_{i=1}^n \,:\, \lambda_i\ge 0,\ \sum_{i=1}^n\lambda_i = \theta v_n <1\}.
$$
Since the minimum of $g$ is attained at an extremal point of the simplex, we obtain
$$ 
-\log( \E [ e^{\theta \xi_n}]) = g(\theta \lambda_{n,1},\cdots, \theta \lambda_{n,n}) \ge g(\theta v_n,0,\cdots,0) = \frac{1}{2} \log(1-2 \theta v_n)
$$
which entails
$$
\E [  e^{\theta \xi_n}] \le \frac{1}{\sqrt{1-2\theta v_n}}. 
$$
Since $\xi_n\uparrow \xi$ and $v_n\uparrow \sigma^2 <\infty$, by letting $n\to\infty$ and appealing to the Monotone Convergence Theorem,
we obtain the upper bound in \eqref{e:MGF-bounds-for-Gauss-norm-2}.
\end{proof}

Proposition \ref{prop:zero-one} implies  the following natural result. 

\begin{corollary}\label{c:supp:Gaussian-characterization} If $X$ is a Gaussian random element in the separable Hilbert space $\V$, then:\\

{\em (i)} $\E [\|X\|^2] <\infty$ and consequently the mean vectors $\mu_X:= \E [ X ]$, the covariance $\CC_X := \E [ (X-\mu_X)\otimes (X-\mu_X)]$ and
pseudo-covariance operators $\CC_{X,\overline X} := \E [ (X-\mu_X)\otimes (\overline X-\overline{\mu_X})]$ are well-defined elements 
in $\V$ and $\bbT_+$, respectively, in the sense of Bochner. \\

{\em (ii)} The distribution of $X$ is determined by the mean vector $\mu_X$ and the pair of 
covariance and pseudo-covariance operators 
\begin{equation}\label{e:cov-pseudo-cov}
\CC_X=\E [ (X-\mu_X)\otimes (X-\mu_X)]\quad\mbox{ and }\quad 
\CC_{X,\overline X} =
\E [(X-\mu_X)\otimes (\overline X - \overline \mu_X)].
\end{equation}
Equivalently, the distribution of $X$ is determined by $\mu_X$ and 
the covariance operator of the vector $ Y:= (X,\overline X)^\top$ in the product Hilbert space $\V^2:=\V\times \V$.
\end{corollary}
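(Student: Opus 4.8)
The plan is to reduce everything to the zero-one law of Proposition~\ref{prop:zero-one}, the Bochner integrability criterion of Theorem~\ref{thm:Bochner-integrability}, and the cross-covariance bound of Lemma~\ref{l:cross-cov}. Throughout I would fix a real CONS $\{e_j\}$ of $\V$, write $X=\sum_j\langle X,e_j\rangle e_j$ and hence $\|X\|^2=\sum_j|\langle X,e_j\rangle|^2$. A preliminary observation, used repeatedly, is that the real sequence formed by all $\Re\langle X,e_j\rangle$ and $\Im\langle X,e_j\rangle$ is \emph{jointly} real Gaussian: for real $a_k,b_k$ one has $\sum_k\big(a_k\Re\langle X,f_k\rangle+b_k\Im\langle X,f_k\rangle\big)=\Re\langle X,g\rangle$ with $g=\sum_k(a_k+\ii b_k)f_k$, and $\langle X,g\rangle$ is complex Gaussian by Definition~\ref{def:Gaussian-dist}; since every real linear combination is univariate Gaussian, joint Gaussianity follows.

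For part~(i) the only delicate point is that I cannot center $X$ before knowing its mean lies in $\V$, so I would first control the fluctuation by symmetrization. Let $X'$ be an independent copy of $X$ and set $\tilde X:=X-X'$, a $\V$-valued random element with $\|\tilde X\|<\infty$ a.s.\ and centered, jointly Gaussian coordinates $\langle\tilde X,e_j\rangle=W_j-W_j'$, where $W_j:=\langle X,e_j\rangle-m_j$ and $m_j:=\E\langle X,e_j\rangle$ (finite, as each $\langle X,e_j\rangle$ is complex Gaussian). Applying Proposition~\ref{prop:zero-one} to $\|\tilde X\|^2=\sum_j|\langle\tilde X,e_j\rangle|^2<\infty$ a.s.\ (condition~(ii)) gives condition~(i), i.e.\ $\sum_j\E|\langle\tilde X,e_j\rangle|^2<\infty$; since $\E|\langle\tilde X,e_j\rangle|^2=2\E|W_j|^2$, this yields $\sigma^2:=\sum_j\E|W_j|^2<\infty$. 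To recover the mean, let $P_N$ project onto $\mathrm{span}(e_1,\dots,e_N)$ and $\mu^{(N)}:=\E[P_NX]=\sum_{j\le N}m_je_j$, so $\|\mu^{(N)}\|^2=\sum_{j\le N}|m_j|^2$ and $\E\|P_NX-\mu^{(N)}\|^2=\sum_{j\le N}\E|W_j|^2\le\sigma^2$. Picking $R$ with $\P(\|X\|\le R)\ge 1/2$ and using $\P(\|P_NX-\mu^{(N)}\|>2\sigma)\le 1/4$ (Chebyshev), on an event of positive probability $\|\mu^{(N)}\|\le R+2\sigma$; as $\mu^{(N)}$ is deterministic this bound holds for every $N$, whence $\sum_j|m_j|^2\le (R+2\sigma)^2$. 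Tonelli's theorem then gives $\E\|X\|^2=\sum_j\big(|m_j|^2+\E|W_j|^2\big)<\infty$, which makes $\mu_X=\E[X]$ a well-defined Bochner integral in $\V$; since $X-\mu_X,\ \overline X-\overline{\mu_X}\in{\cal L}^2(\V)$, the operators $\CC_X$ and $\CC_{X,\overline X}$ are well-defined in $\bbT$ (with $\CC_X\in\bbT_+$) by Lemma~\ref{l:cross-cov}.

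For part~(ii) I would argue that the law of $X$ on the separable Hilbert space $\V$ is determined by its finite-dimensional distributions $(\langle X,f_1\rangle,\dots,\langle X,f_n\rangle)$, equivalently by the characteristic functional, via the cylinder/$\pi$-$\lambda$ argument already used in Remark~\ref{rem:measurability}. By the preliminary observation these vectors are jointly real Gaussian, so their laws are pinned down by the means $\langle\mu_X,f_i\rangle$ together with the quantities $\E[\langle W,f_i\rangle\,\overline{\langle W,f_j\rangle}]$ and $\E[\langle W,f_i\rangle\,\langle W,f_j\rangle]$, where $W:=X-\mu_X$. A direct computation in the real CONS (using $\overline{e_k}=e_k$, the definition $(f\otimes g)h=\langle h,g\rangle f$, and Lemma~\ref{l:identification_operator} to pass between sesquilinear forms and operators) shows the first equals $\langle\CC_X f_j,f_i\rangle$ and the second is the bilinear form of $\CC_{X,\overline X}$ evaluated at $(f_i,f_j)$; hence all finite-dimensional distributions, and so the law of $X$, depend only on $(\mu_X,\CC_X,\CC_{X,\overline X})$. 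For the equivalent formulation, the real Gaussian element $Y=(X,\overline X)^\top$ in $\V^2$ has a $2\times2$ block covariance operator whose blocks are exactly $\CC_X$, $\CC_{X,\overline X}$ and their conjugates/adjoints, so the covariance of $Y$ encodes precisely the pair $(\CC_X,\CC_{X,\overline X})$; since $Y$ is a genuine real Gaussian element, the classical real-case uniqueness determines its law from $\mu_X$ and this covariance, and thus the law of its first coordinate $X$.

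I expect the main obstacle to be the mean-existence step in part~(i): passing from the almost-sure finiteness of $\|X\|$ to the integrability $\E\|X\|^2<\infty$ requires the symmetrization-plus-Chebyshev device above, since Proposition~\ref{prop:zero-one} applies only to centered sequences and $X$ cannot be centered a priori. The remaining computations --- verifying joint Gaussianity and matching the coordinate (pseudo-)covariances with $\CC_X,\CC_{X,\overline X}$ --- are routine once the conjugation conventions of the fixed real CONS are in place.
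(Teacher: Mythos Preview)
Your proposal is correct and follows essentially the same strategy as the paper: symmetrize with an independent copy to apply Proposition~\ref{prop:zero-one} to the centered coordinates, then deduce square-summability of the means, and finally characterize the law through finite-dimensional distributions. The one place you diverge is the mean-existence step: the paper observes directly that once $\sigma^2=\sum_j\E|\xi_j-\mu_j|^2<\infty$ the centered element $X_0:=\sum_j(\xi_j-\mu_j)e_j$ lies in $\V$ almost surely, so the deterministic difference $X-X_0=\sum_j\mu_j e_j$ must also lie in $\V$, giving $\sum_j|\mu_j|^2<\infty$ immediately; your Chebyshev/positive-probability argument reaches the same conclusion but with a little more work.
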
 
\begin{proof} Let $\{e_j,\ j\in\N\}$ be a {\em real} CONS of $\V$, i.e., $\Im(e_j) = 0,\ j\in\N$. We have that
$$
X = \sum_{j\in \N} \xi_j e_j,
$$
where $\xi_j:= \la X,e_j\ra,\ j\in \N$ are (complex) jointly Gaussian (Definition \ref{def:Gaussian-dist}).  

Suppose that $\mu_j := \E[\xi_j]$ and $\sigma_j^2 := \E [| \xi_j - \mu_j|^2],\ j\in\N$.  To prove that $\E [\|X\|^2] <\infty$, it suffices to show
that $\sum_{j\in\N} (|\mu_j|^2 + \sigma_j^2)<\infty$.

Let $\widetilde X:= \sum_{j\in\N} \wtilde \xi_j e_j$ be an independent copy of $X$ and observe that
$Y:= X - \widetilde X$ is also a Gaussian $\V$-valued random element.  The random variables $Z_j:= \xi_j - \wtilde \xi_j,\ j\in \N$ are 
(complex-valued) jointly Gaussian. Since  $\E [ Z_j] = 0$ and $\P(\|Y\|^2 <\infty) = 1$, 
by Proposition \ref{prop:zero-one} applied to the real and imaginary parts of the $Z_j$'s, we obtain 
$$
\E [\|Y\|^2 ]= \sum_{j\in\N} \E [ |Z_j|^2 ] = 2 \sum_{j\in \N}\sigma_j^2  <\infty.
$$ 
This, since $\sigma_j^2 = \E | \xi_j -\mu_j|^2$, implies that $X_0:= \sum_{j\in \N} (\xi_j - \mu_j) e_j$ takes values in $\V$, with probability one. 
But then $X-X_0 = \sum_{j\in\N} \mu_j e_j$ takes values in $\V$ and hence $\sum_{j\in\N} |\mu_j|^2 <\infty$.  This completes the proof of part {\em (i)}.

Since $\E [\|X\|^2] = \E[\|\overline X\|^2] <\infty$ it follows that $\E [\|X\|]<\infty$ and hence the expectation $\mu:= \E [X]$, the covariance, and pseudo-covariance 
operators in \eqref{e:cov-pseudo-cov} are well-defined in the
sense of Bochner \citep[cf.\ Theorem 2.6.5 in] [and Theorem \ref{thm:Bochner-integrability}]
{Hsing2015} in the spaces $(\V,\|\cdot\|)$ and 
$(\bbT,\|\cdot\|_{\rm tr})$, respectively.  From the properties of the Bochner integral, we readily obtain that
$$
\mu = \sum_{j\in \N} \mu_j e_j,\ \ \ \CC_X = \sum_{i,j\in \N} \E \Big[ (\xi_i-\mu_i)\overline{(\xi_j-\mu_j)} \Big] e_i\otimes e_j,
$$
and 
$$
\CC_{X,\overline X}  = \sum_{i,j\in \N} \E \Big[ (\xi_i-\mu_i)(\xi_j-\mu_j) \Big] e_i\otimes e_j.
$$
The distribution of $X$ on $\V$ is determined by the finite-dimensional distributions of the real and imaginary parts of its coordinates 
$\xi_j,\ j\in \N$ in any fixed CONS \citep[see, e.g., Theorem 7.1.2 in][]{Hsing2015}. The latter are, in turn, determined by the coordinates of 
$\mu$, $\CC_X$, and $\CC_{X,\overline X}$, completing the proof of {\em (ii)}.
\end{proof}

The next result provides uniform tail bounds for Gaussian vectors in a Hilbert space under the minimal condition that their 
norms are tight.  Note that this does not mean in general that the Gaussian vectors are (uniformly) tight.

\begin{corollary}\label{c:uniform-tail-boubnds-for-Gaussian-norms-in-a-Hilnbert-space} 
Suppose that $\{X_n,\ n\in \N\}$ is a collection of zero-mean Gaussian random elements taking values in the separable Hilbert space $\V$.
If the set of real random variables $\{\|X_n\|,\ n\in \N\}$ has uniformly tight distributions, then 
$$
\sigma^2 :=\sup_{n\in \N} \E [ \|X_n\|^2] <\infty
$$ 
and 
\begin{equation}
\sup_{n\in \N} \E\Big[ \exp\{ \theta \|X_n\|^2 \} \Big] \le  \frac{1}{\sqrt{1-2 \theta \sigma^2}}<\infty, \ \ \ \mbox{ for all }\theta < \frac{1}{2\sigma^2}.
\end{equation}
\end{corollary}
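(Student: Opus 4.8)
The plan is to reduce everything to the single-sequence estimate in Proposition~\ref{prop:zero-one} applied to each $X_n$ separately, the only genuinely new ingredient being the passage from pointwise finiteness of the second moments to a \emph{uniform} bound. First I would fix a real CONS $\{e_j\}$ of $\V$ and, for each $n$, expand $X_n = \sum_j \xi_{n,j} e_j$ with $\xi_{n,j} := \langle X_n, e_j\rangle$. By Definition~\ref{def:Gaussian-dist} these coordinates are complex Gaussian, so their real and imaginary parts, enumerated as a single family $\{Z_{n,k}\}_k$, are zero-mean and jointly real Gaussian, and $\|X_n\|^2 = \sum_j |\xi_{n,j}|^2 = \sum_k Z_{n,k}^2$. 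Since each $X_n$ is a bona fide $\V$-valued random element, $\pr(\|X_n\|^2 < \infty) = 1$, so the implication (ii)$\Rightarrow$(i) of Proposition~\ref{prop:zero-one} already yields $\sigma_n^2 := \E[\|X_n\|^2] = \sum_k \E[Z_{n,k}^2] < \infty$ for each fixed $n$.

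The crux, and the step I expect to be the main obstacle, is to upgrade this to $\sigma^2 := \sup_n \sigma_n^2 < \infty$ using \emph{only} the tightness of the scalar family $\{\|X_n\|\}$. The subtlety is that tightness of the norms is strictly weaker than tightness of the vectors $\{X_n\}$ in $\V$, so one cannot simply invoke a Fernique-type uniform tail estimate for the vectors. The device I would use is the \emph{negative}-exponent half of \eqref{e:MGF-bounds-for-Gauss-norm-2}: taking $\theta = -1$ in Proposition~\ref{prop:zero-one} gives $\E[e^{-\|X_n\|^2}] \le (1 + 2\sigma_n^2)^{-1/2}$ for every $n$. On the other hand, choosing $M$ so that $\sup_n \pr(\|X_n\| > M) \le \tfrac12$ (possible by uniform tightness) produces the matching lower bound $\E[e^{-\|X_n\|^2}] \ge e^{-M^2}\,\pr(\|X_n\| \le M) \ge \tfrac12 e^{-M^2}$. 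Combining the two inequalities and solving for $\sigma_n^2$ gives a bound $\sigma_n^2 \le 2 e^{2M^2} - \tfrac12$ that is independent of $n$, hence $\sigma^2 < \infty$.

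With $\sigma^2 < \infty$ established, the uniform exponential-moment bound follows by applying Proposition~\ref{prop:zero-one} once more to each $X_n$: for $0 \le \theta < 1/(2\sigma^2)$ one has $\theta < 1/(2\sigma_n^2)$ because $\sigma_n^2 \le \sigma^2$, so \eqref{e:MGF-bounds-for-Gauss-norm-2} gives $\E[e^{\theta\|X_n\|^2}] \le (1 - 2\theta\sigma_n^2)^{-1/2}$. Since $s \mapsto (1-2\theta s)^{-1/2}$ is nondecreasing on $[0, 1/(2\theta))$ for $\theta \ge 0$ and $\sigma_n^2 \le \sigma^2$, the right-hand side is at most $(1 - 2\theta\sigma^2)^{-1/2}$, uniformly in $n$ (for $\theta \le 0$ the bound $\E[e^{\theta\|X_n\|^2}] \le 1$ is in any case trivial). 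Taking the supremum over $n$ then delivers the asserted inequality and its finiteness, completing the proof.
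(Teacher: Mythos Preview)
Your proposal is correct and follows essentially the same route as the paper's proof: both expand $\|X_n\|^2$ in a CONS to invoke Proposition~\ref{prop:zero-one} for each $n$, then use uniform tightness together with the $\theta=-1$ instance of \eqref{e:MGF-bounds-for-Gauss-norm-2} to convert a uniform lower bound on $\E[e^{-\|X_n\|^2}]$ into a uniform upper bound on $\sigma_n^2$, and finally reapply \eqref{e:MGF-bounds-for-Gauss-norm-2} with $\sigma_n^2\le\sigma^2$. Your write-up is in fact slightly more explicit than the paper's (you spell out the monotonicity in $\sigma_n^2$), though your parenthetical for $\theta\le 0$ only gives the trivial bound $1$ rather than $(1-2\theta\sigma^2)^{-1/2}$; this is harmless since the substantive content of the corollary and its only use in the paper concern $\theta>0$.
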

\begin{proof} Fix a CONS $\{e_j,\ j\in \N\}$ of $\V$. Let 
$$
U_{n,i} := \Re(\la X_n, e_i\ra) \ \ \mbox{ and }\ \ V_{n,i} := \Im(\la X_n,e_i\ra),
$$
be the real and imaginary parts of $\la X_n, e_i\ra$.  Since $X_n$ is a valid $\V$-valued random variable, we have that
\begin{equation}\label{e:X_n-square-norm-U-V}
\| X_n\|^2 = \sum_{i\in\N} (U_{n,i}^2 + V_{n,i}^2) <\infty,
\end{equation}
and hence by Proposition \ref{prop:zero-one} applied to the jointly Gaussian zero-mean random variables $\{U_{n,i}, V_{n,i},\ i\in\N\}$, 
we obtain $\sigma_n^2:=\E [ \|X_n\|^2 ] <\infty$.

Since $\{\|X_n\|,\ n\in\N\}$ are uniformly tight, there is an $M>0$ such that 
$$
\inf_{n\in\N} \P (\| X_n\|^2 \le M) \ge 1/2>0.
$$
This implies that 
\begin{equation}\label{e:exp-M-bound}
0<\frac{1}{2} e^{-M} \le \inf_{n\in\N}\E [ \exp\{ -\|X_n\|^2 \} ] \le \inf_{n\in\N} \frac{1}{\sqrt{1+2\sigma_n^2}},
\end{equation}
where the last inequality follows by applying the upper bound in \eqref{e:MGF-bounds-for-Gauss-norm-2} with $\theta:=-1$ to each
$\|X_n\|^2$ in \eqref{e:X_n-square-norm-U-V}.

Relation \eqref{e:exp-M-bound} shows that $\sigma^2:=\sup_{n\in\N} \sigma_n^2 <\infty$.  The inequality in 
\eqref{c:uniform-tail-boubnds-for-Gaussian-norms-in-a-Hilnbert-space} is a simple application of the upper bound in \eqref{e:MGF-bounds-for-Gauss-norm-2}.
\end{proof}

\subsection{Convergence in distribution.}

We start with the following natural criterion for convergence in distribution in the space ${\cal L}^2(\V)$ of all $\V$-valued 
random elements with $\E [\|X\|^2] <\infty$.

For a sequence of $X_n \in {\cal L}^2(\V),$ let 
$$
\mu_n:= \E [ X_n]\quad \mbox{ and }\quad {\cal C}_n:= \E [ (X_n-\mu_n)\otimes (X_n-\mu_n)]
$$ 
be the mean vectors and covariance operators, respectively.  As we know (cf Lemma \ref{l:cross-cov}), 
${\cal C}_n\in \bbT_+$ are necessarily positive trace-class operators.

\begin{theorem} \label{t:weak-conv-in-L2E} Fix some (any) CONS $\{e_j,\ j\in\N\}$ of $\V$ and let $\Pi_k:= \sum_{j=1}^k  e_j \otimes e_j$.
Let $\{X_n\}\subset {\cal L}^2(\V)$.

\noindent (i) We have $X_n\cid X_\infty$, as $n\to\infty$, if 
\begin{enumerate}
\item[(1)] $\langle X_n,f\rangle \cid \langle X_\infty,f\rangle$, for all $f\in\V$
\item[(2)]
$\|\mu_n-\mu_\infty\|\to 0$, and 
\item[(3)]
$\|\mathcal{C}_n-\mathcal{C}_\infty\|_{\rm op} \to 0$ and
\begin{align*}\limsup_{n\to\infty} \big(\tr(\mathcal{C}_n) - \tr(\mathcal{C}_{n,k}) \big)
\to 0 \mbox{ as $k\to\infty$},
\end{align*}
where ${\cal C}_{n,k} := \Pi_k {\cal C}_n \Pi_k$. 
\end{enumerate}

\noindent (ii) Conversely, if $\{ \|X_n\|^2,\ n\in\N\}$ are uniformly integrable, then the convergence  $X_n\cid X_\infty$, as $n\to\infty$ implies (1), (2) and 
(3) in part (i), above.
\end{theorem}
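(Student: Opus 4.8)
The plan is to treat the two directions separately, reducing part (i) to the standard pairing of tightness with convergence of finite-dimensional distributions, and proving part (ii) by a Skorokhod coupling combined with uniform integrability. For part (i), I would first record that conditions (2) and (3) force $\sup_n \E[\|X_n\|^2] < \infty$: writing $\tr(\mathcal{C}_n) = \tr(\mathcal{C}_{n,k}) + (\tr(\mathcal{C}_n) - \tr(\mathcal{C}_{n,k}))$, the tail term is uniformly small for large $k$ by (3), while $\tr(\mathcal{C}_{n,k}) \le k\,\|\mathcal{C}_n\|_{\rm op}$ stays bounded since $\|\mathcal{C}_n - \mathcal{C}_\infty\|_{\rm op} \to 0$; together with $\|\mu_n\|^2 \to \|\mu_\infty\|^2$ from (2) this bounds $\E[\|X_n\|^2] = \tr(\mathcal{C}_n) + \|\mu_n\|^2$. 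The convergence of finite-dimensional distributions then follows from (1) by a Cram\'er--Wold argument: for $f_1,\dots,f_m \in \V$ and scalars $c_1,\dots,c_m$, one has $\sum_i \Re\langle X_n, c_i f_i\rangle = \Re\langle X_n, g\rangle$ with $g := \sum_i c_i f_i$, and (1) gives $\langle X_n, g\rangle \cid \langle X_\infty, g\rangle$; since every real linear functional of $(\Re\langle X_n,f_i\rangle, \Im\langle X_n, f_i\rangle)_{i\le m}$ arises this way, the joint law of these finite-dimensional projections converges.

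The crux of part (i) is tightness of the laws of $X_n$, for which I would invoke the compactness characterisation behind Lemma \ref{l:r-compact}: sets of the form $K = \{x : \|x\| \le R,\ \sum_{j>k}|\langle x,e_j\rangle|^2 \le a_k\ \forall k\}$ with $a_k \downarrow 0$ are compact in $\V$. Markov's inequality and the uniform $L^2$ bound control $\|x\| \le R$, while the tail is handled through $\E\big[\sum_{j>k}|\langle X_n,e_j\rangle|^2\big] = (\tr\mathcal{C}_n - \tr\mathcal{C}_{n,k}) + \sum_{j>k}|\langle\mu_n,e_j\rangle|^2$; the first summand is uniformly small in $n$ for large $k$ by the $\limsup$ condition in (3) (the finitely many remaining indices being handled individually, as each single law on the separable space $\V$ is tight by Ulam's theorem), and the second is uniformly small because $\mu_n \to \mu_\infty$ in norm by (2). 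With tightness in hand, Prokhorov's theorem makes $\{X_n\}$ relatively compact; every subsequential limit shares the finite-dimensional distributions of $X_\infty$, and since the cylinder $\sigma$-field generates the Borel sets of $\V$ (a $\pi$--$\lambda$ argument as in Remark \ref{rem:measurability}), all subsequential limits coincide with the law of $X_\infty$, yielding $X_n \cid X_\infty$.

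For part (ii), continuity of $x \mapsto \langle x, f\rangle$ gives (1) immediately. To obtain (2) and (3) I would pass to a Skorokhod--Dudley coupling \citep[Theorem 3.30 in][]{kallenberg:1997}: on a common space there are $X_n^* \eqd X_n$, $X_\infty^* \eqd X_\infty$ with $X_n^* \to X_\infty^*$ almost surely. Uniform integrability of $\{\|X_n\|^2\}$ (hence of $\{\|X_n\|\}$) together with the Vitali convergence theorem gives $\E\|X_n^* - X_\infty^*\| \to 0$, so $\|\mu_n - \mu_\infty\| = \|\E[X_n^* - X_\infty^*]\| \to 0$, which is (2). Setting $Y_n^* := X_n^* - \mu_n \to X_\infty^* - \mu_\infty$ almost surely, continuity of $y \mapsto y\otimes y$ in trace norm (via $\|a\otimes b\|_{\rm tr} = \|a\|\|b\|$ from Lemma \ref{l:cross-cov}) gives $Y_n^*\otimes Y_n^* \to Y_\infty^*\otimes Y_\infty^*$ in $(\bbT,\|\cdot\|_{\rm tr})$ a.s., with $\|Y_n^*\otimes Y_n^*\|_{\rm tr} = \|Y_n^*\|^2$ uniformly integrable; a Vitali argument in the Banach space $\bbT$ then yields $\|\mathcal{C}_n - \mathcal{C}_\infty\|_{\rm tr} \to 0$. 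This delivers $\|\mathcal{C}_n - \mathcal{C}_\infty\|_{\rm op} \to 0$, and since $\tr\mathcal{C}_{n,k} \to \tr\mathcal{C}_{\infty,k}$ and $\tr\mathcal{C}_n \to \tr\mathcal{C}_\infty$, the tail condition $\limsup_n(\tr\mathcal{C}_n - \tr\mathcal{C}_{n,k}) = \tr\mathcal{C}_\infty - \tr\mathcal{C}_{\infty,k} \to 0$ follows, completing (3). I expect the upgrade from weak operator (or coordinatewise) convergence to genuine trace-norm convergence to be the delicate point in this direction, which is precisely why the Skorokhod coupling with uniform integrability, rather than a direct moment computation, is the natural tool.
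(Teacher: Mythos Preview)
Your proposal is correct and shares the paper's overall skeleton (finite-dimensional convergence plus tightness for (i); moment convergence under uniform integrability for (ii)), but the execution differs in useful ways. For part (i), the paper reduces to mean zero, gets finite-dimensional convergence from (1) via the Wold device, and then proves tightness through the \emph{flat concentration} criterion of \cite{Hsing2015}: it projects $X_n$ onto the span $S_{n,k}$ of the top $k$ eigenfunctions of $\mathcal{C}_n$, invokes $\|\mathcal{C}_n-\mathcal{C}_\infty\|_{\rm op}\to 0$ together with eigenvector perturbation results (Theorems~5.1.6 and~5.1.8 there) to obtain $\Pi_{S_{n,k}}X_n\cid\Pi_{S_{\infty,k}}X_\infty$, and bounds the remainder by the tail-trace term. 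Your route is more direct: you build the compact sets straight from the \emph{fixed} CONS $\{e_j\}$ via the characterisation behind Lemma~\ref{l:r-compact} and control $\E\sum_{j>k}|\langle X_n,e_j\rangle|^2$ by exactly the quantity $\tr\mathcal{C}_n-\tr\mathcal{C}_{n,k}$ appearing in (3), bypassing the eigenspace-convergence machinery and making the role of the fixed CONS in the statement transparent; the paper's approach, in turn, explains more clearly why the operator-norm half of (3) is natural. For part (ii), the paper simply declares the conclusion immediate from uniform integrability and convergence in distribution; your Skorokhod--Dudley coupling followed by a Vitali argument in $(\bbT,\|\cdot\|_{\rm tr})$ is a clean way to substantiate that claim and in fact yields the stronger $\|\mathcal{C}_n-\mathcal{C}_\infty\|_{\rm tr}\to 0$, from which all of (3) follows at once.
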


\begin{remark} Notice that the above result applies to general not necessarily Gaussian $\V$-valued random elements.
\end{remark}

\begin{proof} Part (ii) is immediate since uniform integrability and convergence in distribution imply convergence of the moments.

Part (i): If $\|\mu_n-\mu_\infty\|\to 0$, then by Slutsky's theorem,
the convergence $X_n - \mu_n \cid X_\infty -\mu_\infty$ implies $X_n\cid X_\infty$.  Thus, without loss of 
generality, we will assume that $\mu_n=\mu_\infty=0$.
 
 Condition (1), in view of the Wold device, is equivalent to the finite-dimensional convergence
\begin{align} \label{e:fdc}
(\langle X_n, f_1\rangle, \ldots, \langle X_n, f_k\rangle)
\cid (\langle X_\infty, f_1\rangle, \ldots, \langle X_\infty, f_k\rangle).
\end{align}
To prove $X_n\cid X_\infty$, it remains to establish tightness. For convenience of notation, assume that the eigenspaces
of $\mathcal{C}_n, \mathcal{C}_\infty$ are all one dimensional. If this is not true, we need to
work with eigenspaces and the notation becomes more complicated.
Let $e_{n,j}$ and $e_{\infty,j}$ be the eigenfunctions of $\mathcal{C}_n$ and $\mathcal{C}_\infty$, 
respectively, that correspond to the $j$-th descending eigenvalues.
Let $S_{n,k}=\mathrm{span}(e_{n,j}, j\le k)$ and $S_{\infty,k}=\mathrm{span}(e_{\infty,j}, j\le k)$.
For any $S\subset\V$, define
$$
S^\epsilon = \{f\in\V: \|f-g\| < \epsilon \mbox{ for some $g\in S$}\}.
$$
Write
$$
X_n = X_{n,k} + \wt X_{n,k}.
$$
where $X_{n,k}=\Pi_{S_{n,k}}X_n$.
Clearly, if $X_{n,k} \in S_{\infty,k}^{\epsilon}$ and $\|\wt X_{n,k}\| \le \epsilon$, then
$X_{n} \in S_{\infty,k}^{2\epsilon}$.
Thus,
\begin{align} \label{e:eq1}
\begin{split}
\P(X_n\in S_{\infty,k}^{2\epsilon}) & \ge \P(X_{n,k} \in S_{\infty,k}^{\epsilon}, \|\wt X_{n,k}\| \le \epsilon)  \\
& = \P(X_{n,k} \in S_{\infty,k}^{\epsilon}) - \P(X_{n,k} \in S_{\infty,k}^{\epsilon},\|\wt X_{n,k}\| > \epsilon).
\end{split}
\end{align}
By \eqref{e:fdc} and the fact that the eigenvalues and eigenfunctions of $\mathcal{C}_n$
converge to those of $\mathcal{C}_\infty$ under $\|\mathcal{C}_n-\mathcal{C}_\infty\|_{op} \to 0$
\citep[cf.\ Theorems 5.1.6 and 5.1.8 in][] {Hsing2015},
we have $X_{n,k}=\Pi_{S_{n,k}}X_n\cid \Pi_{S_{\infty,k}}X_\infty$.
Since $S_{\infty,k}^{\epsilon}$ is open, by the Portmanteau Theorem of weak convergence, 
\begin{align*}
\liminf_{n\to\infty} \P(X_{n,k} \in S_{\infty,k}^{\epsilon}) \ge \P(\Pi_{S_{\infty,k}} X_\infty \in 
S_{\infty, k}^{\epsilon}) = 1 \
\mbox{for each $k$}.
\end{align*}
It follows that, for any $k$,
\begin{align} \label{e:eq3}
\begin{split}
& \limsup_{n\to\infty} \P(X_{n,k} \in S_{\infty,k}^{\epsilon},\|\wt X_{n,k}\| > \epsilon) \\
& \le \limsup_{n\to\infty} \P(\|\wt X_{n,k}\| > \epsilon) \\
& \le  \epsilon^{-2} \limsup_{n\to\infty}\big(\tr(\mathcal{C}_n)  - \tr(\mathcal{C}_{n,k})\big).
\end{split}
\end{align}
By \eqref{e:eq1}-\eqref{e:eq3} and assumption (ii), we conclude that for any $\epsilon, \delta > 0$, there exists
$k=k(\epsilon,\delta)$ and $n(\epsilon,\delta)$, such that for all $n>n(\epsilon,\delta)$, we have
\begin{align*}
\P(X_n\in S_{\infty,k}^{2\epsilon}) \ge 1-\delta.
\end{align*}
For $n\le n(\epsilon,k)$, we can create another finite dimentionsal set $S$ by enlarging $S_{\infty,k}$ 
(e.g., adding eigenfunctions of $\mathcal{C}_n$'s to the spanning set) so that
\begin{align*}
\inf_n \P(X_n\in S^{2\epsilon}) \ge 1-\delta.
\end{align*}
Thus, the {\em flat concentration} condition \citep[cf.][Theorem 7.7.5]{Hsing2015} is fulfilled. 
\end{proof}

The following is a necessary and sufficient condition for the weak convergence of Gaussian distributions 
on a {\em real} separable Hilbert space.  Applying this result to the concatenation of the real and imaginary parts, one
can extend this criterion to Gaussian distributions in complex Hilbert spaces.

\begin{theorem}\label{t:Gauss-Hilbert-weak-convergence} 
Let $X_n,\ n\in \N$ be Gaussian elements taking values in the real separable Hilbert space  $\V_\R$.  Suppose that the
 $X_n$'s have means $\mu_n$ and covariance operators ${\cal C}_n$.
  
We have $X_n\cid X_\infty$ as $n\to\infty$ in $\V_\R$ if and only if $X_\infty$ is Gaussian  and
\begin{equation}\label{e:mu_n-C_n}
\|\mu_n - \mu_\infty\|\to 0\quad \mbox{ and } \quad \| \mathcal{C}_n - \mathcal{C}_\infty\|_{\rm tr}  \to 0,
\end{equation}
where $\mu_\infty$ and ${\cal C}_\infty$ are the mean vector and covariance operator of $X_\infty$.
\end{theorem}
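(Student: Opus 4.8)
The plan is to prove the two directions separately. The \textbf{``if''} direction will be a direct application of the general weak-convergence criterion in Theorem~\ref{t:weak-conv-in-L2E}, while the \textbf{``only if''} direction reduces, after a symmetrization step, to two analytic facts: uniform integrability of squared norms for tight Gaussian families (Corollary~\ref{c:uniform-tail-boubnds-for-Gaussian-norms-in-a-Hilnbert-space}), and a lemma asserting that positive trace-class operators converging in the weak operator topology with convergent traces converge in trace norm. Throughout I fix a CONS $\{e_j\}$ of $\V_\R$ and write $\Pi_k := \sum_{j=1}^k e_j\otimes e_j$, so that ${\cal C}_{n,k} = \Pi_k {\cal C}_n \Pi_k$ as in Theorem~\ref{t:weak-conv-in-L2E}.

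For the ``if'' direction, assume $X_\infty$ is Gaussian and \eqref{e:mu_n-C_n} holds; I would verify conditions (1)--(3) of Theorem~\ref{t:weak-conv-in-L2E}. Condition (2) is exactly $\|\mu_n-\mu_\infty\|\to 0$. For condition (1), since $\langle X_n,f\rangle$ is real Gaussian with mean $\langle\mu_n,f\rangle$ and variance $\langle{\cal C}_n f,f\rangle$, the convergences $\langle\mu_n,f\rangle\to\langle\mu_\infty,f\rangle$ and $\langle{\cal C}_n f,f\rangle\to\langle{\cal C}_\infty f,f\rangle$ (both following from \eqref{e:mu_n-C_n}, as the trace norm dominates the operator norm) give $\langle X_n,f\rangle\cid\langle X_\infty,f\rangle$. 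For condition (3), $\|{\cal C}_n-{\cal C}_\infty\|_{\rm op}\le\|{\cal C}_n-{\cal C}_\infty\|_{\rm tr}\to 0$ handles the operator-norm part; writing $\tr({\cal C}_n)-\tr({\cal C}_{n,k})=\sum_{j>k}\langle{\cal C}_n e_j,e_j\rangle$ and comparing with the corresponding tail of ${\cal C}_\infty$ up to an error bounded by $\|{\cal C}_n-{\cal C}_\infty\|_{\rm tr}$ yields $\limsup_n(\tr({\cal C}_n)-\tr({\cal C}_{n,k}))\le\sum_{j>k}\langle{\cal C}_\infty e_j,e_j\rangle\to 0$ as $k\to\infty$, since $\tr({\cal C}_\infty)<\infty$. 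Thus $X_n\cid X_\infty$.

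For the ``only if'' direction, suppose $X_n\cid X_\infty$. Applying the continuous functional $x\mapsto\langle x,f\rangle$ shows $\langle X_\infty,f\rangle$ is a weak limit of real Gaussians, hence Gaussian, so $X_\infty$ is Gaussian and by Corollary~\ref{c:supp:Gaussian-characterization} its mean and covariance are well defined with $\E\|X_\infty\|^2<\infty$. To control moments I would symmetrize: with an independent copy $X_n'$ put $Y_n:=X_n-X_n'$, a zero-mean Gaussian with covariance $2{\cal C}_n$ satisfying $Y_n\cid Y_\infty$. Tightness of $\{Y_n\}$ makes $\{\|Y_n\|\}$ uniformly tight, so Corollary~\ref{c:uniform-tail-boubnds-for-Gaussian-norms-in-a-Hilnbert-space} gives $\sup_n\E\|Y_n\|^2<\infty$ together with uniform exponential moments, whence $\{\|Y_n\|^2\}$ is uniformly integrable and $2\tr({\cal C}_n)=\E\|Y_n\|^2\to\E\|Y_\infty\|^2=2\tr({\cal C}_\infty)$; in particular $\sup_n\tr({\cal C}_n)<\infty$. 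Meanwhile $\langle X_n,f\rangle\cid\langle X_\infty,f\rangle$ are Gaussians with uniformly bounded variances, so their parameters converge, giving $\langle\mu_n,f\rangle\to\langle\mu_\infty,f\rangle$ and $\langle{\cal C}_n f,f\rangle\to\langle{\cal C}_\infty f,f\rangle$ for all $f$; polarization upgrades the latter to weak-operator convergence ${\cal C}_n\to{\cal C}_\infty$.

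It remains to upgrade these two weak convergences to the norm convergences in \eqref{e:mu_n-C_n}, which is where the main work lies. For the covariances I would prove, as a lemma, that positive trace-class operators ${\cal C}_n\to{\cal C}_\infty$ in the weak operator topology with $\tr({\cal C}_n)\to\tr({\cal C}_\infty)$ converge in trace norm: decomposing ${\cal C}_n-{\cal C}_\infty$ via $\Pi_k$ and $\I-\Pi_k$, the finite-rank corner $\Pi_k({\cal C}_n-{\cal C}_\infty)\Pi_k\to 0$ in trace norm for fixed $k$, while the off-diagonal and tail corners are controlled through the Hilbert--Schmidt factorization ${\cal C}_n=({\cal C}_n^{1/2})^2$ and the bound $\|{\cal A}{\cal B}\|_{\rm tr}\le\|{\cal A}\|_{\rm HS}\|{\cal B}\|_{\rm HS}$, yielding estimates of the form $\sqrt{\tr({\cal C}_n)}\,\sqrt{\tr({\cal C}_n(\I-\Pi_k))}$ (and likewise for ${\cal C}_\infty$); since $\limsup_n\tr({\cal C}_n(\I-\Pi_k))=\tr({\cal C}_\infty(\I-\Pi_k))\to 0$ as $k\to\infty$, these tails are uniformly negligible. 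For the means I would observe that weak convergence $\mu_n\to\mu_\infty$ forces $\sup_n\|\mu_n\|<\infty$ by Banach--Steinhaus, so that $\|X_n\|^2\le 2\|X_n-\mu_n\|^2+2\|\mu_n\|^2$ is uniformly integrable (the centered term via Corollary~\ref{c:uniform-tail-boubnds-for-Gaussian-norms-in-a-Hilnbert-space}, using $\sup_n\tr({\cal C}_n)<\infty$; the second bounded), hence $\E\|X_n\|^2\to\E\|X_\infty\|^2$. Since $\E\|X_n\|^2=\|\mu_n\|^2+\tr({\cal C}_n)$ and $\tr({\cal C}_n)\to\tr({\cal C}_\infty)$, this gives $\|\mu_n\|\to\|\mu_\infty\|$, which combined with weak convergence yields $\|\mu_n-\mu_\infty\|\to 0$ in $\V_\R$. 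The principal obstacle is the positive-operator trace-norm lemma, as weak-operator plus trace convergence must be shown genuinely to control the off-diagonal mass; everything else is bookkeeping resting on the cited corollaries.
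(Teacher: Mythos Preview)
Your proof is correct, but your route in the ``only if'' direction differs meaningfully from the paper's.

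For the means, the paper does not use Banach--Steinhaus or the identity $\E\|X_n\|^2=\|\mu_n\|^2+\tr({\cal C}_n)$. Instead it takes an independent copy $\wtilde X_n$, sets $Y_n=X_n-\wtilde X_n$ and $Z_n=X_n+\wtilde X_n$, observes $Z_n\stackrel{d}{=}2\mu_n+Y_n$, and argues that tightness of both $\{Y_n\}$ and $\{Z_n\}$ forces the deterministic sequence $\{\mu_n\}$ into a compact set ${\mathcal K}_2-{\mathcal K}_1$; relative compactness plus weak convergence then gives $\|\mu_n-\mu_\infty\|\to 0$. Your ``weak convergence $+$ norm convergence $\Rightarrow$ strong convergence'' argument is shorter and uses only Hilbert-space geometry, but it requires you to first establish $\tr({\cal C}_n)\to\tr({\cal C}_\infty)$, so the order of the two steps is reversed.

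For the covariances, the paper first proves $\|{\cal C}_n-{\cal C}_\infty\|_{\rm op}\to 0$ via a separate Lemma (Skorokhod representation plus a Cauchy--Schwarz estimate on $|\langle X_n,f\rangle|^2-|\langle X,f\rangle|^2$), then decomposes the self-adjoint operator ${\cal C}_n-{\cal C}_\infty$ into its positive and negative parts $({\cal C}_n-{\cal C}_\infty)^{\pm}$ and bounds the $\Pi_m$- and $\wtilde\Pi_m$-corners of each separately, using $0\le({\cal C}_n-{\cal C}_\infty)^+\le{\cal C}_n$ and $0\le({\cal C}_n-{\cal C}_\infty)^-\le{\cal C}_\infty$. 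Your alternative --- a stand-alone lemma that weak-operator convergence of positive trace-class operators together with convergence of traces forces trace-norm convergence, proved by the factorization ${\cal C}_n={\cal C}_n^{1/2}{\cal C}_n^{1/2}$ and $\|{\cal A}{\cal B}\|_{\rm tr}\le\|{\cal A}\|_{\rm HS}\|{\cal B}\|_{\rm HS}$ --- is a cleaner operator-theoretic statement (the trace-class analogue of Scheff\'e's lemma) and avoids the Skorokhod step entirely. The paper's decomposition via $({\cal C}_n-{\cal C}_\infty)^\pm$ is slightly more elementary in that it never needs to form square roots, but your approach isolates a reusable lemma.
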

\begin{proof} The `if' part follows from Theorem \ref{t:weak-conv-in-L2E}.  Indeed, by \eqref{e:mu_n-C_n} it follows that
$\langle X_n, f\rangle \cid \langle X_\infty, f\rangle$ for all $f\in \V$.  Relation \eqref{e:mu_n-C_n} implies also that  
$\|{\cal C}_n\|_{\rm tr} \to \|{\cal C}_\infty\|_{\rm tr}$ and for each $k$, $\|{\cal C}_{n,k}\|_{\rm tr} \to \|{\cal C}_{\infty,k}\|_{\rm tr}$. Thus,
 $$
 {\rm limsup }_{n\to\infty}  \left(\|{\cal C}_n\|_{\rm tr} - \|{\cal C}_{n,k}\|_{\rm tr}\right) = \|{\cal C}_\infty \|_{\rm tr} - \|{\cal C}_{\infty,k}\|_{\rm tr},
 $$
 vanishes as $k\to\infty$, which shows that the conditions of Theorem \ref{t:weak-conv-in-L2E} are fulfilled.
 
 We now prove the `only if' part.  The weak convergence $X_n\cid X_\infty$ implies that for all $f\in \V$, we have
 $\langle X_n, f\rangle \cid \langle X_\infty,f\rangle$, where the limit is a (complex) Gaussian random variable.
 This proves that $X_\infty$ is a Gaussian element in $\V$ with some mean vector $\mu_\infty\in \V$ and covariance operator ${\cal C}_\infty$.  
 
 We will first prove that $\|\mu_n-\mu_\infty\|\to 0$.  To this end, it is enough to show that $\{\mu_n\}$ is relatively compact, since the convergence 
 of the finite-dimensional distributions implies $\langle \mu_n,f\rangle \to \langle \mu_\infty,f\rangle$.   
 
 Let the $\wtilde X_n$'s be independent copies  of the $X_n$'s and observe that 
 \begin{equation}\label{e:t:Gauss-Hilbert-weak-convergence-1} 
 (Y_n, Z_n) := (X_n - \wtilde X_n, X_n + \wtilde X_n) \cid (Y_\infty, Z_\infty).
 \end{equation}
 Notice that by using the symmetry of the Gaussian distribution
\begin{equation}\label{e:t:Gauss-Hilbert-weak-convergence-2} 
 Z_n \stackrel{d}{=} 2\mu_n + Y_n.
 \end{equation}
 By Prokhorov's theorem, the convergence in distribution in \eqref{e:t:Gauss-Hilbert-weak-convergence-1} entails the tightness in distribution
 of both the sequences $\{Z_n\}$ and $\{Y_n\}$.  On the other hand \eqref{e:t:Gauss-Hilbert-weak-convergence-2} shows that 
 $\{2\mu_n + Y_n\}$ is also tight.  This implies that $\{\mu_n\}$ is relatively compact.  Indeed, by tightness, there exist compact sets ${\mathcal K}_1$
 and ${\mathcal K}_2$, such that $\P(Y_n\in {\mathcal K}_1) \ge 2/3$ and $\P(2\mu_n+Y_n \in {\mathcal K}_2)\ge 2/3$.  This implies that
 \begin{align*}
\frac{1}{3} &\le \P(Y_n\in {\mathcal K}_1) + \P(2\mu_n + Y_n\in {\mathcal K}_2) - 1 \\
& \le \P( Y_n \in {\mathcal K}_1, 2\mu_n + Y_n\in {\mathcal K}_2)  \le \P( 2\mu_n \in {\mathcal K}_2 - {\mathcal K}_1),
 \end{align*}
which means the deterministic sequence $\{\mu_n\}$ belongs to the compact set $2^{-1} ({\mathcal K}_2-{\mathcal K}_1)$ with positive probability. Hence,
 $\{\mu_n\} \subset 2^{-1} ({\mathcal K}_2-{\mathcal K}_1)$ is a relatively compact sequence completing the proof of the convergence $\|\mu_n - \mu_\infty\| \to 0$.
 
 We next show that $\|{\cal C}_n - {\cal C}_\infty\|_{\rm tr} \to 0$ and without loss of generality suppose that $\mu_n = \mu_\infty=0$ so
 that ${\cal C}_n = \E [ X_n \otimes X_n]$. Fix a CONS $\{e_i\}$ of $\V$ and define the projection operators 
$$
 \Pi_m := \sum_{i=1}^m e_i\otimes e_i\ \ \mbox{ and }\ \ \widetilde \Pi_m:= \sum_{i=m+1}^\infty e_i \otimes e_i.
$$
Since $X_n\cid X_\infty$, $\|X_n\|\cid\|X_\infty\|$  and since $\V$ is separable it follows (Prokhorov) that $\{X_n\}$ is tight.  Hence, by 
Corollary  \ref{c:uniform-tail-boubnds-for-Gaussian-norms-in-a-Hilnbert-space}, $\{\|X_n\|^2\}$ is uniformly integrable and
the convergence $\|X_n\|\cid \|X_\infty\|$ implies
\begin{equation}\label{e:tr_Cn-to-tr_C}
\|\mathcal{C}_n\|_{\rm tr} = \E [\|X_n\|^2] \to \|\mathcal{C}_\infty\|_{\rm tr} = \E [\|X_\infty\|^2],\ \ \ \mbox{ as }n\to\infty.
\end{equation}

Fix an arbitrary $\epsilon>0$ and pick $m = m(\epsilon)$ large enough so that 
$$
\|\widetilde \Pi_m\mathcal{C}\widetilde \Pi_m\|_{\rm tr} < \epsilon.
$$

Recall that every self-adjoint trace-class operator ${\cal A}$ can be decomposed as follows 
$$
{\cal A} = {\cal A}^+ - {\cal A}^-,
$$
where ${\cal A}^\pm$ are self-adjoint, positive and such that ${\cal A}^+ {\cal A}^- = {\cal A}^-{\cal A}^+=0$.  
Thus,  $({\cal A}{\cal A}^*)^{1/2} = {\cal A}^+ + {\cal A}^-$ and consequently,
$$
\|{\cal A}\|_{\rm tr} = {\rm trace}({\cal A}^+) + {\rm trace}({\cal A}^-) \equiv \| {\cal A}^+\|_{\rm tr} + \| {\cal A}^-\|_{\rm tr}.
$$
Therefore, for the self-adjoint trace-class operator ${\cal C}_n - {\cal C}_\infty$, we have
\begin{align}\label{e:Cn-C-trace}
\| \mathcal{C}_n - \mathcal{C}_\infty\|_{\rm tr} &= {\rm trace}((\mathcal{C}_n - \mathcal{C}_\infty)^+ ) 
+ {\rm trace}( (\mathcal{C}_n - \mathcal{C}_\infty)^-).
\end{align}
Notice that for every positive self-adjoint operator ${\cal A}$, we have
\begin{align}\label{e:trace-norm-proj}
\|{\cal A}\|_{\rm tr} &= {\rm trace} ({\cal A}) \nonumber \\
& = {\rm trace} (\Pi_m {\cal A} \Pi_m) +  {\rm trace} (\widetilde \Pi_m {\cal A} \widetilde \Pi_m)\\
& = \|\Pi_m {\cal A} \Pi_m\|_{\rm tr} + \|\widetilde \Pi_m {\cal A} \widetilde \Pi_m\|_{\rm tr}.\nonumber
\end{align}
Thus, by writing ${\cal A}_n^{\pm}:=  (\mathcal{C}_n - \mathcal{C}_\infty)^\pm$, in view of \eqref{e:Cn-C-trace}, we obtain
\begin{align}\label{e:trace-norm-proj-identity}
\| \mathcal{C}_n - \mathcal{C}_\infty\|_{\rm tr} &= {\rm trace} ( \Pi_m{\cal A}_n^{+} \Pi_m ) +  {\rm trace} ( \widetilde \Pi_m{\cal A}_n^{+} \widetilde \Pi_m ) \nonumber\\
& + {\rm trace} (\Pi_m{\cal A}_n^{-} \Pi_m)+ {\rm trace} ( \widetilde\Pi_m{\cal A}_n^{-} \widetilde \Pi_m). 
\end{align}
Since $X_n\cid X_\infty$, by Lemma \ref{lem:uniform-var}, we have that $\| {\cal C}_n - {\cal C}_\infty\|_{\rm op}  \to 0$.  
On the other hand, since ${\rm span}\{e_1,\cdots,e_m\}$ is finite-dimensional, Lemma \ref{lem:tr-norm-bound} applied to 
${\cal A}:= {\cal C}_n - {\cal C}_\infty$  yields
\begin{equation}\label{e:Pi_m-tr-conv}
\|\Pi_m ({\cal C}_n - {\cal C}_\infty)^{\pm} \Pi_m\|_{\rm tr} \le m \| {\cal C}_n - {\cal C}_\infty\|_{\rm op} 
 \to 0,
\end{equation}
as $n\to\infty$.

On the other hand, since $0\le ({\cal C}_n-{\cal C}_\infty)^+ \le {\cal C}_n$, for the positive operators $({\cal C}_n-{\cal C}_\infty)^+$ and 
${\cal C}_n$, we obtain
\begin{equation}\label{e:Pi-tilde-Cn+}
\| \wtilde \Pi_m ({\cal C}_n - {\cal C}_\infty)^+ \wtilde \Pi_m\|_{\rm tr} \le \| \wtilde \Pi_m {\cal C}_n  \wtilde \Pi_m\|_{\rm tr} 
= {\rm trace}(\wtilde  \Pi_m {\cal C}_n \wtilde \Pi_m).
\end{equation}
Similarly, $0\le ({\cal C}_n-{\cal C}_\infty)^- \le {\cal C}$, and hence
\begin{equation}\label{e:Pi-tilde-Cn-}
\| \wtilde \Pi_m ({\cal C}_n - {\cal C}_\infty)^- \wtilde \Pi_m\|_{\rm tr} \le \| \wtilde \Pi_m {\cal C}_\infty  \wtilde \Pi_m\|_{\rm tr} 
={\rm trace}( \wtilde \Pi_m {\cal C}_\infty \wtilde \Pi_m).
\end{equation}

The last two bounds imply that
\begin{equation}\label{e:tilde_Pi_m-tr-conv}
 \| \widetilde \Pi_m(\mathcal{C}_n - \mathcal{C}_\infty)^\pm \widetilde \Pi_m\|_{\rm tr} \le  \| \widetilde \Pi_m\mathcal{C}_n \widetilde \Pi_m\|_{\rm tr} +\|\widetilde \Pi_m\mathcal{C}_\infty\widetilde \Pi_m\|_{\rm tr} \le 3\epsilon,
\end{equation}
for all sufficiently large $n$.  Indeed, $\|\widetilde \Pi_m\mathcal{C}_\infty\widetilde \Pi_m\|_{\rm tr}<\epsilon$, by the choice of $m$.  Whereas,
by \eqref{e:tr_Cn-to-tr_C} and \eqref{e:trace-norm-proj}, we have
\begin{align*}
\| \widetilde \Pi_m\mathcal{C}_n \widetilde \Pi_m\|_{\rm tr} &=  \|\mathcal{C}_n\|_{\rm tr} - \| \Pi_m  \mathcal{C}_n\Pi_m\|_{\rm tr}\\
&\quad \to \|\mathcal{C}_\infty\|_{\rm tr} - \| \Pi_m  \mathcal{C}_\infty\Pi_m\|_{\rm tr} = \| \widetilde \Pi_m\mathcal{C}_\infty\widetilde \Pi_m\|_{\rm rt} \le \epsilon.
\end{align*}
This ensures that for all $n$ large $\| \widetilde \Pi_m\mathcal{C}_n \widetilde \Pi_m\|_{\rm tr} \le 2\epsilon$ and hence \eqref{e:tilde_Pi_m-tr-conv} holds.

Since $\epsilon$ was arbitrary, in view of \eqref{e:trace-norm-proj-identity}, Relations \eqref{e:Pi_m-tr-conv} and \eqref{e:tilde_Pi_m-tr-conv} yield
$\| \mathcal{C}_n - \mathcal{C}_\infty\|_{\rm tr}\to 0$, as $n\to\infty$.
\end{proof}

\begin{lemma}\label{lem:tr-norm-bound} Let $e_i\in \V,\ i=1,\cdots,m$ be orthonormal and $\Pi_m = \sum_{i=1}^m e_i\otimes e_i$.

For every bounded self-adjoint linear operator ${\cal A}:\V\to \V$, we have
$$
\|\Pi_m  {\cal A}^\pm \Pi_m\|_{\rm tr} \le m \sup_{\|f\|=1} | \langle f, {\cal A}f\rangle |. 
$$
\end{lemma}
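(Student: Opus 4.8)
The plan is to exploit that the compression $\Pi_m\mathcal{A}^\pm\Pi_m$ is a \emph{positive, self-adjoint, finite-rank} operator whose range is contained in the $m$-dimensional subspace $\mathrm{span}\{e_1,\dots,e_m\}$. Once this is observed, its trace norm coincides with its trace and is controlled by its operator norm up to the factor $m$; the remaining work is to bound that operator norm by the numerical radius $\sup_{\|f\|=1}|\langle f,\mathcal{A}f\rangle|$, which is where self-adjointness enters. First I would note that in all applications of the lemma $\mathcal{A}=\mathcal{C}_n-\mathcal{C}_\infty$ is a difference of trace-class, hence compact, self-adjoint operators, so the parts $\mathcal{A}^{\pm}$ are defined through the spectral decomposition \eqref{e:op_pm} and satisfy $\mathcal{A}^{\pm}\ge 0$. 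Positivity of the compression is immediate since $\langle f,\Pi_m\mathcal{A}^{\pm}\Pi_m f\rangle=\langle \Pi_m f,\mathcal{A}^{\pm}\Pi_m f\rangle\ge 0$ for every $f\in\V$.

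Next I would record three elementary bounds and chain them. Since $\Pi_m\mathcal{A}^{\pm}\Pi_m$ is positive with range of dimension at most $m$, it has at most $m$ nonzero (nonnegative) eigenvalues $\lambda_1,\dots,\lambda_{\le m}$, whence
\[
\|\Pi_m\mathcal{A}^{\pm}\Pi_m\|_{\rm tr}={\rm trace}(\Pi_m\mathcal{A}^{\pm}\Pi_m)=\sum_j\lambda_j\le m\,\max_j\lambda_j=m\,\|\Pi_m\mathcal{A}^{\pm}\Pi_m\|_{\rm op}.
\]
By submultiplicativity of the operator norm (recorded in the preliminaries) and $\|\Pi_m\|_{\rm op}=1$, we get $\|\Pi_m\mathcal{A}^{\pm}\Pi_m\|_{\rm op}\le\|\Pi_m\|_{\rm op}^2\,\|\mathcal{A}^{\pm}\|_{\rm op}=\|\mathcal{A}^{\pm}\|_{\rm op}$. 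Finally, reading off \eqref{e:op_pm}, the eigenvalues of $\mathcal{A}^{+}$ (resp.\ $\mathcal{A}^{-}$) are $\lambda_j^{+}$ (resp.\ $\lambda_j^{-}$), so $\|\mathcal{A}^{\pm}\|_{\rm op}=\max_j\lambda_j^{\pm}\le\max_j|\lambda_j|=\|\mathcal{A}\|_{\rm op}$.

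It remains to identify $\|\mathcal{A}\|_{\rm op}$ with the numerical radius. Writing $\mathcal{A}=\sum_j\lambda_j\,e_j\otimes e_j$ in its spectral form, for any unit vector $f$ one has $\langle f,\mathcal{A}f\rangle=\sum_j\lambda_j|\langle f,e_j\rangle|^2$, a convex combination of the $\lambda_j$, so $|\langle f,\mathcal{A}f\rangle|\le\max_j|\lambda_j|$; taking $f$ to be an eigenvector attaining $\max_j|\lambda_j|$ shows the supremum equals $\max_j|\lambda_j|=\|\mathcal{A}\|_{\rm op}$. Combining the displayed inequalities then yields $\|\Pi_m\mathcal{A}^{\pm}\Pi_m\|_{\rm tr}\le m\,\|\mathcal{A}\|_{\rm op}=m\sup_{\|f\|=1}|\langle f,\mathcal{A}f\rangle|$, as claimed. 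There is no deep obstacle here: the only points requiring care are the finite-rank trace/operator-norm comparison (which hinges on the positivity and the $m$-dimensional range) and the numerical-radius identity (which uses self-adjointness essentially); the mild bookkeeping issue of ensuring $\mathcal{A}^{\pm}$ is well-defined is handled by noting compactness of $\mathcal{A}$ in the intended applications.
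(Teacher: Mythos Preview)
Your argument is correct and follows essentially the same approach as the paper: the compressed operator $\Pi_m\mathcal{A}^{\pm}\Pi_m$ has at most $m$ nonzero eigenvalues, each bounded by $\|\mathcal{A}\|_{\rm op}=\sup_{\|f\|=1}|\langle f,\mathcal{A}f\rangle|$. The paper's proof is a one-line statement of exactly this observation; your version simply spells out the intermediate inequalities and flags the compactness needed for the definition of $\mathcal{A}^{\pm}$ via \eqref{e:op_pm}.
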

\begin{proof} This crude bound follows by observing that the self-adjoint 
operator ${\cal B}:= \Pi_m  {\cal A}^\pm \Pi_m$ has at most $m$ non-zero eigenvalues, which 
are all bounded above in absolute value by the spectral norm of ${\cal A}$.  
\end{proof}

\begin{lemma} \label{lem:uniform-var} Let $X_n$ and $X$ be zero-mean Gaussian $\V$-valued vectors with covariance
operators ${\cal C}_n$ and ${\cal C}$, respectively.  If $X_n\cid X$, then
$$
\| {\cal C}_n - {\cal C}\|_{\rm op} \equiv \sup_{\|f\|=1} | \langle f, ({\cal C}_n-{\cal C}) f \rangle |  \to 0,
$$
as $n\to\infty$.
\end{lemma}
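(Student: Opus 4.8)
The plan is to use that $\mathcal C_n-\mathcal C$ is self-adjoint, so that by the identity in the statement
\[
\|\mathcal C_n-\mathcal C\|_{\rm op}=\sup_{\|f\|=1}|\langle f,(\mathcal C_n-\mathcal C)f\rangle|,
\]
and to control this supremum uniformly in $f$ by splitting each unit vector into a low- and a high-frequency part relative to a fixed CONS $\{e_j\}$ of $\V$. Write $\Pi_m:=\sum_{j=1}^m e_j\otimes e_j$ and $\widetilde\Pi_m:=\I-\Pi_m$. First I would record three consequences of $X_n\cid X$. (a) For each pair $i,j$, continuity of the coordinate maps gives $(\langle X_n,e_i\rangle,\langle X_n,e_j\rangle)\cid(\langle X,e_i\rangle,\langle X,e_j\rangle)$; as these are jointly Gaussian, weak convergence forces convergence of their covariances, i.e.\ $\langle e_i,\mathcal C_n e_j\rangle\to\langle e_i,\mathcal C e_j\rangle$. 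Since $\Pi_m\mathcal C_n\Pi_m$ and $\Pi_m\mathcal C\Pi_m$ act on the fixed $m$-dimensional space $\mathrm{span}\{e_1,\dots,e_m\}$ and correspond to $m\times m$ matrices converging entrywise, this yields $\|\Pi_m(\mathcal C_n-\mathcal C)\Pi_m\|_{\rm op}\to0$ for every fixed $m$. (b) Because $\V$ is a complete separable metric space, $X_n\cid X$ implies $\{X_n\}$ is tight, so $\{\|X_n\|\}$ is uniformly tight; Corollary \ref{c:uniform-tail-boubnds-for-Gaussian-norms-in-a-Hilnbert-space} then gives uniformly bounded exponential moments, hence uniform integrability of $\{\|X_n\|^2\}$, and in particular $\sup_n\|\mathcal C_n\|_{\rm tr}=\sup_n\E\|X_n\|^2=:M<\infty$ by Lemma \ref{l:cross-cov}.

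The crux is a uniform tail bound. Set $\tau_n(m):=\E[\|\widetilde\Pi_m X_n\|^2]=\tr(\widetilde\Pi_m\mathcal C_n\widetilde\Pi_m)$ and $\tau(m):=\sup_n\tau_n(m)$; I would show $\tau(m)\to0$ as $m\to\infty$ (and likewise $\E\|\widetilde\Pi_m X\|^2\to0$). Fix $\eta>0$. By uniform integrability choose $\epsilon>0$ with $\sup_n\E[\|X_n\|^2\mathbf 1_A]<\eta$ whenever $\sup_n\P(A)\le\epsilon$; by tightness pick a compact $K\subset\V$ with $\inf_n\P(X_n\in K)\ge1-\epsilon$. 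Compact subsets of a Hilbert space have uniformly vanishing tails, so $r(m):=\sup_{x\in K}\|\widetilde\Pi_m x\|\to0$ (the tail characterization of compactness, cf.\ Lemma \ref{l:r-compact}). Splitting on $\{X_n\in K\}$ and its complement gives
\[
\tau_n(m)\le r(m)^2+\E[\|X_n\|^2\mathbf 1_{\{X_n\notin K\}}]\le r(m)^2+\eta
\]
uniformly in $n$; letting $m\to\infty$ and then $\eta\to0$ yields $\tau(m)\to0$.

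With these ingredients the conclusion follows by the promised decomposition. Since $\langle\cdot,\mathcal C_n\cdot\rangle$ is a positive semidefinite form, the Cauchy–Schwarz inequality $|\langle g,\mathcal C_n h\rangle|\le\langle g,\mathcal C_n g\rangle^{1/2}\langle h,\mathcal C_n h\rangle^{1/2}$ applies, and using $\langle\widetilde\Pi_m f,\mathcal C_n\widetilde\Pi_m f\rangle\le\tr(\widetilde\Pi_m\mathcal C_n\widetilde\Pi_m)\le\tau(m)$ together with $\max\{\langle f,\mathcal C_n f\rangle,\langle\Pi_m f,\mathcal C_n\Pi_m f\rangle\}\le\|\mathcal C_n\|_{\rm op}\le M$ for $\|f\|=1$, I obtain
\[
|\langle f,\mathcal C_n f\rangle-\langle\Pi_m f,\mathcal C_n\Pi_m f\rangle|\le 2\sqrt{M\,\langle\widetilde\Pi_m f,\mathcal C_n\widetilde\Pi_m f\rangle}\le 2\sqrt{M\,\tau(m)},
\]
and the analogous bound with $\mathcal C$ in place of $\mathcal C_n$. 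Combining these with the finite-dimensional estimate from (a) gives, for every fixed $m$,
\[
\limsup_{n\to\infty}\|\mathcal C_n-\mathcal C\|_{\rm op}\le 2\sqrt{M\,\tau(m)}+2\sqrt{\|\mathcal C\|_{\rm tr}\,\E\|\widetilde\Pi_m X\|^2},
\]
and sending $m\to\infty$ yields the claim. I expect the main obstacle to be precisely the uniform tail bound $\tau(m)\to0$: this is where tightness of $\{X_n\}$ (and not merely tightness of the norms) must be invoked, since the pointwise statement $\langle f,\mathcal C_n f\rangle\to\langle f,\mathcal C f\rangle$ together with $\sup_n\|\mathcal C_n\|_{\rm tr}<\infty$ is by itself insufficient — the choice $\mathcal C_n=e_n\otimes e_n$ converges weakly to $0$ yet has unit operator norm, and is excluded here only because the associated Gaussian sequence fails to converge in distribution.
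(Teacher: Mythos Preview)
Your argument is correct (up to a harmless bookkeeping slip: in the decomposition $f=\Pi_m f+\widetilde\Pi_m f$ you drop the diagonal term $\langle\widetilde\Pi_m f,\mathcal C_n\widetilde\Pi_m f\rangle\le\tau(m)$, so the bound should read $2\sqrt{M\tau(m)}+\tau(m)$ rather than $2\sqrt{M\tau(m)}$; this changes nothing). The key ingredients---uniform integrability of $\{\|X_n\|^2\}$ from Corollary~\ref{c:uniform-tail-boubnds-for-Gaussian-norms-in-a-Hilnbert-space}, tightness of $\{X_n\}$ in $\V$, and the tail characterization of compacta---are all present and correctly invoked, and your closing remark on why weak operator convergence alone is insufficient is well taken.

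The paper takes a shorter route that avoids the projection splitting entirely. It invokes the Skorokhod--Dudley representation to realize $X_n\to X$ almost surely on a common probability space, and then uses the elementary identity $|\,|a|^2-|b|^2\,|\le|a-b|\,|a+b|$ with $a=\langle X_n,f\rangle$, $b=\langle X,f\rangle$ to obtain the \emph{uniform} bound
\[
\sup_{\|f\|=1}\big|\langle f,(\mathcal C_n-\mathcal C)f\rangle\big|\le\big(\E\|X_n-X\|^2\,\E\|X_n+X\|^2\big)^{1/2}.
\]
Uniform integrability (again via Corollary~\ref{c:uniform-tail-boubnds-for-Gaussian-norms-in-a-Hilnbert-space}, now applied to $X_n\pm X$) together with $\|X_n-X\|\to 0$ a.s.\ then gives $\E\|X_n-X\|^2\to 0$. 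This is more economical: one global inequality replaces your three-way split and the separate $\tau(m)\to 0$ argument. Your approach, by contrast, stays entirely at the level of distributions and never couples $X_n$ and $X$; it extracts the same information (uniform smallness of high-frequency components) directly from tightness of the laws, which is arguably more transparent about where the uniformity comes from.
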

\begin{proof}
 By Skorokhod's representation, we can define $X_n, X$ on the same probability space 
 where $X_n\cas X$.  Notice that for $a,b\in \mathbb C$, we have
 $$
  \Big| |a|^2 - |b|^2 \Big| = \Big | \Re (a-b)\overline{(a+b)} \Big| \le |a-b| |a+b|.
 $$ 
By applying this inequality to $a:= \langle X_n,f\rangle$ and $b:=  \langle X,f\rangle$, for all $f\in E,\ \|f\|=1$,
we obtain
 \begin{align}\label{e:lem:uniform-var-1}
 \Big| \langle f,{\cal C}_n f\rangle -  \langle f,{\cal C} f\rangle \Big | & =  \Big|\E |\langle X_n,f\rangle|^2 - \E |\langle X,f\rangle|^2  \Big| \nonumber\\
&\le \E  \Big| |\langle X_n,f\rangle|^2 -  |\langle X,f\rangle|^2  \Big| \nonumber\\
& \le \E | \langle X_n-X,f\rangle \langle X_n+X,f\rangle| \nonumber\\
  &\le \left(\E\|X_n-X\|^2 \E\|X_n+X\|^2\right)^{1/2},
 \end{align}
 where the last bound follows from the Cauchy-Schwartz inequality.
 
 The fact that $\{\|X_n -X\|\}$ and $\{\|X_n+X\|\}$ are tight, in view of Corollary 
 \ref{c:uniform-tail-boubnds-for-Gaussian-norms-in-a-Hilnbert-space}, implies the uniform integrability of both 
 $\{\|X_n -X\|^2\}$ and $\{\|X_n+X\|^2\}$. This, since $\|X_n -X\| \cas 0$, shows that the right-hand side of \eqref{e:lem:uniform-var-1}
 vanishes, which completes the proof.
 \end{proof}

\subsection{Hilbert space valued Gaussian processes.}

Let $\{X(t),\ t\in T\}\subset {\cal L}^2(\V)$ be a second-order stochastic process taking values in the
Hilbert space $\V$, i.e., $\E \|X(t)\|^2 <\infty$.  Then, as seen above (cf Lemma \ref{l:cross-cov}),
its cross covariance operator 
\begin{equation}\label{e:C_X}
{\cal C}_X(t,s) := \E [ X(t)\otimes X(s)],\ \ t,s\in T,
\end{equation}
is well-defined and takes values in $\bbT$. Recall that the latter expectation is understood in the sense of Bochner in the Banach 
space $\bbT:=\bbT(\V)$ of trace-class operators on $\V$ equipped with the trace norm $\|\cdot\|_{\rm tr}$.
The next definition is a natural extension of a classic concept (see also Definition \ref{def:complete-positive-definiteness}). 

\begin{definition}\label{def:sup:PSD-function} Fix an arbitrary index set $T$. A function $\CC:T\times T \mapsto \bbT(\V)$,
is said to be (completely) positive definite, if for all $f_i \in E,\ t_i\in T,\ i=1,\cdots,n$ and $n\in\N$,
\begin{equation}\label{e:psd-E-valued-function} 
 \sum_{i,j} \langle f_i, {\mathcal C}(t_i,t_j) f_j\rangle \ge 0.
\end{equation}
\end{definition}
  
\begin{remark} By taking $(t_1,t_2):=(t,s)$, $(f_1,f_2) = (f, \ii g),$ for some (any) $f,g\in \V$,
 and $(f_1,f_2) =(f,g)$ and applying Relation \eqref{e:psd-E-valued-function} twice, one obtains that 
$$
\langle f, {\mathcal C}(t,s) g \rangle =  \langle f, {\mathcal C}(s,t)^* g\rangle ,\ \  t,s\in T,
$$
and hence ${\mathcal C}(t,s) = {\mathcal C}(s,t)^*$.  That is, positive definite functions are necessarily Hermitian.
\end{remark}

It is straightforward to see that every cross covariance function as in \eqref{e:C_X} is positive definite in the sense of the
above definition.  Naturally, the converse is also true and formally shown next using Gaussian processes.
Recall Definition \ref{def:Gaussian-dist}.

\begin{definition}\label{def:Gaussian-proc} A $\V$-valued stochastic process $\{X(t),\ t\in T\}$ is said to be Gaussian, if 
$\sum_{i=1}^n \langle f_i, X(t_i)\rangle$ is a (complex) Gaussian random variable, for all $f_i\in \V, t_i\in T,\ i=1,\cdots,n$.
 \end{definition}

\begin{proposition}\label{supp:prop:cross-covariance-existence} A function ${\cal C}:T\times T\to \bbT(\V)$ is positive definite in the sense of Definition \ref{def:sup:PSD-function},
if and only if 
$$
 {\mathcal C}(t,s) = \E [ X(t)\otimes X(s)],\ \ t,s\in T
$$
for some second-order $\V$-valued stochastic process $X = \{X(t),\ t\in T\}$.  In this case, the process $X$ can be taken to 
be zero-mean Gaussian.
\end{proposition}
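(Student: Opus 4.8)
The plan is to prove the two directions of the equivalence and then the Gaussian refinement. The ``if'' direction is immediate: if ${\cal C}(t,s) = \E[X(t)\otimes X(s)]$ for a second-order process $X$, then for any $f_i\in\V$, $t_i\in T$, $i=1,\dots,n$, we have $\sum_{i,j}\langle f_i, {\cal C}(t_i,t_j) f_j\rangle = \E\big[ \sum_{i,j} \langle f_i, (X(t_i)\otimes X(t_j)) f_j\rangle\big] = \E\big[ \big|\sum_i \overline{\langle X(t_i), f_i\rangle}\big|^2\big] \ge 0$, where I use the definition $(X(t_i)\otimes X(t_j))f_j = \langle f_j, X(t_j)\rangle X(t_i)$ and the fact that the scalar integrand is nonnegative so the Bochner/ordinary expectation is real and nonnegative. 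Thus every cross-covariance function is positive definite.

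For the ``only if'' direction, given a positive-definite ${\cal C}:T\times T\to\bbT(\V)$, I would construct a zero-mean Gaussian process with the prescribed covariance via the finite-dimensional consistency (Kolmogorov) approach, working through a fixed real CONS $\{e_j\}$ of $\V$ so that the construction reduces to building a real Gaussian family of coordinate variables. First I would fix $\{e_j\}$ and note that any candidate process must have coordinates $\xi_j(t):=\langle X(t),e_j\rangle$ whose joint (real) covariance structure is dictated by ${\cal C}$: specifically $\E[\xi_i(t)\overline{\xi_j(s)}] = \langle e_i, {\cal C}(t,s) e_j\rangle$, together with the pseudo-covariance $\E[\xi_i(t)\xi_j(s)]$, which I would set to zero to obtain a \emph{circularly symmetric} (proper) complex Gaussian family. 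The positive-definiteness of ${\cal C}$ guarantees that the required real covariance matrices for any finite collection of the real and imaginary parts of the $\xi_j(t_\ell)$ are positive semidefinite, so Kolmogorov's extension theorem produces a consistent Gaussian family $\{\xi_j(t)\}_{j\in\N, t\in T}$ on a suitable probability space.

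The main obstacle, and the step I would treat most carefully, is showing that the coordinate sequence $(\xi_j(t))_{j\in\N}$ is almost surely square-summable for each fixed $t$, so that $X(t):=\sum_j \xi_j(t) e_j$ is a genuine $\V$-valued random element rather than merely a cylindrical process. This is exactly where Proposition~\ref{prop:zero-one} (the zero-one law) enters: for fixed $t$, the variances of the real and imaginary parts of $\xi_j(t)$ sum to $\sum_j \langle e_j, {\cal C}(t,t) e_j\rangle = {\rm trace}({\cal C}(t,t)) = \|{\cal C}(t,t)\|_{\rm tr} < \infty$, because ${\cal C}(t,t)\in\bbT_+$ is trace class. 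Hence condition (i) of Proposition~\ref{prop:zero-one} holds, which forces $\sum_j |\xi_j(t)|^2 <\infty$ almost surely, so $X(t)$ is well-defined in $\V$ and $\E\|X(t)\|^2 = \|{\cal C}(t,t)\|_{\rm tr}<\infty$, i.e.\ $X(t)\in{\cal L}^2(\V)$.

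Finally I would verify that the so-constructed process has the correct cross-covariance. Since $\E\|X(t)\|\,\|X(s)\|<\infty$ by Cauchy--Schwarz, Lemma~\ref{l:cross-cov} guarantees that $\E[X(t)\otimes X(s)]$ is a well-defined element of $\bbT$ in the Bochner sense, and by the continuity of the projections $A\mapsto \langle e_i, A e_j\rangle$ one computes $\langle e_i, \E[X(t)\otimes X(s)] e_j\rangle = \E[\langle X(t),e_i\rangle\,\overline{\langle X(s),e_j\rangle}] = \E[\xi_i(t)\overline{\xi_j(s)}] = \langle e_i, {\cal C}(t,s) e_j\rangle$ for all $i,j$. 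Since the coordinates in a fixed CONS determine a trace-class operator uniquely, this yields $\E[X(t)\otimes X(s)] = {\cal C}(t,s)$, and by construction $\sum_i\langle f_i, X(t_i)\rangle$ is complex Gaussian, so $X$ is a zero-mean Gaussian process realizing ${\cal C}$, completing the proof.
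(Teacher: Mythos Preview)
Your proof is correct and takes a genuinely different route from the paper's. The paper constructs, for each finite tuple $(t_1,\dots,t_n)$, a Gaussian random element directly in the product space $\V^{\oplus n}$ by forming the block operator $C_n=({\cal C}(t_i,t_j))_{i,j}$, checking that $C_n\in\bbT_+(\V^{\oplus n})$ with $\|C_n\|_{\rm tr}=\sum_i\|{\cal C}(t_i,t_i)\|_{\rm tr}<\infty$, and then writing down its Karhunen--Lo\`eve expansion $X_n=\sum_j\sqrt{\sigma_j}\,Z_j\varphi_j$; Kolmogorov consistency is then argued at the level of $\V^n$-valued laws. Your approach instead fixes a CONS $\{e_j\}$, builds the \emph{scalar} Gaussian family $\{\xi_j(t)\}$ via the ordinary Kolmogorov theorem, and uses Proposition~\ref{prop:zero-one} (the zero--one law) together with ${\rm trace}({\cal C}(t,t))<\infty$ to show that the coordinates are a.s.\ square-summable, so that $X(t)=\sum_j\xi_j(t)e_j$ lands in $\V$. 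What your approach buys is that it avoids the spectral theorem for trace-class operators on an infinite-dimensional space and handles the complex Gaussian structure cleanly by imposing circular symmetry (zero pseudo-covariance), a point the paper's sketch leaves implicit. What the paper's approach buys is that each finite-dimensional marginal is produced in one shot as a bona fide $\V^{\oplus n}$-element, so no separate ``assembly'' step is needed. Both arguments ultimately rest on the same trace-class hypothesis on ${\cal C}(t,t)$.
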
		
\begin{proof} The `if' part is immediate, by observing that 
$$\E \left |\sum_{i=1}^n \langle f_i, X(t_i) \rangle \right|^2 = \sum_{i,j=1}^n \langle f_i, {\mathcal C}(t_i,t_j) f_j\rangle,
$$
where ${\mathcal C}(t,s) = \E [ X(t)\otimes X(s)],\ t,s\in T$.

To prove the `only if' part, consider the direct-sum Hilbert space $\V^{\oplus n} = \{ (f_i)_{i=1}^n,\ f_i\in \V\}$, which is equipped 
with the inner product
$$
\langle f,g\rangle_n := \sum_{i=1}^n \langle f_i,g_i\rangle.
$$
For any $\{t_i\}_{i=1}^n \subset \V$, define the operator $C_n := ({\mathcal C}(t_i,t_j))_{n\times n}$ acting on $\V^{\oplus n}$ as follows:
$$
\langle f, C_n g\rangle_n := \sum_{i,j=1}^n \langle f_i, {\mathcal C}(t_i,t_j)g_j \rangle,
$$
for all $f = (f_i)_{i=1}^n$ and $g = (g_i)_{i=1}^n$ with $f_i,\ g_i\in \V$.
Relation \eqref{e:psd-E-valued-function}  implies that $C_n$ is self-adjoint and positive. Therefore, to establish that 
$C_n\in \bbT_+(\V^{\oplus n})$, it is enough to show that its trace is finite.  Let $\{e_{j}\}$ be a CONS of $\V$ and define 
the elements in $\V^{\oplus n}$:
$$ 
\psi_{1,j} = (e_j,0,\cdots,0),\ \psi_{2,j} = (0,e_j,0,\cdots,0),\cdots, \psi_{n,j} := (0,\cdots,0,e_j),
$$
for $j\in\N$.  Then, it is easy to see that $\{\psi_{i,j},\ i=1,\cdots,n,\ j\in\N\}$ is a CONS of the direct-sum Hilbert space $\V^{\oplus n}$.
Since $C_n$ is self-adjoint and positive, we have
\begin{align*}
\| C_n\|_{\rm tr} &= {\rm trace}(C_n) = \sum_{i=1}^n\sum_{j\in \N} \langle \psi_{i,j}, C_n \psi_{i,j} \rangle_n\\
& = \sum_{i=1}^n  \sum_{j\in\N} \langle e_j, {\mathcal C}(t_i,t_i) e_j\rangle = \sum_{i=1}^n {\rm trace}({\mathcal C}(t_i,t_i))\\
& = \sum_{i=1}^n \| {\mathcal C}(t_i,t_i)\|_{\rm tr} <\infty, 
\end{align*}
where the terms in the last sum are finite since ${\mathcal C}(t_i,t_i)\in \bbT_+(\V)$ are positive trace-class operators.

The spectral theorem for the positive trace-class operator $C_n$ yields the decomposition
$$
C_n = \sum_{j\in\N} \sigma_{j} \varphi_j \otimes \varphi_j,
$$
where $\sigma_j \ge 0$ and $\{\varphi_j\}$ is an orthonormal set of eigenvectors of the operator $C_n$.
Now, taking independent standard normal random variables $Z_j,\ j\in \N$, we define
\begin{equation}\label{e:X-n-star}
X_n:= \sum_{j} \sqrt{\sigma_j} Z_j \varphi_j.
\end{equation}
Since $\sum_{j} \sigma_j = \|C_n\|_{\rm tr} <\infty$, by the It\^{o}-Nisio Theorem, the above series converges in norm 
with probability one and defines a zero-mean Gaussian vector in $\V^{\oplus n}$.   Noting that $X_n = (X_n(i))_{i=1}^n$, 
where $X_n(i)\in \V$, and using the fact that 
$$
e\otimes f = (e_i\otimes f_j)_{i,j=1}^n,
$$
for all $e = (e_{i})_{i=1}^n$ and $f=(f_i)_{i=1}^n$ in $E^{\oplus n}$, it readily follows that the collection of Gaussian vectors 
$\{X_n(i),\ i=1,\cdots,n\}$ has cross covariance  operators
$$
 {\mathcal C}(t_i,t_j) = \E [ X_n(i)\otimes X_n(j) ],\ \ i,j=1,\cdots,n. 
$$
The joint distribution of the components of the vector $X_n$ defines a probability distribution
$F_{t_1,\cdots,t_n}$ on the {\em product} Hilbert space $\V^n$.  Since the law of the Gaussian vectors $X_n$ in \eqref{e:X-n-star} is (by definition) 
completely determined by $\{(\sigma_j,\varphi_j),\ j=1,\cdots,n\}$, it is easy to see that the family $\{F_{t_1,\cdots,t_n},\ t_i\in T\}$ is consistent and hence the 
generalized version of the Kolmogorov existence theorem applies (see, e.g., Theorem 5.16 in \cite{kallenberg:1997}).  Thus, on a suitable probability space 
one can have an $\V$-valued stochastic process $\{X(t),\ t\in T\}$ with finite-dimensional distributions $F_{t_1,\cdots,t_n}$, and in particular such that 
$
\E [ X(t)\otimes X(s)] = {\mathcal C}(t,s),\ \ t,s\in T.
$
\end{proof}

{\bf Caution.} Suppose that $C: T\times T\to \bbT(\V)$ is such that 
$\sum_{i,j=1}^n  a_i\overline a_j {\mathcal C}(t_i,t_j)$ is a positive self-adjoint operator, for all $a_i\in \bbC$ and $t_i\in T,\ i=1,\cdots,n$.
One may be tempted to conclude that $\{{\mathcal C}(t,s)\}$ is then positive definite in the sense of Definition \ref{def:sup:PSD-function}.
This is not the case in general according to the following classic example borrowed from the literature of $C^*$-algebras.  (See, however,  
Definitions \ref{def:pos-def-function}, \ref{def:complely-pos-def-function}, and 
Corollary \ref{c:weak-psd-implies-complete-psd} in the main paper.)

\begin{example} \label{supp:ex:strong-psd-is-needed}   
Consider the simple case $T =\{1,2\}$ and $\V = \R^2$, where ${\mathcal C}(i,j) = C_{i,j},\ i,j\in T$
are as follows
$$
C:= \left (\begin{array}{ll} C_{1,1} & C_{1,2}\\
C_{2,1} & C_{2,2} \end{array}\right)  := \left( \begin{array}{ll | ll}
 1 & 0 & 0 & 0\\
 0 & 0 & 1 & 0\\
 \hline
 0 & 1 & 0 &0 \\
 0 & 0 & 0 & 1\\
\end{array} \right).
$$
It is easy to see that
$$
\sum_{i,j=1}^2 a_i \overline a_j C_{i,j} = \left( \begin{array}{cc}
  |a_1|^2  &  \overline{a}_1a_2 \\
  a_1 \overline{a}_2  & |a_2|^2
  \end{array}
  \right),
$$
which is positive semidefinite.  However, since ${\rm det}(C)=-1$ 
the matrix $C$ is not positive semi-definite.  
\end{example}

\section{On the support of operator self-similar IRF$_k$'s.}\label{sec:supplement-support}
In this section, we focus on zero-mean second-order operator self-similar IRF$_k$'s taking values in a separable Hilbert space $\V$.
We provide miscellaneous results about their supports and operator exponents complementing Section \ref{s:self-similar IRFk} of
 the main paper. 

\begin{definition}\label{def:supp-Y} For a second-order IRF$_k$ process $Y$, we write
$$
{\rm supp}(Y) := \overline{\rm span} \Big(\bigcup_{\lambda\in \Lambda_k} {\rm Im}(\mathcal{C}_Y(\lambda,\lambda))\Big),
$$ 
where ${\rm Im}(\mathcal{C}_Y(\lambda,\lambda))$ stands for the range of the cross covariance operator
$\mathcal{C}_Y(\lambda,\mu) = \E [Y(\lambda)\otimes Y(\mu)]$ and $\overline{\rm span}$ denotes the closure of the linear span.
The closed subspace ${\rm supp}(Y)$ of ${\mathbb V}$ will be referred to as the {\em support} of $Y$.  If ${\rm supp}(Y) = \V$, then
$Y$ will be referred to as {\em proper}.
\end{definition}

Since for the self-adjoint positive operator $\mathcal{C}_Y(\lambda,\lambda)$, we have 
${\rm Im}(\mathcal{C}_Y(\lambda,\lambda)) = {\rm Ker}(\mathcal{C}_Y(\lambda,\lambda))^\perp$, one can equivalently write
\begin{align*}
{\rm supp}(Y) &= \Big( \cap_{\lambda\in\Lambda_k} {\rm Ker}(\mathcal{C}_Y(\lambda,\lambda) \Big)^\perp \\
& =  \Big \{ e\in \V\, :\, \E | \langle e, Y(\lambda)\rangle|^2 = 0,\ \mbox{ for all }\lambda\in \Lambda_k\Big\}^\perp.
\end{align*}
The last relation follows from the fact that $ \E | \langle e, Y(\lambda)\rangle|^2 = \langle {\cal C}_Y e, e\rangle = \langle {\cal A} e,{\cal A}e\rangle$, where ${\cal A}$ is a self-adjoint positive operator such that ${\cal C}_Y = {\cal A}^2$.

Since for all $\lambda_i\in \Lambda_k$ and $c_i\in \bbC,\ i=1,\cdots,n$, we have
$
Y(\lambda) = \sum_{i=1}^n c_i Y(\lambda_i),
$
for $\lambda:= \sum_{i=1}^n c_i\cdot \lambda_i$, it follows ${\rm supp}(Y)$ is the smallest closed linear subspace $W$ of ${\mathbb V}$ such that
$Y(\lambda) \in W,$ almost surely, for all $\lambda$.

By analogy with the notion of a support of an IRF, we shall introduce the notion of a range of a $\bbT_+$-valued measure.  

\begin{definition}  Let $\chi$ be a $\sigma$-finite $\bbT_+$-valued measure on ${\cal B}(\R^d\setminus\{0\})$.  For
$a\in \V$, consider the $\sigma$-finite measure $\mu_a(du):= \langle a,\chi(du)a\rangle$.  The {\em range} of $\chi$, denoted
${\rm Range}(\chi)$ is defined as:
$$
{\rm Range}(\chi) := \Big\{ a \in \V\, :\, \mu_a(du) = 0 \Big\}^\perp.
$$
\end{definition}
It is easy to see that ${\rm Range}(\chi) = \overline{{\rm span}} \cup_{A \, :\, \|\chi(A)\|_{\rm tr} <\infty} {\rm Im}( \chi(A)).$
The intuition behind the above definition is that ${\rm Range}(\chi)$ is the minimal subspace of ${\mathbb V}$ such that the projected measures
$\mu_a$ are non-trivial.  The support of an IRF with trivial polynomial component is precisely the range of its spectral 
measure. 

\begin{proposition}\label{p:suppY=rangeChi} 
Let $Y$ be a continuous second-order IRF$_k$ with spectral representation $(\chi_k,{\cal Q})$ as in \eqref{e:IRF-cov-spectral} 
(in the main paper), with trivial polynomial component ${\mathcal Q}$.  Then, with $\chi_k$ as in \eqref{e:chi-k}, 
$$
{\rm supp}(Y) = {\rm Range}(\chi_k) \equiv {\rm Range}(\chi).
$$
\end{proposition}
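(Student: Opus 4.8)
The plan is to establish the two equalities ${\rm supp}(Y) = {\rm Range}(\chi_k)$ and ${\rm Range}(\chi_k) = {\rm Range}(\chi)$ separately, in each case passing to orthogonal complements. Recall from Definition \ref{def:supp-Y} that ${\rm supp}(Y) = S^\perp$, where $S := \{e\in\V : \E|\langle e, Y(\lambda)\rangle|^2 = 0 \text{ for all } \lambda\in\Lambda_k\} = \bigcap_{\lambda\in\Lambda_k}\ker \mathcal{C}_Y(\lambda,\lambda)$ is a closed subspace, so that ${\rm supp}(Y)^\perp = S$. Likewise ${\rm Range}(\chi_k) = (S')^\perp$ with $S' := \{e\in\V : \mu_e \equiv 0\}$ closed, where $\mu_e(du) := \langle e, \chi_k(du)\, e\rangle$ is a $\sigma$-finite non-negative Borel measure on $\R^d\setminus\{0\}$ (finite away from the origin but possibly infinite near it). Thus it suffices to show $S = S'$.

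First I would rewrite the relevant quadratic form. Since the polynomial component $\mathcal{Q}$ is trivial, \eqref{e:IRF-cov-spectral} reduces to $\mathcal{C}_Y(\lambda,\lambda) = \int_{\R^d} |\widehat\lambda(u)|^2\, \chi_k(du)$, with $\chi_k$ as in \eqref{e:chi-k} and the integral understood in the Bochner sense on $(\bbT,\|\cdot\|_{\rm tr})$. For fixed $e\in\V$ the functional $T\mapsto \langle e, Te\rangle$ is bounded on $\bbT$ (since $|\langle e,Te\rangle| \le \|T\|_{\rm tr}\|e\|^2$), so it commutes with the Bochner integral as in \eqref{e:trace-commutes-with-integral}, giving $\E|\langle e, Y(\lambda)\rangle|^2 = \langle e, \mathcal{C}_Y(\lambda,\lambda)e\rangle = \int_{\R^d} |\widehat\lambda(u)|^2\, \mu_e(du)$. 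Consequently $S = \{e : \int |\widehat\lambda|^2 \, d\mu_e = 0 \text{ for all } \lambda\in\Lambda_k\}$. The inclusion $S'\subseteq S$ is then immediate: if $\mu_e\equiv 0$, every such integral vanishes.

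The converse $S\subseteq S'$ is the crux. Suppose $\int |\widehat\lambda|^2\, d\mu_e = 0$ for all $\lambda\in\Lambda_k$; I must deduce $\mu_e\equiv 0$. The hard part is that no single finitely supported $\lambda\in\Lambda_k$ has a nowhere-vanishing Fourier transform, so one cannot conclude directly. My remedy is to invoke the reference measure $\mu_0$ of Lemma \ref{l:reference_measure} (in its $\R^d$ version), which satisfies $\widehat\mu_0(u)\neq 0$ for every $u\neq 0$ and arises as the weak limit of a sequence $\mu_n\in M_f(k)\subseteq\Lambda_k$. Because each $\mu_n$ lies in $\Lambda_k$, the hypothesis gives $\int |\widehat{\mu_n}|^2\, d\mu_e = 0$, while weak convergence yields the pointwise limit $\widehat{\mu_n}(u)\to\widehat\mu_0(u)$ (test against the bounded continuous function $x\mapsto e^{\ii u^\top x}$). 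Fatou's lemma, applied to the non-negative integrands $|\widehat{\mu_n}|^2$, then forces $\int |\widehat\mu_0|^2\, d\mu_e \le \liminf_n \int |\widehat{\mu_n}|^2\, d\mu_e = 0$; since $|\widehat\mu_0(u)|^2 > 0$ on $\R^d\setminus\{0\}$, the non-negative measure $\mu_e$ is identically zero, i.e.\ $e\in S'$. This proves $S=S'$ and hence ${\rm supp}(Y) = {\rm Range}(\chi_k)$.

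Finally, the identity ${\rm Range}(\chi_k) = {\rm Range}(\chi)$ follows because, by \eqref{e:chi-k}, $\chi_k(du) = (1\wedge\|u\|^{2k+2})^{-1}\chi(du)$ carries a strictly positive scalar density with respect to $\chi$ on $\R^d\setminus\{0\}$. Hence for every $e\in\V$ the scalar measures $\langle e, \chi_k(du)e\rangle$ and $\langle e, \chi(du)e\rangle$ are mutually absolutely continuous, so one vanishes iff the other does; the associated null-vector subspaces coincide, and therefore so do the two ranges. I expect the Fatou/reference-measure step of the converse inclusion to be the main obstacle, along with the routine bookkeeping of verifying that the $\R^d$ analogue of Lemma \ref{l:reference_measure} indeed supplies a measure whose Fourier transform never vanishes off the origin.
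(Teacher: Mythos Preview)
Your proof is correct and follows essentially the same route as the paper's: both pass to orthogonal complements, write $\langle e,\mathcal{C}_Y(\lambda,\lambda)e\rangle = \int |\widehat\lambda|^2\,d\mu_e$, handle one inclusion trivially, and for the converse invoke the reference measure of Lemma~\ref{l:reference_measure} together with Fatou's lemma to force $\mu_e\equiv 0$; the equality ${\rm Range}(\chi_k)={\rm Range}(\chi)$ is likewise argued via the strictly positive density $(1\wedge\|u\|^{2k+2})^{-1}$.
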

\begin{proof} By \eqref{e:IRF-cov-spectral}, we can write
$
{\mathcal C}(\lambda,\lambda) = \int_{\R^d} |\wh \lambda(u)|^2 \chi_k(du),
$
for all $\lambda\in\Lambda_k$, since ${\cal Q}(\lambda*\wt \lambda) = 0$ by assumption.  Thus, for all $a\in \V$, 
\begin{equation}\label{e:support=range}
\langle a, {\mathcal C}(\lambda,\lambda)a\rangle = \int_{\R^d}  |\wh \lambda(u)|^2 \mu_a(du),
\end{equation}
where $\mu_a(du) = \langle a,\chi_k(du)a\rangle$.

Recall that ${\rm supp}(Y) = {\mathbb W}^\perp$, where  ${\mathbb W} 
= \cap_{\lambda\in\Lambda_k} {\rm Ker}({\mathcal C}(\lambda,\lambda))$.
Thus, \eqref{e:support=range} implies that $a\in {\mathbb W}$ if and only if  $\mu_a(du) =0$.  Indeed, if
$\mu_a$ is the zero measure, then $a\in {\rm Ker}({\mathcal C}(\lambda,\lambda)$ for all $\lambda\in\Lambda_k$.  Conversely, if 
$\int_{\R^d} |\wh \lambda(u)|^2 \mu_a(du) = 0$ for all $\lambda \in \Lambda_k$, then it follows that $\mu_a$ is the zero measure.   
Indeed, this follows from the fact that one can choose a judicious sequence of measures
$\lambda_n\in \Lambda_k$, such that $|\wh \lambda_n(u)|^2 \to g(u)$, where $g(u)>0$ for all $u\not=0$ 
(see e.g.\ Lemma \ref{l:reference_measure}).  The Fatou Lemma then entails $\int g(u)\mu_a(du) = 0$, and hence $\mu_a =0$, since 
$\mu_a(\{0\})=0$. This completes the proof since $\mu_a(du) = (1\wedge \|u\|)^{-2k-2} \langle a,\chi(du)a\rangle$  and the facts  that $\chi_k$ and $\chi$ put no mass at $\{0\}$ 
entail ${\rm Range}(\chi_k) = {\rm Range}(\chi)$.
\end{proof}

Let now ${\H}:\V\to \V$ be a bounded linear operator on the Hilbert space ${\mathbb V}$. Recall that a zero-mean
IRF$_k$ $Y$ is said to be second-order ${\H}$-self-similar if $\{c^{\H} Y(\lambda)\}$ and $\{Y(c\cdot \lambda)\}$ have the same
cross covariance operators, for all $c>0$.   In particular, if $Y$ is Gaussian, this entails the ${\H}$-self-similarity of $Y$.

The operator ${\H}$ can in principle be rather arbitrary outside the support $Y$.  This is perhaps why a {\em standard} assumption 
adopted in the literature on operator self-similarity in the finite-dimensional setting is that  $Y$ be {\em proper}, i.e., 
${\rm supp}(Y)=\V$ so that its support cannot be confined to a proper linear subspace of ${\mathbb V}$ (see e.g., 
\cite{didier:meerschaert:pipiras:2017} and the references therein). The following result allows us to restrict the operator ${\H}$
 to ${\rm supp}(Y)$.

\begin{proposition}\label{p:supplement:support} If $Y$ is a continuous second-order ${\H}$-self-similar IRF$_k$, then:
\begin{enumerate}
\item 
${\H} ({\rm supp}(Y))\subset {\rm supp}(Y)$,
\item
$\overline{{\H}({\rm supp}(Y))} = {\rm supp}(Y)$. 
\end{enumerate}

\end{proposition}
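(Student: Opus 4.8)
The plan is to work throughout with the covariance form of self-similarity. Writing $W := {\rm supp}(Y)$ and using $(Af)\otimes(Ag) = A(f\otimes g)A^*$, second-order $\H$-self-similarity (Definition \ref{def:OSS}) is equivalent to
\[
\mathcal{C}_Y(c\cdot\lambda, c\cdot\mu) = c^{\H}\mathcal{C}_Y(\lambda,\mu)c^{\H^*}, \qquad c>0,\ \lambda,\mu\in\Lambda_k.
\]
Since $c^{\H}$ is invertible with inverse $c^{-\H}$ (Section \ref{s:operator-ss}), this yields $c^{\H}\,{\rm Im}(\mathcal{C}_Y(\lambda,\lambda)) \subset W$: for $v = \mathcal{C}_Y(\lambda,\lambda)x$ one has $c^{\H}v = \mathcal{C}_Y(c\cdot\lambda,c\cdot\lambda)(c^{-\H^*}x)\in {\rm Im}(\mathcal{C}_Y(c\cdot\lambda,c\cdot\lambda))\subset W$, because $c\cdot\lambda\in\Lambda_k$.

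For part (i) I would differentiate the curve $c\mapsto c^{\H}v$ at $c=1$. By the Fr\'echet differentiability \eqref{e:Frechet-derivative-c-to-H}, $h^{-1}((1+h)^{\H}v - v)\to \H v$ as $h\to 0$; since every $(1+h)^{\H}v$ and $v$ lies in the \emph{closed linear} subspace $W$, so do the difference quotients, and hence so does the limit $\H v$. Thus $\H$ maps each ${\rm Im}(\mathcal{C}_Y(\lambda,\lambda))$ into $W$; by linearity it maps their span into $W$, and by continuity of $\H$ together with closedness of $W$ it maps the closed span into $W$, i.e. $\H(W)\subset W$.

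For part (ii), part (i) first gives, via the power series for $c^{\H}$ and the closedness of $W$, that $c^{\H}(W)\subset W$ and $c^{-\H}(W)\subset W$, whence $c^{\H}(W)=W$ and the restriction satisfies $c^{\H}|_W = c^{\H_W}$ with $\H_W := \H|_W$ a bounded operator on $W$. Since $\overline{\H(W)}\subset W$, only density is at issue. Suppose it fails: there is $e\in W$, $e\neq 0$, with $e\perp\H(W)$, i.e. $\H_W^* e = 0$, where $\H_W^* = P_W\H^*|_W$ is the adjoint \emph{on} $W$. Then $c^{\H_W^*}e = e$ for all $c$, and I would show this forces $\psi_\lambda(c) := \E|\langle Y(c\cdot\lambda),e\rangle|^2$ to be constant in $c$ for every $\lambda\in\Lambda_k$: with $A := \mathcal{C}_Y(\lambda,\lambda)$ (self-adjoint, range in $W$, $W^\perp\subset\ker A$), self-similarity gives $\psi_\lambda(c) = \langle c^{\H^*}e, A\,c^{\H^*}e\rangle = \langle w_c, A w_c\rangle$ with $w_c := P_W c^{\H^*}e$, and the identity $\langle w_c, v\rangle = \langle e, c^{\H}v\rangle = \langle e, c^{\H_W}v\rangle = \langle c^{\H_W^*}e, v\rangle$ for $v\in W$ shows $w_c = c^{\H_W^*}e = e$, so $\psi_\lambda(c) = \langle e, Ae\rangle = \psi_\lambda(1)$.

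To close, I would let $c\downarrow 0$: since $\lambda\in\Lambda_k$ with $k\ge 0$ annihilates constants, writing $\lambda = \sum_i a_i\delta_{t_i}$ (so $\sum_i a_i = 0$) one has $Y(c\cdot\lambda) = \sum_i a_i\breve Y(ct_i)\to (\sum_i a_i)\breve Y(0) = 0$ in $L^2$ by mean-square continuity of $\breve Y$ and \eqref{e:representer}; hence $\psi_\lambda(c)\to 0$, and combined with $\psi_\lambda$ constant we get $\E|\langle Y(\lambda),e\rangle|^2 = 0$ for every $\lambda$. By the description of $W={\rm supp}(Y)$ this means $e\in W^\perp$, contradicting $0\neq e\in W$. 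The main obstacle is the bookkeeping in part (ii): one must carefully distinguish the compression $\H_W^* = P_W\H^*|_W$ from $\H^*|_W$ and track the orthogonal projections, and justify the vanishing $Y(c\cdot\lambda)\to 0$ through mean-square continuity — which is precisely where $k\ge 0$ and the annihilation of constants enter.
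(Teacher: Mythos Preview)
Your proposal is correct and follows essentially the same strategy as the paper: for (i), use the covariance form of $\H$-self-similarity to see that $c^{\H}$ preserves the relevant subspace and then differentiate at $c=1$ via \eqref{e:Frechet-derivative-c-to-H}; for (ii), reduce density of $\H(W)$ in $W$ to $\ker(\H_W^*)=\{0\}$, observe that $\H_W^*e=0$ forces $c^{\H_W^*}e=e$, and derive a contradiction from $Y(c\cdot\lambda)\to 0$ in $L^2$ as $c\downarrow 0$. The only organizational difference is that the paper works on the kernel side (showing $r^{\H^*}(\mathbb W)=\mathbb W$ for $\mathbb W=W^\perp$ and hence $r^{\H}(W)=W$ \emph{before} differentiating), while you work on the image side and recover $c^{\H}(W)=W$ afterwards from the power series; your explicit bookkeeping distinguishing $\H_W^*=P_W\H^*|_W$ from $\H^*|_W$ is in fact a bit more careful than the paper's treatment of the same step.
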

\begin{proof} Let ${\mathbb W} := \cap_{\lambda\in \Lambda_k} {\rm Ker}( \mathcal{C}_Y(\lambda,\lambda)),$ where 
$\mathcal{C}_Y(\lambda,\mu) = \E[Y(\lambda)\otimes Y(\mu)]$, $\lambda,\mu\in\Lambda_k$.  For every $r>0$, and 
$\lambda\in \Lambda_k$, the operator self-similarity Relation \eqref{e:Y-Hss} implies that
\begin{equation}\label{e:p:support--1}
r^{\H} \mathcal{C}_Y(r^{-1}\lambda,r^{-1}\lambda) r^{\H^*} = \mathcal{C}_Y(\lambda, \lambda).
\end{equation}
This implies that 
$$
r^{{\H}^*} {\rm Ker}( \mathcal{C}_Y(\lambda,\lambda)) \subset {\rm Ker}( \mathcal{C}_Y(r^{-1}\lambda,r^{-1}\lambda)) \subset  {\mathbb W}.
$$
Since $\lambda$ was arbitrary, it follows that $r^{{\H}^*} ({\mathbb W}) \subset {\mathbb W}$.  Note however that
$r^{{\H}^*}$ is invertible with bounded inverse $r^{{-\H}^*}$ such that $r^{{-\H}^*} r^{{\H}^*} = r^{{\H}^*} r^{{-\H}^*} = {\rm I},\ r>0$.  Therefore,
$$
{\mathbb W} = r^{{-\H}^*} ( r^{{\H}^*} {\mathbb W}) \subset r^{{-\H}^*}({\mathbb W}),\ \ \mbox{ for all $r>0$}.
$$
By replacing $r$ with $r^{-1}$ above we obtain ${\mathbb W} \subset r^{\H^*} {\mathbb W}$ and hence
\begin{equation}\label{e:p:support}
r^{\H^*}({\mathbb W}) = {\mathbb W},\ \ \ \mbox{ for all }r>0.
\end{equation}
Observe that ${\rm supp}(Y) = {\mathbb W}^\perp$.  Therefore, Relation \eqref{e:p:support} implies $r^{\H} {\rm supp}(Y) ={\rm supp}(Y)$, for all $r>0$. Indeed,
this can be seen by writing
\begin{equation}\label{e:p:support-1}
\langle r^{\H} x, y\rangle = \langle x, r^{{\H}^*}y\rangle
\end{equation}
Taking $y\in  {\mathbb W}$ and $x\in {\rm supp}(Y) \equiv  {\mathbb W}^\perp$, we see that  $r^{{\H}^*}y\in  {\mathbb W}$ by \eqref{e:p:support} and hence $\langle r^{\H} x,y\rangle =0$,
for all $x\in {\rm supp}(Y)$ and $y\in  {\mathbb W}$.  This shows that $r^{\H}({\rm supp}(Y)) \subset {\rm supp}(Y),$ for all $r>0$.  
As argued above, by applying the inverse $r^{{-\H}} = (r^{\H})^{-1}$, we get  
$$
r^{\H} ({\rm supp}(Y)) = {\rm supp}(Y),\ \ \mbox{ for all }r>0.
$$
Now, we argue that this relation entails ${\H}({\rm supp}(Y)) \subset {\rm supp}(Y)$.  Observe that $r\mapsto r^{\H}$ is Fr\'echet differentiable in the Banach
space of bounded operators on ${\mathbb V}$ equipped with the operator norm.  That is, for all $r>0$, we have
$$
\left \|\frac{(r+h)^{\H} - r^{\H}}{h} - r^{{\H}-1} {\H} \right \| \to 0,\ \ \mbox{ as }h\to 0.  
$$
By \eqref{e:p:support-1}, for every $x\in {\rm supp}(Y)$, we have $y_h:= h^{-1}((r+h)^{\H} - r^{\H})(x) \in {\rm supp}(Y)$, thus the Fr\'echet differentiability
relation above implies that
$\| y_h - r^{{\H}-1} {\H} x\|\to 0$, as $h\to 0$, which, since ${\rm supp}(Y)$ is closed entails $r^{{\H}-1}{\H} x\in{\rm supp}(Y)$. Taking $r=1$ and 
considering that $x\in {\rm supp}(Y)$ was arbitrary, we obtain
$\H({\rm supp}(Y))\subset {\rm supp}(Y),$ which completes the proof of {\em (i)}.  Note that unless ${\H}$ is invertible with bounded inverse,
we cannot readily conclude that ${\H}({\rm supp}(Y)) = {\rm supp}(Y)$.

Since ${\rm supp}(Y)$ is a closed linear subspace of ${\mathbb V}$, it is itself a Hilbert space.  Part {\em (i)} allows us 
to consider the restriction ${\H}_Y:= {\H}\vert_{{\rm supp}(Y)}$ of the operator ${\H}$ to ${\rm supp}(Y)$.  To emphasize that the domain is 
now restricted to ${\rm supp}(Y)$, we denote by $\H_Y^*$ the adjoint of ${\H}_Y$ in ${\rm supp}(Y)$.

Note that ${\rm Im}({\H}_Y)^\perp ={\rm Ker}({\H}_Y^*)$ and since ${\rm Im}({\H}_Y)$ is a linear space, it follows that
 $\overline{{\rm Im}({\H}_Y)} = {\rm Ker}({\H}_Y^*)^\perp$.
Thus, to prove {\em (ii)}, it is enough to show that ${\rm Ker}({\H}_Y^*) = \{0\}$.  To this end, suppose that ${\H}_Y^*x = 0$ for some 
$x\in  {\rm supp}(Y)$.  We have that $r^{{\H}_Y^*}x = x$, for all $r>0$ since
$$
r^{{\H}_Y^*}(x) = e^{\log(r) {\H}_Y^*}x = x + \sum_{n=1}^\infty \frac{\log^n(r) ({\H}_Y^*)^n x}{n!} = x.
$$
This, in view of \eqref{e:p:support--1}, implies that 
$$
\E | \langle x,Y(r\lambda)\rangle| ^2 =  \langle r^{{\H}_Y^*} x, \mathcal{C}_Y(\lambda,\lambda) r^{{\H}_Y^*} x\rangle  =  \langle x, \mathcal{C}_Y(\lambda,\lambda)x\rangle,
$$
for all $r>0$ and $\lambda \in \Lambda_k$.  Now, the continuity of the IRF $Y$ implies that the left-hand side of the last
expression vanishes as $r\downarrow 0$.  This means that $x\in W$, but ${\rm supp}(Y) = W^\perp$ (in ${\mathbb V}$) and hence $x \in W \cap W^{\perp} =\{0\}$.
This shows that ${\rm Ker}({\H}_Y^*) = \{0\}$ in ${\rm supp}(Y)$ completing the proof of {\em (ii)}.
\end{proof}

\tableofcontents

\end{document}